\newtheorem{defn}{Definition}[section]
\newtheorem{thm}[defn]{Theorem}
\newtheorem{lem}[defn]{Lemma}
\newtheorem{cor}[defn]{Corollary}
\newtheorem{conj}[defn]{Conjecture}
\newtheorem{prop}[defn]{Proposition}
\newcommand\C{\mathbb C}
\newcommand\E{\mathbb E}
\newcommand\NN{\mathbb N}
\newcommand\PP{\mathbb P}
\newcommand\R{\mathbb R}
\newcommand\Q{\mathbb Q}
\newcommand\V{\mathbb V}
\newcommand\Z{\mathbb Z}
\newcommand\cA{\mathcal A}
\newcommand\cC{\mathcal C}
\newcommand\cH{\mathcal H}
\newcommand\cM{\mathcal M}
\newcommand\cN{\mathcal N}
\newcommand\cO{\mathcal O}
\newcommand\cY{\mathcal Y}
\newcommand\fd{\mathfrak d}
\newcommand\fD{\mathfrak D}
\newcommand\fg{\mathfrak g}
\newcommand\fm{\mathfrak m}
\newcommand\fr{\mathfrak r}
\newcommand\Aut{\operatorname{Aut}}
\newcommand\Spf{\operatorname{Spf}\,}
\newcommand\Hom{\operatorname{Hom}}
\newcommand\id{\operatorname{id}\,}
\newcommand\Tr{\operatorname{Tr}}
\newcommand\Ad{\operatorname{Ad}}
\newcommand\ev{\operatorname{ev}}
\newcommand\lcm{\operatorname{lcm}}
\newcommand\rk{\operatorname{rk}}
\newcommand\ord{\operatorname{ord}}
\newcommand\trop{\mathrm{trop}}
\newcommand\virt{\mathrm{virt}}
\newcommand\pt{\mathrm{pt}}
\newcommand\spl{\mathrm{split}}
\newcommand\tw{\mathrm{tw}}
\title[The quantum tropical vertex]{The quantum tropical vertex}
\author{Pierrick Bousseau}
\date{}
\begin{document}

\begin{abstract}
Gross, Pandharipande, and Siebert have shown that the 
2-dimensional Kontsevich-Soibelman scattering diagrams
compute certain genus zero log Gromov-Witten invariants of 
log Calabi-Yau surfaces. We show that the $q$-refined 2-dimensional Kontsevich-Soibelman scattering diagrams compute, 
after the change of variables $q=e^{i \hbar}$, 
generating series of certain higher genus log Gromov-Witten invariants of log Calabi-Yau surfaces.

This result provides a mathematically rigorous realization of the physical derivation of the 
refined wall-crossing formula from topological string theory proposed by Cecotti-Vafa, 
and in particular can be viewed as a non-trivial mathematical check of the connection suggested by Witten 
between higher genus open A-model and Chern-Simons theory.

We also prove some new BPS integrality results and propose some other BPS integrality conjectures.

\end{abstract}

\maketitle

\setcounter{tocdepth}{1}

\tableofcontents

\thispagestyle{empty}

\section*{Introduction}

\subsection*{Statements of results}
We start by giving slightly imprecise versions of the main results of the present paper. 
For us, a log Calabi-Yau surface is a pair $(Y,D)$, where $Y$ is a smooth complex projective surface 
and $D$ is a reduced effective normal crossing anticanonical divisor on $Y$. 
A log Calabi-Yau surface $(Y,D)$ has maximal boundary if $D$ is singular.

\begin{thm} \label{main_thm_ch2}
The functions attached to the rays of the 
$q$-deformed 2-dimensional Kontsevich-Soibelman scattering diagrams are, 
after the change of variables $q=e^{i \hbar}$, generating series of higher genus log Gromov-Witten invariants---with
maximal tangency condition and insertion of the top lambda class---of log Calabi-Yau surfaces with maximal boundary.
\end{thm}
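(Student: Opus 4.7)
The plan is to upgrade the Gross-Pandharipande-Siebert (GPS) correspondence---in which classical two-dimensional Kontsevich-Soibelman scattering diagrams are matched with genus-zero log Gromov-Witten invariants of log Calabi-Yau surfaces---to the $q$-refined setting by systematically inserting the top lambda class $(-1)^g \lambda_g$ into the relative invariants and packaging them into a generating series in $\hbar$. The change of variables $q = e^{i\hbar}$ is natural because the refined wall-crossing automorphisms involve quantum dilogarithms whose elementary trigonometric factor $1/(q^{k/2}-q^{-k/2})$ expands, under $q=e^{i\hbar}$, into precisely the $\hbar$-generating series predicted by the Gopakumar-Vafa / Bryan-Pandharipande multiple cover formula for $\lambda_g$-twisted Gromov-Witten invariants of a rigid curve.

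First I would fix the geometric setup following GPS: given a scattering diagram with prescribed initial walls, construct a toric surface $\bar Y$ together with a blow-up at points on its toric boundary so as to produce a log Calabi-Yau pair $(Y,D)$ with maximal boundary. To each outgoing ray one attaches the generating series of log Gromov-Witten invariants of $(Y,D)$ with a single marked point of maximal tangency along a component of $D$, weighted by $(-1)^g \lambda_g \hbar^{2g}$. I would then proceed by induction on the order of appearance of walls. The base case concerns the incoming walls, where the claim reduces to computing the $\lambda_g$-refined invariants of a single blow-up and identifying them with a single quantum dilogarithm. The inductive step treats the scattering produced when two rays collide: the quantum wall-crossing consistency forces a specific product of outgoing automorphisms in the quantum torus algebra, and on the geometric side this matches, via a degeneration along a toric stratum and the corresponding degeneration formula in log Gromov-Witten theory with $\lambda_g$ insertion, the splitting of the relative invariants into contributions from the two sides.

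The main obstacle, and the technical heart of the proof, is to establish a log-geometric multiple cover formula refining Bryan-Pandharipande: given a rigid log curve with maximal tangency in $(Y,D)$, the $\lambda_g$-weighted sum over its degree $k$ multiple covers must resum, under $q=e^{i\hbar}$, to precisely the trigonometric factor appearing in the quantum dilogarithm. Unlike the original Bryan-Pandharipande setting, which takes place for a rigid $\PP^1$ in a Calabi-Yau threefold, here one works with surfaces and must use the log structure to replace threefold rigidity. My strategy would be to reduce the multiple cover integral, via virtual localization and a careful dimension count on the moduli of log stable maps, to an explicit Hodge integral on $\overline{\cM}_{g,n}$ whose evaluation is known and reproduces the required trigonometric expansion. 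Once this local input is in place, the GPS tropical combinatorics, combined with the degeneration formula for $\lambda_g$, assemble into the full statement by induction.
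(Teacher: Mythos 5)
Your overall shape---degenerate the log Calabi--Yau surface toward a toric model, feed in a Bryan--Pandharipande-type multiple cover formula with $(-1)^g\lambda_g$ inserted, and recognize the quantum dilogarithm under $q=e^{i\hbar}$---does match the actual argument, and the multiple cover computation you flag is indeed one of the local inputs (though it requires no new localization work: the relevant curve is a rigid $(-1)$-curve in a ruled component of the central fiber, with obstruction bundle $\cO_{\PP^1}\oplus\cO_{\PP^1}(-1)$, so it is literally the Bryan--Pandharipande evaluation). But there are two genuine gaps. First, your inductive step is not implementable as stated: an elementary scattering of two rays in the interior of the diagram does not correspond to any degeneration of $(Y,D)$ along a toric stratum. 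The walls of the consistent completion are produced by a purely algebraic order-by-order procedure and have no geometric counterpart in the surface. The actual proof performs a single global degeneration of $Y_\fm$ to the toric surface $\overline{Y}_\fm$ (degeneration to the normal cone of the blown-up boundary divisors), and all wall-by-wall combinatorics is handled on the tropical and algebraic side, via the Filippini--Stoppa identification of the consistent quantum scattering diagram with Block--G\"ottsche refined tropical counts.

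Second, and more seriously, the multiple cover formula only produces the ``edge'' factors $1/(q^{l/2}-q^{-l/2})$; it says nothing about the vertex factors $[m(V)]_q=(q^{m(V)/2}-q^{-m(V)/2})/(q^{1/2}-q^{-1/2})$ that the quantum torus algebra generates at each elementary collision of rays. Matching those with geometry is exactly the content of the refined tropical correspondence theorem, which identifies generating series of $\lambda_g$-twisted higher genus log Gromov--Witten invariants of toric surfaces with Block--G\"ottsche counts; this is the essential bridge, a substantial prior theorem, and ``GPS tropical combinatorics'' only gives its $q=1$ specialization. Finally, you invoke a degeneration formula in log Gromov--Witten theory with $\lambda_g$ insertion; no such general formula is available, and the proof has to substitute the decomposition formula of Abramovich--Chen--Gross--Siebert together with the vanishing of $\lambda_g$ on curves containing cycles of components (to discard rigid tropical curves of positive genus and all non-torically-transverse configurations) and a gluing statement that is only established over the torically transverse locus.
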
  

A precise version of Theorem
\ref{main_thm_ch2} is given by 
Theorems \ref{precise_main_thm_ch2}
and \ref{precise_main_thm_orbifold}
in \mbox{Section
\ref{section_main_result}}. 

\begin{thm} \label{main_thm_integ}
Higher genus log Gromov-Witten invariants--with maximal tangency condition and insertion 
of the top lambda class--of log Calabi-Yau surfaces with maximal boundary satisfy 
an Ooguri-Vafa/ open BPS integrality property.
\end{thm}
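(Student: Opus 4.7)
The plan is to deduce the claimed Ooguri--Vafa type integrality from a purely algebraic integrality statement about the $q$-deformed Kontsevich--Soibelman scattering diagrams, using Theorem \ref{main_thm_ch2} as the bridge. That theorem identifies, after $q = e^{i\hbar}$, each outgoing ray function of the quantum scattering diagram with the generating series of the higher-genus log Gromov--Witten invariants appearing in Theorem \ref{main_thm_integ}. It therefore suffices to show that every such ray function admits a canonical product decomposition
\[
f_\fd \;=\; \prod_{k \geq 1} \Psi(q^{1/2} z^{k m_0})^{\Omega_k(q)},
\]
where $\Psi$ denotes the quantum dilogarithm and $\Omega_k(q) \in \Z[q^{\pm 1/2}]$. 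The integer coefficients of $\Omega_k(q)$ will, by construction, be the refined open BPS counts of Ooguri--Vafa type.

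To establish this factorisation I would proceed by induction on the partial order of effective curve classes cut out by the scattering diagram. The starting point is trivial: a small perturbation of the initial diagram has its incoming ray functions already in the required form. The inductive step propagates the factorisation across each newly created joint by solving a quantum pentagon identity; uniqueness of quantum dilogarithm factorisation in the quantum torus algebra then pins down the exponents $\Omega_k(q)$ unambiguously. Integrality is forced at each stage by the integrality of the commutator of two quantum dilogarithms, in the spirit of Reineke's treatment of refined Donaldson--Thomas invariants of two-vertex quivers; the tropical geometry encoded in the scattering diagram reduces the whole propagation to a sequence of such two-vertex computations.

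The main obstacle is the \emph{polynomiality} of $\Omega_k(q)$, rather than its integrality: the assertion that only finitely many powers of $q^{\pm 1/2}$ appear for each $k$. The algebra of the scattering diagram a priori produces only a formal Laurent series in $\hbar$, and some external finiteness input is needed. I would supply it from the Gromov--Witten side of Theorem \ref{main_thm_ch2}, arguing that the virtual dimension of the relevant moduli spaces together with the degree of the $\lambda_g$ insertion forces the generating series in each fixed curve class to be a Laurent polynomial in $q^{1/2}$ of explicitly bounded degree. A secondary task is matching conventions between the quantum dilogarithm packaging on the scattering side and the Ooguri--Vafa repackaging of the $\hbar$-expansion on the Gromov--Witten side; this amounts to inverting a standard multiplicative transform and is straightforward once the main factorisation is in place.
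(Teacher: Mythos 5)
Your overall strategy is the paper's: transfer everything to the scattering side via Theorem \ref{main_thm_ch2}, and prove that the outgoing rays of the consistent completion are ``admissible'' in the sense of Kontsevich--Soibelman, i.e.\ products of quantum dilogarithms with exponents $\Omega_k(q)\in\Z[q^{\pm 1/2}]$ (this is exactly the BPS integrality condition of Proposition \ref{prop_integ}), with the two-ray case supplied by quiver-DT-type integrality (Proposition 9 of \cite{MR2851153}, which is the rigorous form of the Reineke-style input you invoke). Two of your steps, however, have genuine gaps.

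First, your source of \emph{polynomiality} is wrong. For a fixed curve class the series $\sum_{g\geqslant 0}N_{g,\beta}\hbar^{2g-1}$ is an honest infinite power series in $\hbar$ — every genus contributes, as the multiple-cover formula of Lemma \ref{lem_BP} already shows — and after $q=e^{i\hbar}$ it is only a \emph{rational} function of $q^{1/2}$, typically with denominators $q^{l/2}-q^{-l/2}$; it is never a Laurent polynomial of degree bounded by a virtual-dimension count. The finiteness of $\Omega_k(q)$ appears only after the M\"obius/multicover inversion, and it is an \emph{output} of the algebraic admissibility theorem, not an input from Gromov--Witten geometry. The only geometric input needed on this side is rationality, which already follows from the scattering formula (Lemma \ref{lem_rationality}). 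Second, the assertion that ``the tropical geometry reduces the whole propagation to a sequence of two-vertex computations'' is precisely the step that does not come for free, and the paper flags it as the point where \cite{MR2851153} alone is insufficient. Perturbing separates the initial rays, but two rays meeting at a later joint carry functions in non-primitive classes of the monoid $P$, and applying the two-ray result there does not obviously close an induction on the order. The paper's fix is the change-of-monoid trick of Appendix C3 of \cite{MR3758151}: pass to a free monoid $P'$ with one generator per incoming ray, apply the two-ray result when only two $P'$-coordinates are nonzero, and otherwise strictly decrease the order and invoke the induction hypothesis. Finally, you would also need the quadratic refinement of Section \ref{section_quad_ref} to move between quantum tori and the twisted quantum tori in which the admissibility results are stated; otherwise the signs $(-1)^{l_p+1}$ in the definition of $\overline{\Omega}_p^{Y_\fm}$ do not match.
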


A precise version of Theorem 
\ref{main_thm_integ} is given by 
Theorem 
\ref{thm_integ} in Section 
\ref{section_integrality}.

We also formulate a new conjecture.

\begin{conj} \label{conj_main}
Higher genus relative Gromov-Witten invariants-with maximal tangency condition and insertion of the 
top lambda class--of a del Pezzo surface $S$ relative to a smooth anticanonical divisor 
are related to refined counts of dimension one stable sheaves on the local Calabi-Yau 3-fold $\mathrm{Tot} K_S$, 
the
total space of the canonical line bundle of
$S$.
\end{conj}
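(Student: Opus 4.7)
The plan is to combine Theorem~\ref{main_thm_ch2} with a higher genus version of the van~Garrel--Graber--Ruddat log-local principle and with the Kontsevich--Soibelman interpretation of $q$-refined scattering diagrams as generating series of refined Donaldson--Thomas invariants of the derived category of $\mathrm{Tot}\, K_S$. The first step is to degenerate the pair $(S,E)$, with $E$ a smooth anticanonical divisor, to a pair $(Y,D)$ with maximal boundary, obtained by letting $E$ acquire nodes and split into a cycle of rational curves. The resulting family is log smooth, and by the degeneration formula for log Gromov--Witten invariants (Abramovich--Chen--Gross--Siebert, Ranganathan), the higher genus relative invariants of $(S,E)$ with top lambda class insertion decompose into a sum over rigid tropical curves of products of log invariants of the maximal boundary special fiber.

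\textbf{Main steps.} Applying Theorem~\ref{main_thm_ch2} to these maximal boundary pieces converts the generating series into coefficients of wall-functions in the $q$-refined 2-dimensional Kontsevich--Soibelman scattering diagram associated to $(Y,D)$. To interpret these in terms of sheaves on $\mathrm{Tot}\, K_S$, I would exploit the fact that, for $S$ del Pezzo, $\DC(S)$ admits a full strong exceptional collection, so that $\DC(\mathrm{Tot}\, K_S)$ is equivalent to the derived category of modules over a Calabi--Yau-$3$ completion of a quiver with potential. Under this equivalence, dimension one stable sheaves supported on the zero section correspond to quiver representations in specific dimension vectors, and their refined Donaldson--Thomas invariants, taken for a stability condition adapted to the chamber picked out by the geometry, are computed by a Kontsevich--Soibelman scattering diagram via the motivic wall-crossing formula. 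The conjecture would then follow from identifying this sheaf-theoretic scattering diagram with the one produced from the Gromov--Witten side by Theorem~\ref{main_thm_ch2}, presumably using consistency of the initial rays (which encode the local BPS invariants of the boundary divisors).

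\textbf{Main obstacle.} The hardest step is the higher-genus log-local principle with top lambda class insertion. The genus zero statement of van~Garrel--Graber--Ruddat relies on virtual localization on $\mathrm{Tot}\, K_S$ together with a cancellation specific to genus zero dimension counts, and its extension to higher genus with the lambda-class insertion is not presently available. A careful analysis via degeneration of the target, or alternatively a direct cosection-localization argument on the log moduli space, would be required, together with a tracking of the top lambda class through the splitting formula so as to recover the Gopakumar--Vafa-like content on the sheaf side. A second, more technical difficulty is the identification of the correct Bridgeland stability condition on $\DC(\mathrm{Tot}\, K_S)$ so that its chamber decomposition matches the scattering diagram produced by Theorem~\ref{main_thm_ch2}; this matching is expected from mirror symmetry between $(S,E)$ and its Landau--Ginzburg mirror, but would need to be made precise in each del Pezzo case.
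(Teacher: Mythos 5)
The statement you are trying to prove is a \emph{conjecture}: the paper does not prove it. Its precise form is Conjecture \ref{conj_del_pezzo} in Section \ref{section_del_pezzo}, namely $\Omega_\beta^{S/E}(q^{1/2})=(-1)^{\beta^2+1}(\beta\cdot E)\,\chi_q(M_\beta)$, where $M_\beta$ is the moduli space of one-dimensional Gieseker-stable sheaves on $S$ of class $\beta$ and Euler characteristic $1$. The only things actually established there are the rationality statement (Lemma \ref{lem_rationality_del_pezzo}, via the GW/stable pairs correspondence) and the check that the classical limit $q^{1/2}\to 1$ recovers a known identity, obtained by combining the genus-zero log-local principle $N_{0,\beta}^{S/E}=(-1)^{\beta\cdot E+1}(\beta\cdot E)N_{0,\beta}^{\mathrm{Tot}\,K_S}$ of Graber--Hassett and van Garrel--Graber--Ruddat with the Katz conjecture for $\mathrm{Tot}\,K_S$. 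So there is no proof in the paper against which to measure your argument, and your proposal should be presented as a strategy, not a proof.

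As a strategy it has two genuine gaps beyond the one you flag. First, the reduction to Theorem \ref{main_thm_ch2} by degenerating $E$ to a cycle of rational curves is not a harmless move: log Gromov--Witten invariants with maximal tangency along a \emph{smooth} anticanonical divisor differ from those along a normal crossing one, and the degeneration formula relating them produces nontrivial correction terms (contributions from components mapping into the exceptional pieces of the degeneration, and from curves meeting the new zero-dimensional strata of the boundary). You cannot simply transport the scattering-diagram description of $(Y_\fm,\partial Y_\fm)$ back to $(S,E)$ without controlling these corrections; this comparison is itself an open problem of the same order of difficulty as the conjecture. Second, the target of the conjecture is $\chi_q$ of the moduli space of one-dimensional Gieseker-stable sheaves, with the explicit prefactor $(-1)^{\beta^2+1}(\beta\cdot E)$; your quiver-with-potential route would at best produce refined DT invariants in some chamber of a stability manifold, and identifying that chamber with Gieseker stability on the zero section (and producing the multiplicative factor $\beta\cdot E$, which in the paper's evidence comes precisely from the log-local principle) is not addressed. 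Your identification of the missing higher-genus log-local principle with $\lambda_g$-insertion as the central obstacle is correct, and is consistent with the paper's remark that further evidence ``will be presented elsewhere.''
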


A precise version of Conjecture
\ref{conj_main} is given by 
Conjecture  \ref{conj_del_pezzo} in
Section
\ref{section_del_pezzo}.

\subsection*{Context and motivations}

\subsubsection*{\textbf{SYZ}} The Strominger-Yau-Zaslow 
\cite{MR1429831}
picture of
mirror symmetry suggests a two-step construction of the mirror of a 
Calabi-Yau variety admitting a
Lagrangian torus fibration: 
first, construct the ``semi-flat''
mirror by dualizing the non-singular 
torus fibers; second, correct the complex structure of the ``semi-flat'' mirror
such that it extends across the locus of singular fibers. It is expected--see
\cite{MR1429831} and Fukaya \cite{MR2131017}--
that 
the corrections involved in the second step are determined by some counts of
holomorphic discs in the original variety with boundary on torus fibers.

\subsubsection*{\textbf{KS}}

In dimension two and with at most
nodal singular fibers in the torus fibration, Kontsevich and Soibelman
\cite{MR2181810} had the insight that algebraic self-consistency constraints on the corrections 
were strong enough to determine these corrections uniquely. More precisely,
they reduced the problem to an algebraic 
computation of commutators in a group of formal families of symplectomorphisms of the $2$-dimensional algebraic torus.

This algebraic formalism, graphically encoded
under the form of scattering diagrams, was
generalized and extended to higher dimensions by Gross-Siebert
\cite{MR2846484} and plays an essential role in the Gross-Siebert algebraic approach to mirror symmetry.

\subsubsection*{\textbf{GPS}}

In 
\cite{MR2667135}, Gross, Pandharipande and Siebert 
made some progress in connecting 
the original enumerative expectation and the
algebraic recipe of scattering diagrams.
They showed that the 2-dimensional 
Kontsevich-Soibelman scattering diagrams 
indeed have an enumerative meaning: they compute some genus $0$ log Gromov-Witten invariants 
of some log Calabi-Yau surfaces with maximal boundary, ie complements of a singular normal crossing anticanonical divisor in a smooth projective surface.

This agrees with the original expectation because these geometries admit Lagrangian torus fibrations 
and these genus $0$ log Gromov-Witten invariants should be thought of as algebraic definitions of some counts of holomorphic discs with boundary on Lagrangian torus fibers.
For some symplectic approaches, relating
counts of holomorphic discs in hyperk\"ahler manifolds of real dimension 4 
and the Konstevich-Soibelman wall-crossing formula, we refer to the works of 
Lin
\cite{lin2017correspondence} and Iacovino
\cite{iacovino2017ks}. 

The combination of
2-dimensional scattering diagrams with 
their enumerative interpretation given 
by Gross, Pandharipande and Siebert
\cite{MR2667135}
is the main tool in the Gross-Hacking-Keel
\cite{MR3415066} construction of mirrors for log Calabi-Yau surfaces with maximal boundary.

\subsubsection*{\textbf{Higher genus GPS = refined KS}}

In  \cite[Section 11.8]{MR2181810}
(see also 
\cite{MR2596639}), Kontsevich and Soibelman remarked that the 2-dimensional scattering diagram formalism has a natural $q$-deformation, 
with the group of formal families of symplectomorphisms of the 2-dimensional algebraic torus replaced 
by a group a formal families of automorphisms of the 2-dimensional quantum torus, a natural non-commutative deformation of the 2-dimensional algebraic torus. 
The enumerative meaning of this $q$-deformed scattering diagram was \emph{a priori} 
unclear. 

In Section 5.8 of 
\cite{MR2667135}, Gross, Pandharipande and Siebert remarked that the genus $0$ log Gromov-Witten invariants 
they consider have a natural extension to higher genus,
by integration of the top lambda class, and they asked 
if there is an interpretation 
of these higher genus invariants in terms of scattering diagrams.

The main result of the present paper,
Theorem \ref{main_thm_ch2}, 
is that the two previous questions, the 
enumerative meaning of the algebraic $q$-deformation and the algebraic meaning of
the higher genus deformation, are answers to each other.

\subsubsection*{\textbf{OV}}
The higher genus log Gromov-Witten invariants of log Calabi-Yau surfaces 
that we are considering--with insertion of the top lambda class--should be thought of as an 
algebrogeometric definition of some counts of higher genus Riemann surfaces with boundary on a Lagrangian torus fiber in a Calabi-Yau 3-fold geometry, 
essentially the product of the log Calabi-Yau surface by a third trivial direction;
see Section \ref{section_3d}.
For such counts of higher genus open curves in a Calabi-Yau 3-fold geometry, 
Ooguri-Vafa \cite{MR1765411} have conjectured an open BPS integrality structure. Theorem \ref{main_thm_integ}, 
which is a consequence of Theorem \ref{main_thm_ch2} and
of non-trivial algebraic properties of 
$q$-deformed scattering diagrams, can be viewed as a check of this BPS integrality structure.

\subsubsection*{\textbf{DT}}
The non-trivial integrality properties of $q$-deformed scattering diagrams are well-known to be related to 
integrality properties of refined Donaldson-Thomas (DT) invariants; see Kontsevich and Soibelman
 \cite{kontsevich2008stability}.
Indeed, $q$-deformed scattering diagrams control the wall-crossing behavior of refined 
DT invariants. 

The fact that the integrality structure of 
DT invariants coincides with the Ooguri-Vafa integrality structure of higher genus open Gromov-Witten invariants of Calabi-Yau 3-folds, 
essentially involving the quantum dilogarithm in both cases, can be viewed as an early indication
that something like Theorem
\ref{main_thm_ch2} should be true.

As consequence of \mbox{Theorem
\ref{main_thm_ch2}}, we obtain explicit relations between refined DT invariants of some quivers 
and higher genus log Gromov-Witten invariants of log Calabi-Yau surfaces --see Section
\ref{section_dt}-- generalizing the unrefined/genus $0$ relation of \cite{MR2662867}, \cite{MR3004575}.

\subsubsection*{\textbf{CV}}
In fact, Cecotti and Vafa
\cite{cecotti2009bps} have given a 
physical derivation of the wall-crossing formula in DT theory going through the
higher genus open Gromov-Witten theory of some Calabi-Yau 3-fold. 
We will explain in Section
\ref{section_cecotti_vafa} 
that Theorem \ref{main_thm_ch2} and \ref{main_thm_integ} are indeed fully compatible
with the Cecotti-Vafa argument. In particular, 
\mbox{Theorem
\ref{main_thm_ch2}} can be viewed as a
highly non-trivial mathematical check 
of the connection predicted by Witten
\cite{MR1362846} between higher genus open A-model and quantum Chern-Simons theory.

\subsubsection*{\textbf{del Pezzo}}
Theorem \ref{main_thm_ch2} and \ref{main_thm_integ} are about 
log Calabi-Yau surfaces with maximal boundary, ie with a singular normal crossing anticanonical divisor. 
Similar questions can be asked for log Calabi-Yau surfaces with respect to a smooth anticanonical divisor. Conjecture
\ref{conj_main} gives a non-trivial correspondence in such a case, 
suggested by the similarities between refined DT theory and open higher genus Gromov-Witten invariants discussed above. 

\subsection*{Applications}

In a way parallel to how
\cite{MR2667135} is used by Gross-Hacking-Keel
\cite{MR3415066}
to construct Poisson varieties as
mirrors of log Calabi-Yau surfaces with maximal boundary, we will use
Theorem \ref{main_thm_ch2} in a coming work
\cite{bousseau2018quantum} to construct
deformation quantizations of these Poisson varieties.

\subsection*{Comments on the proof of Theorem \ref{main_thm_ch2}}
The curve counting invariants appearing in 
Theorem \ref{main_thm_ch2} are log Gromov-Witten invariants, as defined by Gross and Siebert \cite{MR3011419},
and Abramovich and Chen
\cite{MR3224717}, \cite{MR3257836}.
The proof of Theorem \ref{main_thm_ch2}
relies on recently developed general properties of log Gromov-Witten invariants, such as the
decomposition formula of Abramovich, Chen, Gross and Siebert
\cite{abramovich2017decomposition}.

The main tool of
\cite{MR2667135} is a reduction to a tropical setting using the correspondence theorem of 
Mikhalkin 
\cite{MR2137980}
and Nishinou-Siebert
\cite{MR2259922} 
between counts of curves in complex toric surfaces and counts of tropical curves in $\R^2$. 
Similarly, the main tool of the present paper is a reduction to a tropical setting 
using a correspondence theorem previously proved by the author
\cite{bousseau2017tropical} between generating series of higher genus curves in toric
surfaces and Block-Göttsche 
\mbox{$q$-deformed}
tropical invariants. 

Given that the
relation between $q$-deformed tropical invariants and $q$-deformed scattering diagrams has already been worked out by 
Filippini and Stoppa
\cite{MR3383167}, Theorem \ref{main_thm_ch2} should really be viewed as a combination of 
\cite{bousseau2017tropical} and  
\cite{MR3383167}.
The new results required for the proof of 
\mbox{Theorem 
\ref{main_thm_ch2}} are: the check that the degeneration step used in
\cite{MR2667135} to go from a log Calabi-Yau setting 
to a toric setting extends to the higher genus case and 
the check that the correspondence 
proved in 
\cite{bousseau2017tropical} has exactly the correct form to be used as input in
\cite{MR3383167}. To make this paper more self-contained, we will in fact review the content of \cite{MR3383167}.

The most technical part is the higher genus version of the degeneration step. 
As the general version of the degeneration formula in log Gromov-Witten theory is not yet known, we combine the general 
decomposition formula of 
\cite{abramovich2017decomposition} with some situation specific vanishing statements, 
which, as in \cite{bousseau2017tropical}, reduce the gluing operations to some 
torically transverse locus where they are under control, thanks to, for example
Kim, Lho and Ruddat
\cite{kim2018degeneration}.

\subsection*{Comments on the proof of Theorem \ref{main_thm_integ}.}
The proof of \mbox{Theorem \mbox{\ref{main_thm_integ}}} is a combination of Theorem \ref{main_thm_ch2}
and of the non-trivial integrality results
about $q$-deformed scattering diagrams 
proved by Kontsevich and Soibelman in 
Section 6 of \cite{MR2851153}.
In fact, to get the most general form of
Theorem \ref{main_thm_integ}, the results contained in \cite{MR2851153} do not seem to be enough. 
We use an induction argument on scattering diagrams, parallel to the one used by Gross, Hacking, Keel and Kontsevich
\cite[Appendix C3]{MR3758151}, to reduce the most general case to a case which can be treated by \cite{MR2851153}. 

A small technical point is to keep track of signs,
because of the difference between quantum tori and twisted quantum tori; see \mbox{Section 
\ref{section_quad_ref}} on the quadratic refinement for details.

\subsection*{Plan of the paper}
In Section \ref{section_scattering}, we review the
notion of 2-dimensional scattering diagrams, both classical
and quantum, with an emphasis on the symplectic/Hamiltonian aspects. 
In Section \ref{section_log_calabi_yau}, 
we introduce a class of 
log Calabi-Yau surfaces and their
log Gromov-Witten invariants. 

In \mbox{Section \ref{section_main_result}}, we state our main result,
Theorem \ref{precise_main_thm_ch2},
a precise version of 
\mbox{Theorem \ref{main_thm_ch2}}, relating 2-dimensional quantum
scattering diagrams and generating series of higher genus log Gromov-Witten invariants of log Calabi-Yau surfaces.
We also state a generalization of Theorem
\ref{precise_main_thm_ch2}, Theorem 
\ref{precise_main_thm_orbifold}, phrased in terms of orbifold log Gromov-Witten invariants.

\mbox{ Sections
\ref{section_gw_toric},
\ref{section_degeneration},
\ref{section_tropical} and
\ref{section_end_proof}}
are dedicated to the proof of 
Theorems \ref{precise_main_thm_ch2} and 
\ref{precise_main_thm_orbifold}.
The general structure of the proof is
parallel to \cite{MR2667135}.
In \mbox{Section \ref{section_gw_toric}}, we introduce higher genus log Gromov-Witten invariants
of toric surfaces.
In Section \ref{section_degeneration}, the most technical part of this paper, 
we prove a degeneration formula relating log Gromov-Witten invariants of log Calabi-Yau surfaces defined in \mbox{Section 
\ref{section_log_calabi_yau}}
and appearing in 
\mbox{Theorem 
\ref{precise_main_thm_ch2}}, with log Gromov-Witten invariants of toric surfaces
defined in 
\mbox{Section
\ref{section_log_gw_toric}}.
In \mbox{Section \ref{section_tropical}},
following Filippini-Stoppa
\cite{MR3383167}, we review the connection between quantum scattering diagrams and refined counts of tropical curves. 
We finish the proof of 
Theorem \ref{precise_main_thm_ch2}
in Section \ref{section_end_proof}, 
combining the results of \mbox{Sections
\ref{section_degeneration} and
\ref{section_tropical}}
with the correspondence theorem proved in 
\cite{bousseau2017tropical} between refined counts of tropical curves and log Gromov-Witten invariants of toric surfaces.
The orbifold Gromov-Witten computation needed to finish the proof of Theorem 
\ref{precise_main_thm_orbifold} is done in 
\mbox{Section \ref{section_BP_orbifold}}.

In Section \ref{section_integ_conj}, we formulate a BPS integrality 
conjecture for higher genus 
log Gromov-Witten invariants 
of log Calabi-Yau surfaces.
In Section \ref{section_integ_result}, we state Theorem \ref{thm_integ}, precise form of Theorem \ref{main_thm_integ}. 
The proof of Theorem \ref{thm_integ} takes Sections
\ref{section_quad_ref}, \ref{section_proof_integ}.
In Section \ref{section_dt}, Theorem
\ref{thm_dt} gives an explicit connection with refined DT invariants of quivers.
Finally, in Section
\ref{section_del_pezzo}, we state 
Conjecture \ref{conj_del_pezzo}, precise version of Conjecture
\ref{conj_main}.

In Section \ref{section_cecotti_vafa}, we explain how Theorem \ref{main_thm_ch2} 
can be viewed as a mathematical check of the physics work of Cecotti and Vafa 
\cite{cecotti2009bps}
and how \mbox{Theorem \ref{main_thm_integ}} is compatible with the Ooguri-Vafa
integrality conjecture \cite{MR1765411}.

\subsection*{Acknowledgements.}
I would like to thank my supervisor Richard Thomas for continuous support and innumerous discussions, suggestions and corrections. 

I thank Rahul Pandharipande, Tom Bridgeland, Liana Heuberger, Michel van Garrel and Bernd Siebert 
for various invitations to conferences and seminars where this work (or rather its application \cite{bousseau2018quantum}) has been presented. 
I thank Luca Battistella and Michel van Garrel for corrections and comments on a first draft of this paper.
I thank the referee for useful corrections and suggestions.

This work is supported by the EPSRC award 1513338, Counting curves in algebraic geometry, Imperial College London, 
and has benefited from the EPRSC [EP/L015234/1], EPSRC Centre for Doctoral Training in Geometry and Number Theory 
(London School of Geometry and Number Theory), University College London.

\section{Scattering}
\label{section_scattering}

In this Section, we first fix our notation for
the basic objects considered in this paper:
tori, quantum tori and automorphisms of formal families of them. 
We then introduce scattering diagrams, both classical and quantum, following 
\cite{MR2181810}, \cite{MR2846484}, \cite{MR2667135} and \cite{MR3383167}.

\subsection{Torus}
We fix $T$ a $2$-dimensional
complex algebraic torus.
Let 
$M \coloneqq \Hom (T, \C^{*})$
be the $2$-dimensional lattice of 
characters of $T$. Characters form a linear basis of the algebra of functions 
on $T$,
\[ \Gamma (\cO_T) = \bigoplus_{m \in M} \C z^m \,,\]
with the product given by 
$z^m \cdot z^{m'}
=z^{m + m'}$. In other words,
the algebra of functions on $T$
is the algebra of the lattice
$M$: 
$\Gamma (\cO_T)=\C [M]$.

We fix 
\[\langle -,- \rangle \colon 
\bigwedge\nolimits^2 M \xrightarrow{\sim} \Z \]
an orientation of $M$,
ie an integral unimodular skew-symmetric 
bilinear form on $M$. This defines a Poisson bracket 
on $\Gamma(\cO_T)$, given by 
\[ \{z^m, z^{m'}\}=\langle m ,m' \rangle z^{m+m'} \,, \]
and a corresponding algebraic symplectic form 
$\Omega$ on $T$.

If we choose a basis $(m_1,m_2)$ of $M$ such that 
$\langle m_1,m_2 \rangle =1$, then, writing
$z_1\coloneqq z^{m_1}$ and $z_2\coloneqq z^{m_2}$, we have identifications 
$T=(\C^{*})^2$, $M=\Z^2$,
$\Gamma(\cO_T)=\C[z_1^\pm, z_2^\pm]$ 
and 
\[ \Omega = \frac{dz_1}{z_1} \wedge \frac{dz_2}{z_2}\,.\]

\subsection{Quantum torus}
Given the symplectic torus 
$(T, \Omega)$, or equivalently 
the Poisson algebra 
$(\Gamma(\cO_T), \{-,-\})$,
it is natural to look for a ``quantization''.
The quantum torus $\hat{T}^q$ is the non-commutative 
``space'' whose algebra of functions is the 
non-commutative $\C[q^{\pm \frac{1}{2}}]$-algebra 
$\Gamma(\cO_{\hat{T}^q})$, with linear basis 
indexed by the lattice $M$,
\[\Gamma(\cO_{\hat{T}^q}) = \bigoplus_{m \in M}
 \C[q^{\pm \frac{1}{2}}] \hat{z}^m \,,\]
and with product defined by 
\[ \hat{z}^m \cdot \hat{z}^{m'}=q^{\frac{1}{2} \langle m,m'
\rangle} \hat{z}^{m+m'}\,.\]
The quantum torus $\hat{T}^q$
is a quantization of the torus $T$
in the sense that writing $q=e^{i \hbar}$ and taking 
the limit $\hbar \rightarrow 0$,
$q \rightarrow 1$, the linear term
in $\hbar$
of the commutator 
$[\hat{z}^m, \hat{z}^{m'}]$
is determined by the 
Poisson bracket 
$\{z^m, z^{m'}\}$. Indeed, we have 
\[ [\hat{z}^m, \hat{z}^{m'}]
= (q^{\frac{1}{2} \langle m, m' \rangle}
- 
q^{-\frac{1}{2} \langle m, m' \rangle})
\hat{z}^{m + m'}\,,\]
and so 
\[ \lim_{\hbar \rightarrow 0} \frac{1}{i \hbar}[\hat{z}^m, \hat{z}^{m'}]
= \langle m,m' \rangle
\hat{z}^{m+m'} \,. \]

We denote by $\hat{T}^{\hbar}$ 
the non-commutative ``space''
whose algebra of 
functions is the 
$\C (\!( \hbar )\!)$-algebra 
$\Gamma (\cO_{\hat{T}^{\hbar}})\coloneqq
\Gamma (\cO_{\hat{T}^{\hbar}})
\otimes_{\C[q^{\pm \frac{1}{2}}]} \C(\!(
\hbar )\!)$.

\subsection{Automorphisms of formal families of tori}
\label{section_autom}
Let $R$ be a complete local $\C$-algebra 
and let 
$\fm_R$ be the maximal ideal of $R$.
By definition of completeness, we have
\[ R = \varprojlim_\ell R/\fm_R^\ell \,.\]
We denote $S \coloneqq \Spf R$
the corresponding formal scheme and $s_0$ the closed 
point of $S$ defined by $\fm_R$. 
Let $T_S$ be the trivial family of \mbox{$2$-dimensional} 
complex algebraic tori parametrized by $S$, ie
$T_S \coloneqq S \times T$. The corresponding algebra 
of functions is given 
by 
\[\Gamma (\cO_{T_S})= \varprojlim_\ell (R/\fm_R^\ell \otimes \Gamma(\cO_T))
 = \varprojlim_\ell (R/\fm_R^\ell \otimes \C[M])\,.\]
Let $\hat{T}^{\hbar}_S$
be the trivial family of non-commutative 
$2$-dimensional tori parametrized by $S$,
ie $\hat{T}^{\hbar}_S \coloneqq S \times \hat{T}^{\hbar}$. The corresponding 
algebra of functions is simply given by 
\[ \Gamma (\cO_{\hat{T}^{\hbar}_S})=
\varprojlim_\ell (R/\fm_R^\ell \otimes \Gamma (\cO_{\hat{T}^{\hbar}}))\,.\]

The family $T_S$ of tori has a natural Poisson structure, whose symplectic leaves are the torus fibers, 
and whose Poisson center is $R$. Explicitly, we have 
\[ \{H_m z^m, H_{m'} z^{m'}\}= H_m H_{m'} \{z^m, z^{m'} \} \,,\]
for every $H_m , H_{m'} \in R$ and $m, m' \in M$.
The family $\hat{T}_S^{\hbar}$ of non-commutative tori is a quantization of the Poisson variety $T_S$.

Let 
\[ H = \sum_{m \in M} H_m z^m \] 
be a function on 
$T_S$ whose restriction to the fiber over the closed point $s_0 \in S$ vanishes, ie such that 
$H = 0 \mod \fm_R$. 
Then $\{H,-\}$ defines a derivation of the algebra of functions on $T_S$ and so a vector field on $T_S$, 
the Hamiltonian vector field defined by $H$, whose restriction to the fiber over the closed 
point $s_0 \in S$ vanishes.

The time one flow of this vector field defines an automorphism
\[ \Phi_H \coloneqq \exp \left( 
\{H,-\} \right)\]
of $T_S$, whose restriction to the fiber over the closed point 
$s_0 \in S$ is the identity.
Note that $\Phi_H$ is well-defined 
because of the assumptions that 
$H=0 \mod \fm_R$ and $R$ is a complete local algebra,
ie $\exp$ makes sense formally.

Let $\V_R$ be the subset of automorphisms of $T_S$ which are of the form $\Phi_H$ for $H$ as above.
By the Baker-Campbell-Hausdorff 
formula, 
$\V_R$ is a subgroup of the group  of automorphisms of $T_S$. 
In \cite{MR2667135}, $\V_R$ is called the tropical vertex group.

Let 
\[ \hat{H} = \sum_{m \in M}
\hat{H}_m \hat{z}^m \]
be a function on $\hat{T}_S^{\hbar}$ whose restriction to the fiber over the closed point $s_0 \in S$ vanishes, 
ie such that 
$\hat{H}=0 \mod \fm_R$.
Conjugation by $\exp \left(
\hat{H} \right)$
defines an automorphism 
\[\hat{\Phi}_{\hat{H}} \coloneqq \Ad_{\exp \left(\hat{H} \right)}
=\exp \left(
\hat{H}\right) (-)
\exp \left(-\hat{H} \right)\]
of $\hat{T}_S^{\hbar}$ whose restriction to the fiber over the closed point $s_0
\in S$ is the identity.
Note that $\hat{\Phi}_{\hat{H}}$ is well-defined because of the assumption 
that $\hat{H}=0 \mod \fm_R$
and $R$ is a compete local algebra, ie
everything makes sense formally.
Let $\hat{\V}^{\hbar}_R$ be the subset of automorphisms of $\hat{T}_S^{\hbar}$ which are of the form
$\hat{\Phi}_{\hat{H}}$ for
$\hat{H}$ as above.
By the Baker-Campbell-Hausdorff formula, $\hat{\V}^{\hbar}_R$ is a subgroup 
from the group of automorphisms 
of $\hat{T}_S^{\hbar}$. 
We call $\hat{\V}^{\hbar}_R$ the quantum tropical vertex group.
This group is much bigger that the ``quantum tropical vertex group" of 
\cite{MR2851153}. We will meet the 
group of \cite{MR2851153} in Section 
\ref{section_integrality}, under the name
``BPS quantum tropical vertex group".

If the limit 
\[H \coloneqq \lim_{\hbar \rightarrow 0}
(i\hbar \hat{H})\]
exists, then, replacing $\hat{z}^m$ by $z^m$, 
$H$ can be naturally viewed as a function on
$T_S$ and is the classical limit of 
$\hat{H}$. It is easy to check that 
$\Phi_H$ is the classical limit of 
$\hat{\Phi}_{\hat{H}}$.

\subsection{Scattering diagrams}
\label{section_scattering_diag}
In this section, we work in the $2$-dimensional real plane 
$M_\R \coloneqq M \otimes_{\Z} \R$. 
A \emph{ray} is a half-line $\fd$ in $M_\R$ of rational slope with initial point the origin $0\in M_\R$, 
and we denote $m_{\fd} \in M-\{0\}$ its primitive integral direction, pointing away from the origin.

\begin{defn}
A \emph{scattering diagram} $\fD$
over $R$ is a set of rays
$\fd$ in $M_\R$, 
equipped with functions
$H_{\fd}$ such that
either 
\[ H_{\fd}
\in \varprojlim_\ell ( R/\fm_R^\ell \otimes \C[z^{m_{\fd}}]) \,,\]
or 
\[ H_{\fd}
\in \varprojlim_\ell ( R/\fm_R^\ell \otimes \C[z^{-m_{\fd}}]) \,,\]
and such that
$H_{\fd} =0 \mod \fm_R$,
and
for every $\ell \geqslant 1$, only finitely many rays
$\fd$ have 
$H_{\fd} \neq 0 \mod \fm_R^\ell$.

A ray $(\fd, H_{\fd})$ such that 
\[ H_{\fd}
\in \varprojlim_\ell ( R/\fm_R^\ell \otimes \C[z^{m_{\fd}}]) \,,\]
is called \emph{outgoing} and 
a ray 
$(\fd, H_{\fd})$ such that 
\[ H_{\fd}
\in \varprojlim_\ell ( R/\fm_R^\ell \otimes \C[z^{-m_{\fd}}]) \,,\]
is called \emph{ingoing}.

Given a ray $(\fd,H_\fd)$, we denote
$m(H_\fd) \coloneqq m_\fd$ if $(\fd,H_\fd)$ is outgoing, and $m(H_\fd)
\coloneqq -m_\fd$ if 
$(\fd,H_\fd)$ is ingoing. In both cases, we have \[ H_{\fd}
\in \varprojlim_\ell ( R/\fm_R^\ell \otimes \C[z^{m(H_\fd)}]) \,,\]
\end{defn}

We will always consider scattering diagrams up to the following simplifying operations:
\begin{itemize}
\item A ray $(\fd, H_{\fd})$ with $H_{\fd}=0$ is considered to be trivial and can be safely removed from the scattering diagram.
\item If two rays $(\fd_1, H_{\fd_1})$ and $(\fd_2, H_{\fd_2})$
are such that $\fd_1=\fd_2$ and are both ingoing or outgoing, 
then they can be replaced by a single ray $(\fd, H_{\fd})$, where $\fd=\fd_1=\fd_2$ and 
$H_{\fd}=H_{\fd_1}+H_{\fd_2}$. Note that, because 
$\{ H_{\fd_1}, H_{\fd_2}\}=0$, we have that 
$\Phi_{H_{\fd}}=\Phi_{H_{\fd_1}}\Phi_{H_{\fd_2}}
=\Phi_{H_{\fd_2}}\Phi_{H_{\fd_1}}$.
\end{itemize}




Let $\fD$ be a scattering diagram.
The \emph{singular locus} of $\fD$ is 
the union of the set of initial points of rays and of the set of non-trivial intersection points of rays.
Let $\gamma\colon [0,1] \rightarrow M_\R$ be a smooth path. We say that $\gamma$ is admissible 
if $\gamma$ does not intersect the singular locus of $\fD$, if the endpoints of $\gamma$ are not on rays of $\fD$, and if $\gamma$
intersects transversely all the rays of $\fD$.

Let $\gamma$ be an admissible smooth path in $M_\R$.
Let $\ell \geqslant 1$ be a positive integer. By definition, 
$\fD$ contains only finitely 
many rays $(\fd, H_\fd)$ with 
$H_{\fd} \neq 0 \mod \fm_R^\ell$.
We denote 
$0<t_1 \leqslant \dots 
\leqslant t_s <1$ the times of intersection of 
$\gamma$ with rays $(\fd_1, H_{\fd_1}),
\dots, (\fd_s, H_{\fd_s})$ of $\fD$ such that 
$H_{\fd_r} \neq 0 \mod \fm_R^\ell$. For every
$1 \leqslant r \leqslant s$, we define $\epsilon_r \in \{\pm 1\}$ to be the sign of $\langle m(H_{\fd_i}), \gamma'(t_r) \rangle$. 
We then define 
\[\theta_{\gamma, \fD,\ell} \coloneqq \Phi_{H_{\fd_s}}^{\epsilon_s} \cdots \Phi_{H_{\fd_1}}^{\epsilon_1} \,.\]
Taking the limit $\ell \rightarrow +\infty$, we define
\[\theta_{\gamma,\fD} \coloneqq
\lim_{\ell \rightarrow +\infty}
\theta_{\gamma,\fD,\ell} \,.\]

\begin{defn}
A scattering diagram $\fD$
over $R$ is \emph{consistent} if, for every closed admissible smooth path $\gamma \colon [0,1] \rightarrow M_\R$, 
we have $\theta_{\gamma, \fD}=\id$.
\end{defn}

The following result is due to Kontsevich and Soibelman
\cite[Theorem 6]{MR2181810} (see also \cite[Theorem 1.4]{MR2667135}).

\begin{prop} \label{prop_consistency}
Any scattering diagram $\fD$ can be canonically completed by adding
only outgoing rays to form a consistent scattering diagram 
$S(\fD)$.
\end{prop}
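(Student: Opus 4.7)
The plan is to construct $S(\fD)$ by induction on $l$, at each stage adjoining finitely many new outgoing rays whose Hamiltonians lie in $\fm_R^l/\fm_R^{l+1}$ so as to cancel the order-$l$ discrepancy of the monodromy around every singular point. Set $S(\fD)_1 := \fD$; since every Hamiltonian of $\fD$ lies in $\fm_R$, all $\theta_{\gamma, \fD}$ are trivial modulo $\fm_R$. Inductively, assume $S(\fD)_l$ has been built from $\fD$ by adjoining outgoing rays only and satisfies $\theta_{\gamma, S(\fD)_l} \equiv \id \pmod{\fm_R^l}$ for every closed admissible $\gamma$.

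Let $Q$ be a singular point of $S(\fD)_l$ (an initial point of a ray, or an intersection point of rays), and let $\gamma_Q$ be a small counter-clockwise loop around $Q$ enclosing no other singular point. Since $H, H' \in \fm_R^l$ forces $\{H, H'\} \in \fm_R^{2l} \subseteq \fm_R^{l+1}$, the Baker-Campbell-Hausdorff corrections collapse modulo $\fm_R^{l+1}$, and there is a unique Hamiltonian class
\[ H_Q \in (\fm_R^l/\fm_R^{l+1}) \otimes_\C \C[M] \]
with $\theta_{\gamma_Q, S(\fD)_l} \equiv \Phi_{H_Q} \pmod{\fm_R^{l+1}}$. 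Decompose $H_Q = \sum_{m \neq 0} c_{Q,m} z^m$ and, for each primitive lattice vector $v$, collect the sub-sum $H_Q^{(v)} := \sum_{k \geq 1} c_{Q,kv} z^{kv}$. For each nonzero $H_Q^{(v)}$, adjoin to $S(\fD)_l$ a single outgoing ray starting at $Q$ in direction $v$, whose Hamiltonian, with the sign dictated by the crossing convention, contributes the factor $\Phi_{H_Q^{(v)}}^{-1}$ when crossed by $\gamma_Q$. The Hamiltonians of these new rays at $Q$ all lie in $\fm_R^l$, so their Poisson brackets lie in $\fm_R^{2l} \subseteq \fm_R^{l+1}$, and therefore they mutually commute modulo $\fm_R^{l+1}$; their ordering along $\gamma_Q$ is irrelevant, and their joint contribution is $\Phi_{H_Q}^{-1}$, restoring consistency around $Q$ modulo $\fm_R^{l+1}$.

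The point requiring care is that performing these corrections simultaneously at every singular point does not spoil consistency elsewhere, nor create new obstructions visible at the current order. This is the same order count: any intersection of a newly-added ray (Hamiltonian in $\fm_R^l$) with any pre-existing ray or with another newly-added ray contributes local monodromy in $\fm_R^l \cdot \fm_R \subseteq \fm_R^{l+1}$, which vanishes modulo $\fm_R^{l+1}$. We thus obtain $S(\fD)_{l+1}$, consistent modulo $\fm_R^{l+1}$, extending $S(\fD)_l$ by outgoing rays only. Uniqueness of the classes $H_Q$ at each stage makes the construction canonical, and the inverse limit $S(\fD) := \varprojlim_l S(\fD)_l$ is the sought consistent completion.
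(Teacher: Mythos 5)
Your argument is correct and follows essentially the same route as the paper: induction on the order in $\fm_R$, computing the local monodromy $\theta_{\gamma_Q}$ around each singular point, writing it as $\Phi_{H_Q}$ with $H_Q \equiv 0 \bmod \fm_R^l$, decomposing $H_Q$ by primitive direction, and cancelling each piece with a new outgoing ray. Your additional checks (BCH collapse modulo $\fm_R^{l+1}$, and that new rays create no order-$l$ obstructions elsewhere) are points the paper leaves implicit, but the construction is the same.
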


\begin{proof}
It is enough to show that for every 
nonnegative integer $\ell$, it is possible to add outgoing 
rays to $\fD$ to get a scattering diagram $\fD_\ell$ consistent at the order $\ell$, 
ie such that 
$\theta_{\gamma, \fD_\ell}= \id \mod 
\fm_R^{\ell+1}$. The construction is done by induction on $\ell$, starting with 
$\fD_0=\fD$. Let us assume we have constructed $\fD_{\ell-1}$, consistent at the order $\ell-1$. 
Let $p$ be a point in the singular locus of 
$\fD_{\ell-1}$ and let $\gamma$ be a
small anticlockwise closed loop around $p$. As $\fD_{\ell-1}$ is consistent at the order $\ell-1$, 
we can write $\theta_{\gamma, \fD_{\ell-1}}
=\Phi_H$ for some $H$ with 
$H = 0 \mod \fm_R^\ell$.
There are finitely many primitive
$m_j \in M-\{0\}$ such that we can write
\[ H=\sum_j H_j \mod \fm_R^{\ell+1}\] with 
$H_j \in \fm_R^\ell R \otimes \C[z^m_j]$.
We construct $\fD_{\ell}$ by adding to 
$\fD_{\ell-1}$ the outgoing rays 
$(p+\R_{\geqslant 0}m_j, \Phi_{-H_j})$.
\end{proof}

Adding hats everywhere, we obtain the definition of a 
quantum scattering diagram 
$\hat{\fD}$, with functions
\[ \hat{H}_{\fd} \in 
\varprojlim_\ell (R/\fm_R^\ell \otimes \C (\!(\hbar)\!) [\hat{z}^{m_\fd}])\,,\]
for outgoing rays and 
\[ \hat{H}_{\fd} \in 
\varprojlim_\ell (R/\fm_R^\ell \otimes \C (\!(\hbar)\!) [\hat{z}^{-m_\fd}])\,,\]
for ingoing rays,
the notion of consistent quantum scattering diagram, and the fact that 
every quantum scattering diagram 
$\hat{\fD}$ can be canonically completed by adding only
outgoing rays to form a 
consistent quantum scattering diagram 
$S(\hat{\fD})$.
We will often call $\hat{H}_\fd$ the Hamiltonian attached to the ray $\fd$.

A general notion of scattering diagram, as in Section 2 of \cite{kontsevich2013wall}, takes as input
a lattice $M$ and an $M$-graded Lie algebra
$\fg$.
What we call a (classical) scattering diagram is the special case 
where $M$ is the lattice of characters of a 2-dimensional symplectic torus $T$ and where $\fg=(\Gamma(\cO_{T_S}),\{-,-\})$. 
What we call a quantum scattering diagram is the special case where $M$ is the lattice of characters of a 2-dimensional symplectic torus $T$ 
and where 
$\fg=(\Gamma(\cO_{\hat{T}^{\hbar}_S}), [-,-])$.

In our definition of a scattering diagram, we attach to each ray 
$\fd$ a function 
\[ H_{\fd}=\sum_{\ell \geqslant 0} H_\ell z^{\ell m(H_{\fd})} \,, \]
such that $H_{\fd}=0 \mod \fm_R$, 
which can be interpreted as Hamiltonian generating an automorphism 
\[\Phi_{H_{\fd}}= \exp \left( 
\{H_{\fd},-\} \right)\,.\]
In \cite{MR2667135}, \cite{MR2846484}
or \cite{MR3383167}, the terminology is slightly different. To a ray $\fd$,
they attach a function 
\[f_{\fd} =\sum_{\ell \geqslant 0} c_\ell z^{\ell m(H_{\fd})} \,,\]
such that $f_{\fd}=1 \mod \fm_R$, and,
to a path $\gamma(t)$ intersecting transversely $\fd$ at time $t_0$,
 an automorphism
\[\theta_{f_{\fd},\gamma} \colon z^m \mapsto z^m f_{\fd}^{\langle n_\fd , m\rangle}\,,\]
where $n_\fd$ is the primitive generator 
of $\fd$ such that $\langle n_\fd , \gamma'(t_0)\rangle >0$.
These two choices are equivalent.
Indeed, if $\epsilon$ is the sign of 
$\langle m(H_\fd),\gamma'(t_0) \rangle$,
we have 
\[\Phi_{H_\fd}^{\epsilon}=\theta_{f_\fd,\gamma} \,\]
if $H_\fd$ and $f_\fd$ are related by 
\[ \log f_\fd = \sum_{\ell \geqslant 0} \ell H_\ell z^{\ell m(H_\fd)} \,.\]
The formalism of \cite{MR2846484} is more general because it treats the Calabi-Yau case and not just a holomorphic symplectic case. 
In the present paper, focused on a holomorphic symplectic situation, using  the Hamiltonians $H_\fd$ rather than the
functions $f_\fd$ makes the quantization step transparent. 
The quantum version of the functions 
$f_\fd$ will be studied and used in 
\cite{bousseau2018quantum}.

\section{Gromov-Witten theory of log Calabi-Yau surfaces}
\label{section_log_calabi_yau}

Our main result, Theorem \ref{precise_main_thm_ch2}, 
is an enumerative interpretation of a class of quantum scattering diagrams, as introduced in Section 
\ref{section_scattering}, 
in terms of higher genus log Gromov-Witten invariants of a class of log Calabi-Yau surfaces.
In Section \ref{section_log_cy_surface}
we review the definition of these
log Calabi-Yau surfaces, following 
\cite{MR2667135}. 
We define the relevant higher genus 
log Gromov-Witten invariants
in Sections
\ref{section_curve_classes_log_cy}
and
\ref{section_log_gw_log_cy}. 
We give a 3-dimensional 
interpretation of these invariants in Section \ref{section_3d}.
Finally, we give a generalization of these invariants to a certain orbifold context 
in Section
\ref{section_orbifold_gw}.

\subsection{Log Calabi-Yau surfaces}
\label{section_log_cy_surface}

We fix $\fm =(m_1, \dots, m_n)$ an $n$-tuple of primitive non-zero vectors of 
$M=\Z^2$. 
The fan in $\R^2$ with rays  
$-\R_{\geqslant 0} m_1, \dots, -
\R_{\geqslant 0} m_n$ defines a toric surface $\overline{Y}_\fm$.
Let $D_{m_1}, \dots, D_{m_n}$
be the corresponding toric 
divisors.
If $m_1, \dots, m_n$ do not span $M$, ie if 
$\overline{Y}_\fm$ is non-compact, we add some extra rays to the fan to make it span $M$ 
and we still denote $\overline{Y}_\fm$ the corresponding compact toric surfaces. 
The choice of the added
rays will be irrelevant for us
because of the log birational invariance result in logarithmic Gromov-Witten theory proved in 
\cite{MR3778185}. We denote by $\partial \overline{Y}_\fm$ the toric boundary divisor of $\overline{Y}_\fm$.

For every $1 \leqslant j \leqslant n$, we blow-up $Y_\fm$ at a point
$x_j$ on the toric divisor $D_{m_j}$
and not on any other toric divisor. We also assume that all the points $x_j$
are distinct.
By deformation invariance of log Gromov-Witten invariants, the precise choice of $x_j$ will be irrelevant for us. 
Note that it is possible to have 
$\R_{\geqslant 0} m_j = \R_{\geqslant 0} m_{j'}$, and so $D_{m_j}=D_{m_{j'}}$, 
for $j \neq j'$, and that in this case we blow-up several distinct points on the same toric divisor. 
We denote by $Y_{\fm}$ the resulting
projective surface and 
$\nu \colon 
Y_{\fm} \rightarrow 
\overline{Y}_\fm$ the blow-up morphism.
Let $E_j \coloneqq \nu^{-1}(x_j)$ be the exceptional divisor over $x_j$.
We denote by
$\partial Y_{\fm}$ the strict transform of the toric 
boundary divisor. The divisor 
$\partial Y_{\fm}$ is an anticanonical cycle of rational curves and so the pair $(Y_{\fm}, \partial Y_{\fm})$
is an example of a log Calabi-Yau surface with maximal boundary.

\subsection{Curve classes}
\label{section_curve_classes_log_cy}
We want to consider curves in $Y_{\fm}$
meeting $\partial Y_{\fm}$ in a unique point. We first explain how to parametrize 
the relevant curve classes 
in terms of their intersection numbers
$p_j$ with the exceptional divisors $E_j$.

Let $p\coloneqq (p_1, \dots, p_n)  
\in P \coloneqq \NN^n$. We assume that 
$\sum_{j=1}^n p_j m_j \neq 0$ and so we can uniquely write 
\[ \sum_{j=1}^n p_j m_j = \ell_p m_p \,,\]
for some $m_p \in M$ primitive and some 
$\ell_p \in \NN$. 

We explain now how to define a curve class 
$\beta_p \in H_2(Y_\fm, \Z)$. In short, 
$\beta_p$ is the class of a curve in 
$Y_\fm$ having for every $1 \leqslant j \leqslant n$ intersection number 
$p_j$ with the exceptional divisor $E_j$, and exactly one intersection 
point with the anticanonical cycle
$\partial Y_\fm$.

More precisely, 
the vector $m_p \in M$ 
belongs to some cone of the fan of $\overline{Y}_\fm$
and we write the corresponding decomposition
\[ m_p=a_p^L m_p^L + a_p^R m_p^R\,,\]
where $m_p^L, m_p^R \in M$ are primitive generators of rays of the fan of $\overline{Y}_\fm$ and where 
$a_p^L, a_p^R \in \NN$. 
Note that there is only one term in this decomposition if the ray $\R_{\geqslant 0} m_p$ coincides with one of the
rays of the fan of $\overline{Y}_\fm$.
Let 
$D_p^L$ and $D_p^R$ be the toric divisors corresponding to the rays 
$\R_{\geqslant 0}m_p^L$ and 
$\R_{\geqslant 0}m_p^R$.
Let $\beta \in H_2(\overline{Y}_\fm,\Z)$
be determined by the following intersection numbers with the toric divisors:
\begin{itemize}
\item The intersection numbers with those 
$D_{m_j}$ for $1 \leqslant j \leqslant n$ which are 
distinct from $D_p^L$ and $D_p^R $:
\[ \beta \cdot D_{m_j} = \sum_{j', D_{m_{j'}} =D_{m_j}} p_{j'} \,.\]
\item The intersection number with $D_p^L$: 
\[ \beta \cdot D_p^L=\ell_p a_p^L+\sum_{j, D_{m_j}=D_p^L} p_j \,.\]
\item The intersection number with $D_p^R$:
\[\beta \cdot D_p^R=\ell_p a_p^R+\sum_{j,D_{m_j}=D_p^R}
p_j \,.\]
\item 
The intersection number with every toric divisor $D$
different
from $D_{m_j}$ for every $1 \leqslant j \leqslant n$,
and from $D_p^L$ and $D_p^R$:
\[\beta \cdot D=0\,.\]
\end{itemize}

Such class $\beta \in H_2(\overline{Y}_\fm,\Z)$ exists and is unique by standard toric geometry because of 
the relation $\sum_{j=1}^n p_j m_j=\ell_p m_p$.
Finally, we define 
\[ \beta_p \coloneqq \nu^{*} \beta 
- \sum_{j=1}^n p_j E_j \in 
H_2(Y_{\fm},\Z) \,.\]
By construction, we have 
\[\beta_p \cdot E_j=p_j \,,\]
for every $1 \leqslant j \leqslant n$, 
\[\beta_p \cdot D_p^L=\ell_p a_p^L \,\,\,\, \text{and} \,\,\,\,
\beta_p \cdot D_p^R=\ell_p a_p^R \,,\]
and 
\[\beta_p \cdot D=0 \,,\]
for every component $D$ of 
$\partial Y_{\fm}$ distinct from
$D_p^L$ and $D_p^R$.

\subsection{Log Gromov-Witten invariants}
\label{section_log_gw_log_cy}

For every $p=(p_1, \dots, p_n) \in P=\NN^n$,
we defined in the previous Section 
\ref{section_curve_classes_log_cy} positive integers $\ell_p$, $a_p^L$ and $a_p^R$, 
some components $D_p^L$ and $D_p^R$ of the divisor $\partial Y_\fm$
 and a curve class $\beta_p \in H_2(Y_\fm,\Z)$.
We would like to consider genus $g$ stable maps $f \colon C \rightarrow Y_{\fm}$
of class $\beta_p$ that meet $\partial Y_m$ in a unique point.
At this point, such a map necessarily has an intersection number $\ell_p a_p^L$ with $D_p^L$ and $\ell_p a_p^R$
with $D_p^R$.

The space of such stable maps is not proper in general: a limit of stable maps intersecting $\partial Y_{\fm}$ in a unique point does not 
necessarily intersect 
$\partial Y_{\fm}$ in a unique point. For example, a component of the limit curve could map entirely inside $\partial Y_\fm$. 
A nice compactification of this space is obtained by considering
stable log maps. The idea is to allow maps with components possibly mapping entirely inside
$\partial Y_\fm$, 
but to also remember some additional information under the form of log structures, 
which give a way to make sense of tangency conditions for points on such components mapping entirely inside $\partial Y_\fm$. 
The theory of stable log maps has been developed by Gross and Siebert \cite{MR3011419},
and Abramovich and Chen
\cite{MR3224717}, \cite{MR3257836}.
By stable log maps, we always mean basic stable log maps in the sense of
\cite{MR3011419}.
We refer to Kato 
\cite{MR1463703} for elementary notions of log geometry.
We consider the divisorial log structure on 
$Y_{\fm}$ defined by the divisor 
$\partial Y_{\fm}$ and use it to view $Y_\fm$ as a smooth log scheme.

Let 
$\overline{M}_{g, p} (Y_{\fm} / \partial Y_{\fm})$ be the moduli space of 
genus $g$ stable log maps to
$Y_{\fm}$, of class 
$\beta_p$, with contact order along $\partial Y_{\fm}$ given by $\ell_p m_p$.
According to the main results of 
\cite{MR3011419}, this is a proper Deligne-Mumford stack
coming with a $g$-dimensional virtual fundamental class
\[ [\overline{M}_{g, p} (Y_{\fm}/\partial Y_{\fm})]^{\virt}\,.\]
If $\pi \colon \cC \rightarrow 
\overline{M}_{g, p} (Y_{\fm}/\partial Y_{\fm})$ is the universal curve,
of relative dualizing sheaf 
$\omega_\pi$, then the Hodge bundle
\[ \E\coloneqq \pi_{*}\omega_\pi \]
is a rank $g$ vector bundle over 
$\overline{M}_{g, p} (Y_{\fm}/\partial Y_{\fm})$. Its Chern classes 
are classically called the lambda classes
\cite{MR717614} and denoted by
$\lambda_j \coloneqq c_j(\E)$
for $0 \leqslant j \leqslant g$.
Finally, we define 
the genus $g$ log Gromov-Witten invariants
$N_{g,p}^{Y_\fm} \in \Q$ of 
$Y_\fm$ which will be of interest for us by 
\[N_{g,p}^{Y_\fm}\coloneqq
\int_{[\overline{M}_{g, p} (Y_{\fm}/\partial Y_{\fm})]^{\virt}} 
(-1)^g \lambda_g  \,.\]

Note that the top lambda class $\lambda_g$ has exactly the right degree to cut down the virtual dimension from $g$ to $0$, so that 
$N_{g,p}^{Y_\fm}$ is not obviously zero.

The fact that the top lambda class should be 
the natural insertion to consider for some higher genus version of \cite{MR2667135}  
was already suggested in Section 5.8 of \cite{MR2667135}. 
From our point of view, 
higher genus invariants with the top lambda class inserted are the correct objects because it is to them that the correspondence tropical theorem of 
\cite{bousseau2017tropical} applies.
In Section \ref{section_cecotti_vafa}, we will explain how our main result 
Theorem \ref{precise_main_thm_ch2} fits into an 
expected story for higher genus open holomorphic curves in Calabi-Yau 3-folds. 
This is probably the most conceptual understanding of the role of the invariants $N_{g,\beta}^{Y_\fm}$: they are really 
higher genus invariants of the log Calabi-Yau
3-fold $Y_\fm \times \PP^1$, and the top lambda class is simply a measure of the difference between surface and 3-fold obstruction theories.
This will be made precise in the following Section \ref{section_3d}, whose analogue for K3 surfaces is well-known, see Lemma 7 of \cite{MR2746343}.

\subsection{3-dimensional interpretation of the invariants $N_{g,p}^{Y_\fm}$}
\label{section_3d}

In this Section, we rewrite the log Gromov-Witten invariants $N_{g,p}^{Y_\fm}$ of the log Calabi-Yau surface $Y_\fm$ in terms
of 3-dimensional geometries, first 
$Y_\fm \times \C$ and then $Y_\fm \times \PP^1$. 

We endow the 3-fold $Y_\fm \times \C$ with the smooth log structure given by the divisorial log structure along the divisor 
$\partial Y_\fm \times \C$. 
Let 
\[ \overline{M}_{g, p} (Y_{\fm} \times \C/\partial Y_{\fm} \times \C)\] 
be the moduli space of 
genus $g$ stable log maps to
$Y_{\fm}$, of class 
$\beta_p$, with contact order along $\partial Y_{\fm} \times \C$ given by $\ell_p m_p$.
This is a Deligne-Mumford stack
coming with a $1$-dimensional
virtual fundamental class
\[ [\overline{M}_{g, p} (Y_{\fm}/\partial Y_{\fm})]^{\virt}
\,.\]
Because $\C$ is not compact, 
$\overline{M}_{g, p} (Y_{\fm} \times \C/\partial Y_{\fm} \times \C)$ is not compact 
and so one cannot simply integrate over the virtual class. 
Using the standard action of 
$\C^{*}$ on $\C$, fixing $0 \in \C$, 
we obtain an action of $\C^{*}$
on $\overline{M}_{g, p} (Y_{\fm} \times \C/\partial Y_{\fm} \times \C)$, with its perfect obstruction theory, whose
fixed point locus is the space of stable log maps mapping to $Y_\fm \times \{0\}$, ie
$\overline{M}_{g, p} (Y_{\fm}/\partial Y_{\fm})$, with its natural perfect
obstruction theory.
Given the virtual localization formula
\cite{MR1666787}, it is natural to define
equivariant log Gromov-Witten invariants
\[ N_{g,p}^{Y_\fm \times \C} 
\coloneqq \int_{[\overline{M}_{g, p} (Y_{\fm}/\partial Y_{\fm})]^{\virt}}
\frac{1}{
e(\mathrm{Nor}^{\virt})} 
\in \Q(t)
\,,
\]
where 
$\mathrm{Nor}^{\virt}$
is the equivariant virtual normal bundle of 
$\overline{M}_{g, p} (Y_{\fm}/\partial Y_{\fm})$
in $\overline{M}_{g, p} (Y_{\fm} \times \C/\partial Y_{\fm} \times \C)$, 
$e(\mathrm{Nor}^{\virt})$ is its equivariant Euler class, and $t$ is the generator of the 
$\C^{*}$-equivariant cohomology of a point.

\begin{lem} \label{lem_3d_1}
\[N_{g,p}^{Y_\fm \times \C} = \frac{1}{t}
N_{g,p}^{Y_\fm} \,.\]
\end{lem}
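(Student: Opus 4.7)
The plan is to identify the integral defining $N_{g,p}^{Y_\fm \times \C}$ as the outcome of virtual localization with respect to the $\C^*$-action scaling the second factor, and then to compute the equivariant Euler class of the virtual normal bundle by hand. Since the divisor $\partial Y_\fm \times \C$ is pulled back from $\partial Y_\fm$, the log structure on $Y_\fm \times \C$ is $\C^*$-equivariant and the log cotangent bundle splits as a direct sum along the product decomposition. In particular, the $\C^*$-fixed locus of $\overline{M}_{g,p}(Y_\fm \times \C / \partial Y_\fm \times \C)$ is exactly $\overline{M}_{g,p}(Y_\fm/\partial Y_\fm)$, consisting of stable log maps factoring through $Y_\fm \times \{0\}$, and its induced obstruction theory coincides with the intrinsic one.

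Next, I would decompose the restriction of the perfect obstruction theory to the fixed locus into its fixed and moving parts. For a stable log map $f\colon C \to Y_\fm \subset Y_\fm \times \C$, the fixed part recovers the obstruction theory of $\overline{M}_{g,p}(Y_\fm/\partial Y_\fm)$, while the moving part comes entirely from the $\C$-direction and is given by $H^{\bullet}(C, f^{*}T_{\C}|_{0}) = H^{\bullet}(C, \cO_C) \otimes t$, where $t$ is the generator of $H^{*}_{\C^{*}}(\pt)$. So, as a virtual bundle, $\mathrm{Nor}^{\virt} = \C_{t} - \E^{\vee} \otimes t$, using Serre duality $H^1(C, \cO_C) \cong H^0(C, \omega_C)^{\vee}$ to identify the fiber of the obstruction piece with $\E^{\vee}$.

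Inverting the equivariant Euler class and using the splitting principle,
\begin{equation*}
\frac{1}{e(\mathrm{Nor}^{\virt})} = \frac{e(\E^{\vee} \otimes t)}{t} = \frac{1}{t}\sum_{k=0}^{g} t^{g-k} (-1)^{k} \lambda_{k}.
\end{equation*}
The virtual class $[\overline{M}_{g,p}(Y_\fm/\partial Y_\fm)]^{\virt}$ has dimension $g$, so integration annihilates every cohomology class of degree strictly less than $g$; only the $k=g$ term, namely $(-1)^{g}\lambda_{g}/t$, contributes. Thus
\begin{equation*}
N_{g,p}^{Y_\fm \times \C} = \frac{(-1)^{g}}{t} \int_{[\overline{M}_{g,p}(Y_\fm/\partial Y_\fm)]^{\virt}} \lambda_{g} = \frac{1}{t}\, N_{g,p}^{Y_\fm},
\end{equation*}
the last equality using $N_{g,p}^{Y_\fm} = \int (-1)^{g}\lambda_{g}$ so that the two signs $(-1)^{g}$ cancel.

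The only nontrivial point is verifying that the moving part of the obstruction theory really is $H^{\bullet}(C,\cO_C)\otimes t$. This reduces to the statement that the log tangent bundle of $(Y_\fm \times \C, \partial Y_\fm \times \C)$ splits as $T^{\log}_{Y_\fm} \boxplus T_{\C}$ and that no log correction appears in the $\C$-direction (since $\partial Y_\fm \times \C$ is pulled back from $Y_\fm$). This is a direct check on charts and is the log-geometric analogue of the classical computation for $K3 \times \C$ underlying Lemma 7 of \cite{MR2746343}, to which the argument is entirely parallel.
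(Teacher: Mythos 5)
Your argument is correct and coincides with the paper's proof: both identify the fixed locus as $\overline{M}_{g,p}(Y_\fm/\partial Y_\fm)$, compute the virtual normal bundle as $t-\E^\vee\otimes t$ from the triviality of the $\C$-factor, and expand $1/e(\mathrm{Nor}^{\virt})=\frac{1}{t}\sum_i(-1)^i\lambda_i t^{g-i}$, with only the $\lambda_g$ term surviving against the dimension-$g$ virtual class. The extra detail you supply on the splitting of the log tangent bundle and the Serre-duality identification of the obstruction piece with $\E^\vee$ is exactly the content the paper leaves implicit.
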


\begin{proof}
Because the 3-dimensional geometry 
$Y_\fm \times \C$, including the log/tangency conditions, is obtained from the 2-dimensional geometry $Y_\fm$ 
by a trivial product with a trivial factor $\C$, with $\C^{*}$ scaling this 
trivial factor, the virtual normal at a stable log map $f \colon C \rightarrow Y_\fm$ is 
$H^0(C,f^{*} \cO)-H^1(C,f^{*}\cO)
=t - \E^\vee \otimes t$ so 
\[\frac{1}{
e(\mathrm{Nor}^{\virt})} = \frac{1}{t}\left(\sum_{i=0}^g
(-1)^i \lambda_i t^{g-i}\right) \,,\]
and
\[N_{g,p}^{Y_\fm \times  \C}
= \int_{[\overline{M}_{g, p} (Y_{\fm},
\partial Y_{\fm})]^{\virt}} \frac{(-1)^g 
\lambda_g}{t} = \frac{1}{t} N_{g,p}^{Y_\fm} \,.\]
\end{proof}

The proof of Lemma
\ref{lem_3d_1} is identical to the proof of  \mbox{Lemma 7} in \cite{MR2746343} up to a small point: in \cite{MR2746343}, 
counts of expected dimensions work because of the use of a 
reduced Gromov-Witten theory of K3 surfaces, whereas for us, counts of expected dimensions
work because of the use of log Gromov-Witten theory.

We consider now the 3-fold $Z_\fm 
\coloneqq Y_\fm \times \PP^1$
with the smooth log structure given by the 
divisorial log structure along the divisor
\[ \partial Z_\fm \coloneqq (\partial Y_\fm \times \PP^1) \cup 
(Y_\fm \times \{0\}) \cup 
(Y_\fm \times \{ \infty \})\,.\]
The divisor $\partial Z_\fm$ is anticanonical, containing zero-dimensional strata, 
and so the pair $(Z_\fm, \partial Z_\fm)$ is an example of log Calabi-Yau 3-fold with maximal boundary.

Let 
\[ \overline{M}_{g, p} (Z_{\fm}/\partial Z_\fm)\] 
be the moduli space of 
genus $g$ stable log maps to
$Z_{\fm}$, of class 
$\beta_p$, with contact order along $\partial Z_{\fm}$ given by $\ell_p m_p$.
This is a proper Deligne-Mumford stack
coming with a $1$-dimensional virtual fundamental class
\[ [\overline{M}_{g, p} (Z_{\fm}/\partial Z_{\fm})]^{\virt}\,.\]
Composing the evaluation map at the contact point with $ \partial Z_\fm$ 
and
the projection $\partial Z_\fm \rightarrow \PP^1$, we obtain a map 
$\rho \colon 
\overline{M}_{g, p} (Z_{\fm}/\partial Z_{\fm})
\rightarrow \PP^1$ and we define log Gromov-Witten 
invariants 
\[N_{g,p}^{Z_\fm}
\coloneqq \int_{[\overline{M}_{g, p} (Z_{\fm}/\partial Z_{\fm})]^{\virt}} 
\rho^{*}(\pt)  \,, \]
where 
$\pt \in A^1(\PP^1)$ is the class of a point.

\begin{lem} \label{lem_3d_2}
We have 
\[N_{g,p}^{Z_\fm} = N_{g,p}^{Y_\fm} \,.\]
\end{lem}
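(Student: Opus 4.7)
The plan is to restrict the moduli space $\overline{M}_{g, p}(Z_{\fm}/\partial Z_{\fm})$ along a generic fiber of $\rho$ and perform an obstruction-theory computation directly parallel to Lemma \ref{lem_3d_1}.

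Since the curve class $\beta_p \in H_2(Y_\fm, \Z) \subset H_2(Z_\fm, \Z)$ has degree zero along the projection $\pi \colon Z_\fm \to \PP^1$, every stable log map $f \colon C \to Z_\fm$ in the moduli space composes with $\pi$ to give a constant map $C \to \{q\}$ for some $q \in \PP^1$. Because the contact order vector $l_p m_p \in M$ has vanishing $\PP^1$-component, the value $q$ lies in $\PP^1 \setminus \{0, \infty\}$ for a general map. Hence, choosing $q$ generic, the fiber $\rho^{-1}(q)$ is canonically identified with $\overline{M}_{g, p}(Y_\fm/\partial Y_\fm)$ via the isomorphism $(Y_\fm \times \{q\}, \partial Y_\fm \times \{q\}) \cong (Y_\fm, \partial Y_\fm)$, which matches log structures since $q$ avoids the horizontal boundary divisors $Y_\fm \times \{0, \infty\}$.

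I would then compare obstruction theories. At a stable log map $f \colon C \to Y_\fm \times \{q\} \subset Z_\fm$, the pullback $f^* T_{Z_\fm}$ splits as $f_0^* T_{Y_\fm} \oplus f^{*}T_{\PP^1}|_q \cong f_0^* T_{Y_\fm} \oplus \cO_C$, where $f_0 \colon C \to Y_\fm$ is the induced map. The trivial line bundle summand contributes $H^0(C, \cO_C) = \C$ to deformations---namely the infinitesimal motion of $q$ in $\PP^1$, which is killed by the constraint $\rho = q$---and $H^1(C, \cO_C) = \E^\vee$ to excess obstructions, where $\E$ is the rank $g$ Hodge bundle. The standard excess intersection formula in virtual intersection theory then yields
\[\rho^{*}(\pt) \cap [\overline{M}_{g, p}(Z_\fm/\partial Z_\fm)]^\virt = e(\E^\vee) \cap [\overline{M}_{g, p}(Y_\fm/\partial Y_\fm)]^\virt = (-1)^g \lambda_g \cap [\overline{M}_{g, p}(Y_\fm/\partial Y_\fm)]^\virt,\]
and integration gives $N_{g,p}^{Z_\fm} = N_{g,p}^{Y_\fm}$.

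The main technical obstacle is justifying the identification of $\rho^{-1}(q)$ with $\overline{M}_{g, p}(Y_\fm/\partial Y_\fm)$ together with the compatible splitting of perfect obstruction theories in the logarithmic setting. One must verify that the restriction of the normal-crossings log structure on $Z_\fm$ to a generic fiber $Y_\fm \times \{q\}$ recovers the divisorial log structure on $(Y_\fm, \partial Y_\fm)$, and that the excess obstruction bundle is indeed $\E^\vee$, arising purely from the trivial normal direction along $\PP^1$. The underlying calculation is formally parallel to Lemma \ref{lem_3d_1} and to the K3 analogue treated in Lemma 7 of \cite{MR2746343}, with log Gromov-Witten theory replacing reduced Gromov-Witten theory.
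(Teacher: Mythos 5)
Your argument is correct, but it takes a genuinely different route from the paper's. The paper proves Lemma \ref{lem_3d_2} by virtual $\C^{*}$-localization on the $\PP^1$-factor: choosing the equivariant lift $\pt_0=t$ of the point class forces only the fixed locus over $0\in\PP^1$ to contribute, that contribution is $t\cdot N_{g,p}^{Y_\fm\times\C}$, and the result follows at once from Lemma \ref{lem_3d_1}. You instead work non-equivariantly at a generic fiber of $\rho$: you identify $\rho^{-1}(q)$ with $\overline{M}_{g,p}(Y_\fm/\partial Y_\fm)$ using the degree-zero projection to $\PP^1$ together with the fact that the log structure of $Z_\fm$ restricts, away from $0$ and $\infty$, to the divisorial log structure of $(Y_\fm,\partial Y_\fm)$, and you then produce $(-1)^g\lambda_g$ directly as the Euler class of the excess obstruction bundle $\E^\vee\cong R^1\pi_*\cO$ coming from the trivial log normal direction $T_{\PP^1}(-0-\infty)\cong\cO_{\PP^1}$, the $H^0$-part being cancelled by the point constraint. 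The mathematical content is the same---in both cases the point is that the surface and $3$-fold obstruction theories differ by $H^{\bullet}(C,\cO_C)$---but the mechanisms differ. Your version is more self-contained: it bypasses the intermediate equivariant geometry $Y_\fm\times\C$ and Lemma \ref{lem_3d_1} entirely and exhibits the lambda class transparently as an excess class; the price is that you must invoke compatibility of virtual classes with the refined Gysin restriction to a fiber, and the splitting of the log perfect obstruction theory, which you correctly flag as the technical crux. The paper's version outsources exactly that comparison to the virtual localization formula and reuses the already-computed virtual normal bundle of Lemma \ref{lem_3d_1}, which also keeps the three geometries $Y_\fm$, $Y_\fm\times\C$ and $Z_\fm$ explicitly linked (useful for the open-string interpretation in Section \ref{section_cecotti_vafa}). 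Both arguments sit at the level of rigor of Lemma 7 of \cite{MR2746343}, which is the model here.
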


\begin{proof}
We use virtual 
localization 
\cite{MR1666787} with respect to the action of $\C^{*}$ on the $\PP^1$-factor with weight $t$ at $0$ and weight $-t$ at 
$\infty$.
We choose $\pt_0$
as equivariant lift of 
$\pt \in A^1(\PP^1)$. 
Because of the insertion of 
$\pt_0 =t$, only the fixed point $0 \in \PP^1$, 
and not $\infty \in \PP^1$, contributes to the localization formula, and so we obtain
\[N_{g,p}^{Z_\fm} 
= t N_{g,p}^{Y_\fm \times \C}  \,,\]
and hence the result by Lemma
\ref{lem_3d_1}.
\end{proof}

\subsection{Orbifold Gromov-Witten theory}
\label{section_orbifold_gw}

We give an orbifold generalization of 
Sections \ref{section_log_cy_surface},
\ref{section_curve_classes_log_cy}, \ref{section_log_gw_log_cy}, which will be 
necessary to state Theorem 
\ref{precise_main_thm_orbifold} 
in Section \ref{section_BP_orbifold}. 

As in Section \ref{section_log_cy_surface}, we fix 
$\fm = (m_1, \dots, m_n)$ an $n$-tuple of primitive non-zero vectors of $M=\Z^2$ 
and this defines a toric surface $\overline{Y}_\fm$, with toric divisors 
$D_{m_j}$ for $1 \leqslant j \leqslant n$. For every 
$\fr = (r_1,\dots, r_n)$ an $n$-tuple of positive integers, we define a projective surface 
$Y_{\fm,\fr}$ by blowing-up a subscheme of length $r_j$ in general position on the toric divisor $D_{m_j}$, 
for every $1 \leqslant j \leqslant n$.
For $\fr=(1, \dots, 1)$, we simply have $Y_{\fm,\fr}
=Y_\fm$, which was defined in Section \ref{section_log_cy_surface}.

Let $\nu \colon Y_{\fm,\fr} \rightarrow \overline{Y}_\fm$ be the blow-up morphism. If $r_j \geqslant 2$, then 
$Y_{\fm,\fr}$ has an $A_{r_j-1}$-singularity on the exceptional divisor $E_j \coloneqq \nu^{-1}(x_j)$. 
We will consider $Y_{\fm, \fr}$ as a Deligne-Mumford stack by taking the natural structure of 
smooth Deligne-Mumford stack on a $A_{r_j-1}$ singularity. The exceptional divisor $E_j$ is then a
stacky projective line 
$\PP^1[r_j,1]$, with a single $\Z/r_j$ stacky point
$0 \in \PP^1[r_j,1]$. The normal bundle to $E_j$ in $Y_{\fm,\fr}$ is the orbifold line bundle
$\cO_{\PP^1[r_j,1]}(-[0]/(\Z/r_j))$ of degree $-1/r_j$, and in particular we have $E_j^2=-1/r_j$.

Denote by $P_{\fr}$ the set of $(p_1, \dots, p_n)
\in P=\NN^n$ such that $r_j$ divides $p_j$, for every 
$1 \leqslant j \leqslant n$.
Exactly as in Section \ref{section_curve_classes_log_cy}, we define for every $p \in P_\fr$ a curve class 
$\beta_p \in H_2(Y_{\fm,\fr},\Z)$. The only difference is that now we have 
\[ \beta_p \cdot E_j = \frac{p_j}{r_j} \,.\]

We denote by $\partial Y_{\fm, \fr}$ the strict transform of the toric boundary divisor $\partial \overline{Y}_\fm$ of $
\overline{Y}_\fm$, and we endow $Y_\fm$ with the divisorial log structure define by 
$\partial Y_\fm$. So we view $Y_{\fm, \fr}$ as a smooth Deligne-Mumford log stack. 
Because the 
non-trivial stacky structure is disjoint from the divisor $\partial Y_{\fm, \fr}$ supporting the non-trivial log structure, 
there is no difficulty in combining orbifold Gromov-Witten theory,
\cite{MR2450211}, \cite{MR1950941}, with log Gromov-Witten theory,
\cite{MR3011419},
\cite{MR3224717}, \cite{MR3257836}, 
to get a 
moduli space $\overline{M}_{g,p}(Y_{\fm,\fr}/\partial Y_{\fm,\fr})$ of genus $g$ stable log maps to 
$Y_{\fm,\fr}$, of class $\beta_p$, with contact order along $\partial Y_{\fm,\fr}$ given by 
$\ell_p m_p$. It is a proper Deligne-Mumford stack 
coming with a $g$-dimensional virtual fundamental class
\[ [\overline{M}_{g,p}(Y_{\fm,\fr}/\partial Y_{\fm,\fr})]^\virt \,.\]
We finally define genus $g$ orbifold log Gromov-Witten invariants  $N_{g,p}^{Y_{\fm,\fr}}
\in \Q$ of $Y_{\fm,\fr}$
by 
\[ N_{g,p}^{Y_{\fm,\fr}} 
\coloneqq \int_{[\overline{M}_{g,p}(Y_{\fm,\fr}/\partial Y_{\fm,\fr})]^\virt}
(-1)^g \lambda_g \,.\]

\section{Main results}
\label{section_main_result}

In Section \ref{section_statement}, 
we state the main result of the present paper,
\mbox{Theorem 
\ref{precise_main_thm_ch2}}, precise form of
Theorem \ref{main_thm_ch2} mentioned in the 
Introduction. 
In Section \mbox{\ref{section_examples}}, we give 
elementary examples illustrating 
Theorem \ref{precise_main_thm_ch2}. 
In \mbox{Section \ref{section_orbifold_gen}}, 
we state Theorem \ref{precise_main_thm_orbifold}, a generalization of Theorem \ref{precise_main_thm_ch2} including 
orbifold geometries. Finally, we give in Section \ref{section_more_general} some brief comments about the level of generality
of Theorems \ref{precise_main_thm_ch2} and 
\ref{precise_main_thm_orbifold}.

\subsection{Statement}
\label{section_statement}
Using the notations of Section
\ref{section_scattering}, we define a family of consistent quantum scattering diagrams.
Our main result, Theorem
\ref{precise_main_thm_ch2}, is that the Hamiltonians attached to the rays of these quantum scattering diagrams 
are generating series of the higher genus log Gromov-Witten invariants defined in Section 
\ref{section_log_calabi_yau}.

We fix $\fm=(m_1, \dots, m_n)$ an $n$-tuple of primitive non-zero vectors of $M$.
We denote $P \coloneqq \NN^n$ and we take
$R \coloneqq \C[\![P]\!]
=\C [\![t_1, \dots, t_n]\!]$
as complete local $\C$-algebra. 
Let $\hat{\fD}_{\fm}$ be the quantum scattering diagram over $R$
consisting of 
incoming rays 
$(\fd_j, \hat{H}_{\fd_j})$ for $1 \leqslant j \leqslant n$, where 
\[ \fd_j = - \R_{\geqslant 0} m_j \,,\]
and 
\[ \hat{H}_{\fd_j} = -i
\sum_{\ell \geqslant 1}\frac{1}{\ell} \frac{(-1)^{\ell-1}}{2 \sin \left( \frac{\ell \hbar}{2} \right)} t_j^\ell \hat{z}^{\ell m_j} = 
\sum_{\ell \geqslant 1}\frac{1}{\ell} \frac{(-1)^{\ell-1}}{q^{\frac{\ell}{2}}-q^{-\frac{\ell}{2}}} t_j^\ell \hat{z}^{\ell m_j} \,,\]
where $q=e^{i \hbar}$.

Let $S(\hat{\fD}_{\fm})$ be the corresponding consistent
quantum scattering diagram
given by 
Proposition \ref{prop_consistency}, 
obtained by adding outgoing rays to $\hat{\fD}_{\fm}$. 
We can assume that, for every 
$m \in M-\{0\}$ primitive, $S(\hat{\fD}_{\fm})$
contains a unique outgoing ray of support 
$\R_{\geqslant 0}m$.

For every 
$m \in M-\{0\}$ primitive, let $P_m$ be the subset of 
$p=(p_1, \dots, p_n) \in P=\NN^n$ such that 
$\sum_{j=1}^n p_j m_j$ is positively collinear with $m$:
\[ \sum_{j=1}^n p_j m_j = \ell_p m \]
for some $\ell_p \in \NN$.

Recall that in Section
\ref{section_log_calabi_yau}, for
every 
$\fm=(m_1,\dots, m_n)$, we introduced
a log Calabi-Yau surface $Y_{\fm}$
and for every 
$p=(p_1, \dots, p_n) \in P=\NN^n$, 
we defined a certain genus $g$ log Gromov-Witten $N_{g,p}^{Y_\fm}$ of $Y_{\fm}$.

\begin{thm} \label{precise_main_thm_ch2}
For every $\fm
=(m_1, \dots, m_n)$
an $n$-tuple 
of primitive non-zero vectors in $M$ and every
$m \in M-\{0\}$ primitive, the Hamiltonian $\hat{H}_m$
attached to the outgoing ray 
$\R_{\geqslant 0}m$ in the consistent quantum scattering 
diagram $S(\hat{\fD}_{\fm})$ is given by 
\[\hat{H}_m =
\left(-\frac{i}{\hbar}\right) 
\sum_{p \in P_m}
 \left(\sum_{g \geqslant 0}
N_{g,p}^{Y_\fm} \, \hbar^{2g} \right) \left( \prod_{j=1}^n t_j^{p_j} 
\right) \hat{z}^{\ell_p m} \,.\]
\end{thm}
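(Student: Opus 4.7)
The plan is to follow the strategy of Gross-Pandharipande-Siebert in \cite{MR2667135} lifted to the quantum/higher genus setting, combining three inputs: a higher genus degeneration formula reducing $N_{g,p}^{Y_\fm}$ to log Gromov-Witten invariants on the toric surface $\overline{Y}_\fm$, the tropical correspondence theorem for higher genus log Gromov-Witten invariants of toric surfaces from \cite{bousseau2017tropical}, and the Filippini-Stoppa identification of refined tropical counts with quantum scattering diagrams from \cite{MR3383167}.

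First I would degenerate $Y_\fm$ to a reducible surface $\cY_{\fm,0}$ whose components are $\overline{Y}_\fm$ (with its toric log structure) and, for each $j=1,\dots,n$, a blown-up Hirzebruch surface $\tilde{\PP}_j$ glued to $\overline{Y}_\fm$ along $D_{m_j}$. Given the class $\beta_p$ and the contact data $l_p m_p$, I would apply the decomposition formula of \cite{abramovich2017decomposition} to express $[\overline{M}_{g,p}(Y_\fm/\partial Y_\fm)]^\virt$ as a sum of contributions indexed by rigid tropical curves $h_{k,\vec{g}}\colon\Gamma\to \cY_{\fm,0}^\trop$. Because a general gluing formula in log Gromov-Witten theory is not yet available, I would borrow from \cite{bousseau2017tropical} the crucial vanishing arguments showing that the only tropical types contributing to the integral against $(-1)^g\lambda_g$ are those whose image lies in the torically transverse locus, where gluing is controlled by results such as \cite{kim2018degeneration}. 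The outcome should be
\[ \sum_{g\geqslant 0} N_{g,p}^{Y_\fm}\,\hbar^{2g} \;=\; \sum_{k\vdash p} C(k,\hbar)\,\sum_{g\geqslant 0} N_{g,w(k)}^{\overline{Y}_\fm}\,\hbar^{2g}, \]
with $C(k,\hbar)$ an explicit vertex contribution coming from the pieces $\tilde{\PP}_j$.

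Second I would compute $C(k,\hbar)$ in closed form by a local higher genus (orbifold) log Gromov-Witten calculation on $\tilde{\PP}_j$. The answer must be precisely the quantum-dilogarithm coefficient $\frac{(-1)^{l-1}}{l(q^{l/2}-q^{-l/2})}$ appearing in the ingoing Hamiltonians $\hat H_{\fd_j}$. Matching this factor is exactly what allows the product of vertex factors over $j$ to reconstruct the structure of the ingoing rays of $\hat{\fD}_\fm$, and thereby to align the degeneration formula with the combinatorial framework of Filippini-Stoppa. Third, I would feed the toric invariants $\sum_g N_{g,w(k)}^{\overline{Y}_\fm}\hbar^{2g}$ into the correspondence theorem of \cite{bousseau2017tropical}, converting each generating series into the Block-G\"ottsche $q$-refined count (with $q=e^{i\hbar}$) of tropical curves in $\R^2$ with unbounded edges dictated by $w(k)$ and one extra unbounded edge of direction $m_p$. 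The resulting sum over $k$ of refined tropical counts, weighted by the vertex factors $C(k,\hbar)$, is precisely the shape of input that \cite{MR3383167} prove computes the Hamiltonian $\hat H_m$ attached to the outgoing ray $\R_{\geqslant 0}m$ in $S(\hat{\fD}_\fm)$. Combining the three identifications yields the claimed formula.

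The main obstacle is the higher genus degeneration/vanishing argument of step one: because no general log degeneration formula is available, the content lies in showing that integration against $(-1)^g\lambda_g$ kills all badly-behaved tropical strata and that the surviving strata admit a workable gluing statement in the torically transverse regime. I also expect that normalizing signs and powers of $q^{1/2}$ so that the vertex contributions from $\tilde{\PP}_j$ line up exactly with $\hat H_{\fd_j}$, and so that the output of \cite{bousseau2017tropical} is in the normalization consumed by \cite{MR3383167}, will require careful bookkeeping; this is anticipated by the review of \cite{MR3383167} announced in the introduction.
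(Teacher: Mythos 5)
Your proposal is correct and follows essentially the same route as the paper: degeneration to the normal cone of the toric divisors, the decomposition formula of Abramovich--Chen--Gross--Siebert combined with $\lambda_g$-vanishing to reduce to genus-zero rigid tropical types and the torically transverse gluing of Kim--Lho--Ruddat, the Bryan--Pandharipande multicover computation on $\tilde{\PP}_j$ producing the factor $\frac{(-1)^{l-1}}{l}\frac{1}{q^{l/2}-q^{-l/2}}$, and then the correspondence theorem of \cite{bousseau2017tropical} fed into the Filippini--Stoppa tropical description of $S(\hat{\fD}_\fm)$. The remaining work you flag (the precise vertex factors, powers of $\hbar$ and $s(k)$-dependent normalizations) is exactly the bookkeeping carried out in Proposition \ref{prop_degeneration}, Corollary \ref{corollary_tropical} and Proposition \ref{prop_correspondence}.
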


In the classical limit 
$\hbar \rightarrow 0$, Theorem 
\ref{precise_main_thm_ch2} reduces to the main 
result (Theorem 5.4) of 
\cite{MR2667135}, expressing the classical
scattering diagram 
$S(\fD_\fm)$ in terms of the genus zero log 
Gromov-Witten invariants $N_{0,p}^{Y_\fm}$.
In \cite{MR2667135}, the genus 
$0$ invariants are defined as relative Gromov-Witten invariants of some open geometry. 
The fact that they coincide with genus $0$ log Gromov-Witten invariants follows from arguments 
of the type of those used in Section 5 of \cite{bousseau2017tropical}.

The proof of Theorem \ref{precise_main_thm_ch2}
takes Sections 
\ref{section_gw_toric},
\ref{section_degeneration}
\ref{section_tropical},
and \ref{section_end_proof}.
In \mbox{Section 
\ref{section_log_gw_log_cy}}, we define 
higher genus log Gromov-Witten 
invariants $N_{g,w}^{\overline{Y}_\fm}$
of toric surfaces $\overline{Y}_\fm$. 
In Section \ref{section_degeneration}, we prove a degeneration formula expressing the 
log Gromov-Witten invariants $N_{g,p}^{Y_\fm}$ of the log Calabi-Yau surface $Y_{\fm}$
in terms of log Gromov-Witten invariants
$N_{g,w}^{\overline{Y}_\fm}$ of 
the toric surface $\overline{Y}_\fm$.
In Section \ref{section_tropical}, we review, 
following \cite{MR3383167}, the relation between quantum scattering diagrams and Block-Göttsche 
$q$-deformed tropical curve count. 
In Section \ref{section_end_proof}, we conclude the proof by using the main result of 
\cite{bousseau2017tropical} relating $q$-deformed tropical curve count and higher genus 
log Gromov-Witten invariants of toric surfaces.

The consistency of the quantum scattering diagram $S(\hat{\fD}_\fm)$
translates into the fact that the product,
ordered according to the phase of the rays, of the 
elements $\hat{\Phi}_{\hat{H}_j}$ and $\hat{\Phi}_{\hat{H}_m}$ of the quantum tropical vertex group 
$\hat{\V}^{\hbar}_R$ is equal to the identity. Therefore, one can paraphrase Theorem
\ref{precise_main_thm_ch2} by saying that 
the log Gromov-Witten invariants $N_{g,p}^{Y_\fm}$ produce relations in the quantum tropical vertex group 
$\hat{\V}^{\hbar}_R$, or conversely that relations in $\hat{\V}^{\hbar}_R$ give constraints 
on the log Gromov-Witten invariants $N_{g,p}^{Y_\fm}$.

The automorphism $\hat{\Phi}_{\hat{H}_j}$
attached to the incoming rays 
$\fd_j$ of the quantum scattering diagram 
$S(\hat{\fD}_\fm)$ are conjugation by $e^{\hat{H}_{\fd_j}}$, ie
by 
\[ \exp \left(
\sum_{\ell \geqslant 1} \frac{1}{\ell}
\frac{(-1)^{\ell-1}}{q^{\frac{\ell}{2}}
-q^{-\frac{\ell}{2}}} t_j^\ell \hat{z}^{\ell m_j}\right) \,,\]
which can be written as $\Psi_q(-t_j
\hat{z}^{m_j})$
where
\[\Psi_q(x) \coloneqq \exp \left( -\sum_{\ell \geqslant 1} 
\frac{1}{\ell} \frac{x^\ell}{q^{\frac{\ell}{2}}-
q^{-\frac{\ell}{2}}} \right)
= 
\prod_{k \geqslant 0} \frac{1}{1-q^{k+\frac{1}{2}}x} \,,\]
is the quantum dilogarithm.
We warn that various conventions are used for the quantum dilogarithm throughout the literature.
We refer for example to 
\cite{MR2290758} for a nice review of the many aspects of the dilogarithm, 
including its quantum version.

As the incoming rays of $S(\hat{\fD}_{\fm})$ are expressed in terms of quantum dilogarithms, it is natural to ask if the 
outgoing rays, which by 
\mbox{Theorem 
\ref{precise_main_thm_ch2}} are generating series of higher genus log Gromov-Witten invariants, 
can be naturally expressed in terms of quantum dilogarithms. 
This question is related to the multicover/BPS structure of higher genus log Gromov-Witten theory 
and is fully answered by Theorem \ref{main_thm_integ} in Section
\ref{section_integrality}.

\subsection{Examples}
\label{section_examples}

In this Section, we give some elementary examples illustrating Theorem \ref{precise_main_thm_ch2}.

\subsubsection{Trivial scattering: propagation of a ray.} \label{example_propagation}
We take $n=1$ and $\fm=(m_1)$ with 
$m_1 =(1,0) \in M =\Z^2$. In this case, $R=\C[\![t_1]\!]$, and the quantum scattering diagram 
$\hat{\fD}_\fm$ contains a unique incoming ray: 
$\fd_1=-\R_{\geqslant 0}(1,0)=\R_{\geqslant 0}(-1,0)$
equipped with 
\[\hat{H}_{\fd_1}= -i \sum_{\ell \geqslant 1} 
\frac{1}{\ell} \frac{(-1)^{\ell-1}}{2 \sin \left( 
\frac{\ell \hbar}{2}
\right)} t_1^\ell \hat{z}^{(\ell,0)} \,.\]
Then the consistent scattering diagram $S(\hat{\fD}_\fm)$ is obtained by simply 
propagating the incoming ray, ie by adding the outgoing ray $\R_{\geqslant 0}(1,0)$
equipped with 
\[\hat{H}_{(1,0)}= -i \sum_{\ell \geqslant 1} 
\frac{1}{\ell} \frac{(-1)^{\ell-1}}{2 \sin \left( 
\frac{\ell \hbar}{2}
\right)} t_1^\ell \hat{z}^{(\ell,0)} \,.\]
We start with a fan consisting of the ray 
$\R_{\geqslant 0}(-1,0)$. To get a proper toric surface, we add to the fan the rays
$\R_{\geqslant 0}(1,0)$, $\R_{\geqslant 0}(0,1)$
and $\R_{\geqslant 0}(0,-1)$.
The corresponding toric surface $\overline{Y}_\fm$ is simply $\PP^1 \times \PP^1$. We obtain
$Y_\fm$ by blowing-up a point on 
$\{0\} \times \C^{*}$, eg $\{0\}
\times \{1\}$. Denote by $E$ the exceptional divisor and $F$ the strict transform of 
$\PP^1 \times \{1\}$. We have 
$E^2=F^2=-1$ and $E\cdot F=1$.
For $\ell \in P=\NN$, we have $\beta_\ell = \ell [F]$.
So, according to Theorem 
\ref{precise_main_thm_ch2}, one should have, for every $\ell \geqslant 1$,  
\[  \sum_{g \geqslant 0}
N_{g,\ell}^{Y_\fm} \hbar^{2g-1} 
=
\frac{1}{\ell} \frac{(-1)^{\ell-1}}{2 \sin \left( 
\frac{\ell \hbar}{2}
\right)}  \,.\]
As $F$ is rigid, contributions to $N_{g,\ell}$ only come from $\ell$ to $1$ multicoverings
of $F$ and the computation of $N_{g,\ell}$ can be reduced to a computation in relative Gromov-Witten theory of $\PP^1$. 
Using Theorem 5.1 of
\cite{MR2115262}, one can check that the above formula is indeed correct. We refer for more details to Lemma 
\ref{lem_multicover} which plays a crucial role in the proof of Theorem \ref{precise_main_thm_ch2}.

\subsubsection{Simple scattering of two rays}
\label{example_simple}
We take $n=2$ and $\fm=(m_1,m_2)$
with $m_1=(1,0)\in M=\Z^2$ and $m_2=(0,1) \in M=\Z^2$. In this case, $R=\C[\![t_1,t_2]\!]$, 
and the quantum scattering diagram 
$\hat{\fD}_{\fm}$ contains two incoming rays 
$\fd_1=\R_{\geqslant 0}(-1,0)$ and 
$\fd_2=\R_{\geqslant 0}(0,-1)$,
respectively equipped with 
\[\hat{H}_{\fd_1}= -i \sum_{\ell \geqslant 1} 
\frac{1}{\ell} \frac{(-1)^{\ell-1}}{2 \sin \left( 
\frac{\ell \hbar}{2}
\right)} t_1^\ell \hat{z}^{(\ell,0)} \,,\]
and 
\[\hat{H}_{\fd_2}= -i \sum_{\ell \geqslant 1} 
\frac{1}{\ell} \frac{(-1)^{\ell-1}}{2 \sin \left( 
\frac{\ell \hbar}{2}
\right)} t_2^\ell \hat{z}^{(0,\ell)} \,.\]
Then, because of the
Faddeev-Kashaev \cite{MR1264393}
pentagon identity
\[\Psi_q(z^{(1,0)}) \Psi_q(z^{(0,1)})
=\Psi_q(z^{(0,1)}) \Psi_q(z^{(1,1)})  \Psi_q(z^{(1,0)})\]
satisfied by the quantum dilogarithm 
$\Psi_q$,
the consistent scattering diagram $S(\hat{\fD}_\fm)$ is obtained by propagation of the two incoming rays in outgoing rays, 
as in \ref{example_propagation}, and by addition of a third outgoing ray 
$\R_{\geqslant 0}(1,1)$ equipped with 
\[\hat{H}_{(1,1)}= -i \sum_{\ell \geqslant 1} 
\frac{1}{\ell} \frac{(-1)^{\ell-1}}{2 \sin \left( 
\frac{\ell \hbar}{2}
\right)} t_1^\ell t_2^\ell \hat{z}^{(\ell,\ell)} \,.\]
We start with the fan consisting of the rays $\R_{\geqslant 0}(-1,0)$ and
$\R_{\geqslant 0}(0,-1)$. To get a proper 
toric surface, we can for example add to 
the fan the ray $\R_{\geqslant 0} (1,1)$. The corresponding toric surface $\overline{Y}_\fm$ is simply 
$\PP^2$, with its toric divisors $D_1$, $D_2$, $D_3$. 
We obtain $Y_\fm$ by blowing a point $p_1$ on $D_1$ and a point $p_2$ on $D_2$, both away from the torus fixed points. 
We denote by $E_1$ and $E_2$ the corresponding exceptional divisors and $F$ the strict transform of the unique line in $\PP^2$ passing through $p_1$ and $p_2$. 
We have $E_1^2=E_2^2=F^2=-1$ and 
$E_1 \cdot F=E_2 \cdot F=1$. 
For $\ell \in \NN$ and $(\ell,\ell) \in P=\NN^2$, we have $\beta_{(\ell,\ell)} = \ell [F]$. So according to Theorem 
\ref{precise_main_thm_ch2}, one should have, for every 
$\ell \geqslant 1$,
\[\sum_{g \geqslant 0} N_{g,(\ell,\ell)}^{Y_\fm} \hbar^{2g-1} 
= \frac{1}{\ell} \frac{(-1)^{\ell-1}}{2 \sin \left( 
\frac{\ell \hbar}{2}
\right)}  \,.\]
As $F$ is rigid, contributions to $N_{g,(\ell,\ell)}$
only come from $\ell$ to $1$ multicoverings of $F$ and the computation of $N_{g,(\ell,\ell)}$
reduces to a computation identical to the one used for $N_{g,\ell}$ in 
\ref{example_propagation}.

\subsubsection{More complicated scatterings}
Already at the classical level of 
\cite{MR2667135}, general scattering diagrams 
can be very complicated. A fortiori, general quantum scattering diagrams are extremely complicated. 
Direct computation of the higher genus log Gromov-Witten invariants $N_{g,p}^{Y_\fm}$ is
a difficult problem in general. In particular, unlike what happens in  
\ref{example_propagation} and \ref{example_simple}, linear systems defined by 
$\beta_p$ and the tangency condition contain
in general curves of positive genus, and so genus $g>0$ stable log maps appearing in the 
moduli space defining $N_{g,p}^{Y_\fm}$
do not factor through genus $0$ curves in general. As consistent scattering diagrams can be algorithmically computed, one can view
Theorem \ref{precise_main_thm_ch2} as an 
answer to the problem of effectively computing the higher genus log Gromov-Witten invariants 
$N_{g,p}^{Y_\fm}$.

\subsection{Orbifold generalization}
\label{section_orbifold_gen}

As in Section 5.5 and 5.6 of \cite{MR2667135} for the classical case,
we can give an enumerative interpretation of 
quantum scattering diagrams more general than those considered in Theorem 
\ref{precise_main_thm_ch2} if we allow ourselves to work with orbifold Gromov-Witten invariants. 

We fix $\fm=(m_1,\dots, m_n)$
an $n$-tuple of primitive non-zero vectors of $M=\Z^2$ and $\fr=(r_1, \dots, r_n)$ an 
$n$-tuple of positive integers. We denote 
$P \coloneqq \NN^n$ and we take 
$R \coloneqq \C[\![P]\!]
\coloneqq \C [\![t_1, \dots, t_n]\!]$ as 
complete local $\C$-algebra. Let $P_{\fr}$ be the set of $p=(p_1, \dots, p_n) \in P$ 
such that $r_j$ divides $p_j$ for every $1 \leqslant j \leqslant n$. 
Let $\hat{\fD}_{\fm,\fr}$ be the quantum scattering diagram over $R$ consisting of incoming rays 
$(\fd_j, \hat{H}_{\fd_j})$, 
$1 \leqslant j \leqslant n$, where 
\[ \fd_j=-\R_{\geqslant 0}m_j \,,\]
and 
\[ \hat{H}_{\fd_j}
=-i \sum_{\ell \geqslant 1}
\frac{1}{\ell} \frac{(-1)^{\ell-1}}{
2 \sin \left( \frac{r_j \ell \hbar}{2} \right)} t_j^{r_j \ell} \hat{z}^{r_j \ell m_j}
= \sum_{\ell \geqslant 1}
\frac{1}{\ell} \frac{(-1)^{\ell-1}}{q^{\frac{r_j \ell}{2}}
-q^{-\frac{r_j \ell}{2}}} t_j^{r_j \ell} \hat{z}^{r_j \ell m_j}\,,\]
where $q=e^{i \hbar}$. 
Let $S(\hat{\fD}_{\fm,\fr})$ be the corresponding consistent
quantum scattering diagram
given by 
Proposition \ref{prop_consistency}, 
obtained by adding outgoing rays to $\hat{\fD}_{\fm,\fr}$. 
For every 
$m \in M-\{0\}$, let $P_{\fr,m}$ be the subset of 
$p=(p_1, \dots, p_n) \in P_\fr$ such that 
$\sum_{j=1}^n p_j m_j$ is positively collinear with $m$:
\[ \sum_{j=1}^n p_j m_j = \ell_p m \]
for some $\ell_p \in \NN$.

Recall that in Section
\ref{section_orbifold_gw}, for
every 
$\fm=(m_1,\dots, m_n)$ and $\fr=(r_1,\dots,r_n)$, we introduced
an orbifold log Calabi-Yau surface $Y_{\fm,\fr}$
and for every 
$p=(p_1, \dots, p_n) \in P_\fr$, 
we defined a certain genus $g$ orbifold log Gromov-Witten $N_{g,p}^{Y_{\fm,\fr}}$ of $Y_{\fm,\fr}$.

\begin{thm} \label{precise_main_thm_orbifold}
For every $\fm
=(m_1, \dots, m_n)$
an $n$-tuple 
of primitive non-zero vectors in $M$,
every $\fr=(r_1, \dots, r_n)$ an 
$n$-tuple of positive integers and every
$m \in M-\{0\}$ primitive, the Hamiltonian $\hat{H}_m$
attached to the outgoing ray 
$\R_{\geqslant 0}m$ in the consistent quantum scattering 
diagram $S(\hat{\fD}_{\fm,\fr})$ is given by 
\[\hat{H}_m = 
\left(-\frac{i}{\hbar} \right)
\sum_{p \in P_{\fr,m}}
 \left(\sum_{g \geqslant 0}
N_{g,p}^{Y_{\fm,\fr}} \, \hbar^{2g} \right) \left( \prod_{j=1}^n t_j^{p_j} 
\right) \hat{z}^{\ell_p m} \,.\]
\end{thm}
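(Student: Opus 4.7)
The plan is to parallel the proof of Theorem \ref{precise_main_thm_ch2}, with the orbifold modifications localized at the exceptional divisors $E_j = \PP^1[r_j,1]$. The crucial structural observation is that the non-trivial stacky locus of $Y_{\fm,\fr}$ is disjoint from the log boundary $\partial Y_{\fm,\fr}$, so the whole machinery of log Gromov-Witten theory (moduli of stable log maps, decomposition formula, reduction to tropical geometry) applies to $Y_{\fm,\fr}$ exactly as it applies to $Y_{\fm}$, with purely orbifold-Gromov-Witten modifications confined to vertex contributions at the exceptional bubbles.

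Concretely, I would first construct a degeneration $\cY_{\fm,\fr} \to \A^1$ of $Y_{\fm,\fr}$ analogous to the $\cY_{\fm}$ of Section \ref{section_degeneration}, whose central fiber is glued out of a $\overline{Y}_\fm$-component and stacky bubble components $\tilde{\PP}_j$ containing the orbifold exceptional divisor $E_j = \PP^1[r_j,1]$. Applying the decomposition formula of \cite{abramovich2017decomposition} and the same torically transverse vanishing arguments as in the non-orbifold case reduces $N_{g,p}^{Y_{\fm,\fr}}$ to a sum, indexed by rigid tropical curves $h_{k,\vec{g}}$ in the tropicalization of the central fiber, of products of (i) log Gromov-Witten invariants of the toric surface $\overline{Y}_\fm$ (no orbifold structure, so the analysis of Section \ref{section_gw_toric} applies verbatim) and (ii) local orbifold multiple-cover contributions at the bubble components $\tilde{\PP}_j$.

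The new computation is a top-$\lambda_g$ orbifold Bryan-Pandharipande lemma for the rigid orbifold curve inside $\tilde{\PP}_j$ passing through the stacky point of $E_j$. The claim, to be proved in Section \ref{section_BP_orbifold}, is that the generating series of these local contributions for an $l$-fold cover equals
\[
-i \,\frac{1}{l}\,\frac{(-1)^{l-1}}{2\sin(r_j l \hbar/2)} \,,
\]
that is, exactly the coefficient of $t_j^{r_j l} \hat{z}^{r_j l m_j}$ in $\hat{H}_{\fd_j}$. This is the orbifold analogue of Lemma \ref{lem_BP}; the rescaling of $\hbar$ by $r_j$ is expected to emerge from the age/twisted-sector contributions at the $\Z/r_j$ point, while the overall structure follows the Faber-Pandharipande/Bryan-Pandharipande computation on $\PP^1$ relative to two points, now adapted to the root stack $\PP^1[r_j,1]$.

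Once this local lemma is in hand, the remainder of the argument is formal: the decomposition formula now expresses $N_{g,p}^{Y_{\fm,\fr}}$ as a sum over tropical curves of toric log Gromov-Witten invariants of $\overline{Y}_\fm$ weighted by products of the incoming Hamiltonians $\hat{H}_{\fd_j}$, the Filippini-Stoppa tropical interpretation of Section \ref{section_tropical} identifies these weighted sums with the Hamiltonians attached to rays of $S(\hat{\fD}_{\fm,\fr})$, and the correspondence theorem of \cite{bousseau2017tropical} converts $q$-refined tropical counts on the toric side into the required higher genus log Gromov-Witten invariants. The main obstacle is precisely the orbifold BP computation: one must control the moduli of orbifold stable log maps to $\PP^1[r_j,1]$ with prescribed contact orders at the stacky and the non-stacky point, handle the twisted evaluation at the $\Z/r_j$ point, and carry out the Hodge integral with $(-1)^g \lambda_g$ to produce the quantum dilogarithm with parameter rescaled by $r_j$. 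Everything else in the proof is a bookkeeping upgrade of the non-orbifold case.
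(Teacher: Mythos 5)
Your proposal follows the paper's own route: rerun the degeneration, decomposition-formula, and tropical-correspondence machinery of Sections \ref{section_degeneration}--\ref{section_end_proof} verbatim, with the single genuinely new ingredient being the orbifold multicover computation (the paper's Lemma \ref{lem_BP_orbifold}). Two small normalizations to be aware of: the local contribution $\sum_{g} N_{g,r}^{l}\hbar^{2g-1}$ equals $\frac{(-1)^{l-1}}{l}\frac{1}{2\sin(r l\hbar/2)}$ with no $-i$ prefactor (the $-i$ appears only when rewriting $(q^{l/2}-q^{-l/2})^{-1}$ after $q=e^{i\hbar}$), and the $\hbar\mapsto r\hbar$ rescaling in the paper's proof arises from the degree-$r^{2g-1}$ forgetful map $\overline{M}_{g,1}(B\Z/r)\rightarrow\overline{M}_{g,1}$ combined with a local-system--twisted Mumford relation, rather than from an age shift.
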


For $\fr=(1, \dots, 1)$, Theorem 
\ref{precise_main_thm_orbifold} reduces to 
Theorem \ref{precise_main_thm_ch2}.
In the classical limit 
$\hbar \rightarrow 0$, Theorem 
\ref{precise_main_thm_ch2} reduces to Theorem 5.6 of 
\cite{MR2667135}.
The proof of Theorem 
\ref{precise_main_thm_orbifold} is entirely parallel to the proof of its special case Theorem 
\ref{precise_main_thm_ch2}. The key point is that orbifold and logarithmic questions never interact in a non-trivial way. 
The only major needed modification is an orbifold version of the multicovering formula of Lemma \ref{lem_multicover}. 
This is done in \mbox{Lemma 
\ref{lem_BP_orbifold}}, Section \ref{section_BP_orbifold}.

\subsection{More general quantum scattering diagrams}
\label{section_more_general}
We still fix $\fm=(m_1,\dots,m_n)$ an
$n$-tuple of primitive vectors of 
$M=\Z^2$ and we continue to denote $P=\NN^n$, so that $R =\C[\![P]\!]=\C[\![t_1, \dots, t_n]\!]$. 
One could try to further generalize 
Theorem \ref{precise_main_thm_orbifold}
by starting with a quantum scattering 
diagram over $R$ consisting of incoming rays $(\fd_j,\hat{H}_{\fd_j})$
for 
$1 \leqslant j \leqslant n$, where 
$\fd_j=-\R_{\geqslant 0}m_j$ and 
\[ \hat{H}_{\fd_j} = \sum_{\ell \geqslant 1}
\hat{H}_{\fd_j,\ell} t_j^{\ell} \hat{z}^{\ell m_j} \,,\]
for arbitrary 
\[ \hat{H}_{\fd_j,\ell} \in \C[\![\hbar]\!] \,.\] 

In the classical limit $\hbar \rightarrow 0$, Theorem 5.6 of \cite{MR2667135}, classical limit of our Theorem 
\ref{precise_main_thm_orbifold}, is enough to give an enumerative interpretation
of the resulting consistent scattering diagram in such generality.
Indeed, the genus $0$ orbifold Gromov-Witten story takes as input classical Hamiltonians  
\[ H_r=\sum_{\ell\geqslant 1}
\frac{(-1)^{\ell-1}}{r\ell^2}t^{r\ell}z^{r\ell}
=\frac{1}{r} (tz)^r + \cO((tz)^{r+1}) \,,\]
for all $r \geqslant 0$, which form a basis of $\C[(tz)]$. In particular, at 
every finite order in $\fm_R$,
every classical scattering diagram consisting of $n$ incoming rays meeting at $0 \in \R^2$ coincides
with a classical scattering diagram whose consistent completion has an enumerative interpretation in terms of genus $0$ orbifold Gromov-Witten invariants.
In the quantum case, things are more complicated due to the extra dependence in $\hbar$.
More precisely, Theorem \ref{precise_main_thm_orbifold} only covers a particular class of Hamiltonians $\hat{H}_{\fd_j}$ 
whose form is dictated by the 
multicovering structure of higher genus orbifold Gromov-Witten theory.

\section{Gromov-Witten theory of toric surfaces}
\label{section_gw_toric}

For every $\fm=(m_1, \dots, m_n)$
an $n$-tuple of primitive non-zero vectors 
in $M=\Z^2$, we defined in 
Section \ref{section_log_cy_surface} a log Calabi-Yau surface 
$Y_\fm$ obtained as the blow-up of some toric surface $\overline{Y}_\fm$, and we introduced in
Section \ref{section_log_gw_log_cy} 
a collection of log Gromov-Witten 
invariants $N_{g,p}^{Y_\fm}$ of $Y_\fm$. 
In the present Section, we define log Gromov-Witten invariants 
$N_{g,w}^{\overline{Y}_\fm}$ of the toric surface $\overline{Y}_\fm$. 
In Section \ref{section_degeneration}, we will compare the invariants $N_{g,p}^{Y_\fm}$ of 
$Y_\fm$ and 
$N_{g,w}^{\overline{Y}_\fm}$ of 
$\overline{Y}_\fm$.

\subsection{Curve classes on toric surfaces} \label{section_curve_classes_toric}

Recall from Section \ref{section_log_cy_surface} that $\overline{Y}_\fm$ 
is a proper toric surface whose fan contains the rays $-\R_{\geqslant 0}m_j$ for 
every $1 \leqslant j \leqslant n$. We denote by
$\partial \overline{Y}_\fm$ the toric boundary divisor of 
$\overline{Y}_\fm$. We will consider curves in 
$\overline{Y}_\fm$ meeting $\partial 
\overline{Y}_\fm$ in a
number of prescribed points with prescribed tangency conditions and at one
unprescribed point with prescribed tangency condition.
In this Section, we explain how to parametrize the relevant curve classes in terms of the prescribed tangency conditions $w_j$
at the prescribed points.

Let $s$ be a positive integer and let 
$w=(w_1,\dots, w_s)$ be an $s$-tuple of
non-zero vectors in $M$ such that for every 
$1 \leqslant r \leqslant s$, 
there exists $1 \leqslant j \leqslant n$ such that $-\R_{\geqslant 0}w_r
=-\R_{\geqslant 0}m_j$.
In particular, the ray $-\R_{\geqslant 0}w_r$
belongs to the fan of $\overline{Y}_\fm$ and we denote by
$D_{w_r}$ the corresponding toric divisor 
of $\overline{Y}_\fm$. Note that we can have 
$D_{w_r}=D_{w_{r'}}$ even if $r \neq r'$.
We denote by $|w_r| \in \NN$
the divisibility of $w_r \in M=\Z^2$, ie the largest positive integer $k$ such that one can write $w_r = k v$ with $v \in M$.
One should think about $w_r$ as defining a toric divisor $D_{w_r}$ and an intersection number $|w_r|$ with $D_{w_r}$ for a curve in 
$\overline{Y}_\fm$.

We assume that $\sum_{r=1}^s w_r \neq 0$ and so we can uniquely write  
\[ \sum_{r=1}^s w_r = \ell_w m_w \,,\]
with $m_w \in M$ primitive and 
$\ell_w \in \NN$.

We explain now how to define a curve class 
$\beta_w \in H_2(\overline{Y}_\fm, \Z)$.
In short, $\beta_w$ is the class of a curve in 
$\overline{Y}_\fm$ having for every $1 \leqslant r \leqslant s$, an intersection point of intersection number 
$|w_r|$ with $D_{w_r}$, and exactly one other intersection point with the toric boundary 
$\partial \overline{Y}_\fm$. 

More precisely,
the vector $m_w \in M$ belongs 
to some cone of the fan of $\overline{Y}_\fm$
and we write the corresponding decomposition 
\[ m_w=a_w^L m_w^L + a_w^R m_w^R\,,\] 
where 
$m_w^L, m_w^R \in M$
are primitive generators of rays of the fan of 
$\overline{Y}_\fm$ and where 
$a_w^L, a_w^R \in \NN$.
Note that there is only one term in this decomposition if the 
ray $\R_{\geqslant 0}m_w$ coincides with one of the rays of the fan of 
$\overline{Y}_\fm$. Let $D_w^L$ and 
$D_w^R$ be the toric divisors of 
$\overline{Y}_\fm$ corresponding to the rays 
$\R_{\geqslant 0} m_w^L$ and 
$\R_{\geqslant 0} m_w^R$.
Let $\beta_w \in H_2(\overline{Y}_\fm,\Z)$ be the curve class uniquely determined by the following intersection numbers with the toric divisors:
\begin{itemize}
\item The intersection numbers with those $D_{w_r}$
for 
$1 \leqslant r \leqslant s$ that are distinct from 
$D_w^L$ and $D_w^R$:
\[ \beta_w \cdot D_{w_r} = \sum_{r', D_{w_{r'}}
=D_{w_r}} |w_{r'}| \,,\]
\item The intersection number with $D_w^L$: 
\[\beta_w \cdot D_w^L=\ell_w a_w^L
+ \sum_{r, D_{w_r}=D_w^L} |w_r|\,.\]
\item The intersection number with $D_w^R$:
\[\beta_w \cdot D_w^R=\ell_w a_w^R
+\sum_{r, D_{w_r}=D_w^R}|w_r|\,. \]
\item The intersection number with 
every toric divisor $D$
different from $D_{w_r}$ for every
$1 \leqslant r \leqslant s$, and from 
$D_w^L$ and $D_w^R$: 
\[ \beta_w \cdot D=0\,.\]
\end{itemize}

Such class $\beta_w \in H_2(\overline{Y}_\fm, \Z)$ 
exists by standard toric geometry because of the 
relation $\sum_{r=1}^s w_r = \ell_w m_w$, and is unique. 

\subsection{Log Gromov-Witten invariant of toric surfaces} \label{section_log_gw_toric}

In the previous Section, given $w=(w_1,\dots, w_s)$ a $s$-tuple of non-zero vectors in $M$, we defined certain positive integers $\ell_w$, $a_w^L$ and 
$a_w^R$, certain toric divisors
$D_w^L$ and $D_w^R$ of $\overline{Y}_\fm$,
and a curve class 
$\beta_w \in H_2(\overline{Y}_\fm, \Z)$. 

We would like to consider genus $g$ stable maps
$f \colon C \rightarrow \overline{Y}_\fm$ of class $\beta_w$, intersecting $\partial Y_\fm$ in 
$s+1$ points, $s$ of them being on the divisors $D_{w_r}$ with
contact order $|w_r|$ for $1 \leqslant r \leqslant s$, 
and the last one having contact number 
$\ell_w a_w^L$ with the divisor
$D_w^L$ and 
contact order $\ell_w a_w^R$
with the divisor $D_w^R$. We also would like to fix the position of the $s$ intersection numbers with the divisors $D_{w_r}$. It is easy to check that the expected dimension of this enumerative problem is $g$. As in Section
\ref{section_log_gw_log_cy}, we will cut down the virtual dimension from $g$ to zero by integration of the top lambda class.

As in Section \ref{section_log_gw_log_cy}, 
to get proper moduli spaces, we work with stable log maps. 
We consider the divisorial 
log structure on $\overline{Y}_\fm$ defined by the toric divisor $\partial \overline{Y}_\fm$ and use it to view
$\overline{Y}_\fm$ as a smooth log scheme.
Let $\overline{M}_{g,w}(\overline{Y}_\fm,
\partial \overline{Y}_\fm)$ be the moduli space 
of genus $g$ stable log maps to 
$\overline{Y}_\fm$, of class $\beta_w$, 
with $s+1$ tangency conditions along $\partial \overline{Y}_\fm$ defined by the $s+1$ vectors $-w_1, \dots, -w_s, \ell_w m_w$ in $M$. 
It is a proper Deligne-Mumford stack coming with a $(g+s)$-dimensional
 virtual fundamental class
\[ [\overline{M}_{g,w}(\overline{Y}_\fm,
\partial \overline{Y}_\fm)]^{\virt}\,.\]
For every $1 \leqslant r \leqslant s$, 
we have an evaluation map
\[ \ev_r \colon \overline{M}_{g,w}(\overline{Y}_\fm,
\partial \overline{Y}_\fm) \rightarrow D_{w_r} \,.\]
If $\pi \colon \cC \rightarrow 
\overline{M}_{g,w}(\overline{Y}_\fm,
\partial \overline{Y}_\fm)$ is the universal curve,
of relative dualizing sheaf 
$\omega_\pi$, then the Hodge bundle 
$\E \coloneqq \pi_{*}\omega_\pi$
is a rank $g$ vector bundle over 
$\overline{M}_{g,w}(\overline{Y}_\fm,
\partial \overline{Y}_\fm)$, of top Chern class $\lambda_g \coloneqq c_g(\E)$.

We define log Gromov-Witten invariants $N_{g,w}^{\overline{Y}_\fm}  \in \Q$ by 
\[N_{g,w}^{\overline{Y}_\fm} 
\coloneqq \int_{[\overline{M}_{g,w}(\overline{Y}_\fm,
\partial \overline{Y}_\fm)]^{\virt}} 
(-1)^g \lambda_g \prod_{r=1}^s \ev_r^{*}(\pt_r) \,,\]
where $\pt_r \in A^1(D_{w_r})$ is the class of a point on $D_{w_r}$. This is a rigorous definition of the enumerative problem sketched at the beginning of this Section.

\section{Degeneration from log Calabi-Yau to toric}
\label{section_degeneration}

\subsection{Degeneration formula: statement}
\label{section_degeneration_statement}

We fix $\fm=(m_1, \dots, m_n)$
an $n$-tuple of primitive non-zero vectors 
in $M=\Z^2$. In
Section \ref{section_log_cy_surface}, we defined a log Calabi-Yau surface 
$Y_\fm$ obtained as blow-up of some toric surface $\overline{Y}_\fm$.
In 
\mbox{Section \ref{section_log_gw_log_cy}}, we introduced 
a collection of log Gromov-Witten 
invariants $N_{g,p}^{Y_\fm}$ of $Y_\fm$,
indexed by $n$-tuples $p=
(p_1,\dots,p_n) \in P=\NN^n$. 
In Section \ref{section_log_gw_toric}, we defined log Gromov-Witten invariants 
$N_{g,w}^{\overline{Y}_\fm}$ of the toric surface $\overline{Y}_\fm$
indexed by $s$-tuples 
$w=(w_1, \dots, w_s) \in 
M^s$.
The main result of this section, 
Proposition 
\ref{prop_degeneration}, is an explicit formula expressing the invariants 
$N_{g,p}^{Y_\fm}$ in terms of the 
invariants $N_{g,w}^{\overline{Y}_\fm}$.

We first need to introduce some notations to relate the indices 
$p=(p_1, \dots, p_n)$ in the invariants 
$N_{g,p}^{Y_\fm}$ and the indices $w=(w_1, \dots, w_s)$ in the invariants 
$N_{g,w}^{\overline{Y}_\fm}$. The way it goes is imposed by the degeneration formula in Gromov-Witten theory and hopefully will become conceptually clear in Section
\ref{section_contributing_tropical_curves}.

We fix $p=(p_1, \dots, p_n) \in P=\NN^n$. We call 
$k$ a partition of $p$, and we write 
$k \vdash p$, if $k$ is an $n$-tuple $(k_1, \dots, k_n)$, with $k_j$ a partition of $p_j$
for every $1 \leqslant j \leqslant n$.
We encode a partition $k_j$ of $p_j$ as a sequence $k_j=(k_{\ell, j})_{\ell \geqslant 1}$ of nonnegative integers, all zero except finitely many of them, such that 
\[ \sum_{\ell \geqslant 1} \ell k_{\ell, j}
= p_j \,.\]  
Given a partition $k$ of $p$, we define
\[ s(k)\coloneqq \sum_{j=1}^n \sum_{\ell \geqslant 1} k_{\ell, j}\,.\] 
We now define, given a partition $k$ of $p$, a $s(k)$-tuple 
\[ w(k)=(w_1(k), \dots, w_{s(k)}(k))\] of non-zero vectors in 
$M=\Z^2$, by the following formula:
\[w_r(k)\coloneqq \ell m_j\]
if 
\[ 1+\sum_{j'=1}^j \sum_{ \ell'=1}^{\ell-1}k_{\ell',j'}
\leqslant r \leqslant  k_{\ell, j} + \sum_{j'=1}^j \sum_{ \ell'=1}^{\ell-1 }k_{\ell', j'}\,.\]
In particular, for every 
$1 \leqslant j \leqslant n$ and 
$\ell \geqslant 1$, the $s(k)$-tuple $w(k)$ contains $k_{\ell, j}$ copies of the vector $\ell m_j \in M$. Note that because $m_j$
is primitive in $M$, we have $\ell=|w_r(k)|$, where 
$|w_r(k)|$ is the divisibility of 
$w_r$ in $M$.
Remark also that 
\[ \sum_{r=1}^{s(k)} w_r(k) = \sum_{j=1}^n
\sum_{\ell \geqslant 1} k_{\ell, j} \ell m_j
=\sum_{j=1}^n p_j m_j = \ell_p m_p  \,,\]
and so, 
comparing notations of Sections
\ref{section_curve_classes_log_cy} and
\ref{section_curve_classes_toric},  $\ell_{w(k)}=\ell_p$ and $m_{w(k)}=m_p$.

Using the above notations, we can now state Proposition 
\ref{prop_degeneration}.

\begin{prop} \label{prop_degeneration}
For every 
$\fm=(m_1, \dots, m_n)$ an 
$n$-tuple of primitive non-zero vectors 
in $M=\Z^2$, and 
every $p =(p_1, \dots, p_n) \in P=\NN^n$, 
the log Gromov-Witten invariants 
$N_{g,p}^{Y_\fm}$ of the log Calabi-Yau surface 
$Y_\fm$ are expressed in terms of the log Gromov-Witten invariants $N_{g,w}^{\overline{Y}_\fm}$
of the toric surface $\overline{Y}_\fm$ by the following formula:
\[ \sum_{g \geqslant 0}
N_{g,p}^{Y_\fm} \hbar^{2g-1}\]
\[ = \sum_{k \vdash p} \left(
\sum_{g \geqslant 0} N_{g,w(k)}^{\overline{Y}_\fm} \hbar^{2g-1+s(k)}
\right)  
\prod_{j=1}^n 
\prod_{\ell \geqslant 1} 
\frac{1}{k_{\ell, j}!} \ell^{k_{\ell, j}} \left( \frac{(-1)^{\ell-1}}{\ell} \frac{1}{2 \sin \left( 
\frac{\ell \hbar}{2} \right)}
\right)^{k_{\ell, j}} \,,\]
where the first sum is over all 
partitions $k$ of $p$. 
\end{prop}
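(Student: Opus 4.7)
The plan is to deduce Proposition \ref{prop_degeneration} from a degeneration-gluing argument, closely following the genus zero strategy of \cite{MR2667135} but replacing the classical degeneration formula with the decomposition formula of \cite{abramovich2017decomposition} together with situation-specific vanishings and the multicover computation of the forthcoming Lemma \ref{lem_BP}. First I would construct the degeneration $\cY_\fm \to \A^1$ of $Y_\fm$ whose generic fiber is $Y_\fm$ and whose central fiber $\cY_{\fm,0}$ is the transverse union of $\overline{Y}_\fm$ with one Hirzebruch blow-up $\tilde{\PP}_j$ over each blown-up divisor $D_{m_j}$, with the $\tilde{\PP}_j$'s glued to $\overline{Y}_\fm$ along $D_{m_j}$ and carrying the strict transform of the exceptional divisor $E_j$. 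Equip everything with its natural log structure so that $\cY_\fm$ is a log smooth degeneration and tropicalize to obtain $\cY_{\fm,0}^\trop$.

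Next I would apply the decomposition formula to the class $\beta_p$: the invariant $N_{g,p}^{Y_\fm}$ is expressed as a sum, over rigid decorated tropical curves $h \colon \Gamma \to \cY_{\fm,0}^\trop$, of log Gromov-Witten contributions of the corresponding stable log maps to $\cY_{\fm,0}$. The key geometric input here is a vanishing statement, parallel to the one used in \cite{bousseau2017tropical}, eliminating every tropical curve whose image meets the singular (non-torically-transverse) locus of $\cY_{\fm,0}^\trop$. The surviving tropical curves are exactly indexed by partitions $k \vdash p$ together with a genus decoration $\vec{g}$: for each $(l,j)$ with $k_{lj}>0$, there are $k_{lj}$ legs of weight $l$ going into the vertex of $\cY_{\fm,0}^\trop$ corresponding to $\tilde{\PP}_j$, meeting at a genus--$g_0$ vertex that maps to $\overline{Y}_\fm$ and carries tangency data $w(k)$, while each of the $s(k)$ bounded edges terminates at a vertex of genus $g_r$ mapping to a $\tilde{\PP}_j$. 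The combinatorial encoding is literally that of $w(k)$ introduced in the statement.

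For each such rigid tropical curve, the stratum of stable log maps decomposes, via the torically transverse gluing formula of \cite{kim2018degeneration}, into the fiber product of moduli of stable log maps to $\overline{Y}_\fm$ (of genus $g_0$ and discrete data $w(k)$) with moduli of stable log maps to the individual $\tilde{\PP}_j$'s. Pushing the top lambda class through this product via the splitting $\lambda_g = \sum \lambda_{g_0} \cdots \lambda_{g_{s(k)}}$ across the dual graph, the factor over $\overline{Y}_\fm$ produces, after imposing the point conditions at the $s(k)$ interior marked points (which automatically appear because of the gluing), exactly $N_{g_0, w(k)}^{\overline{Y}_\fm}$. The factor at each external vertex of weight $l$ over $\tilde{\PP}_j$ is the multiple cover contribution of a rigid $(-1)$-curve (the strict transform of $E_j$), which, as computed in Lemma \ref{lem_BP} following Bryan--Pandharipande via \cite{MR2115262}, is
\[
\sum_{g \geqslant 0} \hbar^{2g} (\text{contribution}) = \frac{(-1)^{l-1}}{l}\cdot \frac{1}{2\sin(l\hbar/2)} \cdot \hbar,
\]
up to overall powers of $\hbar$ that conspire with the virtual dimension count to give the $\hbar^{2g-1+s(k)}$ in the statement. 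The combinatorial prefactor $\frac{1}{k_{lj}!}\, l^{k_{lj}}$ comes from the automorphisms of the tropical curve (the $k_{lj}!$ permutations of identical legs of weight $l$) and from the standard gluing multiplicities of edges of weight $l$ in the log degeneration formula.

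The main obstacle, as the introduction forewarns, is the higher genus degeneration step itself: no general log degeneration formula is available, so one cannot directly reduce $N_{g,p}^{Y_\fm}$ to products of invariants of the components of $\cY_{\fm,0}$. The core work is therefore to combine the decomposition formula with dimension-based vanishing arguments showing that every contribution from a tropical curve touching the non-torically-transverse locus vanishes after insertion of $(-1)^g \lambda_g$, thereby restricting to strata where the gluing formula of \cite{kim2018degeneration} actually applies. Once this vanishing and torically transverse gluing are established, assembling the lambda class and multicover computations into the stated generating series is essentially bookkeeping parallel to \cite{MR2667135}.
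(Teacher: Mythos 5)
Your proposal follows essentially the same route as the paper: the same degeneration of $Y_\fm$ to $\overline{Y}_\fm \cup \bigcup_j \tilde{\PP}_j$, the decomposition formula of \cite{abramovich2017decomposition} with $\lambda_g$-vanishing to reduce to genus-zero rigid tropical curves indexed by $(k,\vec{g})$, torically transverse gluing via \cite{kim2018degeneration} with the bad locus killed again by the lambda class, and the Bryan--Pandharipande multicover computation for the vertex contributions. One small slip: the rigid $(-1)$-curve multiply covered in $\tilde{\PP}_j$ is $C_j$, the strict transform of the $\PP^1$-fiber through the blown-up point (which carries the tangency along $D_{m_j,0}$), not the exceptional divisor $E_j$; otherwise the argument and bookkeeping match the paper's.
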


The proof of Proposition \ref{prop_degeneration} takes the remainder of Section
\ref{section_degeneration}.
We consider the degeneration from $Y_\fm$ to $\overline{Y}_\fm$ introduced in 
\mbox{Section 5.3} of 
\cite{MR2667135} and we apply a higher genus version of the argument of \cite{MR2667135}. 
Because the general degeneration formula in log Gromov-Witten theory is not yet available, we give a proof of the needed degeneration formula following the general strategy used in 
\cite{bousseau2017tropical}, which uses specific vanishing properties of the top lambda class.
We assume for simplicity that 
$m_p$ is distinct from all $-m_j$. It is easy to adapt the argument in this special case.

\subsection{Degeneration set-up}
\label{section_deg_set_up}

We first review the construction of the degeneration considered in Section 5.3 of 
\cite{MR2667135}. 

We fix $\fm=(m_1, \dots, m_n)$
an $n$-tuple of primitive non-zero vectors 
in $M=\Z^2$.
Recall from Section \ref{section_log_cy_surface} that $
\overline{Y}_\fm$ 
is a proper toric surface whose fan contains the rays $-\R_{\geqslant 0}m_j$ for 
$1 \leqslant j \leqslant n$ and that we denote by
$D_{m_j}$ the corresponding toric 
divisors. For every $1 \leqslant j \leqslant n$, we also choose a point $x_j$ in general position on the toric divisor $D_{m_j}$.
Let $\overline{Y}_\fm \times \C \rightarrow \C$ be the trivial
family over $\C$ and let $\{x_j\} \times \C$ be the sections determined by the points $x_j$.
Up to doing some toric blow-ups, which do not change the log Gromov-Witten invariants
that we are considering by \cite{MR3778185},
we can assume that the divisors 
$D_{m_j}$ are disjoint.

The degeneration of $\overline{Y}_\fm$ to the normal cone of 
$D_{m_1} \cup \dots \cup D_{m_n}$,
\[\epsilon_{\overline{\cY}_\fm} \colon \overline{\cY}_\fm \rightarrow \C \,,\]
is obtained by blowing-up the loci
$D_{m_1}, \dots, D_{m_n}$
over $0 \in \C$ in $\overline{Y}_\fm
\times \C$.
The special fiber is given by 
\[\epsilon_{\overline{\cY}_\fm}^{-1}(0)=
\overline{Y}_\fm \cup \bigcup_{j=1}^n 
\PP_j \,,\]
where, denoting by $N_{D_{m_j}|\overline{Y}_\fm}$ the normal line bundle to $D_{m_j}$ in $
\overline{Y}_\fm$,
$\PP_j$ is the projective bundle over $D_{m_j}$ obtained by projectivization of the rank two vector bundle $\cO_{D_{m_j}} \oplus N_{D_{m_j}|
\overline{Y}_\fm}$ over $D_{m_j}$.
The embeddings 
\[ \cO_{D_{m_j}} \hookrightarrow \cO_{D_{m_j}} \oplus N_{D_{m_j}|\overline{Y}_\fm}
\,\,\,\, \text{and} \,\,\,\, 
N_{D_{m_j}|\overline{Y}_\fm} \hookrightarrow \cO_{D_{m_j}} \oplus N_{D_{m_j}|\overline{Y}_\fm} \] 
induce two sections of $\PP_j \rightarrow D_{m_j}$ that we denote respectively by
$D_{m_j,\infty}$ and $D_{m_j, 0}$.
In $\epsilon_{\overline{\cY}_\fm}^{-1}(0)$, the divisor $D_{m_j}$ in $\overline{Y}_\fm$ is glued to the divisor $D_{m_j,0}$ in $\PP_j$.
The strict transform of the section 
$\{x_j\} \times \C$ of $\overline{Y}_\fm \times \C$
is a section $S_j$ of $\epsilon_{\overline{\cY}_\fm}$, whose intersection with $\epsilon_{\overline{\cY}_\fm}^{-1}(0)$ is a point $x_{j,\infty} \in D_{m_j,\infty}$.

For every $1 \leqslant j \leqslant n$, we blow-up the section $S_j$ in $\overline{\cY}_\fm$ and
we obtain a family
\[ \epsilon_{\cY_\fm}  \colon 
\cY_\fm \rightarrow \C \,,\]
whose fibers away from zero are isomorphic to the surface $Y_\fm$, and whose special fiber 
is given by 
\[\cY_{m,0}\coloneqq\epsilon_{\cY_\fm}^{-1}(0)
=\overline{Y}_\fm \cup \bigcup_{j=1}^n \tilde{\PP}_j \,,\]
where $\tilde{\PP}_j$ is the blow-up of 
$\PP_j$ at all the points $x_{j',\infty}$ such that $\PP_{j'}=\PP_j$. 
We denote by $E_{j'}$ the corresponding exceptional divisor in $\tilde{\PP}_j$ and $C_{j'}$ the strict 
transform in $\tilde{\PP}_{j}$ of the unique 
$\PP^1$-fiber of $\PP_{j} \rightarrow
D_{m_j}$ containing $x_{j',\infty}$.
We have $E_{j'} \cdot C_{j'}=1$ in 
$\tilde{\PP}_j$. We still denote by 
$D_{m_j,0}$ and $D_{m_j,\infty}$ the strict transforms of $D_{m_j,0}$ and $D_{m_j,\infty}$. We denote by 
$\partial \tilde{\PP}_j$ the ``boundary" of 
$\tilde{\PP}_j$, which is the union of $D_{m_j,0}$, 
$D_{m_j,\infty}$, and of the strict transforms of the two $\PP^1$-fibers of 
$\PP_j \rightarrow D_{m_j}$ over the two intersection points of 
$D_{m_j}$ with the remaining part of 
$\partial \overline{Y}_{\fm}$.

We would like to obtain Proposition
\ref{prop_degeneration} by application of 
a degeneration formula in
log Gromov-Witten theory to the family
\[\epsilon_{\cY_\fm} \colon \cY_{\fm}
\rightarrow \C \,,\]
to relate the invariants $N_{g,p}^{Y_\fm}$ of the general fiber $Y_\fm$ to the invariants $N_{g,w}^{\overline{Y}_\fm}$ 
of $\overline{Y}_\fm$ which appears as component of the special fiber $\cY_{\fm,0}$.
In \cite{MR2667135}, Gross, Pandharipande and Siebert work with an ad hoc definition of the genus 0 invariants as relative 
Gromov-Witten invariants of some open geometry and they only need to apply 
the usual degeneration formula in relative Gromov-Witten theory.  
In our present setting, with log Gromov-Witten invariants in arbitrary genus, we cannot follow exactly the same path.

Because the general degeneration formula in log Gromov-Witten theory is not yet available, we follow the strategy used
in \cite{bousseau2017tropical}.
We first apply the decomposition formula of 
Abramovich, Chen, Gross and Siebert
\cite{abramovich2017decomposition}.
We then use the vanishing property of the top 
lambda class to restrict the terms appearing in the decomposition formula  
and to prove a gluing formula by working only with 
torically transverse stable log maps.
We review the decomposition formula of 
\cite{abramovich2017decomposition} in Section 
\ref{section_decomp_formula}. In
Section \ref{section_rigid}, we derive constraints on the terms contributing to the decomposition formula. 
In Section \ref{section_gluing}, we prove a 
gluing formula computing each of these terms.
We end the classification of the terms contributing to the decomposition formula in 
Section 
\ref{section_contributing_tropical_curves}.
We conclude the proof of Proposition 
\ref{prop_degeneration} in \mbox{Section 
\ref{section_end_proof_deg}}.

\subsection{Statement of the decomposition formula} \label{section_decomp_formula}

We consider $\cY_\fm$ as a smooth log scheme for the divisorial log structure defined by the union of the ``vertical" divisor 
$\cY_{\fm,0}$ with the strict transform of the ``horizontal" divisor $
\partial \overline{Y}_\fm \times \C$.
Viewing $\C$ as a smooth log scheme for the divisorial log structure defined by the divisor 
$\{0\} \subset \C$, we obtain that $\epsilon_{\cY_\fm}$
is a log smooth morphism.
Restricting to the special fiber gives a structure of log scheme on $\cY_{\fm,0}$
and a log smooth morphism 
\[\epsilon_{\cY_{\fm, 0}}
\colon \cY_{\fm,0}
\rightarrow \pt_\NN \,,\]
where the standard log point $\pt_{\NN}$ is obtained by restriction to $\{0\} \subset \C$ of the divisorial log structure on 
$\C$.

Let 
$\overline{M}_{g,p}(\cY_{\fm,0})$
be the moduli space of genus $g$
stable log maps to $\epsilon_{\cY_{\fm, 0}}
\colon
\cY_{\fm,0}
\rightarrow \pt_\NN$, of class $\beta_p$, with a marked point of contact order $\ell_p m_p$. 
This is a proper Deligne-Mumford stack coming with a $g$-dimensional virtual fundamental class
\[ [\overline{M}_{g,p}(\cY_{\fm,0})]^{\virt} \,.\] 
By deformation invariance of the virtual fundamental class on 
moduli spaces of stable log maps in log smooth families, we have 
\[N_{g,p}^{Y_\fm} 
= \int_{[\overline{M}_{g,p}(\cY_{\fm,0})]^{\virt}}(-1)^g \lambda_g \,.\]

The decomposition formula of \cite{abramovich2017decomposition} gives a decomposition 
of $[\overline{M}_{g,p}(\cY_{\fm,0})]^{\virt}$ indexed by tropical curves mapping to the tropicalization of
$\cY_{\fm,0}$. These tropical curves encode 
the intersection patterns of irreducible components of stable log maps mapping to the special fiber of the degeneration.
We refer to Appendix B of
\cite{MR3011419} 
and Section 2 of \cite{abramovich2017decomposition}
for the general notion of tropicalization of a log scheme. We denote by $\Sigma (X)$
the tropicalization of a log scheme $X$. The tropicalization of a log scheme is a cone complex, ie an abstract gluing of cones.

We start by describing the tropicalization 
$\Sigma (\cY_{\fm,0})$ of 
$\cY_{\fm,0}$. Tropicalizing the log morphism $\epsilon_{\cY_{\fm, 0}}
\colon
\cY_{\fm,0}
\rightarrow \pt_\NN$, we obtain a morphism 
of cone complexes
\[ \Sigma(\epsilon_{\cY_{\fm, 0}})
\colon
\Sigma(\cY_{\fm,0})
\rightarrow \Sigma(\pt_\NN) \,.\]
We have 
$\Sigma(\pt_\NN) = \R_{\geqslant 0}$
and $\Sigma(\cY_{\fm,0})$ is naturally
identified with the cone over the fiber 
$\Sigma(\epsilon_{\cY_{\fm, 0}})^{-1}(1)$ at $1 \in \R_{\geqslant 0}$. It is thus enough to describe the polyhedral complex 
\[\cY_{\fm,0}^\trop \coloneqq\Sigma(\epsilon_{\cY_{\fm, 0}})^{-1}(1) \,.\]
The polyhedral complex $\cY_{\fm,0}^\trop$
has one vertex $v_0$ dual to $\overline{Y}_\fm$ and vertices $v_j$ dual to $\tilde{\PP}_j$ for all $1 \leqslant j \leqslant n$. 
For every 
$1 \leqslant j \leqslant n$, there is an edge $e_{j,0}$ of integral length $1$, connecting $v_0$ and $v_j$, 
dual to $D_{m_j,0}$, and an unbounded edge $e_{j,\infty}$ attached to $v_j$, dual to $D_{m_j,\infty}$.

The best way to understand  $\cY_{\fm,0}^\trop$ probably 
is to think about it as a modification of the tropicalization of $
\overline{Y}_\fm$.
As $\overline{Y}_\fm$ is simply a toric surface, its tropicalization
$\Sigma(\overline{Y}_\fm)$ can be naturally identified with $\R^2$ endowed with the fan decomposition. In particular, $\Sigma(
\overline{Y}_\fm)$ has one 
vertex $v_0 = 0 \in \R^2$ and unbounded edges $-\R_{\geqslant 0} m_j$, attached to $v_0$ and dual to the toric boundary divisors $D_{m_j}$. 
To go from $\Sigma(\overline{Y}_\fm)$ to  $\cY_{\fm,0}^\trop$, 
one adds a vertex $v_j$ on each primitive integral point of $-\R_{\geqslant 0} m_j$, which has the effect of 
splitting 
$-\R_{\geqslant 0} v_j$ into a bounded edge $e_{j,0}$ 
and an unbounded edge $e_{j,\infty}$. 
One still has to cut along $e_{j, \infty}$ and to insert there two two-dimensional cones dual to the two ``corners" of $\partial \tilde{\PP}_j$
which are on $D_{m_j,\infty}$.
In particular, for every $1 \leqslant j \leqslant n$, the vertex $v_j$ is 4-valent and looks locally like the fan of the Hirzebruch surface $\PP_j$. 
In general, there is no global linear embedding of  $\cY_{\fm,0}^\trop$ in $\R^2$.

\begin{center}
\setlength{\unitlength}{1cm}
\begin{picture}(6,6)
\thicklines
\put(3,3){\circle*{0.1}}
\put(3,3){\line(1,0){4}}
\put(3,3){\line(1,2){1.5}}
\put(3,3){\line(-2,1){4}}
\put(3,3){\line(-2,-1){4}}
\put(3,3){\line(1,-2){1.5}}
\put(5,3.2){$-\R_{\geqslant 0}m_1$}
\put(2.8,3.2){$0$}
\put(-0.5,5){$-\R_{\geqslant 0}m_2$}

\end{picture}
\end{center}

\begin{center}
Figure: tropicalization of $\overline{Y}_\fm$.
\end{center}

\begin{center}
\setlength{\unitlength}{1cm}
\begin{picture}(6,6)
\thicklines
\put(3,3){\circle*{0.1}}
\put(3,3){\line(1,0){4}}
\put(3,3){\line(1,2){1.5}}
\put(3,3){\line(-2,1){4}}
\put(3,3){\line(-2,-1){4}}
\put(3,3){\line(1,-2){1.5}}
\put(6,3){\circle*{0.1}}
\put(6,3){\line(1,2){1}}
\put(6,3){\line(1,-2){1}}
\put(1,4){\circle*{0.1}}
\put(1,4){\line(1,2){1}}
\put(1,4){\line(-2,-1){2}}
\put(2.7,3.2){$v_0$}
\put(4,3.2){$e_{1,0}$}
\put(6.5,3.2){$e_{1,\infty}$}
\put(5.75,2.7){$v_1$}
\put(1.2,4){$v_2$}
\put(1.5,3.3){$e_{2,0}$}
\put(-1,4.5){$e_{2,\infty}$}
\end{picture}
\end{center}

\begin{center}
Figure: picture of 
$\cY_{\fm,0}^\trop$.
\end{center}

We refer to Definition 2.5.3 of 
\cite{abramovich2017decomposition}
for the general definition of parametrized 
tropical curve 
$h \colon \Gamma \rightarrow 
\cY_{\fm,0}^\trop$.
It is a natural generalization of the notion of parametrized tropical curve in 
$\R^2$ that we will use and review in
Section \ref{section_tropical_count}.
In particular, $\Gamma$ is a finite graph, with bounded and unbounded edges mapped by $h$ to $\cY_{\fm,0}^\trop$ in an affine linear way and vertices $V$ of $\Sigma$ are decorated by some genus $g(V)$. The total genus $g$ of the parametrized tropical curve 
is defined by $g_\Gamma + \sum_V g(V)$, where $g_\Gamma$ is the genus of the graph 
$\Gamma$. 

Some distinction between 
$\cY_{\fm,0}^\trop$ and $\R^2$, related to the fact that the components 
$\tilde{\PP}_j$ of 
$\cY_{\fm,0}$ are non-toric, is that the usual form of the balancing condition for a tropical curve in $\R^2$ is not necessarily valid at vertices of $\Gamma$ mapping to a vertex $v_j$ of 
$\cY_{\fm,0}^\trop$ for some $1 \leqslant j \leqslant n$. 
For vertices of 
$\Gamma$ mapping away from the vertices $v_1,\dots,v_n$ of 
$\cY_{\fm,0}^\trop$, the usual balancing condition applies. 

Following Definition 4.2.1 of 
\cite{abramovich2017decomposition}, a decorated parametrized tropical curve 
is a parametrized tropical curve 
$h \colon \Gamma \rightarrow \cY_{\fm,0}^\trop$ where each vertex $V$ has a further decoration by a curve class
$\beta(V)$ in the stratum of $\cY_{\fm,0}$ dual to the stratum of $\cY_{\fm,0}^\trop$ where this vertex is mapped.
The curve class $\beta(V)$ is assumed to be numerically compatible with the tangency conditions imposed by the edges incident to 
$V$. In short, a decorated parametrized tropical curve to $\cY_{\fm,0}^\trop$
encodes all the necessary combinatorial information to be a fiber of the tropicalization of a stable log maps to
$\cY_{\fm,0}$.

The decomposition formula of \cite{abramovich2017decomposition}
involves decorated 
parametrized tropical curves which are rigid in their combinatorial type. 
This is easy to understand intuitively: the decomposition formula is supposed to describe how the moduli space of stable log maps breaks into pieces under degeneration. 
If the moduli space of tropical curves were the tropicalization, and so the dual intersection complex, of the moduli space of stable log maps, components of the 
moduli space of stable log maps should be in bijection with the zero dimensional strata of the moduli space of tropical curves, 
ie with rigid tropical curves. 
According to the decomposition formula of \cite{abramovich2017decomposition}, this intuitive picture is correct at the virtual level.

The tropical curves relevant to the study of $\overline{M}_{g,p}(\cY_{\fm,0})$ are genus $g$ decorated parametrized tropical curves 
$\Gamma \rightarrow \cY_{\fm,0}^\trop$ of type $p$, ie with only one unbounded edge, of weight $\ell_p$ and of direction $m_p$, and with total curve class $\beta_p$.
According to Section 4.4 
of \cite{abramovich2017decomposition}, 
for every such $h \colon 
\Gamma \rightarrow \cY_{\fm,0}^{\trop}$, we have the notion of stable log map marked by $h$, and a moduli space  
$\overline{M}_{g,p}^h(\cY_{\fm,0})$
of stable log maps marked by $h$,  
which is a proper Deligne-Mumford stack
equipped with a virtual fundamental class
$[\overline{M}_{g,p}^h(\cY_{\fm,0})]^\virt$. Forgetting the marking by 
$h$ gives a morphism
\[i_{h} \colon 
\overline{M}_{g,p}^{h}(\cY_{\fm,0}) 
\rightarrow \overline{M}_{g,p}^{h}(\cY_{\fm,0})\,.\]
We can finally state the decomposition formula, Theorem 4.8.1 of 
\cite{abramovich2017decomposition}:
\[[\overline{M}_{g,p}(\cY_{\fm,0})]^\virt
= \sum_{h}  
\frac{n_h}{|\Aut (h)|}
(i_{h })_* [\overline{M}_{g,p}^{h}(\cY_{\fm,0})]^\virt \,,\]
where the sum is over 
rigid genus $g$ decorated parametrized 
tropical curves 
$h \colon \Gamma \rightarrow 
\cY_{\fm,0}^\trop$
of type $p$, $n_h$ is the smallest positive integer such that after scaling by $n_h$, 
$h$ gets integral vertices and integral lengths, and $|\Aut(h)|$ is the order of the automorphism group of $h$.

\subsection{Constraints on rigid tropical curves}
\label{section_rigid}

In order to extract 
some explicit information
from the decomposition formula, the first step is to identify the rigid decorated parametrized 
tropical curves $h \colon 
\Gamma \rightarrow 
\cY_{\fm, 0}^\trop$ of type $p$. 
This is in general a difficult question.
But because we are
only interested in numerical invariants obtained by integration of the top lambda class 
$\lambda_g$, and not in the full virtual class, the situation is much simpler by the following vanishing result.

\begin{lem} \label{lem_rigid_higher_genus}
Let $h \colon \Gamma \rightarrow 
\cY_{\fm,0}^\trop$ be a rigid 
genus $g$ decorated parametrized
tropical curve of type $p$ with 
$\Gamma$ of positive genus. Then we have 
\[ \int_{[\overline{M}^h_{g,p}(\cY_{\fm,0})]^\virt} (-1)^g
\lambda_g = 0 \,.\] 
\end{lem}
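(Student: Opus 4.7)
The plan is to exploit the classical vanishing of the top Hodge class $\lambda_g$ on loci of stable curves whose dual graph has positive first Betti number, a strategy already used by the author in \cite{bousseau2017tropical}.

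First I would unpack what it means for a stable log map to be marked by $h$: the tropicalization of its source curve refines $h$ in the sense of \cite{abramovich2017decomposition}, so the dual graph $\Gamma_C$ of the source curve $C$ admits a natural contraction onto $\Gamma$. Such a contraction can only increase the first Betti number, hence the assumption that $\Gamma$ has positive genus forces $\Gamma_C$ itself to have positive first Betti number. In particular, every source curve parametrized by $\overline{M}_{g,p}^h(\cY_{\fm,0})$ carries at least one non-separating node, and the locus of such curves is the whole moduli space.

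Next I would apply the standard vanishing of the top lambda class. Writing $\pi \colon \cC \to \overline{M}_{g,p}^h(\cY_{\fm,0})$ for the universal curve and pushing forward the normalization short exact sequence for $\omega_\pi$, one obtains a surjection of the Hodge bundle $\E = \pi_{*} \omega_\pi$ onto a trivial bundle whose rank equals the first Betti number of the dual graph, produced by residues at the non-separating nodes corresponding to the cycles of $\Gamma$. Since this rank is at least $1$ over the entire moduli space, the top Chern class of the trivial quotient vanishes, and the Whitney sum formula forces
\[ \lambda_g = c_g(\E) = 0 \]
in the Chow ring of $\overline{M}_{g,p}^h(\cY_{\fm,0})$. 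Capping against the virtual class then yields the desired identity.

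The main point that requires care is the first step: verifying, directly from the definitions of \cite{abramovich2017decomposition}, that the combinatorial notion of marking by $h$ forces the dual graph of any source curve to admit a contraction onto $\Gamma$ which does not decrease the first Betti number. This is built into the construction of the moduli of stable log maps marked by a tropical type, but it should be made explicit to legitimately invoke the vanishing of $\lambda_g$. Everything else is a formal application of well-known properties of the Hodge bundle on moduli of stable curves.
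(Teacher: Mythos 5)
Your proof is correct and follows essentially the same route as the paper: the marking by $h$ forces the dual graph of every source curve to retract onto $\Gamma$, hence to have positive first Betti number, so every source curve contains a cycle of irreducible components, and the standard vanishing of $\lambda_g$ on such families (which the paper simply cites as Lemma 8 of \cite{bousseau2017tropical}, and which you re-derive via the residue surjection from the Hodge bundle) gives the result. The only difference is that you spell out the Hodge-bundle argument that the paper delegates to a reference.
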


\begin{proof}
If $f \colon C \rightarrow 
\cY_{\fm,0}^\trop$ is a 
stable log map in 
$\overline{M}^h_{g,p}(\cY_{\fm,0})$, then, by definition of the 
marking by $h$, the dual intersection complex of $C$ retracts onto $\Gamma$ and in particular, has genus bigger than the genus of $\Gamma$, which is positive by hypothesis. It follows that $C$ contains a cycle
of irreducible components.
It is then a general property of $\lambda_g$ that it vanishes on families of curves containing cycles of irreducible components
(eg see Lemma 8 of 
\cite{bousseau2017tropical}).
\end{proof}

For every  $h \colon \Gamma \rightarrow 
\cY_{\fm,0}^\trop$ a rigid
genus $g$ decorated parametrized
tropical curve of type $p$, we define
\[N_{g,p}^{h}\coloneqq\int_{[\overline{M}_{g,p}^{h}(\cY_{\fm,0})]^\virt}
(-1)^g \lambda_g \,.\]

\begin{prop} \label{prop_decomp_corrected}
We have 
\[N_{g,p}^{Y_\fm} = 
\sum_h
 \frac{n_h}{|\Aut (h)|}
N_{g,p}^{h} \,,\]
where the sum is over rigid genus $g$
decorated parametrized tropical curves 
$h \colon \Gamma \rightarrow \cY_{\fm,0}^\trop$ of type $p$ with $\Gamma$ of genus $0$.
\end{prop}

\begin{proof}
This follows from integrating 
$(-1)^g \lambda_g$ over the decomposition formula of \cite{abramovich2017decomposition},
reviewed at the end of Section 
\ref{section_decomp_formula}.
By \mbox{Lemma \ref{lem_rigid_higher_genus}},
rigid tropical curves $h \colon 
\Gamma \rightarrow \cY_{\fm,0}^\trop$ with 
$\Gamma$ of positive genus do not contribute.
\end{proof}

\begin{prop} \label{prop_rigid_corrected}
Let
$h \colon \Gamma \rightarrow \cY_{\fm,0}^\trop$ 
be a rigid genus $g$ decorated parametrized tropical curve 
of type $p$, with $\Gamma$ of genus $0$ and $N_{g,p}^h \neq 0$.
Then,
\begin{itemize}
\item for every vertex $V$ of $\Gamma$, we have $h(V)=v_j$ for some $0 \leqslant j \leqslant n$,
\item every bounded edge $E$ of $\Gamma$ connects a vertex $V_0$ with $h(V_0)=v_0$ and a vertex $V_+$ with $h(V_+)=v_j$ for some 
$1 \leqslant j \leqslant n$. In particular, the image $h(E)$ is the bounded edge $e_{j,0}$ of 
$\mathcal{Y}_{\fm,0}^{\trop}$.
\item for every vertex $V$ of 
$\Gamma$ such that $h(V)=v_j$ for some 
$1 \leqslant j \leqslant n$, there exists a unique 
index $j(V)$ such that $1 \leqslant j(V) \leqslant n$ and the curve class $\beta(V)$ is a positive multiple of the curve class $[C_{j(V)}]$. In particular, we have 
$h(V)=v_{j(V)}$.
\end{itemize}
\end{prop}

The proof of Proposition \ref{prop_rigid_corrected} takes the remainder of Section \ref{section_rigid}.
The argument is similar to the one used in the proof of Proposition 11 of 
\cite{bousseau2017tropical}, itself a
tropical version of the properness argument
given in
Proposition 4.2 of \cite{MR2667135}.
By iterative application of the balancing condition, we will argue that the source $\Gamma$ of  a rigid decorated parametrized tropical curve 
$h \colon \Gamma \rightarrow \cY_{\fm,0}^\trop$ violating the conclusions of 
Proposition \ref{prop_rigid_corrected}
necessarily contains a cycle and so has positive genus. We refer to Proposition 1.15 of \cite{MR3011419} for the general form of the balancing condition in log Gromov-Witten theory.

Let $h \colon \Gamma \rightarrow \cY_{\fm,0}^\trop$ be a
rigid
genus $g$ parametrized tropical curve of type $p$. As $h$ is rigid, there is no edge of $\Gamma$ contracted by $h$. 
The fact that $h$ has type $p$ implies that $h$ has only one unbounded edge and this unbounded edge has
weight $\ell_p$
and direction 
$m_p$.

\begin{lem} \label{lem_loop}
If there exists a vertex $V$ of 
$\Gamma$ such that 
$h(V) \notin \{v_0, v_1, \dots, v_n\}$,
then $\Gamma$ has positive genus.
\end{lem}

\begin{proof}
We first assume 
$h(V)$ is contained in the interior of one of the 
two-dimensional cones $\cC$ of 
$\cY_{\fm,0}^\trop$.
Because $h(V)$ is away from the vertices 
$v_j$, the situation is locally toric and the balancing condition has to be satisfied at $h(V)$. If $h(V) \notin \R_{\geqslant 0} m_p$, there is no unbounded edge of $\Gamma$ incident to $V$, and so by balancing, not all edges attached to $h(V)$ can point towards the vertex of $\cC$, ie at least one of those edges points towards a boundary ray of $\cC$.
If $h(V) \in \R_{\geqslant 0}m_p$, we can get the same conclusion: if all edges passing through $h(V)$ were parallel to 
$\R_{\geqslant 0} m_p$, this would contradict the rigidity of $h$ because one could move $h(V)$ along $\R_{\geqslant 0}m_p$.

Next, we follow the proof of Proposition 11 of 
\cite{bousseau2017tropical}. Fixing a cyclic orientation on the collection of cones and rays of $\cY_{\fm,0}^\trop$, we can assume that this edge points towards the left (from the point of view of the vertex of the cone, looking inside the cone) ray of $\cC$. If this 
edge ends on some vertex still contained in the interior of $\cC$, then the balancing condition still applies and so there is still an edge attached to this vertex pointing towards the left ray of $\cC$.
Because $\Gamma$ has finitely many vertices, iterating this construction finitely many times, we construct a path starting from $h(V)$ and ending at some vertex $h(V')$ on the left boundary ray of 
$\cC$. 

Let $\cC'$ be the two-dimensional cone of $\cY_{\fm,0}^\trop$ incident to 
$\cC$ near $h(V')$. Then we claim that by the balancing condition, there exists an edge attached to $h(V')$ pointing towards the left ray of $\cC'$. Indeed, the only case for which the balancing condition is not a priori satisfied is if $h(V')=v_j$ for some $j$. But at $v_j$, the non-toric nature of $\tilde{\PP}_j$ only modifies the balancing condition in the direction parallel to $e_{j,0}$ and $e_{j,\infty}$: if there is an  incoming edge with non-zero transversal direction, then there is still an outgoing edge with non-zero transversal direction. Indeed, $\tilde{\PP}_j$ is obtained from the Hirzebruch surface $\PP_j \rightarrow D_{m_j}$ by blowing-up points on the divisors $D_{m_j,\infty}$: this does not affect the fact that the general fibers of 
$\tilde{\PP}_j \rightarrow D_{m_j}$ are still linearly equivalent.

Iterating this construction, we obtain a
path in $\Gamma$ whose image by $h$ in $\cY_{\fm,0}^\trop$ is a path which intersects successive rays in the anticlockwise order.
Because $\Gamma$ has finitely many edges, this path has to close eventually and so 
$\Gamma$ contains a non-trivial cycle, ie $\Gamma$ has positive genus.

It remains to treat the case where $h(V)$ is in the interior
of a one dimensional ray of $\cY_{\fm,0}^\trop$. If all the edges attached to $h(V)$ were parallel to the ray, this would contradict the rigidity of $h$ because one could move $h(V)$ along the ray. So at least one of the edges attached to $h(V)$ is not parallel to the ray and by balancing, we can assume that there is an edge attached to $h(V)$ pointing towards the 2-dimensional cone of $\cY_{\fm,0}^\trop$ left to the ray. We can then apply the iterative argument described above.
\end{proof}

We continue the proof of Proposition \ref{prop_rigid_corrected}.
We are assuming that $\Gamma$ has genus $0$. By Lemma 
\ref{lem_loop}, every vertex $V$ of $\Gamma$ maps to one of the vertices $v_0, v_1, \dots, v_n$ of
$\Gamma$. If there were an edge connecting a vertex mapped to $v_j$ with a vertex mapped to $v_{j'}$, $1 \leqslant j,j' \leqslant n$ with $j \neq j'$, then we could apply the iterative argument used in the proof of 
Lemma \ref{lem_loop}, and this would contradict the assumption that $\Gamma$ has genus $0$. 
It follows that every bounded edge in $\Gamma$ incident to some vertex mapped to $v_j$ for some 
$1 \leqslant j \leqslant n$
is also incident to some vertex mapped to $v_0$.   

Let $V$ be a vertex of $\Gamma$ such that 
$h(V)=v_j$ for some $1 \leqslant j \leqslant n$.
As we are assuming that $m_p \neq -m_j$ for every $j$, the unique unbounded edge of $\Gamma$ is not incident to $V$, and so all edges incident to $V$ are also 
incident to a vertex mapped to $v_0$. The only curve classes numerically compatible with these tangency conditions are positive multiples of the classes $[C_j]$. Therefore, there exists a unique $1 \leqslant j(V) \leqslant n$ such that $\beta(V)$ is a positive multiple of 
$[C_{j(V)}]$. This concludes the proof of
Proposition \ref{prop_rigid_corrected}.

\subsection{Gluing formula}
\label{section_gluing}

According to 
Proposition
\ref{prop_decomp_corrected}, 
the computation of the log Gromov-Witten invariants $N_{g,p}^{Y_\fm}$ is reduced 
to the 
computation of the invariants 
\[N_{g,p}^{h}\coloneqq\int_{[\overline{M}_{g,p}^{h}(\cY_{\fm,0})]^\virt}
(-1)^g \lambda_g \,,\]
where $h \colon \Gamma 
\rightarrow \cY_{\fm,0}^{\trop}$
is a rigid genus $g$ decorated parametrized tropical curve of type $p$
with $\Gamma$ of genus $0$, and  
where $\overline{M}_{g,p}^h(\cY_{\fm,0})$
is a moduli space of stable log maps
to $\cY_{\fm,0}$
marked by $h$, ie whose 
tropicalization is equipped with a retraction to $h$.

According to Proposition \ref{prop_rigid_corrected}, 
the image by $h$ of the bounded edges of 
$\Gamma$ is contained in the 1-dimensional part of the polyhedral decomposition of 
$\mathcal{Y}_{\fm,0}^{\trop}$.
Therefore, cutting the bounded edges, we obtained well-defined numerical data to define moduli spaces $M_V$ of stable log maps attached to the vertices $V$ of $\Gamma$. More precisely, we define the moduli spaces $M_V$ as follows.

For $V$ a vertex such that $h(V)=v_0$, 
$M_V$ is the moduli space of genus $g(V)$
and class $\beta(V)$ stable log maps to $\overline{Y}_\fm$ with tangency conditions along 
$\partial \overline{Y}_\fm$ 
defined by the weighted edges of $\Gamma$ attached to $V$. As $\overline{Y}_\fm$ is toric, the class $\beta(V)$ is uniquely determined by the numerical compatibility with the tangency conditions along 
$\partial \overline{Y}_{\fm}$.

Let $V$ be a vertex such that $h(V) \neq v_0$.
By Proposition
\ref{prop_rigid_corrected}, there exists a unique $1 \leqslant j(V) \leqslant n$
such that $h(V)=v_{j(V)}$ 
and $\beta(V)$ is a positive multiple of 
$[C_{j(V)}]$. We endow 
$\tilde{\PP}_{j(V)}$ with the divisorial log structure defined by 
$\partial \tilde{\PP}_{j(V)}$
Then,  
$M_V$ is the moduli space of genus $g(V)$
class $\beta(V)$ stable log maps to $\tilde{\PP}_{j(V)}$ 
with contact orders along $\partial \tilde{\PP}_j$ defined by the weighted edges of $\Gamma$ attached to $V$. 
As such an edge always connects $V$ with a vertex mapped to $v_0$, these contact orders are only non-trivial along the divisor $D_{m_{j(V)},0}$. 
The positive integer 
$\ell(V)$ such that $\beta(V)=\ell(V)[C_{j(V)}]$ is necessarily equal to the sum of weights of edges incident to $V$ 
by numerical compatibility of $\beta(V)$ with the tangency conditions.

For every bounded edge $E$ of $\Gamma$, we define $j(E) \coloneqq j(V)$, where $V$ 
is the unique vertex of $\Gamma$ incident to $E$ such that $h(V) \neq v_0$, and we set $D_E \coloneqq D_{m_{j(E)}}$.
 We denote by $w(E)$ the weight of $E$.

By general log Gromov-Witten theory
\cite{MR3011419}, 
each moduli space $M_V$ comes with a virtual fundamental class $[M_V]^{\virt}$.
For $V$ a vertex with $h(V)=v_0$, we define
\[ N_V \coloneqq \int_{[M_V]^{\virt}}(-1)^{g(V)} \lambda_{g(V)}
\prod_{V \in E} 
\ev_E^{*}(\pt_E) \,,\]
where the product is over the bounded edges $E$ incident to $V$, $\ev_E$ is the evaluation map at the corresponding contact point with the divisor $D_E$, and $\pt_E \in A^1(D_E)$ is the class of a point on $D_E$. 
For a vertex $V$ with $h(V) \neq v_0$, we define
\[ N_V \coloneqq \int_{[M_V]^{\virt}}(-1)^{g(V)} \lambda_{g(V)}
 \,.\]

\begin{prop} \label{prop_gluing_corrected}
For every $h \colon \Gamma 
\rightarrow \cY_{\fm,0}^{\trop}$
a rigid genus $g$ decorated parametrized tropical curve of type $p$
with $\Gamma$ of genus $0$ and $N_{g,p}^h \neq 0$, we have 
\[ N_{g,p}^h = 
\frac{
\prod_E w(E)}
{\lcm \{w(E)\}_E}
\prod_V N_V \,,\]
where the index $E$ runs over bounded edges of $\Gamma$ and the index 
$V$ over vertices of $\Gamma$.
\end{prop}

The proof of Proposition \ref{prop_gluing_corrected} takes the remainder of Section \ref{section_gluing}.
The moduli space
$\overline{M}_{g,p}^h(\cY_{\fm,0})$ parametrizes stable log maps
marked by $h$, and so in general contains stable log maps whose tropicalization is not $h$ but only retracts onto 
$h$. Such stable log maps might interact in a complicated way with the log structure of $\cY_{\fm,0}$ and their general cutting and gluing properties have not been worked out yet. 

We go around this issue by following the strategy used in \mbox{Section 6} of 
\cite{bousseau2017tropical}. On an open locus of torically transverse stable maps, the above mentioned problems do not arise and the difficulty of the gluing problem is of the same level as the
usual degeneration formula in relative Gromov-Witten theory. The log version of this gluing problem has been recently treated in full detail by Kim, Lho and Ruddat 
\cite{kim2018degeneration}. On the complement of the nice locus of torically transverse stable log maps, a combinatorial argument 
described in Proposition 11 of
\cite{bousseau2017tropical} 
implies that one of the relevant curves will always contain a non-trivial cycle of components. 
By standard vanishing properties of the lambda class, it follows that we can ignore this bad locus if we only care about numerical invariants obtained by integration of a top lambda class, which is our case.
 
We give now an outline of the proof, referring to 
\cite{kim2018degeneration} and \mbox{Section 6} of 
\cite{bousseau2017tropical} for some of the steps.
We have an evaluation morphism
\[ \ev 
\colon 
\prod_V M_V
 \rightarrow 
\prod_E D_E^2 \,,\]
where the left product is over the vertices of $\Gamma$ and where the right product is over the bounded edges of $\Gamma$.
Let 
\[ \delta \colon
\prod_E D_E
\rightarrow 
\prod_E D_E^2
 \]
be the diagonal morphism.
Using the morphisms $\ev$ and $\delta$, we define the 
fiber product 
\[ \cM \coloneqq 
\left(\prod_V M_V \right)
\times_{\prod_E D_E^2}
\left( \prod_E D_E \right) \,.\]
We define a cycle class $[\cM]^\virt$ on 
$\cM$
by 
\[ [\cM]^\virt \coloneqq 
\delta^! \left(
\prod_V [M_V]^\virt
\right) \,,\]
where $\delta^!$ is the refined Gysin morphism
(see Section 6.2 of \cite{MR1644323}) defined by $\delta$.

The following Lemma will play for us the same role
played by 
\mbox{Lemma 16} in Section 6 of \cite{bousseau2017tropical}.

\begin{lem} \label{lem_edges_cy}
Let 
\begin{center}
\begin{tikzcd}
C \arrow{r}{f} \arrow{d}{\pi}
& \cY_{\fm,0} \arrow{d}{\epsilon_{\cY_{\fm,0}}}\\
W \arrow{r}{g} & \mathrm{pt}_{\NN} \,,
\end{tikzcd}
\end{center}
be a point of $\overline{M}_{g,p}^h(\cY_{\fm,0})$.
Let 
\begin{center}
\begin{tikzcd}
\Sigma(C) \arrow{r}{\Sigma(f)} \arrow{d}{\Sigma(\pi)}
& \Sigma(\cY_{\fm,0}) \arrow{d}{\Sigma(\epsilon_{\cY_{\fm,0}})}\\
\Sigma(W) \arrow{r}{\Sigma(g)} & \Sigma(\mathrm{pt}_{\NN}) \,.
\end{tikzcd}
\end{center}
be its tropicalization.
For every $b \in \Sigma(g)^{-1}(1)$,
let 
\[ \Sigma(f)_b \colon
\Sigma(C)_b \rightarrow \Sigma(\epsilon_{\cY_{\fm,0}})^{-1}
(1) =\cY_{\fm,0}^{\trop} \]
be the fiber of $\Sigma(f)$ over $b$.
For every bounded edge $E$ of 
$\Gamma$, let
$E_{\Sigma(f)_b}$ be the edge of $\Sigma(f)_b$
marked by 
$E$. Then, we have 
\[h(E_{\Sigma(f)_b}) \subset h(E)=e_{j(E),0} \,.\]
\end{lem}

\begin{proof}
This follows from the fact that $C_j$ is the unique curve in $\tilde{\PP}_j$ of class $[C_j]$.  
\end{proof}

Given a stable log map 
$f \colon C \rightarrow \cY_{\fm,0}$
marked by 
$h$, we have nodes of $C$
in correspondence with the bounded edges of 
$\Gamma$. Cutting 
$C$ along these nodes, we obtain a 
morphism 
\[ \mathrm{cut}
\colon 
\overline{M}_{g,p}^h
(Y_\fm/\partial Y_\fm)
\rightarrow \cM \,. \]
By Lemma \ref{lem_edges_cy}, each cut is locally identical to the corresponding cut in a degeneration along a smooth divisor and so we can refer to Section 6 of \cite{bousseau2017tropical} or Section 5 of \cite{kim2018degeneration} for a 
precise definition of the $\mathrm{cut}$ morphism dealing with log structures. 

We say that a stable log map 
$f \colon C\rightarrow \overline{Y}_\fm$ is torically transverse
if its image does not contain any of the torus fixed points of $\overline{Y}_\fm$, ie if its image does not pass through the ``corners" of the toric boundary divisor
$\partial \overline{Y}_\fm$, ie if its tropicalization has no vertex mapping in the interior of one of the two-dimensional cones of the fan of $\overline{Y}_\fm$.

For every vertex $V$ of $\Gamma$
such that $h(V)=v_0$, let
$M^0_V$
be the open substack of $M_V$ consisting of torically transverse stable log maps.
We define
\[ \cM^0 \coloneqq 
\left(\prod_{\{V \colon h(V)=v_0\}} 
M^0_V \times 
\prod_{\{V \colon h(V) \neq v_0\}} 
M_V \right)
\times_{\prod_E D_E^2}
\left( \prod_E D_E \right) \,,\]
\[ \overline{M}_{g,p}^{h, 0}(Y_\fm/\partial Y_\fm) \coloneqq \mathrm{cut}^{-1}(\cM^0) \,,\]
and we denote by
\[\mathrm{cut}^0 \colon \overline{M}_{g,p}^{h,0}(Y_\fm/\partial Y_\fm)
\rightarrow \cM^0 \]
the corresponding restriction of the $\mathrm{cut}$
morphism.

\begin{lem} \label{lem_etale_cy}
The morphism 
\[\mathrm{cut}^0 \colon \overline{M}_{g,p}^{h,0}(Y_\fm/\partial Y_\fm)
\rightarrow \cM^0 \]
is étale of degree
\[ \frac{
\prod_E w(E)}
{\lcm \{w(E)\}_E}
\,,\]
where the index $E$ runs over bounded edges of $\Gamma$ and $w(E)$ is the weight of $E$.
\end{lem}

\begin{proof}
Because of the restriction to the torically transverse locus, the gluing question is locally isomorphic to the corresponding gluing question
in a degeneration along a smooth divisor, and so the result follows from
formula (6.13) and \mbox{Lemma 9.2} of 
\cite{kim2018degeneration}.
In the corresponding argument in Section 6
of \cite{bousseau2017tropical}, the denominator of the formula did not appear because the relevant tropical curves 
had edges all of integral length. 
\end{proof}

Restricted to the torically transverse locus,
the comparison of obstruction theories on 
$\overline{M}_{g,p}^h
(Y_\fm/\partial Y_\fm)$ and $\cM$ reduces to the same
question studied in \mbox{Section 9} of
\cite{kim2018degeneration}
for a degeneration along a smooth divisor.
In particular, combining Lemma 
\ref{lem_etale_cy} with
formula 9.14 of \cite{kim2018degeneration}, we obtain that the cycle classes 
\[ (\mathrm{cut})_* 
([\overline{M}_{g,p}^h
(Y_\fm/\partial Y_\fm)]^\virt)\]
and 
\[\frac{
\prod_E w(E)}
{\lcm \{w(E)\}_E}
[\cM]^\virt\] have the same restriction to the open substack $\cM^0$ of $\cM$.
By
\cite[Proposition 1.8]{MR1644323},
it follows that their difference is rationally equivalent to a cycle supported on the closed
substack
\[ Z \coloneqq \cM - \cM^0 \,.\]
At a point of $Z$, the corresponding stable log map 
$f \colon C \rightarrow \overline{Y}_\fm$ to $\overline{Y}_\fm$ is not torically transverse. Using Lemma
\ref{lem_edges_cy}, we can apply \mbox{Proposition 11}
of \cite{bousseau2017tropical} to get that $C$ contains a non-trivial cycle of components. As $\lambda_g=0$ for a family of curves containing a non-trivial cycle of components (see eg Lemma 8
of \cite{bousseau2017tropical}), we deduce as in Section 6
of \cite{bousseau2017tropical} that 
\[N_{g,p}^h= \frac{\prod_E w(E)}
{\lcm \{w(E)\}_E} \int_{[\cM]^{\virt}} (-1)^g \lambda_g\,,\]
and using the gluing properties of lambda classes (see eg Lemma 9 
of \cite{bousseau2017tropical})
that 
\[ \int_{[\cM]^{\virt}} (-1)^g \lambda_g = \prod_V N_V \,.\]
For this last step, in order to compute $[\cM]^{\virt}$, we had to insert the class 
$1 \times \pt_E + \pt_E \times 1$ of the diagonal $D_E \hookrightarrow D_E^2$. 
Each bounded edge $E$ in $\Gamma$ connects a vertex $V_0$ with $h(V_0)=v_0$ and a vertex $V_+$ with $h(V_+) \neq v_0$. A stable log map in $M_{V_+}$ is a cover of the curve $C_{j(V_+)}$, which intersects the divisor $D_E$ in a specific point. Therefore, the only term in the class of the diagonal leading to a possibly non-vanishing contribution is the one with 
the insertion of $\pt_E$ on $M_{V_0}$ and the insertion of $1$ on $M_{V_+}$, which is exactly the way we defined the invariants $N_V$. This concludes the proof of 
Proposition \ref{prop_gluing_corrected}.

We remark that the most general form of 
the gluing formula in log Gromov-Witten theory, work in progress of Abramovich, Chen, Gross and Siebert, requires the use of punctured Gromov-Witten invariants;
see \cite{abramovich2017punctured}.
We do not see punctured invariants in our gluing formula because the only contributing rigid tropical curves are contained
in the 1-dimensional part of the polyhedral decomposition of 
$\cY_{\fm,0}^\trop$.

\subsection{Classification of contributing rigid tropical curves}
\label{section_contributing_tropical_curves}

\begin{lem} \label{lem_multicover}
Let $h \colon \Gamma \rightarrow \cY_{\fm,0}^{\trop}$ be a rigid genus $g$ decorated 
parametrized tropical curve of type $p$ with $\Gamma$ of genus $0$ and $N_{g,p}^h \neq 0$.
Let $V$ be a vertex of $\Gamma$ such that $h(V) \neq v_0$, so that $\beta(V)=\ell(V)[C_{j(V)}]$. Let $n_V$ be the number of bounded edges of $\Gamma$
incident to $V$. If $n_V>1$, then we have $N_V=0$. If $n_V=1$, then $N_V$ is the coefficient of $\hbar^{2g(V)-1}$ in 
\[ \frac{(-1)^{\ell(V)-1}}{\ell(V)} \frac{1}{2 \sin \left( \frac{\ell(V) \hbar}{2} \right)}\,.\]
\end{lem}

\begin{proof}
As the curve $C_{j(V)} \simeq \PP^1$ is rigid in 
$\tilde{\PP}_{j(V)}$, with normal bundle $\cO_{\PP^1}(-1)$, every stable log map
defining a point of $M_V$ factors through $C_{j(V)}$. Therefore, the moduli space $M_V$
coincides with the moduli space $M_V(\PP^1/\infty)$
of genus $g(V)$ degree $\ell(V)$ stable log maps to $\PP^1$, relative to $\infty \in \PP^1$
with $n_V$ contact points above $\infty$ and contact orders given by the weights of the $n_V$ bounded edges incident to $V$. However, the surface obstruction theory on $M_V$ defining the class $[M_V]^{\virt}$ differs from the curve obstruction theory 
on $M_V(\PP^1/\infty)$ defining the class $[M_V(\PP^1/\infty)]^{\virt}$.
Denoting by $\pi \colon \cC \rightarrow M_V(\PP^1/\infty)$ the universal source log curve and by $f \colon \cC \rightarrow 
\PP^1$ the universal log map, the two obstruction theories differ by 
\[R^1 \pi_* f^{*} N_{C_{j(V)}|\tilde{\PP}_{j(V)}}=R^1 \pi_* f^{*} \cO_{\PP^1}(-1)\,,\]
and so
\[N_V
=\int_{[M_V(\PP^1/\infty)]^\virt}
e \left(R^1 \pi_* f^* (\cO_{\PP^1} \oplus
\cO_{\PP^1}(-1))\right) \,, \]
where $e(-)$ is the Euler class. The virtual dimension of $[M_V(\PP^1/\infty)]^\virt$
is \[2g(V)-2+\ell(V)+n_V\,,\] whereas the complex degree of the integrand is 
\[ 2g(V)-1+\ell(V)\,.\] Therefore, we have $N_V=0$ for $n_V \neq 1$. To compute 
$N_V$ for $n_V=1$, we remark that log Gromov-Witten invariants of $(\PP^1,\infty)$
coincide with relative Gromov-Witten invariants of $(\PP^1,\infty)$ by the general log/relative comparison theorem of \cite{abramovich2012comparison} for a smooth divisor.
The corresponding relative Gromov-Witten invariants of $(\PP^1,\infty)$ have been computed by Bryan and Pandharipande in \cite{MR2115262}; see the proof of Theorem 5.1 in \cite{MR2115262}.
\end{proof}

Let $h \colon \Gamma \rightarrow \cY_{\fm,0}^{\trop}$ be a rigid genus $g$ decorated 
parametrized tropical curve of type $p$ with $\Gamma$ of genus $0$ and $N_{g,p}^h \neq 0$. 
By Proposition \ref{prop_rigid_corrected}, every vertex $V$ of $\Gamma$ is mapped by $h$ 
to some $v_j$ for some $0 \leqslant j \leqslant n$, and every bounded edge $E$ of $\Gamma$ connects a vertex 
$V_0$ with $h(V_0)=v_0$ and a vertex $V_+$ with $h(V_+)=v_j$ for some $1 \leqslant j \leqslant n$.
Furthermore, the combination of Proposition \ref{prop_gluing_corrected} and 
Lemma \ref{lem_multicover} shows that every vertex $V$ of $\Gamma$ with $h(V) \neq v_0$
is incident to a single bounded edge. As $\Gamma$ is connected, this implies that there is a unique vertex $V_0$ in $\Gamma$ with $h(V_0)=v_0$.
In other words, $h \colon \Gamma \rightarrow \cY_{\fm,0}^{\trop}$
is one of the rigid genus $g$ decorated parametrized tropical curves $h_{k,\vec{g}} \colon \Gamma_{k,\vec{g}}
\rightarrow \cY_{\fm,0}^\trop$ defined as follows.

Recall that we defined
in
\mbox{Section \ref{section_degeneration_statement}}
 what a partition of $p$ is and that 
we associated to such partition $k$ of $p$ a 
positive integer $s(k)$ and a 
$s(k)$-tuple 
$(w_1(k),\dots,w_{s(k)}(k))$ of non-zero
vectors in $M=\Z^2$.
In particular, each $w_r(k)$ 
can be naturally written 
$w_r(k)=\ell m_j$ for some $\ell \geqslant 0$ and some $1 \leqslant j \leqslant n$.
For every partition $k$ of $p$
and for every $\vec{g}=
(g_0, g_1, \dots, g_{s(k)})$, 
an $(s(k)+1)$-tuple of nonnegative integers such that 
$|\vec{g}| \coloneqq g_0+\sum_{r=1}^{s(k)}
g_r = g$,
we define a rigid genus $g$ decorated 
parametrized tropical curve 
$h_{k,\vec{g}} \colon \Gamma_{k,\vec{g}}
\rightarrow \cY_{\fm,0}^\trop$.

Let $\Gamma_{k,\vec{g}}$ 
be the genus $0$ graph consisting of 
vertices $V_0, V_1, \dots, V_{s(k)}$,
for every $1 \leqslant r \leqslant s(k)$
bounded edges 
$E_r$
connecting 
$V_0$ to $V_r$, and an unbounded edge $E_p$ attached to $V_0$.
We define a structure of tropical curve on 
$\Gamma_{k,\vec{g}}$ by assigning:
\begin{itemize}
\item Genera to the vertices. We assign $g_0$ to $V_0$, and 
$g_r$ to $V_r$, for all 
$1 \leqslant r \leqslant s(k)$.
\item Lengths to the bounded edges. We assign the length 
\[ \ell(E_r) \coloneqq \frac{1}{|w_r(k)|}=\frac{1}{\ell} \] 
to the bounded edge $E_r$, 
for all $1 \leqslant r \leqslant s(k)$.
\end{itemize}

Finally, we define a decorated parametrized tropical curve 
\[ h_{k,\vec{g}} \colon
\Gamma_{k,\vec{g}}
\rightarrow \cY_{\fm,0}^\trop\] by the following data:

\begin{itemize}
\item We define $h_{k,\vec{g}}(V_0)\coloneqq v_0$, and, writing $w_r(k)=\ell m_j$, $h_{k,\vec{g}}(V_r) \coloneqq v_j$, 
for all 
$1 \leqslant r \leqslant s(k)$.
\item Edge markings of bounded edges. We define $v_{V_0, E_r}
\coloneqq w_r$ for all 
$1 \leqslant r \leqslant s(k)$.
In particular, the bounded edge $E_r$ has weight $|w_r(k)|=\ell$.
This is a valid choice because 
\[ h(V_r)-h(V_0) = m_j = \frac{1}{\ell}\ell m_j
=\ell(E_r) v_{V_0,E_r}\,.\] 
This uniquely specifies an affine linear map $h_{k,\vec{g}}|_{E_r}$. 
\item Edge marking of the unbounded edge.
We define $v_{V_0,E_p}
\coloneqq \ell_p m_p$. In particular, the unbounded edge $E_p$ has weight 
$\ell_p$.  This uniquely specifies an affine linear map 
$h_{k,\vec{g}}|_{E_p}$.
\item Decoration of vertices by curve classes. We decorate $V_0$
with the curve class 
$\beta_{w(k)}
\in H_2(\overline{Y}_\fm,\Z)$.
Writing $w_r(k)=\ell m_j$, we decorate the vertex $V_r$ with the curve class 
$\ell [C_j] \in H_2(\tilde{\PP}_j,\Z)$.
\end{itemize}

\begin{center}
\setlength{\unitlength}{1cm}
\begin{center}
Figure: picture of $\Gamma_{k,\vec{g}}$.\\
\end{center}

\begin{picture}(6,6)
\thicklines
\put(3,3){\circle*{0.1}}
\put(3,3){\line(1,0){3}}
\put(3,3){\line(1,2){1.5}}
\put(3,3){\line(-2,1){2}}
\put(3,3){\line(1,-2){1.5}}
\put(6,3){\circle*{0.1}}
\put(1,4){\circle*{0.1}}
\put(4.5,0){\circle*{0.1}}
\put(4,0){$V_2$}
\put(3.2,1.5){$E_2$}
\put(2.7,3.2){$V_0$}
\put(4.5,3.2){$E_1$}
\put(5.75,3.2){$V_1$}
\put(0.5,4){$V_3$}
\put(1.5,3.3){$E_3$}
\put(3.5,5){$E_p$}
\end{picture}
\end{center}

We summarize our classification of contributing rigid tropical curves by
the following Proposition.

\begin{prop} \label{prop_classification}
Every rigid genus $g$ decorated parametrized tropical curve \[ h \colon \Gamma \rightarrow \cY_{\fm,0}^\trop\] of type $p$ with $\Gamma$ of genus zero
and $N_{g,p}^h \neq 0$,
is of the form $h_{k,\vec{g}}$ for some
partition $k$ of $p$ and some
$(s(k)+1)$-tuple
$\vec{g}=(g_0,g_1,\dots
, g_{s(k)})$ 
of nonnegative integers such that 
$|\vec{g}|=g$. 
\end{prop}

In the remainder of Section \ref{section_contributing_tropical_curves}, we compute
for $h=h_{k,\vec{g}}$ 
the numerical factors $n_h$ and $|\Aut(h)|$ entering in the decomposition formula of 
Proposition \ref{prop_decomp_corrected}.
We fix $k$ a partition of $p$ and $\vec{g}$ such that $|\vec{g}|=g$ and we consider the decorated parametrized tropical
curve $h \colon \Gamma_{k,\vec{g}}
\rightarrow \cY_{\fm,0}^\trop$. 

\begin{lem} \label{lem_n}
We have 
\[ n_{h_{k,\vec{g}}} = \lcm \{|w_r(k)|, 1 \leqslant r \leqslant s(k) \} \,.\]
\end{lem}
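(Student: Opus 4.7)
The plan is to unwind the definition of $n_{h_{k,\vec{g}}}$ given in Section \ref{section_decomp_formula} and check directly that scaling by the claimed lcm is necessary and sufficient to produce integral vertices and integral edge lengths.

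First I would recall the explicit data defining $h_{k,\vec{g}}$: the source $\Gamma_{k,\vec{g}}$ has vertices $V_0,V_1,\dots,V_{s(k)}$, bounded edges $E_r$ of length $l(E_r)=1/|w_r(k)|$ joining $V_0$ to $V_r$, and an unbounded edge $E_p$ attached to $V_0$. The map sends $V_0 \mapsto v_0$ and $V_r \mapsto v_j$ whenever $w_r(k)=lm_j$. Since $v_0$ is the origin of $\cY_{\fm,0}^\trop$ and each $v_j$ lies at the primitive integral point of the ray $-\R_{\geqslant 0}m_j$, all vertices already map to integral points of $\cY_{\fm,0}^\trop$; scaling any positive integer preserves this integrality. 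Hence the only obstruction to integrality comes from the lengths of the bounded edges.

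Next, scaling $h_{k,\vec{g}}$ by a positive integer $n$ multiplies each bounded length by $n$, producing $n \cdot l(E_r)=n/|w_r(k)|$. This is an integer for every $r \in \{1,\dots,s(k)\}$ if and only if $|w_r(k)|$ divides $n$ for each $r$. The smallest positive integer with this property is precisely $\lcm\{|w_r(k)| : 1 \leqslant r \leqslant s(k)\}$, and for this value of $n$ the vertex map remains integral by the previous paragraph. Therefore $n_{h_{k,\vec{g}}}=\lcm\{|w_r(k)| : 1 \leqslant r \leqslant s(k)\}$, as claimed.

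I do not expect any real obstacle here: the statement is a direct computation from the explicit combinatorial description of $h_{k,\vec{g}}$. The only point to keep in mind is the slightly degenerate case (mentioned in the footnote of Section \ref{section_rigid}) in which $m_p$ coincides with some $-m_j$; in that case one adds a $2$-valent vertex $V_p$ mapped to $v_j$, but this vertex still lies at an integral point of $\cY_{\fm,0}^\trop$ and no additional bounded edge length is introduced, so the same argument goes through unchanged.
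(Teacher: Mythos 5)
Your argument is correct and is essentially the paper's own proof: both observe that the vertices of $h_{k,\vec{g}}$ already land on integral points of $\cY_{\fm,0}^\trop$, so the only obstruction is the fractional lengths $1/|w_r(k)|$ of the bounded edges, forcing $n_{h_{k,\vec{g}}}$ to be the stated lcm. Your extra remark about the degenerate $2$-valent vertex case is a harmless refinement of the same computation.
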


\begin{proof}
Recall that $n_{h_{k,\vec{g}}}$ is the 
smallest positive integer such that after
scaling by $n_{k_{k,\vec{g}}}$, $h_{k,\vec{g}}$ gets integral vertices and integral lengths. By definition of $h_{k,\vec{g}}$, vertices of $h_{k,\vec{g}}$
are already mapped to integral points of 
$\cY_{\fm,0}^\trop$. On the other hand, bounded edges $E_r$ of $\Gamma_{k,\vec{g}}$ have fractional lengths $1/|w_r(k)|$. It follows that $n_{k,\vec{g}}$ is the least common 
multiple of the positive integers
$|w_r(k)|$, $1 \leqslant r \leqslant s(k)$.
\end{proof}

For 
$1 \leqslant j \leqslant n$, 
$\ell \geqslant 1$
and $a \geqslant 0$, denote by
$k_{\ell, j,a}$ the number of vertices
of $\Gamma_{k,\vec{g}}$ having genus $a$ among the $k_{\ell, j}$ ones having curve class decoration 
$\ell [C_j]$. 
Note that we have 
\[k_{\ell, j} = \sum_{a \geqslant 0} 
k_{\ell, j,a} \,,\]
and 
\[ \sum_{r=1}^{s(k)}
g_r = \sum_{j=1}^n
\sum_{\ell \geqslant 1}
\sum_{a \geqslant 0} a k_{\ell, j,a}\,.\]

\begin{lem} \label{lem_aut}

The order of the automorphism group of 
the decorated parametrized tropical curve 
$h_{k, \vec{g}} \colon \Gamma_{k,\vec{g}} \rightarrow 
\cY_{\fm,0}^\trop$ is given by 
\[ |\Aut (h_{k,
\vec{g}})|
= \prod_{j=1}^n \prod_{\ell \geqslant 1} \prod_{a \geqslant 0}
k_{\ell, j,a}! \,.\]
\end{lem}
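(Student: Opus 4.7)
The plan is a direct bookkeeping argument, using the explicit form of $h_{k,\vec{g}}$ from its construction. First I would observe that $V_0$ is the unique vertex of $\Gamma_{k,\vec{g}}$ mapping to $v_0 \in \cY_{\fm,0}^\trop$, and that it is the unique vertex incident to the unbounded edge $E_p$. Hence any automorphism of $h_{k,\vec{g}}$ fixes both $V_0$ and $E_p$. Moreover, each bounded edge $E_r$ joins the fixed vertex $V_0$ to a one-valent vertex $V_r$, so edge-reversals are impossible and an automorphism is simply a permutation $\sigma$ of $\{1,\dots,s(k)\}$ acting on the pairs $(V_r, E_r)$.

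Next I would characterize which permutations $\sigma$ give an automorphism. Preservation of the map to $\cY_{\fm,0}^\trop$ forces $h_{k,\vec{g}}(V_r) = h_{k,\vec{g}}(V_{\sigma(r)})$, i.e.\ the same index $j$, where $w_r(k) = l m_j$. Preservation of edge weights forces $|w_r(k)| = |w_{\sigma(r)}(k)|$, i.e.\ the same $l$. Preservation of the genus decoration forces $g_r = g_{\sigma(r)}$, i.e.\ the same $a$. Finally, preservation of the curve class decoration on $V_r$ (namely $l[C_j]$) is automatic from the matching of $l$ and $j$. Conversely, any $\sigma$ satisfying these conditions is easily checked to define an automorphism.

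Therefore $\Aut(h_{k,\vec{g}})$ is exactly the subgroup of permutations of $\{1,\dots,s(k)\}$ that preserve the partition of this set according to the triples $(l,j,a)$. By construction of $h_{k,\vec{g}}$, the block indexed by $(l,j,a)$ has cardinality $k_{lja}$, and the automorphism group is the product of symmetric groups on these blocks:
\[ |\Aut(h_{k,\vec{g}})| = \prod_{j=1}^n \prod_{l \geq 1} \prod_{a \geq 0} k_{lja}!\,. \]
No real obstacle arises here; the lemma is purely combinatorial once one has the explicit description of $\Gamma_{k,\vec{g}}$ and its decoration. The only point that requires care is to confirm that no nontrivial automorphism fixes the index set pointwise, which is immediate because the $V_r$ are distinguishable leaves and each $E_r$ has distinct endpoints.
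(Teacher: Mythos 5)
Your argument is correct and is essentially the same as the paper's: both identify the automorphisms as exactly the permutations of the leaf vertices $V_r$ preserving the triple $(l,j,a)$ of weight/target-vertex/genus (and hence the curve class decoration), yielding the product of factorials $\prod k_{lja}!$. Your write-up is just slightly more explicit about why $V_0$ and $E_p$ are fixed and why no edge reversals can occur.
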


\begin{proof}
For every $1 \leqslant j \leqslant n$, 
$\ell \geqslant 1$ and 
$a \geqslant 0$, there are $k_{\ell, j,a}$ of the vertices $V_r$ having the same curve class decoration $\ell [C_j]$, the same genus $a$, and the attached edges have the same weight $\ell m_j$, so permutations of these $k_{\ell, j,a}$ vertices define automorphisms of the decorated tropical curve $h_{k, \vec{g}}$. Any other permutation of the vertices of $\Sigma_k$ permutes vertices having different curve class decorations and/or different genus, and so cannot be an automorphism of the decorated tropical curve.
\end{proof}

\subsection{End of the proof of the degeneration formula}
\label{section_end_proof_deg}

In this Section, we end the proof of Proposition \ref{prop_degeneration}.
By Proposition \ref{prop_decomp_corrected}, 
we have 
\[N_{g,p}^{Y_\fm} = 
\sum_h
 \frac{n_h}{|\Aut (h)|}
N_{g,p}^{h} \,,\]
where the sum is over rigid genus $g$
decorated parametrized tropical curves 
$h \colon \Gamma \rightarrow \cY_{\fm,0}^\trop$ of type $p$ with $\Gamma$ of genus $0$.
By Proposition \ref{prop_classification}, such tropical curves $h$ with 
$N_{g,p}^h$ are necessarily of the form 
$h_{k,\vec{g}}$ for some $k$
partition of $p$ and $\vec{g}=(g_0,g_1,\dots
, g_{s(k)})$ some
$(s(k)+1)$-tuple of nonnegative integers such that 
$|\vec{g}|=g$. For $h=h_{k,\vec{g}}$, the factors $n_h$
and $|\Aut(h)|$ are computed by
 Lemmas 
\ref{lem_n} and \ref{lem_aut}. On the other hand, 
$N_{g,p}^{h_{k,\vec{g}}}$ can be computed using the gluing formula
of Proposition \ref{prop_gluing_corrected} and Lemma \ref{lem_multicover}.
After simplification, we obtain the formula of Proposition \ref{prop_degeneration}.

\section{Scattering and tropical curves} \label{section_tropical}

In this Section, we review the connection established in \cite{MR3383167}
between quantum scattering diagrams and refined tropical curve counting.

\subsection{Refined tropical curve counting}
\label{section_tropical_count}
In this Section, we review the
definition of the refined tropical curve counts used in 
\cite{MR3383167}. The relevant tropical curves are identical 
to those considered in
\cite{MR2667135}. The only difference is that they are counted with the Block-Göttsche 
refined multiplicity
\cite{MR3453390}, $q$-deformation of the usual 
Mikhalkin multiplicity
\cite{MR2137980}.

We first recall the definition of a parametrized tropical curve to $\R^2$
by simply repeating the presentation we gave in 
\cite{bousseau2017tropical}.
For us, a graph $\Gamma$ has a finite set 
$V(\Gamma)$ of vertices, a finite set $E_f(\Gamma)$ 
of bounded edges connecting pairs of vertices and a finite set 
$E_\infty (\Gamma)$ of legs attached to vertices that we view as unbounded edges. 
By edge, we refer to a bounded or unbounded edge.
We will always consider connected graphs.
 
A parametrized tropical curve $h \colon \Gamma \rightarrow \R^2$ 
is the following data:
\begin{itemize}
\item A nonnegative integer $g(V)$ for each vertex $V$, called the genus of $V$.
\item A labeling of the elements  of the set $E_\infty(\Gamma)$.
\item A vector $v_{V,E} \in \Z^2$ for every vertex $V$ 
and $E$ an edge incident to $V$. If $v_{V,E}$
is not zero, the divisibility $|v_{V,E}|$ of $v_{V,E}$ in $\Z^2$ 
is called the weight of $E$ and is denoted
by $w(E)$.
We require that $v_{V,E} \neq 0$ if $E$ is unbounded and 
that for every vertex $V$, the following balancing condition is 
satisfied:
\[\sum_E v_{V,E} =0 \,,\]
where the sum is over the edges $E$ incident to 
$V$. If $E$ is an unbounded edge, we write 
$v_E$ for $v_{V,E}$, where $V$ is the unique vertex to which $E$ is attached.
\item A nonnegative real number $\ell(E)$ for every bounded
edge of $E$, called the length of $E$.
\item A proper map $h \colon \Gamma \rightarrow \R^2$
such that 
\begin{itemize}
\item If $E$ is a bounded edge connecting the vertices $V_1$ and $V_2$, 
then $h$ maps $E$ affine linearly on the line segment connecting 
$h(V_1)$ and $h(V_2)$, and 
\[h(V_2)-h(V_1) = \ell(E)v_{V_1,E}\,.\]
\item If $E$ is an unbounded edge of vertex $V$, then 
$h$ maps $E$ affine linearly to the ray 
$h(V)+\R_{\geqslant 0} v_{V,E}$.
\end{itemize}
\end{itemize}
The genus $g_h$ of a parametrized tropical 
curve $h \colon \Gamma \rightarrow \R^2$ 
is defined by
\[g_h \coloneqq g_\Gamma + \sum_{V \in V(\Gamma)} g(V) \,,\]
where $g_\Gamma$ is the genus of the graph 
$\Gamma$.

Let $w=(w_1,\dots,w_s)$ be a $s$-tuple
of non-zero vectors in $M$. We fix 
$x=(x_1, \dots, x_s)$
and element of $(\R^2)^s$. 
We say that a parametrized tropical curve 
$h \colon \Gamma \rightarrow \R^2$ is of type $(w,x)$ if $\Gamma$ has exactly 
$s+1$ unbounded edges, labeled 
$E_0, E_1, \dots, E_s$, such that 
\begin{itemize}
\item $v_{E_0}=\sum_{r=1}^s w_r$,
\item $v_{E_r}=-w_r$ for every
$1 \leqslant r \leqslant s$,
\item $E_r$ asymptotically coincides with 
the half-line
$-\R_{\geqslant 0}w_r + x_r$ for every 
$1 \leqslant r \leqslant s$.
\end{itemize}

Let $T_{w,x}$ be the set of genus $0$ parametrized tropical curves 
$h \colon \Gamma \rightarrow \R^2$ of type 
$(w,x)$ without contracted edges.  
If $x \in (\R^2)^s$ is general enough
(in some appropriate open dense subset), then it follows from \cite{MR2137980}
or \cite{MR2259922} that $T_{w,x}$ is a finite set, and that if 
$(h \colon \Gamma \rightarrow \R^2) \in T_{w,x}$, then 
$\Gamma$ is trivalent and $h$ is an immersion (distinct vertices have distinct images and two distinct edges have at most one point in common in their images).

For $h \colon \Gamma \rightarrow \R^2$ a 
parametrized tropical curve in $\R^2$ and $V$ a trivalent vertex with incident edges
$E_1$, $E_2$ and $E_3$, the multiplicity of 
$V$ is the integer defined by 
\[ m(V) \coloneqq |\det (v_{V,E_1}, v_{V,E_2})| \,.\]
Thanks to the balancing condition 
\[v_{V,E_1}+v_{V,E_2}+v_{V,E_3}=0 \,,\]
this definition is symmetric in 
$E_1, E_2, E_3$.
The Block-Göttsche
\cite{MR3453390} multiplicity of $V$
is a Laurent polynomial in a formal variable $q^{\frac{1}{2}}$:
\[ [m_V]_q \coloneqq
\frac{q^{\frac{m(V)}{2}}-q^{-\frac{m(V)}{2}}}{q^{\frac{1}{2}}-q^{-\frac{1}{2}}} 
=q^{-\frac{m(V)-1}{2}}(1+q+\dots 
+ q^{\frac{m(V)-1}{2}}) 
\in \NN[q^{\pm \frac{1}{2}}] \,. \]

For $h \colon \Gamma \rightarrow \R^2$ a parametrized tropical curve with $\Gamma$ trivalent, the refined multiplicity of $h$ is defined by 
\[ m_h (q^{\frac{1}{2}}) \coloneqq \prod_{V
\in V(\Gamma)} [m(V)]_q \,,\]
where the product is over the
vertices of $\Gamma$.

If $x \in (\R^2)^s$ is in general position, we count the elements of $T_{w,x}$ with refined multiplicities and we obtain a refined count of tropical curves:
\[ N^{\trop}_{w,x}(q^{\frac{1}{2}}) \coloneqq
\sum_{h \colon \Gamma \rightarrow \R^2}
m_h(q^{\frac{1}{2}}) \in \NN[q^{\pm \frac{1}{2}}] \,.\]
According to Itenberg-Mikhalkin 
\cite{MR3142257}, $N_{w,x}^{\trop}(q^{\frac{1}{2}})$ does not depend on $x$ if $x$ is general.
This also follows from the correspondence theorem of 
\cite{bousseau2017tropical}. 
Therefore, we simply denote by $N_{w}^\trop (q^{\frac{1}{2}})$ the corresponding invariant.

\subsection{Elementary quantum scattering}
Let $m_1$ and $m_2$ be two
non-zero vectors in $M=\Z^2$. Let 
$\hat{\fD}$ be the quantum scattering diagram over an Artinian ring $R$ consisting of two 
incoming rays 
$-\R_{\geqslant 0} m_1$ and $-\R_{\geqslant 0} m_2$
equipped with the 
Hamiltonians
\[ \hat{H}_1= \frac{f_1}{q^{\frac{1}{2}} - q^{-\frac{1}{2}}}\hat{z}^{m_1} \,,\]
and 
\[ \hat{H}_2= \frac{f_2}{q^{\frac{1}{2}} - q^{-\frac{1}{2}}} \hat{z}^{m_2}\,,\]
where $f_1, f_2 \in R$
satisfy 
$f_1^2=f_2^2=0$.
Let $S(\hat{\fD})$ be the resulting 
consistent quantum scattering diagram
given by Proposition \ref{prop_consistency}. 
The following result is Lemma 4.3 of 
\cite{MR3383167}.

\begin{lem} \label{lem_elementary_scattering}
The consistent quantum scattering diagram $S(\hat{\fD})$ is obtained from
$\hat{\fD}$ by adding three outgoing rays:
\begin{itemize}
\item $(\R_{\geqslant 0} m_1, \hat{H}_1)$
\item $(\R_{\geqslant 0} m_2, \hat{H}_2)$
\item $(\R_{\geqslant 0} (m_1+m_2), \hat{H}_{12})$, where 
\[ \hat{H}_{12} \coloneqq [\langle m_1, m_2 \rangle]_q \frac{f_1 f_2}{q^{\frac{1}{2}} - q^{-\frac{1}{2}}}
\hat{z}^{m_1+m_2} \,,\]
and 
\[ [\langle m_1,m_2 \rangle]_q
\coloneqq \frac{q^{\frac{\langle m_1,m_2 \rangle}{2}}-q^{-\frac{\langle m_1, m_2 \rangle}{2}}}{q^{\frac{1}{2}}-q^{-\frac{1}{2}}}\,.\]
\end{itemize}
\end{lem}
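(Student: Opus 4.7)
The plan is to verify directly that augmenting $\hat{\fD}$ by the three outgoing rays listed produces a consistent quantum scattering diagram; by the uniqueness part of Proposition \ref{prop_consistency}, this augmented diagram must then coincide with $S(\hat{\fD})$. The core of the verification is a short Heisenberg-type computation made possible by the nilpotency assumption $f_1^2 = f_2^2 = 0$.

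The first step I would take is to exploit the nilpotency. Since $\hat{z}^{m_i} \cdot \hat{z}^{m_i} = \hat{z}^{2m_i}$ (the $q$-factor being trivial because $\langle m_i, m_i\rangle = 0$) and $(f_1 f_2)^2 = 0$, I get immediately $\hat{H}_1^2 = \hat{H}_2^2 = \hat{H}_{12}^2 = 0$. In particular $\exp(\hat{H}_j) = 1 + \hat{H}_j$ for $j \in \{1, 2, 12\}$, and any BCH expansion in these elements truncates after a finite number of commutators.

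Next I would perform a single one-line calculation in the quantum torus: using $\hat{z}^{m_1} \hat{z}^{m_2} = q^{\frac{1}{2}\langle m_1,m_2\rangle} \hat{z}^{m_1+m_2}$ one finds
\[ [\hat{H}_1, \hat{H}_2] = \frac{f_1 f_2}{(q^{1/2}-q^{-1/2})^2}\bigl( q^{\frac{1}{2}\langle m_1,m_2\rangle} - q^{-\frac{1}{2}\langle m_1,m_2\rangle}\bigr) \hat{z}^{m_1+m_2} = \hat{H}_{12}\,. \]
Moreover $[\hat{H}_1, \hat{H}_{12}]$ and $[\hat{H}_2, \hat{H}_{12}]$ are proportional to $f_1^2 f_2$ and $f_1 f_2^2$ respectively, hence vanish. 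So $\hat{H}_{12}$ is central in the Lie subalgebra generated by $\hat{H}_1$ and $\hat{H}_2$, which is a quotient of the three-dimensional Heisenberg algebra. The truncated BCH formula then gives
\[ \exp(\hat{H}_1)\exp(\hat{H}_2) = \exp\bigl(\hat{H}_1 + \hat{H}_2 + \tfrac{1}{2}\hat{H}_{12}\bigr) = \exp(\hat{H}_2)\exp(\hat{H}_{12})\exp(\hat{H}_1)\,, \]
and applying $\Ad$ yields the wall-crossing identity $\hat{\Phi}_{\hat{H}_1} \hat{\Phi}_{\hat{H}_2} = \hat{\Phi}_{\hat{H}_2} \hat{\Phi}_{\hat{H}_{12}} \hat{\Phi}_{\hat{H}_1}$.

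To finish, I would take a small counterclockwise loop $\gamma$ around the origin and assemble $\theta_{\gamma, \fD}$ for the augmented diagram. With the sign conventions of Section \ref{section_scattering_diag} the five crossings contribute $\hat{\Phi}_{\hat{H}_2}^{-1} \hat{\Phi}_{\hat{H}_1}^{-1} \hat{\Phi}_{\hat{H}_2} \hat{\Phi}_{\hat{H}_{12}} \hat{\Phi}_{\hat{H}_1}$, which equals $\id$ by the identity just established. The main obstacle is not the algebra (the short Heisenberg computation above) but the careful bookkeeping of the cyclic order of the five rays and the signs $\epsilon_r$ in this last step, uniformly across both signs of $\langle m_1, m_2\rangle$; the degenerate case $\langle m_1, m_2\rangle = 0$ collapses automatically since then $\hat{H}_{12} = 0$ and the third ray is trivial.
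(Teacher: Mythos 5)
Your proposal is correct and follows essentially the same route as the paper: the key step in both is the computation $[\hat{H}_1,\hat{H}_2]=\hat{H}_{12}$ together with the nilpotency $f_1^2=f_2^2=0$ and the truncated Baker-Campbell-Hausdorff formula, yielding the identity $\hat{\Phi}_{\hat{H}_1}\hat{\Phi}_{\hat{H}_2}=\hat{\Phi}_{\hat{H}_2}\hat{\Phi}_{\hat{H}_{12}}\hat{\Phi}_{\hat{H}_1}$ (which the paper phrases equivalently as a group commutator equaling $\hat{\Phi}_{[\hat{H}_1,\hat{H}_2]}$). Your explicit bookkeeping of the five crossings and signs around the loop is, if anything, slightly more careful than the paper's.
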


\begin{proof}
Using 
\[ [\hat{z}^{m_1},\hat{z}^{m_2}]=
\left(q^{\frac{\langle m_1,m_2 \rangle}{2}}-q^{-\frac{\langle m_1, m_2 \rangle}{2}} \right) \hat{z}^{m_1+m_2} \,,\]
we compute that 
\[[\hat{H}_1, \hat{H}_2]
=[\langle m_1,m_2 \rangle]_q
\frac{f_1 f_2}{
q^{\frac{1}{2}}-q^{-\frac{1}{2}}} 
\hat{z}^{m_1+m_2} \,.\]
As $f_1^2=f_2^2=0$, it follows 
that $\hat{H}_1$ and $\hat{H}_2$
commute with $[\hat{H}_1, \hat{H}_2]$.
Using an easy case of the Baker-Campbell-Hausdorff formula, according to which 
$e^a e^b=e^{a+b+\frac{1}{2}[a,b]}$
if $a$ and $b$ commute with 
$[a,b]$, we obtain 
\[e^{\hat{H}_2}e^{-\hat{H_1}}e^{-\hat{H}_2}e^{\hat{H}_1}
=e^{[\hat{H}_1, \hat{H}_2]} \,,\]
and so 
\[ 
\hat{\Phi}_{\hat{H}_2}^{-1} \hat{\Phi}_{\hat{H}_1} \hat{\Phi}_{\hat{H}_2} \hat{\Phi}_{\hat{H}_1}^{-1} 
=\hat{\Phi}_{[\hat{H}_1, \hat{H}_2]}\,,\]
hence the result 
\end{proof}
\subsection{Quantum scattering from refined tropical curve counting}

In this section, we review the result of Filippini and Stoppa \cite{MR3383167}
expressing the Hamiltonians attached to the rays of the consistent quantum scattering diagram $S(\hat{\fD}_m)$,
defined in Section
\ref{section_statement},
in tropical terms.
We use the notations introduced at the beginning of \mbox{Section
\ref{section_degeneration_statement}}.

\begin{prop} \label{prop_scattering_tropical}
For every $\fm = (m_1,\dots,m_n)$
an $n$-tuple of primitive non-zero vectors in $M$
and for every
$m \in M-\{0\}$,
the Hamiltonian $\hat{H}_m$
attached to the outgoing ray $\R_{\geqslant 0}m$
in the consistent quantum scattering diagram 
$S(\hat{\fD}_{\fm})$ is given 
by 
\[\hat{H}_m =
 \sum_{p \in P_m}
\sum_{k \vdash p} N_{w(k)}^{\mathrm{trop}}
(q^{\frac{1}{2}})
\left( \prod_{j=1}^n
\prod_{\ell \geqslant 1} 
\frac{1}{k_{\ell, j}!}
\left( \frac{(-1)^{\ell-1}}{\ell}
\frac{q^{\frac{1}{2}}-q^{-\frac{1}{2}}}{q^{\frac{\ell}{2}}-q^{-\frac{\ell}{2}}} \right)^{k_{\ell, j}}
\right)
\left( \prod_{j=1}^n t_j^{p_j}
\right)
\frac{\hat{z}^{\ell_p m}}{
q^{\frac{1}{2}}-q^{-\frac{1}{2}}}\,,\]
where $q=e^{i\hbar}$, and the innermost sum is over all
partitions $k$ of $p$.
\end{prop}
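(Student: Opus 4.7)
The plan is to follow the approach of Filippini--Stoppa \cite{MR3383167}, with Lemma \ref{lem_elementary_scattering} as the basic local building block. Since $R = \C[\![t_1,\dots,t_n]\!]$ is the inverse limit of Artinian quotients $R/\fm_R^{L+1}$, it suffices to prove the formula modulo $\fm_R^{L+1}$ for each $L$. In that regime each incoming Hamiltonian is a finite sum
\[ \hat{H}_{\fd_j} \equiv \sum_{l=1}^{L} \hat{a}_{l,j}\, t_j^l \hat{z}^{l m_j} \mod \fm_R^{L+1}, \qquad \hat{a}_{l,j} = \frac{1}{l}\frac{(-1)^{l-1}}{q^{l/2}-q^{-l/2}}. \]
To bring each term into the hypothesis of Lemma \ref{lem_elementary_scattering}, I would further enlarge $R$ by auxiliary formal variables $u_{l,j,k}$ with $u_{l,j,k}^2=0$ and set $t_j^l = \sum_{k} u_{l,j,k}$ (with $k$ ranging over a large finite set), so that each incoming ray splits into a collection of \emph{elementary} incoming rays whose Hamiltonians $c_{l,j}\, u_{l,j,k}\, \hat{z}^{l m_j}$ square to zero. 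The original scattering is recovered at the end by collapsing the $u_{l,j,k}$ back to $t_j^l$.

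Next I would run the consistency algorithm of Proposition \ref{prop_consistency} on this enlarged diagram. In the elementary regime every pairwise interaction is governed by Lemma \ref{lem_elementary_scattering}: two colliding rays with Hamiltonians $\hat{H}_1$, $\hat{H}_2$ generate a single new outgoing ray whose Hamiltonian is $[\langle m_1, m_2\rangle]_q$ times $\hat{H}_1 \hat{H}_2/(q^{1/2}-q^{-1/2})$. Iterating, the rays produced at order $N$ are indexed by rooted trivalent trees whose leaves are elementary incoming rays; by $u_{l,j,k}^2 = 0$ each leaf appears at most once in a nonzero contribution. Perturbing the elementary incoming rays to generic parallel positions and recording the successive intersections of the produced rays in $\R^2$, these trees refine to genus zero trivalent parametrized tropical curves $h\colon \Gamma \rightarrow \R^2$ without contracted edges, of type $(w,x)$ where $w$ lists the directions of the elementary leaves involved. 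The product of quantum integer factors picked up at the vertices is exactly the Block--Göttsche multiplicity $m_h(q^{1/2}) = \prod_V [m(V)]_q$.

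Finally I would reorganize the resulting sum. For an outgoing direction $m$, collecting the elementary leaves used by $h$ according to which factor $lm_j$ they come from produces a partition $k \vdash p$ of some $p \in P_m$, via the rule $w(k)$ of Section \ref{section_degeneration_statement}, and substituting $\sum_k u_{l,j,k} \mapsto t_j^l$ introduces the symmetry factor $\prod_{l,j} 1/k_{lj}!$. The contribution of each elementary leaf is $\hat{a}_{l,j}(q^{1/2}-q^{-1/2}) = \tfrac{(-1)^{l-1}}{l}\,(q^{1/2}-q^{-1/2})/(q^{l/2}-q^{-l/2})$, the overall remaining factor of $q^{1/2}-q^{-1/2}$ being the one appearing in the denominator of the claimed formula. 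Summing $m_h(q^{1/2})$ over tropical curves of type $w(k)$ with generic endpoints yields $N^{\trop}_{w(k)}(q^{1/2})$, independent of the chosen positions by \cite{MR3142257}, and reassembling the pieces gives the stated identity. The main technical obstacle is the combinatorial bookkeeping: keeping precise track of the nilpotent expansion, of the ordering of commutators in the consistency algorithm, and of the symmetrizations introduced when collapsing the $u_{l,j,k}$ back to $t_j^l$, so that exactly the coefficients $\frac{1}{k_{lj}!}\bigl(\tfrac{(-1)^{l-1}}{l}(q^{1/2}-q^{-1/2})/(q^{l/2}-q^{-l/2})\bigr)^{k_{lj}}$ emerge with no spurious sign or prefactor.
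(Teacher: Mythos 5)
Your strategy is the same as the paper's (which follows Filippini--Stoppa): perturb the incoming rays, split them into nilpotent ``elementary'' pieces so that every collision is governed by Lemma \ref{lem_elementary_scattering}, read off genus-zero trivalent tropical curves with Block--G\"ottsche vertex multiplicities from the resulting trees, and then resum over partitions $k\vdash p$. The one step that does not work as written is the splitting $t_j^l=\sum_k u_{l,j,k}$ performed \emph{independently for each power} $l$. This assignment is not induced by a ring homomorphism out of $R/\fm_R^{L+1}$: multiplicativity forces $t_j^{l}\cdot t_j^{l'}=t_j^{l+l'}$, whereas your prescription sends the left side to $\sum_{k,k'}u_{l,j,k}u_{l',j,k'}$ and the right side to $\sum_k u_{l+l',j,k}$. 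Since the transport of the consistent completion between $R$ and the enlarged ring rests on functoriality of $S(-)$ under ring maps together with uniqueness of the completion, you cannot directly conclude that the diagram you compute upstairs collapses to $S(\hat{\fD}_\fm)$. The paper avoids this by splitting the \emph{variable} rather than its powers: $t_j=\sum_{a=1}^N u_{ja}$ with $u_{ja}^2=0$ is an honest injective homomorphism $\C[t_j]/(t_j^{N+1})\hookrightarrow \C[u_{ja}]/(u_{ja}^2)$, and it automatically yields $t_j^l=\sum_{|A|=l}l!\prod_{a\in A}u_{ja}$, so the elementary pieces for weight $l$ are indexed by size-$l$ subsets $A$ and two pieces interact nontrivially exactly when their subsets are disjoint. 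Your version can be repaired (e.g.\ first replace each $t_j^l$ by an independent variable $s_{l,j}$, run the argument there, and push forward along $s_{l,j}\mapsto t_j^l$ at the end), but that extra step must be supplied, and it changes the combinatorics of the final resummation: the factor $\prod_{l,j}k_{lj}!^{-1}$ then arises from expanding $(\sum_k u_{l,j,k})^{k_{lj}}$ over unordered $k_{lj}$-element subsets, not, as in the paper, from counting the $p_j!/\prod_l k_{lj}!(l!)^{k_{lj}}$ ways of partitioning a $p_j$-element set into blocks of prescribed sizes.

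Two smaller points. First, your paraphrase of the elementary scattering output as ``$[\langle m_1,m_2\rangle]_q$ times $\hat{H}_1\hat{H}_2/(q^{1/2}-q^{-1/2})$'' is not the statement of Lemma \ref{lem_elementary_scattering}: the emitted Hamiltonian is $[\hat{H}_1,\hat{H}_2]$, which equals $[\langle m_1,m_2\rangle]_q\,f_1f_2\,\hat{z}^{m_1+m_2}/(q^{1/2}-q^{-1/2})$ in the normalization $\hat{H}_i=f_i\hat{z}^{m_i}/(q^{1/2}-q^{-1/2})$; your expression carries a spurious $q^{\langle m_1,m_2\rangle/2}/(q^{1/2}-q^{-1/2})^2$. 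Second, when you index the produced rays by trivalent trees you should also account for the $4$-valent crossings where two rays pass through each other without interacting; these must be separated to obtain an immersed trivalent tropical curve, as in the paper's passage from $\bar{\Gamma}$ to $\Gamma$.
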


\begin{proof}
This follows from the main result, 
Corollary 4.9, of 
\cite{MR3383167}, which is a $q$-deformed version of the proof of Theorem 2.8 of 
\cite{MR2667135}.
A higher dimensional generalization of this argument has been given by Mandel in 
\cite{mandel2015refined}. For completeness and because we organize the combinatorics in a slightly different way, we provide a proof.

By definition, 
$S(\hat{\fD}_{\fm})$ is the consistent quantum
scattering diagram obtained from the 
quantum scattering diagram 
$\hat{\fD}_\fm$ consisting of incoming rays 
$(\fd_j, \hat{H}_{\fd_j})$, 
$1 \leqslant j \leqslant n$, where 
\[ \fd_j = - \R_{\geqslant 0} m_j \,,\]
and 
\[ \hat{H}_{\fd_j} = 
\sum_{\ell \geqslant 1}\frac{1}{\ell} \frac{(-1)^{\ell-1}}{q^{\frac{\ell}{2}}-q^{-\frac{\ell}{2}}} t_j^l \hat{z}^{\ell m_j} \,.\]
Let us work over the ring 
$\C[t_1, \dots,t_n]/(t_1^{N+1},\dots, 
t_n^{N+1})$.
We embed this ring into 
\[ \C[ \{u_{ja} | 1 \leqslant j \leqslant n, 1 \leqslant a \leqslant N\}]/
\langle \{ u_{ja}^2 | 1 \leqslant j \leqslant n, 
1 \leqslant a \leqslant N\}\rangle \]
by
\[ t_j=\sum_{a=1}^N u_{ja}\]
for all 
$1 \leqslant j \leqslant n$.
We then have
\[ t_j^\ell =  \sum_{\substack{A \subset \{1,\dots,N\} \\
|A|=\ell}} \ell ! \prod_{a \in A} u_{ja} \,,\]
and so 
\[\hat{H}_{\fd_j}
=\sum_{\ell=1}^N \sum_{\substack{A \subset \{1,\dots,N\} \\
|A|=\ell}} 
\left( \frac{1}{\ell} \frac{(-1)^{\ell-1}}{q^{\frac{\ell}{2}}-q^{-\frac{\ell}{2}}}
\right) \ell ! 
 \left( 
\prod_{a \in A} u_{ja} 
\right)
\hat{z}^{\ell m_j} \,. \]
This suggests we consider the
quantum scattering diagram 
$\hat{\fD}_{\fm}^{\spl}$ consisting of incoming rays 
$(\fd_{j\ell A}, \hat{H}_{\fd_{j \ell A}})$
for $1 \leqslant \ell
\leqslant N$, 
$A \subset \{1, \dots, N\}$, $|A|=\ell$,
where 
\[ \fd_{j\ell A} = -\R_{\geqslant 0} m_j + c_{j \ell A} \,,\]
for $c_{j \ell A} \in \R^2$ in general position, and 
\[ \hat{H}_{\fd_{j \ell A}}
=
\left( \frac{1}{\ell} \frac{(-1)^{\ell-1}}{q^{\frac{\ell}{2}}-q^{-\frac{\ell}{2}}}
\right) \ell! 
 \left( 
\prod_{a \in A} u_{ja} 
\right)
\hat{z}^{\ell m_j} \,. \]
If we had taken all $c_{j \ell A}$ to be $0$, then $\hat{\fD}_\fm^\spl$ would have been equivalent to $\hat{\fD}_\fm$.
 But for 
$c_{j \ell A} \in \R^2$ in general position, $\hat{\fD}_\fm^\spl$ 
is a perturbation of $\hat{\fD}_{\fm}$: each ray $(\fd_j, \hat{H}_{\partial_j})$ of $\fD_{\fm}$ splits into various rays 
$(\fd_{j\ell A}, \hat{H}_{\fd_{j\ell A}})$ of 
$\hat{\fD}_\fm^\spl$.

 The key simplifying fact is that the consistent scattering diagram 
$S(\hat{\fD}_{\fm}^{\spl})$ can be obtained from 
$\hat{\fD}_{\fm}^{\spl}$ by a recursive procedure involving
only elementary scatterings in the sense of Lemma
\ref{lem_elementary_scattering}. When two rays of $\hat{\fD}_{\fm}^{\spl}$ intersect, we are in the situation of Lemma \ref{lem_elementary_scattering} because 
$u_{ja}^2=0$. The local consistency at this intersection is then guaranteed by emitting a third ray according to 
\mbox{Lemma 
\ref{lem_elementary_scattering}}. Further intersections of the old and newly created rays can similarly be treated by application of 
\mbox{Lemma \ref{lem_elementary_scattering}}.
Indeed, the assumption of general position of the $c_{j\ell A}$ guarantees that only double intersections occur.

The asymptotic scattering diagram of $S(\hat{\fD}_{\fm}^{\spl})$ is the scattering diagram obtained by taking all the rays of 
$S(\hat{\fD})$ and placing their origin at $0 \in \R^2$. By uniqueness of the consistent completion, the asymptotic scattering diagram
is precisely $S(\hat{\fD}_{\fm})$.
To get the Hamiltonian $\hat{H}_m$ attached to an outgoing ray $\R_{\geqslant 0}m$ in $S(\hat{\fD}_{\fm})$, it is then enough to collect the various contributions to the corresponding asymptotic ray of $S(\hat{\fD}_{\fm}^{\spl})$ coming from the recursive construction of $S(\hat{\fD}_{\fm}^{\spl})$.

Let us study how the recursive construction of $S(\hat{\fD}_{\fm}^{\spl})$ can produce a ray $\fd$ asymptotic to $\R_{\geqslant 0} m$
and equipped with a function 
$\hat{H}_\fd$ proportional to $\hat{z}^{\ell_\fd m}$, for some 
$\ell_\fd \geqslant 1$. 
Such a ray is obtained by successive applications of Lemma \ref{lem_elementary_scattering} starting from a subset of the initial incoming rays of $\hat{\fD}_{\fm}^{\spl}$.

We focus on one particular sequence of 
such elementary scatterings. Such sequence naturally defines a graph
$\bar{\Gamma}$ in $\R^2$. This graph starts with unbounded edges given by the initial rays taking part in the sequence of scatterings. When two of these rays meet, they scatter and produce a third ray given by Lemma 
\ref{lem_elementary_scattering}. If this third ray does not contribute to further scatterings ultimately contributing to 
$\hat{H}_\fd$, we do not include it in $\bar{\Gamma}$ and we continue $\bar{\Gamma}$ by propagating the two initial rays.
In particular, $\bar{\Gamma}$ contains a 4-valent vertex given by the two initial rays crossing without non-trivial interaction.

If the third ray does contribute to further scatterings ultimately contributing to 
$\hat{H}_\fd$, we include it in $\bar{\Gamma}$ and we do not propagate the two initial rays. In particular, $\bar{\Gamma}$ gets a trivalent vertex given by the two initial rays meeting and producing the third ray.
Iterating this construction, we obtain one trivalent vertex for each elementary scattering ultimately giving a contribution to $\hat{H}_\fd$. At the end of this process, the last elementary scattering produces the ray 
$\fd$ which becomes an unbounded edge of the graph. 

The graph $\bar{\Gamma}$ has two kinds of vertices: trivalent vertices where a non-trivial elementary scattering happens and 4-valent vertices where two rays cross without non-trivial interaction. 
For each 4-valent vertex, we can separate the two rays crossing, and we obtain a trivalent graph $\Gamma$ and a map
$h \colon \Gamma \rightarrow \bar{\Gamma}
\subset \R^2$ which is one to one except over the 4-valent vertices of $\bar{\Gamma}$ where it is two to one.
It follows from the iterative construction that the trivalent graph $\Gamma$ is a tree, ie a graph of genus $0$. 

The function attached to initial ray of  $\hat{\fD}_{\fm}^{\spl}$
is a monomial in $\hat{z}$, whose power  
is proportional to the direction of the ray.
By Lemma
\ref{lem_elementary_scattering}, this property is preserved under elementary scattering. Each edge $E$ of our
$\Gamma$ is thus equipped with a function proportional to 
$\hat{z}^{m_E}$ for some $m_E \in M=\Z^2$ proportional to the direction of $E$.
Furthermore, in an elementary scattering of two edges $E_1$ and $E_2$ equipped with $m_{E_1}$ and $m_{E_2}$, the produced edge $E_3$ is equipped 
with $m_{E_1}+m_{E_2}$ by
\mbox{Lemma 
\ref{lem_elementary_scattering}}.
In other words, the balancing condition is satisfied at each vertex and so we can view
$h \colon \Gamma \rightarrow \R^2$ as a
parametrized tropical curve to 
$\R^2$ in the sense of Section 
\ref{section_tropical_count}.

For every $1 \leqslant j \leqslant n$ and $\ell \geqslant 1$, there is a number $k_{\ell, j}$ of subsets $A$ of $\{1, \dots, n \}$, of size $\ell$, such that $\fd_{j\ell A}$ is one of the initial rays appearing in $\Gamma$. Denote by 
$\cA_{j \ell}^\Gamma$ this set of subsets of 
$\{1, \dots, n\}$. 
Writing $p_j \coloneqq \sum_{\ell \geq 1} \ell k_{\ell, j}$, we have, by the balancing condition,
\[\sum_{j=1}^n p_j = \ell_\fd m \,,\]
and, in particular, $\ell_\fd = \ell_p$.

It follows from an iterative 
application of \mbox{Lemma 
\ref{lem_elementary_scattering}}
that the contribution of $\Gamma$ 
to $\hat{H}_\fd$ is given by 
\[m_\Gamma(q^{\frac{1}{2}})
\left(
\prod_{j=1}^n
\prod_{\ell \geqslant 1} 
\left( \frac{(-1)^{\ell-1}}{\ell}
\frac{q^{\frac{1}{2}}-q^{-\frac{1}{2}}}{q^{\frac{\ell}{2}}-q^{-\frac{\ell}{2}}} \right)^{k_{\ell, j}}(\ell !)^{k_{\ell, j}}
\left( \prod_{A \in 
\cA_{j \ell}^\Gamma} \prod_{a \in A}
u_{ja} \right)
\right)
 \frac{\hat{z}^{\ell_p m}}{
q^{\frac{1}{2}}-q^{-\frac{1}{2}}}\,, \]
where $m_\Gamma (q^{\frac{1}{2}})$ is the refined multiplicity of the tropical curve 
$\Gamma$.

To get the complete expression for 
$\hat{H}_\fd$, we have to sum over the possible
$\Gamma$. 

If we fix $p=(p_1, \dots, p_n)
\in P=\NN^n$, $k$ a partition of $p$ and 
for every
$1 \leqslant j \leqslant n$ and 
$\ell \geqslant 1$, a set 
$\cA_{j\ell}$ of $k_{\ell, j}$ disjoint subsets of 
$\{1, \dots, N\}$ of size $\ell$, we can consider the set 
$T_{j\ell \cA_{j\ell}}$ of genus $0$ tropical curves 
$\Gamma$ having one unbounded edge of asymptotic direction
$\R_{\geqslant 0}m$ and weight 
$\ell_p m$, and for every 
$1 \leqslant j \leqslant n$,
$\ell \geqslant 1$ and $A \in \cA_{j\ell}$, an unbounded edge of weight $\ell m_j$ asymptotically coinciding with 
$\fd_{j\ell A}$.
By \mbox{Section \ref{section_tropical_count}}, this set is finite. 

We already saw how a sequence of elementary scatterings 
contributing to $\hat{H}_\fd$ produces an element $\Gamma \in T_{j \ell \cA_{j\ell}}$. Conversely, any $\Gamma \in T_{j\ell\cA_{j\ell}}$ will define a sequence of elementary scatterings appearing in the construction of 
$S(\hat{\fD}_\fm^\spl)$ and contributing to 
$\hat{H}_\fd$.

It follows that, for every 
$m \in M-\{0\}$, we have
\[ \hat{H}_m
= \]
\[
\sum_{p \in P_m}
\sum_{k \vdash p}
\sum_{\cA_{j\ell}}
\left( \sum_{\Gamma \in 
T_{j\ell\cA_{j\ell}}}
m_\Gamma (q^{\frac{1}{2}}) \right)
\left(
\prod_{j=1}^n
\prod_{\ell \geqslant 1} 
\left( \frac{(-1)^{\ell-1}}{\ell}
\frac{q^{\frac{1}{2}}-q^{-\frac{1}{2}}}{q^{\frac{\ell}{2}}-q^{-\frac{\ell}{2}}} \right)^{k_{\ell, j}}(\ell !)^{k_{\ell, j}}
\left( 
\prod_{A \in \cA_{j\ell}} \prod_{a \in A}
u_{ja} \right)
\right)
 \frac{\hat{z}^{\ell_p m}}{
q^{\frac{1}{2}}-q^{-\frac{1}{2}}}\,, \]
But by Section
\ref{section_tropical_count}, we have 
\[ \sum_{\Gamma \in 
T_{j\ell\cA_{j\ell}}}
m_\Gamma  (q^{\frac{1}{2}})
=N^\trop_{w(k)} (q^{\frac{1}{2}}) \,,\]
which is in particular independent of 
$\cA_{j\ell}$. So we can do the sum over 
$\cA_{j\ell}$. Given an 
$\cA_{j\ell}$, we can form 
\[ B \coloneqq \bigcup_{A \in \cA_{j\ell}} A\,,\]
a subset of 
$\{1, \dots ,N\}$ of size $\sum_{\ell \geqslant 1} \ell k_{\ell, j}=p_j$. Conversely, 
the number of ways to write a set $B$ of $p_j=\sum_{\ell \geqslant 1} 
\ell k_{\ell, j}$ elements as a disjoint union of subsets, $k_{\ell, j}$ of them being of size $\ell$, is equal to 
\[\frac{p_j!}{\prod_{\ell \geqslant 1} k_{\ell, j}! (\ell!)^{k_{\ell, j}}} \,.\]
Replacing the sum over $\cA_{j\ell}$ by a sum over $B$, we obtain

\[ \hat{H}_m
= \]
\[
\sum_{p \in P_m}
\sum_{k \vdash p}
N^\trop_{w(k)}(q^{\frac{1}{2}})
\left(
\prod_{j=1}^n
\prod_{\ell \geqslant 1} 
\frac{1}{k_{\ell, j}!}
\left( \frac{(-1)^{\ell-1}}{\ell}
\frac{q^{\frac{1}{2}}-q^{-\frac{1}{2}}}{q^{\frac{\ell}{2}}-q^{-\frac{\ell}{2}}} \right)^{k_{\ell, j}} \right)
\left(
\prod_{j=1}^n
\sum_{\substack{B \subset \{1, \dots, N\}
\\ |B|=p_j}}p_j! 
\prod_{b \in B} 
u_{jb} \right)
 \frac{\hat{z}^{\ell_p m}}{
q^{\frac{1}{2}}-q^{-\frac{1}{2}}}\,. \]
Finally, using that 
\[ t_j^{p_j} 
= \sum_{\substack{B \subset \{1,\dots, N \} \\ |B|=p_j}}
p_j! \prod_{b \in B} u_{j b} \,,\]
we obtain the desired formula for 
$\hat{H}_m$.

\end{proof}

\begin{cor} \label{corollary_tropical}
We have 
\[\hat{H}_m =
\sum_{p \in P_m}
\sum_{k \vdash p} N_{w(k)}^{\mathrm{trop}}
(q^{\frac{1}{2}}) 
\left( \prod_{j=1}^n
\prod_{\ell \geqslant 1} 
\frac{1}{k_{\ell, j}!}
\left( \frac{(-1)^{\ell-1}}{\ell}
\frac{1}{q^{\frac{\ell}{2}}-q^{-\frac{\ell}{2}}} \right)^{k_{\ell, j}}
\right)
(q^{\frac{1}{2}}-q^{-\frac{1}{2}})^{s(k)-1}
\hat{z}^{\ell_p m}
\,.\]
\end{cor}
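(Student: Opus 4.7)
The plan is to derive Corollary \ref{corollary_tropical} as a purely algebraic rearrangement of Proposition \ref{prop_scattering_tropical}. Starting from the formula given in that proposition, I would rewrite each factor in the product $\prod_{j=1}^n \prod_{l \geqslant 1}$ by separating the $q^{\frac{1}{2}}-q^{-\frac{1}{2}}$ numerator from the remaining denominator $q^{\frac{l}{2}}-q^{-\frac{l}{2}}$. This turns the $(j,l)$-th factor into $(q^{\frac{1}{2}}-q^{-\frac{1}{2}})^{k_{lj}}$ times the $k_{lj}$-th power of $\frac{(-1)^{l-1}}{l(q^{\frac{l}{2}}-q^{-\frac{l}{2}})}$, leaving the $\frac{1}{k_{lj}!}$ prefactor and the tropical count $N_{w(k)}^{\trop}(q^{\frac{1}{2}})$ untouched.

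Next, I would collect the $(q^{\frac{1}{2}}-q^{-\frac{1}{2}})^{k_{lj}}$ factors over all pairs $(j,l)$. By the definition of $s(k)$ recalled in Section \ref{section_degeneration_statement}, namely $s(k) = \sum_{j=1}^n \sum_{l \geqslant 1} k_{lj}$, the aggregate power is $(q^{\frac{1}{2}}-q^{-\frac{1}{2}})^{s(k)}$. Combining with the lone $(q^{\frac{1}{2}}-q^{-\frac{1}{2}})^{-1}$ sitting outside the product in Proposition \ref{prop_scattering_tropical} yields $(q^{\frac{1}{2}}-q^{-\frac{1}{2}})^{s(k)-1}$, which is precisely the prefactor appearing in the statement of the corollary. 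All other factors match term-by-term, so no substantive obstacle arises; the corollary is a bookkeeping reorganization designed to make the power of $(q^{\frac{1}{2}}-q^{-\frac{1}{2}})$ a function of $s(k)$ alone, which is the form most convenient for comparison with the degeneration formula of Proposition \ref{prop_degeneration} and the multicovering identity of Lemma \ref{lem_BP}.
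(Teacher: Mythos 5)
Your proposal is correct and is exactly the paper's own argument: the proof there consists of pulling the factor $(q^{\frac{1}{2}}-q^{-\frac{1}{2}})^{k_{lj}}$ out of each term of the product in Proposition \ref{prop_scattering_tropical}, summing the exponents via $s(k)=\sum_{j=1}^n\sum_{l\geqslant 1}k_{lj}$, and absorbing the external $(q^{\frac{1}{2}}-q^{-\frac{1}{2}})^{-1}$ to obtain the exponent $s(k)-1$.
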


\begin{proof}
We simply rearrange the 
 factors $(q^{\frac{1}{2}}-q^{-\frac{1}{2}})$ in Proposition
\ref{prop_scattering_tropical}
and use that 
\[ s(k)=\sum_{j=1}^n \sum_{\ell \geqslant 1} k_{\ell, j} \,.\]
\end{proof}

\section{End of the proof of Theorems \ref{precise_main_thm_ch2} and \ref{precise_main_thm_orbifold}}
\label{section_end_proof}

\subsection{End of the proof of Theorem \ref{precise_main_thm_ch2}}
In this Section, we finish the proof of
Theorem \ref{precise_main_thm_ch2}.
We have to
express the Hamiltonians 
attached to the rays of the consistent quantum scattering diagram 
$S(\hat{\fD}_\fm)$ in terms of 
the log Gromov-Witten invariants 
$N_{g,p}^{Y_\fm}$ of the log Calabi-Yau surface 
$Y_\fm$.
We know already:
\begin{itemize}
\item Corollary \ref{corollary_tropical},
expressing the Hamiltonians attached to the rays 
of $S(\hat{\fD}_\fm)$ in terms of the refined counts $N_w^\trop(q^{\frac{1}{2}})$ of 
tropical curves in $\R^2$.
\item Proposition \ref{prop_degeneration}, 
relating the log Gromov-Witten invariants $N_{g,p}^{Y_\fm}$
of the log Calabi-Yau surface 
$Y_{\fm}$ to the log Gromov-Witten invariants $N_{g,w}^{\overline{Y}_\fm}$ of the 
toric surface $\overline{Y}_\fm$.
\end{itemize}
It remains to connect the refined tropical counts $N_w^{\trop}(q^{\frac{1}{2}})$ to the log Gromov-Witten invariants 
$N_{g,w}^{\overline{Y}_\fm}$ of the toric
surface $\overline{Y}_\fm$. 
This is given by Proposition \ref{prop_correspondence},  
which is a special case of the main
result, Theorem 6, of 
\cite{bousseau2017tropical}.

\begin{prop} \label{prop_correspondence}
For every 
$\fm=(m_1, \dots, m_n)$ $n$-tuple of non-zero primitive vectors in 
$M= \Z^2$, every 
$p=(p_1, \dots, p_n) \in P=\NN^n$, and every $k$ partition of $p$,
we have 
\begin{align*} \sum_{g \geqslant 0} N_{g,w(k)}^{\overline{Y}_\fm} \hbar^{2g-1+s(k)} 
&= N_{w(k)}^{\mathrm{trop}}(q^{\frac{1}{2}})
\left( \prod_{r=1}^{s(k)}
\frac{1}{|w_r|} \right)
\left( 2 \sin \left( \frac{\hbar}{2} \right) \right)^{s(k)-1} \\
& = N_{w(k)}^{\mathrm{trop}}(q^{\frac{1}{2}})
\left( \prod_{j=1}^n \prod_{\ell \geqslant 1}
\frac{1}{\ell^{k_{\ell, j}}} \right)
\left( 2 \sin \left( \frac{\hbar}{2} \right) \right)^{s(k)-1} \,.
\end{align*}
\end{prop}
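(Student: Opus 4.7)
The plan is to recognize this statement as a direct specialization of the correspondence theorem (Theorem 6) from the author's earlier work \cite{bousseau2017tropical}, which asserts that higher genus log Gromov-Witten invariants of toric surfaces, with maximal tangency conditions, point constraints on the divisor components, and insertions of the top lambda class, are computed by refined counts of tropical curves with Block--Göttsche multiplicities. So the real work is to check that the definitions line up and that the combinatorial prefactors come out correctly.

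First I would verify that the setup in Section \ref{section_log_gw_toric} matches the one considered in \cite{bousseau2017tropical}: the toric surface $\overline{Y}_\fm$ equipped with its toric boundary divisor $\partial\overline{Y}_\fm$ and divisorial log structure; the curve class $\beta_{w(k)}$ determined tropically by the $s(k)$-tuple $w(k)$; the $s(k)+1$ contact orders along $\partial\overline{Y}_\fm$ prescribed by $-w_1(k),\dots,-w_{s(k)}(k)$ and $l_{w(k)} m_{w(k)} = l_p m_p$; point constraints imposed via $\ev_r^*(\pt_r)$ for $r=1,\dots,s(k)$; and the insertion of $(-1)^g \lambda_g$ to cut virtual dimension from $g+s(k)$ down to $s(k)$. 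On the tropical side, I would identify $T_{w(k),x}$ with the set of genus zero parametrized tropical curves dual to the relevant stable log maps under the toric correspondence theorem, noting that $x$ is implicitly fixed by the point constraints and that $N_{w(k)}^{\mathrm{trop}}(q^{1/2})$ is independent of this generic choice.

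Next I would read off the exact combinatorial normalization of \cite{bousseau2017tropical}. In that correspondence, each bounded external leg of weight $|w_r|$ contributes a factor of $1/|w_r|$ (coming from the automorphism/edge-weight conventions on the tropical side versus the Gromov-Witten side), which yields the product $\prod_{r=1}^{s(k)} 1/|w_r|$. The sine factor $(2\sin(\hbar/2))^{s(k)-1}$, with the exponent $s(k)-1$ rather than $s(k)$, reflects the relation between the refined Block--Göttsche multiplicity (a Laurent polynomial in $q^{\pm 1/2}$) and the generating series in $\hbar$, together with one sine factor that is absorbed by the single unconstrained contact point carrying direction $l_p m_p$. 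Matching the conventions of \cite{bousseau2017tropical} to those of Section \ref{section_log_gw_toric} thus produces exactly the stated identity after the substitution $q = e^{i\hbar}$, so that $q^{1/2}-q^{-1/2} = 2i\sin(\hbar/2)$ up to signs that are tracked via the parity of $s(k)$.

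Finally, the second equality in the proposition is a purely combinatorial restatement: by construction of $w(k)$ in Section \ref{section_degeneration_statement}, the tuple contains exactly $k_{lj}$ copies of the vector $l m_j$, and since $m_j$ is primitive the divisibility is $|l m_j| = l$. Therefore
\[
\prod_{r=1}^{s(k)} |w_r| \;=\; \prod_{j=1}^n \prod_{l \geqslant 1} l^{k_{lj}},
\]
and inverting this product converts the first form into the second. The main (and essentially only) obstacle is conventional bookkeeping: making sure the normalizations of tropical multiplicities, edge weights, and the exponent of $(2\sin(\hbar/2))$ in \cite{bousseau2017tropical} are transported without sign or factor errors into the present notation; once the dictionary is fixed, the proposition is immediate from that correspondence theorem.
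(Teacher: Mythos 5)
Your proposal matches the paper's proof: the paper likewise derives this proposition as a direct specialization of Theorem 6 of \cite{bousseau2017tropical}, taking $\Delta = (w_1(k),\dots,w_{s(k)}(k), l_w m_w)$, $\Delta^F=(w_1(k),\dots,w_{s(k)}(k))$ and zero interior point constraints, and then translating notation (the genus variable $u$ there becoming $\hbar$ here). The second equality is, as you say, just the observation that $w(k)$ contains $k_{lj}$ copies of $lm_j$ with divisibility $l$.
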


\begin{proof}
We simply explain the change in notations needed to translate from \mbox{Theorem 6} of \cite{bousseau2017tropical}.

In \cite{bousseau2017tropical}, we
fix $\Delta$, a balanced collection of vectors in $\Z^2$, 
specifying a toric surface $X_\Delta$ and tangency conditions for a curve along the toric divisors. We also fix a subset $\Delta^F$ of $\Delta$, for which the corresponding tangency conditions happen at prescribed 
positions on the toric divisors.
Finally, we fix a nonnegative  integer $n$. Theorem 6 of 
\cite{bousseau2017tropical} is a
correspondence theorem between log Gromov-Witten invariants of $X_\Delta$, counting curves in 
$X_{\Delta}$ satisfying the tangency constraints imposed by $\Delta$ and $\Delta^F$, and passing through $n$ points in general position, and refined 
counts of tropical curves in 
$\R^2$ satisfying the tropical analogue of these constraints.

To get Proposition 
\ref{prop_correspondence}, we take 
\[\Delta = (w_1(k), \dots, w_{s(k)}(k), k_w m_w)\,,\]  
\[\Delta^F = (w_1(k),\dots, w_{s(k)}(k))\,,\] 
and $n=0$. 
We then have $X_\Delta =
\overline{Y}_\fm$ up to some toric blow ups, which do not change the relevant log Gromov-Witten invariants by 
\cite{MR3778185}.
Using the notations of 
\cite{bousseau2017tropical}, we have $|\Delta|=s(k)+1$, $|\Delta^F|=s(k)$ and 
$g_{\Delta,n}^{\Delta^F}=0$.
As the variable $u$ keeping track of the genus in 
\cite{bousseau2017tropical} is denoted $\hbar$ in the present paper, we see that Theorem 6 of 
\cite{bousseau2017tropical} reduces to Proposition \ref{prop_correspondence}.
\end{proof}

By comparison of the explicit formulas of Corollary \ref{corollary_tropical}, 
\mbox{Proposition \ref{prop_correspondence}}
and Proposition \ref{prop_degeneration}, 
and using the relation
\[ s(k)=\sum_{j=1}^n \sum_{\ell \geqslant 1}
k_{\ell, j}\] to collect the powers of $i$,
we find exactly the
formula given in 
\mbox{Theorem \ref{precise_main_thm_ch2}} for the Hamiltonians of the quantum scattering diagram 
$S(\hat{\fD}_\fm)$ in terms of the log Gromov-Witten invariants 
$N_{g,p}^{Y_\fm}$ of the log Calabi-Yau surface 
$Y_{\fm}$. 
This ends the proof of Theorem 
\ref{precise_main_thm_ch2}.

\subsection{End of the proof of Theorem \ref{precise_main_thm_orbifold}}
\label{section_BP_orbifold}

The proof of Theorem \ref{precise_main_thm_orbifold}
follows the one of Theorem \ref{precise_main_thm_ch2}, up to minor notational changes. The only needed
serious modification is an orbifold version of the multicovering formula of Lemma \ref{lem_multicover}. This is provided by Lemma \ref{lem_BP_orbifold} below.

We fix positive integers $r$ and $\ell$. Let 
$\PP^1[r,1]$ be the stacky projective line 
with a single orbifold point of isotropy group
$\Z/r$ at $0$.
Let $\overline{M}_{g,\ell}(\PP^1[r,1]/\infty)$ be the moduli space of genus $g$ orbifold stable maps to 
$\PP^1[r,1]$, relative to $\infty \in \PP^1[r,\infty]$, of degree $r \ell$, with maximal tangency order $r \ell$ along $\infty$. It is a proper Deligne-Mumford stack of virtual dimension $2g-1+\ell$, admitting a virtual
fundamental class 
\[ [\overline{M}_{g,\ell}(\PP^1[r,1]/\infty)]^\virt
\in A_{2g-1+\ell}(\overline{M}_{g,\ell}(\PP^1[r,1]/\infty),\Q) \,.\]
Let $\cO_{\PP^1[r,1]}(-[0]/(\Z/r))$ be the orbifold line bundle on $\PP^1[r,1]$ of degree $-1/r$.
Denoting $\pi \colon \cC \rightarrow 
\overline{M}_{g,\ell}(\PP^1[r,1]/\infty)$ the 
universal source curve and $f \colon \cC \rightarrow
\PP^1[r,1]$ the universal map, we define
\[ N_{g,r}^\ell 
\coloneqq 
\int_{[\overline{M}_{g,\ell}
(\PP^1[r,1]/\infty)]^\virt}
(-1)^g \lambda_g \,
e
\left(
R^1 \pi_{*} f^{*}
\left(\cO_{\PP^1[r,1]}
(-[0]/(\Z/r))
\right) \right) \,,\]
where 
$e(-)$ is the Euler class.

\begin{lem}\label{lem_BP_orbifold}
For all positive integers $r$ and $\ell$, we have 
\[ \sum_{g \geqslant 0} N_{g,r}^\ell 
\hbar^{2g-1} 
= 
\frac{(-1)^{\ell-1}}{\ell}
\frac{1}{2 \sin 
\left( \frac{r \ell \hbar}{2} \right)}\,.\]
\end{lem}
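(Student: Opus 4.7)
This is the orbifold generalization of \mbox{Lemma \ref{lem_BP}}, and the strategy parallels the Bryan--Pandharipande multiple cover computation used in the non-orbifold case. First, I observe that the rigidity argument of \mbox{Lemma \ref{lem_BP}} extends directly: every orbifold stable map in $\overline{M}_{g,l}(\PP^1[r,1]/\infty)$ factors through the identity $\PP^1[r,1]\to \PP^1[r,1]$ (the zero section being rigid, with orbifold normal bundle $\cO_{\PP^1[r,1]}(-[0]/(\Z/r))$). The invariant $N_{g,r}^l$ is therefore a local orbifold Calabi--Yau contribution, obtained by integrating $(-1)^g \lambda_g \cdot e(R^1\pi_*f^* \cO_{\PP^1[r,1]}(-[0]/(\Z/r)))$ against the virtual class of $\overline{M}_{g,l}(\PP^1[r,1]/\infty)$, and the moduli problem is purely that of multiple covers of the orbifold $\PP^1[r,1]$ fully ramified at $\infty$.

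The main computation can then be carried out by virtual torus localization with respect to the standard $\C^*$-action on $\PP^1[r,1]$ fixing $0$ and $\infty$, adapting the original Bryan--Pandharipande argument to the orbifold target. The fixed loci are stable maps totally ramified over $\infty$ with the remaining components contracted to the orbifold point $[0/(\Z/r)]$. The edge contribution from the full tangency cover over $\infty$ rescales equivariant weights by the total tangency order, producing the factor $\frac{1}{2\sin(rl\hbar/2)}$; this is as in the classical case but with the effective cover degree $l$ replaced by $rl$, because $\infty$ is a smooth point of $\PP^1[r,1]$ hit by a cover of total ramification order $rl$. The vertex contribution at $[0/(\Z/r)]$ reduces to a twisted orbifold Hodge integral over $\overline{M}_{g,1}(B(\Z/r))$, computable from known evaluations. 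A parallel route, likely cleaner in practice, is to invoke an orbifold/relative correspondence converting the orbifold invariant on $\PP^1[r,1]$ into a relative invariant on $(\PP^1,\infty)$ of degree $rl$ with maximal tangency $rl$ at $\infty$ and specified ramification profile over $0$, and then quote the relevant case of Theorem 5.1 of \cite{MR2115262}.

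The main technical obstacle is the accurate bookkeeping of the sign $(-1)^{l-1}$ and the multiple-cover prefactor $\frac{1}{l}$ in the final answer: these encode the character-theoretic data of the $\Z/r$-action on the tangent and obstruction spaces at the orbifold vertex, or equivalently the automorphism factor appearing in the orbifold--relative correspondence. For $r=1$ these factors collapse to the standard Bryan--Pandharipande multiplicity recovered in \mbox{Lemma \ref{lem_BP}}; for general $r\geqslant 1$ the only net change to the final generating series is the rescaling $l\mapsto rl$ inside the sine, reflecting the fact that full tangency of order $rl$ at a smooth point of $\PP^1[r,1]$ factors as a degree-$l$ multiple cover composed with the order-$r$ stacky structure at $0$. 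Verifying that these combinatorial factors combine exactly into $\frac{(-1)^{l-1}}{l}\cdot\frac{1}{2\sin(rl\hbar/2)}$ is the crux of the argument.
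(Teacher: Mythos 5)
Your localization strategy is the same as the paper's: the $\C^*$-action on $\PP^1[r,1]$ fixing $0$ and $\infty$, a single contributing fixed-point graph consisting of a genus $g$ vertex over the stacky point $0$, a degree $rl$ edge, and a degenerate genus zero vertex over $\infty$, with the vertex contribution reducing to a twisted Hodge integral over $\overline{M}_{g,1}(B\Z/r)$. That much is right, and the factor $\frac{1}{2\sin(rl\hbar/2)}$ does indeed arise from the rescaling $l\mapsto rl$ of the edge degree (though, to be precise, the sine itself comes from the Faber--Pandharipande evaluation of $\sum_g (rl)^{2g-1}\int_{\overline{M}_{g,1}}\lambda_g\psi^{2g-2}$, not from the edge term alone).

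The genuine gap is in the phrase ``computable from known evaluations.'' After localization the integrand over $\overline{M}_{g,1}(B\Z/r)$ is a product of two full Chern polynomials, $c_{r/t}\bigl((\pi_0)_*(\omega_{\pi_0}\otimes L)\bigr)$ and $c_{r/t}\bigl(R^1(\pi_0)_*(\cO\otimes L)\bigr)$, where $L$ is the $\Z/r$-local system determined by the orbifold structure; without a cancellation between these two factors you are left with a genuinely two-variable Hurwitz--Hodge integral, not the single class $\lambda_g\psi^{2g-2}$, and no off-the-shelf evaluation applies. The missing ingredient is the orbifold replacement for the Mumford relation $c(\E)c(\E^\vee)=1$: one must twist the exact sequence $0\to(\pi_0)_*(\omega_{\pi_0}\otimes L)\to R^1(\pi_0)_*(\omega^\bullet\otimes L)\to R^1(\pi_0)_*(\cO\otimes L)\to 0$ and use the Gauss--Manin connection on $R^1(\pi_0)_*(\omega^\bullet\otimes L)$ to conclude $c_x\bigl((\pi_0)_*(\omega_{\pi_0}\otimes L)\bigr)\,c_x\bigl(R^1(\pi_0)_*(\cO\otimes L)\bigr)=1$. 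This relation, together with the rank bookkeeping (the rank jumps between $g$ and $g-1$ according to whether $L$ is trivial) and the degree $r^{2g-1}$ of the forgetful map $\overline{M}_{g,1}(B\Z/r)\to\overline{M}_{g,1}$, is what actually produces the claimed answer; the sign $(-1)^{l-1}$ and the prefactor $\frac{1}{l}$, which you identify as the crux, are comparatively routine once the twisted Mumford relation is in place. Your alternative route via an orbifold/relative correspondence to $(\PP^1,\infty)$ is also not available as stated, because the obstruction bundle is twisted by the orbifold line bundle $\cO_{\PP^1[r,1]}(-[0]/(\Z/r))$ of fractional degree, which has no counterpart on the coarse space.
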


\begin{proof}
This is a
higher genus version of Proposition 5.7 of 
\cite{MR2667135} and an orbifold version of Theorem 5.1 of \cite{MR2115262}. Very similar localization computations of higher genus orbifold Gromov-Witten invariants can be found in \cite{MR2785870}. The 
main thing we need to explain is the replacement
in the orbifold case for the Mumford relation 
$c(\mathbb{E})c(\mathbb{E}^\vee)=1$ playing a
key role in the proof of Theorem 5.1 of \cite{MR2115262}. We will simply have to twist the usual Hodge theoretic argument of
\cite{MR717614} by a local system.

We consider the action of $\C^{*}$
on $\PP^1[r,1]$ with tangent weights $[1/r,-1]$
at the fixed points $[0,\infty]$. We choose the equivariant lifts of  \[\cO_{\PP^1[r,1]}(-[0]/(\Z/r))\] 
and $\cO_{\PP^1[r,1]}$ having fibers over the fixed points $[0,\infty]$ of weights $[-1/r,0]$ and $[0,0]$ respectively.
For such choices, the argument given in the proof of 
Theorem 5.1 of \cite{MR2115262} shows that only one graph $\Gamma$ contributes to the $\C^{*}$-localization formula computing $N_{g,r}^l$. The graph $\Gamma$ consists of a genus $g$ vertex over $0$, a unique edge of degree $r\ell$ and  
a degenerate genus $0$ vertex over $\infty$.

The contribution of $\Gamma$ is computed using the 
virtual localization formula of \cite{MR1666787}. 
We assume that $g>0$. The case $g=0$ is
simpler and treated in \mbox{Proposition 5.7} of 
\cite{MR2667135}.
The corresponding $\C^{*}$-fixed locus is the fiber product 
\[ \overline{M}_{g,1}(B \Z/r) \times_{\overline{I} B \Z/r} B \Z/(rd) \,,\]
where $\overline{M}_{g,1}(B \Z/r)$ is the moduli stack of 1-pointed
(with a trivial stacky structure at the marked point) genus $g$ orbifold stable maps to the classifying stack $B\Z/r$, $\overline{I}B\Z/r$ is the rigidified inertia stack of $B \Z/r$, and the
classifying stack $B \Z/(rd)$ appears as moduli space of 
$\C^{*}$-invariant Galois covers $\PP^1 \rightarrow \PP^1[r,1]$ of degree $r\ell$. This fibered product is a cover of  $\overline{M}_{g,1}(B \Z/r)$ of degree $r/(r\ell)$.

Let $\pi_0 \colon \cC_0 \rightarrow  \overline{M}_{g,1}(B \Z/r)$ be 
the universal source curve over 
$\overline{M}_{g,1}(B \Z/r)$.
The data of an orbifold stable map
$f_0 \colon C_0 \rightarrow B\Z/r$ is equivalent to the data of an (orbifold) $\Z/r$-local system $L$ on $C_0$.
We denote by $t$ the generator of the $\C^{*}$-equivariant cohomology of a point.

The computation of the inverse of the equivariant Euler class of the equivariant virtual bundle is done in Section 2.2 \cite{MR2785870} and gives
\[ e \left(R^1 (\pi_0)_* \left( \cO_{\cC_0} \otimes L \otimes \frac{t}{r} \right) \right) \frac{(r\ell)^\ell}{t^\ell \ell !}
\frac{1}{\frac{t}{r \ell}-\psi} \left( \frac{r}{t}
\right)^{\delta_{L,0}} \frac{t}{r} \,,\]
where $\delta_{L,0}=1$ if $L$ is the trivial 
$\Z/r$-local system and $0$ otherwise.
The vector bundle 
\[R^1 (\pi_0)_* \left( \cO_{\cC_0} \otimes L \otimes \frac{t}{r} \right)\]
over $\overline{M}_{g,1}(B\Z/r)$ comes from the equivariant orbifold line bundle 
$T_{\PP^1[r,1]}(-\infty)|_{[0]/(\Z/r)}$ over 
$B \Z/r$, restriction over $[0]/(\Z/r)$ of the degree
$1/r$ orbifold line bundle 
$T_{\PP^1[r,1]}(-\infty)$ over $\PP^1[r,1]$.

The contribution of the integrand in the definition of
$N_{g,r}^\ell$ is
\[ (-1)^g \lambda_g
e \left( R^1(\pi_0)_* \left(\cO_{\cC_0} \otimes \left(L \otimes \frac{t}{r}\right)^\vee \right) \right) \left( -\frac{t}{r} \right)^{1-\delta_{R,0}}(-1)^{\ell-1} \frac{(\ell-1)!}{(r\ell)^{\ell-1}}t^{\ell-1} \,.\]
The vector bundle $ R^1(\pi_0)_* \left(\cO_{\cC_0} \otimes \left(L \otimes \frac{t}{r}\right)^\vee \right)$
over $\overline{M}_{g,1}(B\Z/r)$ comes from the equivariant orbifold line bundle 
$\cO_{\PP^1[r,1]}(-[0]/(\Z/r))|_{[0]/(\Z/r)}$ over 
$B \Z/r$, restriction over $[0]/(\Z/r)$ of the degree
$-1/r$ orbifold line bundle 
$\cO_{\PP^1[r,1]}(-[0]/(\Z/r))$ over $\PP^1[r,1]$.

By Serre duality, we have 
\[ R^1(\pi_0)_* \left(\cO_{\cC_0} \otimes \left(L \otimes \frac{t}{r}\right)^\vee \right)
= \left( (\pi_0)_* \left( \omega_{\pi_0}
\otimes L \otimes \frac{t}{r} \right) \right)^\vee \,, \]
and so 
\begin{align*}
e \left( R^1(\pi_0)_* 
\left(\cO_{\cC_0} \otimes 
\left(L \otimes \frac{t}{r}\right)^\vee  
\right)
\right)
&=(-1)^{\rk} 
e \left( 
(\pi_0)_{*}
\left( 
\omega_{\pi_0} \otimes L \otimes \frac{t}{r}
\right)
\right) \\
&= (-1)^{\rk} \left(\frac{t}{r}\right)^{\rk}
\sum_{j=0}^{\rk}
\left( \frac{r}{t} \right)^j
c_j \left( 
(\pi_0)_{*}
\left( 
\omega_{\pi_0} \otimes L
\right)
\right)\\
&= (-1)^{\rk} 
\left(
\frac{t}{r}
\right)^{\rk}
c_{\frac{r}{t}}((\pi_0)_{*}
(\omega_{\pi_0} \otimes L))\,, 
\end{align*}
where $\rk$ is the rank of 
$(\pi_0)_{*}
\left( 
\omega_{\pi_0} \otimes L
\right)$, a locally constant
function on 
$\overline{M}_{g,1}(B \Z/r)$, equal to $g$ on the component with $L$ trivial and to $g-1$ on the 
components with $L$ non-trivial, 
and where 
\[c_x(E) \coloneqq \sum_{j\geqslant 0} x^j c_j(E)\]
is the Chern polynomial of a vector bundle $E$.
Similarly, we have 
\begin{align*}
e \left(R^1 (\pi_0)_* \left( \cO_{\cC_0} \otimes L \otimes \frac{t}{r} \right) \right)
&=\left(\frac{t}{r}\right)^{\rk} 
\sum_{j=0}^{\rk} 
\left(\frac{r}{t} \right)^j
c_j
\left(R^1 (\pi_0)_* 
\left( \cO_{\cC_0} \otimes L \right) 
\right) \\
&= \left(\frac{t}{r}\right)^{\rk} 
c_{\frac{r}{t}}\left(R^1 (\pi_0)_* 
\left( \cO_{\cC_0} \otimes L \right) 
\right) \,.
\end{align*}

We twist now the Hodge theoretic argument of 
\cite{MR717614} (see formulas (5.4) and (5.5))
(see also Proposition 3.2 of \cite{MR2357679})
by the local system $L$. The complex
\[ \omega_{\cC_0}^{\bullet}
\colon 0 \rightarrow \cO_{\cC_0}
\xrightarrow{d} \omega_{\pi_0} \rightarrow 0 \,,\]
twisted by $L$, gives rise to an exact sequence
\[ 0 \rightarrow (\pi_0)_* (\omega_{\pi_0} \otimes L)
\rightarrow R^1(\pi_0)_* (\omega_{\cC_0}^{\bullet} 
\otimes L)
\rightarrow R^1(\pi_0)_* (\cO_{\cC_0} \otimes L) 
\rightarrow 0 \,.\]
By Hodge theory, we have the Gauss-Manin connection on the restriction of 
\[R^1(\pi_0)_* (\omega_{\cC_0}^{\bullet} \otimes L)\]
to the open dense subset of $\overline{M}_{g,1}(B\Z/r)$ given by smooth curves, with regular singularities and nilpotent residue along the divisor of nodal curves. This is enough to imply \[ c_x \left(R^1(\pi_0)_* (\omega_{\cC_0}^{\bullet} \otimes L)\right)=1\,,\] and so 
\[c_x \left( (\pi_0)_* (\omega_{\pi_0} 
\otimes L)\right)c_x\left(R^1(\pi_0)_* (\cO_{\cC_0} \otimes L) \right)
=1 \,. \]

Using this relation to simplify the above expressions, we obtain
\[ N_{g,r}^\ell = \frac{r}{r\ell}
\int_{\overline{M}_{g,1}(B\Z/r)}
(-1)^{\ell-1} (-1)^{g+\rk+1-\delta_{L,0}}
 \left(\frac{t}{r}\right)^{2 \rk-2\delta_{L,0}+1}
 \frac{\lambda_g}{\frac{t}{r\ell}-\psi} \,. \]
Using that $\rk=g-1+\delta_{R,0}$, this can be rewritten as 
\[ N_{g,r}^\ell = 
\int_{\overline{M}_{g,1}(B\Z/r)}
\frac{(-1)^{\ell-1}}{\ell} 
 \left(\frac{t}{r}\right)^{2g-1}
 \frac{\lambda_g}{\frac{t}{r\ell}-\psi} \,. \]
As the dimension of $\overline{M}_{g,1}(B\Z/r)$ is $3g-2$, we have to extract the term proportional to 
$\psi^{2g-2}$ and we obtain
\[ N_{g,r}^\ell = \int_{\overline{M}_{g,1}(B\Z/r)}\frac{(-1)^{\ell-1}}{\ell}
\ell^{2g-1}
 \lambda_g \psi^{2g-2}   \,.
\]
The integrand is now the pullback from 
the moduli space $\overline{M}_{g,1}$ of $1$-pointed genus $g$ stable maps. 
The forgetful map 
$\overline{M}_{g,1}(B\Z/r) \rightarrow 
\overline{M}_{g,n}$
has degree
$r^{2g-1}$.
Indeed, there are $r^{2g}$ 
$\Z/r$-local systems on a smooth genus $g$ curve, each with a $\Z/r$ group of automorphisms.
Therefore, we have 
\[ N_{g,r}^\ell =\frac{(-1)^{\ell-1}}{\ell} (r\ell)^{2g-1}
\int_{\overline{M}_{g,1}} \lambda_g \psi^{2g-2} \,,\]
and the result then follows, as in 
the proof of Theorem 5.1 of \cite{MR2115262}, from the Hodge integrals
computations of \cite{MR1728879}.
\end{proof}

\section{Integrality results and conjectures}
\label{section_integrality}
In Section
\ref{section_integ_conj}, we state Conjecture \ref{conj_integrality}, a 
log BPS integrality conjecture.
In Section \ref{section_integ_result}, we state Theorem \ref{thm_integ}, precise version of \mbox{Theorem
\ref{main_thm_integ}} of the Introduction, 
establishing the validity of Conjecture 
\ref{conj_integrality} for 
$(Y_\fm,\partial Y_\fm)$.
The proof of Theorem
\ref{main_thm_integ} takes Sections
\ref{section_quad_ref} and
\ref{section_proof_integ}.
In Section \ref{section_dt}, we describe an explicit connection with refined Donaldson-Thomas theory of quivers. Finally, in Section \ref{section_del_pezzo}, we discuss del Pezzo surfaces with a smooth anticanonical divisor and we formulate 
\mbox{Conjecture \ref{conj_del_pezzo}}, 
precise form of \mbox{Conjecture \ref{conj_main}} of the Introduction.

\subsection{Integrality conjecture}
\label{section_integ_conj}
We formulate a higher genus analogue of
the log BPS integrality conjecture,  
Conjecture 6.2, of \cite{MR2667135}.
We start by formulating a rationality conjecture, Conjecture
\ref{conj_rationality}, before 
stating the integrality conjecture, 
Conjecture 
\ref{conj_integrality}.

Let $Y$ be a smooth projective surface and let 
$\partial Y \subset Y$ be a reduced normal crossing effective divisor.
We endow $Y$ with the divisorial log structure defined by $\partial Y$ and we obtain a smooth 
log scheme. Following Section 6.1 of 
\cite{MR2667135}, we say that
$(Y,\partial Y)$ is log Calabi-Yau with respect to some non-zero class 
$\beta \in H_2(Y,\Z)$
if $\beta \cdot (\partial Y)=\beta \cdot (-K_Y)$.

Two basic examples are:
\begin{itemize}
\item For every $\fm=(m_1,\dots,m_n)$ an $n$-tuple of primitive non-zero vectors 
in $M=\Z^2$, the pair
$(Y_\fm,\partial Y_\fm)$
 defined in Section 
\ref{section_log_cy_surface}. 
Strictly speaking, $Y_\fm$ is not smooth, but log smooth. We can either make $Y_\fm$ smooth by toric blow-ups or allow log smooth objects in the definition of log Calabi-Yau.
Then $(Y_\fm,\partial Y_\fm)$
is log Calabi-Yau with respect to every class $\beta \in H_2(Y_\fm,\Z)$ and so in 
particular with respect to the classes 
$\beta_p \in H_2(Y_\fm,\Z)$
defined in Section \ref{section_curve_classes_log_cy}.
\item If $Y$ is a del Pezzo surface and $\partial Y$ a smooth anticanonical divisor, then 
$(Y,\partial Y)$ is log Calabi-Yau with respect to every class 
$\beta \in H_2(Y,\Z)$.
\end{itemize}

We fix $(Y,\partial Y)$ log Calabi-Yau with respect to some $\beta \in H_2(Y,\Z)$ such that $\beta \cdot (\partial Y) \neq 0$.
Let 
$\overline{M}_{g,\beta} (Y/\partial Y)$ 
be the moduli space of genus $g$ stable 
log maps to $Y$ of class $\beta$ and full tangency of order $\beta \cdot (\partial Y)$
at a single unspecified point of $D$. 
It is a proper Deligne-Mumford stack 
coming with a $g$-dimensional virtual fundamental class
\[[\overline{M}_{g,\beta} (Y/\partial Y)]^{\virt} \,.\]
We define
\[ N_{g,\beta}^{Y/\partial Y} \coloneqq \int_{[\overline{M}_{g,\beta} (Y/\partial Y)]^{\virt}} (-1)^g \lambda_g \,.\]
If $(Y,\partial Y)$ is of the form $(Y_\fm,\partial Y_\fm)$ and $\beta$ is of the form $\beta_p$, see 
\mbox{Section
\ref{section_curve_classes_log_cy}}, then we have 
$N_{g,\beta}^{Y/\partial Y}=N_{g,p}^{Y_\fm}$, where 
$N_{g,p}^{Y_\fm}$ are the invariants defined in \mbox{Section \ref{section_log_gw_log_cy}}.

We can now formulate the rationality conjecture.

\begin{conj} \label{conj_rationality}
Let $(Y,\partial Y)$ be a log Calabi-Yau pair with respect to some class
$\beta \in H_2(Y,\Z)$ such that $\beta \cdot (\partial Y) \neq 0$. Then there exists a rational function 
\[ \overline{\Omega}_{\beta}(q^{\frac{1}{2}}) \in \Q(q^{\pm \frac{1}{2}})\,\]
such that we have the equality of power series in $\hbar$,
\[ \overline{\Omega}_{\beta}(q^{\frac{1}{2}})
=(-1)^{\beta \cdot (\partial Y) +1} \left(2 \sin
\left(\frac{\hbar}{2} \right) \right)
\left( \sum_{g \geqslant 0} N_{g,\beta}^{Y/\partial Y} \hbar^{2g-1} \right)\,,\]
after the change of variables $q=e^{i\hbar}$.
\end{conj}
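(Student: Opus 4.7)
The plan is to reduce the general log Calabi-Yau pair $(Y,\partial Y)$ to the maximal boundary case $(Y_\fm,\partial Y_\fm)$, where Theorem \ref{precise_main_thm_ch2} applies, and then extract rationality from the algebraic structure of the quantum scattering diagram.

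First, I would establish rationality in the maximal boundary case. For $(Y_\fm,\partial Y_\fm)$ with curve class $\beta_p$, Theorem \ref{precise_main_thm_ch2} identifies the generating series $\sum_g N_{g,p}^{Y_\fm}\hbar^{2g-1}$, up to an explicit overall factor, with the coefficient of $\prod_j t_j^{p_j}\hat{z}^{l_p m_p}$ in the Hamiltonian $\hat{H}_{m_p}$ attached to the outgoing ray $\R_{\geqslant 0}m_p$ of $S(\hat{\fD}_\fm)$. The recursive construction of $S(\hat{\fD}_\fm)$ via iterated elementary scatterings (Lemma \ref{lem_elementary_scattering}) only ever produces Hamiltonians whose monomial coefficients lie in $\Q(q^{\pm 1/2})$: the seed Hamiltonians $\hat{H}_{\fd_j}$ involve $1/(q^{l/2}-q^{-l/2})$, and each elementary scattering step introduces a rational factor $[\langle m_1,m_2\rangle]_q$. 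Since the prefactor $(-1)^{\beta\cdot\partial Y+1}\,2\sin(\hbar/2)$ is proportional to $q^{1/2}-q^{-1/2}$ under $q=e^{i\hbar}$, multiplying through yields an element of $\Q(q^{\pm 1/2})$, giving the desired rational function $\overline{\Omega}_{\beta_p}$.

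Second, I would reduce a general pair $(Y,\partial Y)$ to a maximal boundary one. Toric blow-ups along components of $\partial Y$ are harmless by the log birational invariance of \cite{abramovich2013invariance}, so the task is to arrange a log smooth family $\pi\colon\cY\to\C$ with general fiber $(Y,\partial Y)$ and special fiber of maximal boundary type, degenerating each smooth component of $\partial Y$ into a cycle of rational curves. To transfer the generating series from general to special fiber, I would imitate Section \ref{section_degeneration}: combine the decomposition formula of \cite{abramovich2017decomposition} on the central fiber with the vanishing of $\lambda_g$ on curves containing cycles of components (Lemma \ref{lem_rigid_higher_genus}) to restrict attention to genus-zero rigid tropical types, then glue via a Proposition \ref{prop_gluing}-style argument using \cite{kim2018degeneration}. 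Rigid $\PP^1$ components produced by the degeneration contribute the multicover series of Lemma \ref{lem_BP}, already rational in $q^{1/2}$, so rationality propagates from the maximal boundary case through the degeneration formula.

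The main obstacle is executing this reduction when $\partial Y$ is \emph{smooth}, as in the del Pezzo setting of Conjecture \ref{conj_del_pezzo}. Constructing a log smooth family with smooth anticanonical general fiber and cycle-of-rational-curves special fiber is delicate: the log structure degenerates non-trivially across the family, and the decomposition formula would need to track how a single full-tangency marked point spreads across components of the central fiber, producing rigid tropical types not covered by the classification of Section \ref{section_rigid}. An alternative route would realize $\overline{\Omega}_\beta$ directly as a refined DT invariant of sheaves on $\mathrm{Tot}\,K_S$ in the spirit of Conjecture \ref{conj_main}, inheriting rationality from the motivic origin of such invariants; but establishing that correspondence is itself open. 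Thus, while the maximal boundary case follows cleanly from the argument above, the full conjecture remains open with the techniques developed here and must await one of these further developments.
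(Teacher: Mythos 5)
You are reviewing a statement that the paper itself records as a \emph{conjecture}: the paper gives no general proof, only two special cases, so the honest assessment is a comparison of strategies. Your first step coincides with the paper's own argument for the maximal boundary case (Lemma \ref{lem_rationality}): rationality of $\overline{\Omega}_{\beta_p}$ for $(Y_\fm,\partial Y_\fm)$ is read off from Theorem \ref{precise_main_thm_ch2} together with the fact that the consistent scattering diagram $S(\hat{\fD}_\fm)$ is defined over $\Q[q^{\pm\frac{1}{2}}][(1-q^l)^{-1}]_{l\geqslant 1}$, exactly as you argue. You are also right, and appropriately candid, that the general case is not reachable by the techniques of the paper, which is consistent with its status as a conjecture.

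Where your route diverges from the paper is in the proposed reduction of a general pair to the maximal boundary case by degenerating a smooth anticanonical divisor into a cycle of rational curves and rerunning the machinery of Section \ref{section_degeneration}. The paper never attempts this, and for good reason: tracking the single full-tangency condition through such a degeneration, and classifying the rigid tropical types, is precisely the kind of difficulty you flag. Instead, the paper's intended mechanism is the 3-dimensional interpretation of Section \ref{section_3d}: rationality should follow from a (log) Gromov--Witten/Donaldson--Thomas--stable pairs correspondence, i.e.\ a log version of the MNOP conjectures; when $\partial Y$ is smooth it follows from the \emph{relative} MNOP conjecture, and in the del Pezzo case with smooth anticanonical divisor the paper actually proves it (Lemma \ref{lem_rationality_del_pezzo}) by degenerating $S\times\C$ to the normal cone of $E\times\C$, invoking the known GW/stable pairs correspondences for (deformations of) toric 3-folds and for local curves, and inverting the degeneration formula via Proposition 6 of \cite{MR3127827}. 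So the smooth-divisor case is more accessible than your proposal suggests: one does not need the refined sheaf-counting correspondence of Conjecture \ref{conj_del_pezzo} (which concerns integrality and is genuinely open), only the unrefined GW/PT correspondence, which suffices for rationality and is available in the del Pezzo setting. Your ``alternative route'' through refined DT invariants of $\mathrm{Tot}\,K_S$ therefore conflates the rationality statement with the much stronger BPS statement; what buys rationality in the paper is the 3-fold GW/PT mechanism, not the refined correspondence.
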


Note that such rational function $\overline{\Omega}_\beta(q^{\frac{1}{2}})$ is unique if it exists.
If the rational function 
$\overline{\Omega}_\beta (q^{\frac{1}{2}})$ exists, then it is 
invariant under $q^{\frac{1}{2}}
\mapsto q^{-\frac{1}{2}}$, because its power series expansion in $\hbar$ after $q=e^{i\hbar}$ has real coefficients.
Given the 3-dimensional interpretation of the invariants 
$N_{g,\beta}^{Y,\partial Y}$ given in Section \ref{section_3d}, 
Conjecture 
\ref{conj_rationality} should follow from a log version of the MNOP conjectures,
\cite{MR2264664}, \cite{MR2264665}, 
once an appropriate theory of log Donaldson-Thomas invariants is developed.
If $\partial Y$ is smooth, then Conjecture 
\ref{conj_rationality} indeed follows from the relative MNOP conjectures, see Section
3.3 of \cite{MR2264665}.

Let $(Y,\partial Y)$ be a log Calabi-Yau pair with respect to some primitive class
$\beta \in H_2(Y,\Z)$ such that $\beta \cdot (\partial Y) \neq 0$. Let us assume that 
\mbox{Conjecture \ref{conj_rationality}} is true for all classes that are multiple of
$\beta$. So, for every $n \geqslant 1$, we have a rational function $\overline{\Omega}_{n \beta}(q^{\frac{1}{2}}) \in \Q(q^{\pm \frac{1}{2}})$.
We define a collection of rational functions 
$\Omega_{n \beta}(q^{\frac{1}{2}}) \in \Q(q^{\pm \frac{1}{2}})$, $n \geqslant 1$,
invariant under 
$q^{\frac{1}{2}}
\mapsto q^{-\frac{1}{2}}$, 
by the relations
\[ \overline{\Omega}_{n\beta} (q^{\frac{1}{2}})
= \sum_{\ell|n} \frac{1}{\ell} \frac{q^{\frac{1}{2}}-q^{-\frac{1}{2}}}{q^{\frac{\ell}{2}}-q^{-\frac{\ell}{2}}}
\Omega_{\frac{n}{\ell}\beta}(q^{\frac{\ell}{2}}) \,.\]

\begin{lem} \label{lem_mobius}
These relations have a unique solution, given by 
\[ \Omega_{n\beta} (q^{\frac{1}{2}})
= \sum_{\ell|n} \frac{\mu(\ell)}{\ell} \frac{q^{\frac{1}{2}}-q^{-\frac{1}{2}}}{q^{\frac{\ell}{2}}-q^{-\frac{\ell}{2}}}
\overline{\Omega}_{\frac{n}{\ell}\beta}(q^{\frac{\ell}{2}}) \,,\]
where $\mu$ is the M\"obius function.
\end{lem}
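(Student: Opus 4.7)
The plan is to reduce the defining relation to a standard Möbius-inversion pattern on sequences indexed by $n$, verify the claimed formula by direct substitution, and conclude uniqueness from a short induction on $n$.

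First, setting $y \coloneqq q^{1/2}$ and introducing
\[ F_n(y) \coloneqq \frac{\overline{\Omega}_{n\beta}(y)}{y-y^{-1}}, \qquad G_n(y) \coloneqq \frac{\Omega_{n\beta}(y)}{y-y^{-1}}, \]
the defining relation, after dividing both sides by $y-y^{-1}$ and using $q^{l/2}-q^{-l/2}=y^l-y^{-l}$, takes the much cleaner form
\[ F_n(y) = \sum_{l\mid n} \frac{1}{l}\, G_{n/l}(y^l). \]
This exhibits the relation as $F=PG$ for the operator $P$ acting on sequences $\{H_n(y)\}_{n\geq 1}$ by $(PH)_n(y) \coloneqq \sum_{l\mid n}\tfrac{1}{l} H_{n/l}(y^l)$, and the problem is to invert $P$.

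Next, I would propose the candidate inverse $(P^{-1}K)_n(y) \coloneqq \sum_{l\mid n}\tfrac{\mu(l)}{l} K_{n/l}(y^l)$ and verify $P\circ P^{-1}=\mathrm{id}$ by a one-line manipulation. Substituting $d=ll'$ and exchanging the order of summation, the double sum collapses:
\[ (P(P^{-1}K))_n(y) = \sum_{d\mid n} \frac{K_{n/d}(y^d)}{d} \sum_{l'\mid d}\mu(l') = K_n(y),\]
thanks to the Möbius identity $\sum_{l'\mid d}\mu(l')=[d=1]$. Applying $P^{-1}$ to $F$ and multiplying back by $y-y^{-1}$ gives exactly the formula claimed for $\Omega_{n\beta}(q^{1/2})$; invariance under $q^{1/2}\mapsto q^{-1/2}$ is automatic, since both the hypothesized symmetry of $\overline{\Omega}_{m\beta}$ and the ratio $(y-y^{-1})/(y^l-y^{-l})$ are unchanged by that involution.

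Finally, uniqueness follows by strong induction on $n$: isolating the $l=1$ summand on the right-hand side of the defining relation solves for $\Omega_{n\beta}(q^{1/2})$ in terms of $\overline{\Omega}_{n\beta}(q^{1/2})$ and the $\Omega_{m\beta}$ with $m=n/l<n$, already determined by induction. There is no genuine obstacle here: the lemma is a purely combinatorial Möbius inversion, and the only point requiring mild care is tracking the rescaling of the argument from $y$ to $y^l$ under $P$.
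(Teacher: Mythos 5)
Your proposal is correct and follows essentially the same route as the paper: the key step in both is the double-sum collapse via $\sum_{l'\mid d}\mu(l')=\delta_{d,1}$ after substituting the Möbius-inverted formula into the defining relation, your normalization by $y-y^{-1}$ being only a cosmetic repackaging of the telescoping of the factors $(q^{\frac{l}{2}}-q^{-\frac{l}{2}})$. Your explicit induction for uniqueness is a small addition the paper leaves implicit, but it changes nothing of substance.
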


\begin{proof}
Indeed, we have 
\[\sum_{\ell|n} \frac{1}{\ell} \frac{q^{\frac{1}{2}}-q^{-\frac{1}{2}}}{q^{\frac{\ell}{2}}-q^{-\frac{\ell}{2}}}
\left( \sum_{\ell'|\frac{n}{\ell}} \frac{\mu(\ell')}{\ell'} \frac{q^{\frac{l}{2}}-q^{-\frac{l}{2}}}{q^{\frac{\ell \ell'}{2}}-q^{-\frac{\ell \ell'}{2}}}
\overline{\Omega}_{\frac{n}{\ell \ell'}\beta}(q^{\frac{\ell \ell'}{2}}) \right) \]
\[=\sum_{\ell|n} \sum_{\ell'|\frac{n}{\ell}} 
\frac{\mu(\ell')}{\ell \ell'} \frac{q^{\frac{1}{2}}-q^{-\frac{1}{2}}}{q^{\frac{\ell \ell'}{2}}-q^{-\frac{\ell \ell'}{2}}}
\overline{\Omega}_{\frac{n}{\ell \ell'}\beta}(q^{\frac{\ell \ell'}{2}}) 
= \sum_{m|n}
 \frac{1}{m} \frac{q^{\frac{1}{2}}-q^{-\frac{1}{2}}}{q^{\frac{m}{2}}-q^{-\frac{m}{2}}}
\Omega_{\frac{n}{m}\beta}(q^{\frac{m}{2}}) 
\left( \sum_{\ell'|m} \mu(\ell') \right)   \]
\[ = \sum_{m|n}
 \frac{1}{m} \frac{q^{\frac{1}{2}}-q^{-\frac{1}{2}}}{q^{\frac{m}{2}}-q^{-\frac{m}{2}}}
\Omega_{\frac{n}{m}\beta}(q^{\frac{m}{2}}) 
\delta_{m,1}  
=\overline{\Omega}_{n\beta}(q^{\frac{1}{2}}) \,, \]
where we used the M\"obius inversion formula
$\sum_{\ell'|m} \mu(\ell')=\delta_{m,1}$. 
\end{proof}

We can now formulate the integrality conjecture.

\begin{conj} \label{conj_integrality}
Let $(Y,\partial Y)$ be a log Calabi-Yau pair with respect to some class $\beta \in 
H_2(Y,\Z)$, such that $\beta \cdot (\partial Y)
\neq 0$, and such that the rationality Conjecture
\ref{conj_rationality} is true for all multiples of $\beta$, so that the rational functions  $\Omega_{n \beta}(q^{\frac{1}{2}}) \in \Q(q^{\pm \frac{1}{2}})$, are defined.
Then, in fact, for every $n \geqslant 1$, $\Omega_{n\beta}(q^{\frac{1}{2}})$ is a Laurent polynomial in $q^{\frac{1}{2}}$ with integer coefficients, ie
\[\Omega_{n \beta}(q^{\frac{1}{2}}) \in \Z[q^{\pm \frac{1}{2}}] \,,\]
invariant under $q^{\frac{1}{2}}
\mapsto q^{-\frac{1}{2}}$.
\end{conj}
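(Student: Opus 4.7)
The plan is to prove Conjecture \ref{conj_integrality} in the specific setting of $(Y_\fm, \partial Y_\fm)$ and classes $\beta_p$, which is what Theorem \ref{thm_integ}/Theorem \ref{main_thm_integ} promises. The bridge is Theorem \ref{precise_main_thm_ch2}, which identifies the generating series $\sum_g N_{g,p}^{Y_\fm} \hbar^{2g-1}$ (collected over all $p \in P_m$ giving the same primitive direction $m$) with the Hamiltonian $\hat{H}_m$ attached to the outgoing ray $\R_{\geqslant 0} m$ in the consistent quantum scattering diagram $S(\hat{\fD}_\fm)$. Thus the BPS rationality/integrality statement for $N_{g,p}^{Y_\fm}$ translates into a structural statement about the automorphism $\hat{\Phi}_{\hat{H}_m}$: one must show that it factors as an ordered product of quantum dilogarithm factors $\Psi_q(\pm t^{p} \hat{z}^{l_p m})^{\Omega_{\beta_p}(q^{1/2})}$ with Laurent polynomial exponents $\Omega_{\beta_p}(q^{1/2}) \in \Z[q^{\pm 1/2}]$. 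The Möbius inversion formula of Lemma \ref{lem_mobius} is precisely the logarithmic identity relating such a product to the Hamiltonian; so once the factorization is established, integrality of $\Omega$ follows by definition of $\Psi_q$.

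The core input is Section 6 of \cite{MR2851153}, where Kontsevich and Soibelman prove that the automorphism attached to a ray in a consistent scattering diagram of their BPS type factors into quantum dilogarithms with integer exponents, using a motivic/Hall algebra argument. The plan is to apply this black box ray by ray in $S(\hat{\fD}_\fm)$. Before doing so, one must reconcile two conventions: our scattering diagrams are phrased using the "naive" quantum torus with product $\hat{z}^m \hat{z}^{m'} = q^{\langle m,m' \rangle /2} \hat{z}^{m+m'}$, whereas \cite{MR2851153} works with the twisted quantum torus, whose monomials pick up signs via a quadratic refinement of $\langle -,-\rangle$. I would treat this in Section \ref{section_quad_ref} by passing from one convention to the other through an explicit change of generators $\hat{z}^m \mapsto (-1)^{\sigma(m)} \hat{z}^m$, verifying that the signs arising in the dilogarithm expansion are exactly those needed to match the Ooguri-Vafa formula on the Gromov-Witten side.

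The second ingredient is an inductive reduction, parallel to Appendix C.3 of \cite{MR3758151}. The incoming Hamiltonians $\hat{H}_{\fd_j}$ of $\hat{\fD}_\fm$ are not themselves presented in the specific BPS format of \cite{MR2851153}: rather than a single ray with a prescribed primitive BPS charge, we have a collection of incoming rays with multiple classes. I would induct on the total order in $\fm_R$, at each step adding one new incoming ray (or splitting an existing one, as in the proof of Proposition \ref{prop_scattering_tropical}) and using the consistency of $S(\hat{\fD})$ together with the already-proved integrality of lower-order contributions. The base case, involving a single primitive ray or two rays, is directly governed by the pentagon identity and the elementary scattering of Lemma \ref{lem_elementary_scattering}, and matches \cite{MR2851153} essentially verbatim. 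The inductive step uses that, in restriction to a suitable sector of $M_\R$, the sub-diagram is equivalent to one of the BPS diagrams covered by Kontsevich-Soibelman.

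The main obstacle I anticipate is precisely the inductive step: the factorization into quantum dilogarithms is not automatically preserved under scattering, and one must argue that the Laurent-polynomiality of the exponents $\Omega$ propagates correctly when rays collide and generate new rays. Equivalently, one must show that the scattering operation is compatible with the BPS structure in a way that does not introduce poles in $q^{1/2}$ into the $\Omega_{n\beta}$. The control gained from the inductive hypothesis, together with the fact that each elementary scattering produces a single new ray whose Hamiltonian is computed by a commutator (Lemma \ref{lem_elementary_scattering}), should be enough; but keeping the signs from the quadratic refinement synchronized with the motivic integrality of \cite{MR2851153} throughout the induction is the most delicate bookkeeping in the argument.
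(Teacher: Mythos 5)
Your proposal follows essentially the same route as the paper: translate via Theorem \ref{precise_main_thm_ch2} to a statement about the outgoing rays of $S(\hat{\fD}_\fm)$, pass to twisted quantum tori by the quadratic refinement of Section \ref{section_quad_ref} to match the conventions of Kontsevich--Soibelman, invoke their admissibility/integrality result (Section 6 of \cite{MR2851153}, equivalent to the quantum-dilogarithm factorization with $\Z[q^{\pm 1/2}]$ exponents via the M\"obius inversion of Lemma \ref{lem_mobius}), and reduce the general many-ray case to the two-ray case by the perturbation and change-of-monoid induction of Appendix C3 of \cite{MR3758151}. This is exactly the paper's proof of Theorem \ref{thm_integ}, the case of the conjecture that is actually established.
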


In Section \ref{section_cecotti_vafa}, we
explain why this integrality conjecture can be interpreted in some cases as a mathematically well-defined example of the general integrality for open Gromov-Witten invariants in Calabi-Yau 3-folds
predicted by Ooguri-Vafa 
\cite{MR1765411}. In particular, the log
BPS invariants $\Omega_\beta(q^{\frac{1}{2}})$ should be thought as examples 
of Ooguri-Vafa/open BPS invariants.

In the classical limit $\hbar \rightarrow 0$, the integrality of  
 $\Omega_{n\beta} \coloneqq \Omega_{n\beta} (q^{\frac{1}{2}}=1)$ is equivalent to Conjecture 6.2 of \cite{MR2667135}.
If $\beta^2=-1$, $\beta \cdot (\partial Y)=1$, and the class $\beta$ only contains a smooth rational curve, then
it follows from the proof of Lemma \ref{lem_multicover} that 
\mbox{Conjecture \ref{conj_integrality}} is true. More precisely, we have \[\overline{\Omega}_{n\beta}(q^{\frac{1}{2}})=\frac{1}{n}
\frac{q^{\frac{1}{2}}
-q^{-\frac{1}{2}}}{q^{\frac{n}{2}}
-q^{-\frac{n}{2}}}\] 
for every 
$n \geqslant 1$, and so 
$\Omega_{\beta}(q^{\frac{1}{2}})=1$ and $\Omega_{n \beta}(q^{\frac{1}{2}})=0$ for $n>1$.

\subsection{Integrality result}
\label{section_integ_result}

\begin{lem} \label{lem_rationality}
For every $\fm=(m_1,\dots,m_n)$ an $n$-tuple of primitive non-zero vectors in $M=\Z^2$ and $p\in P=\NN^n$, the rationality Conjecture \ref{conj_rationality} is true for the log Calabi-Yau pair $(Y_\fm,\partial Y_\fm)$
with respect to the curve class $\beta_p \in H_2(Y,\Z)$.
\end{lem}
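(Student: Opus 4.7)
The plan is to chain together two of the big results already established in the paper: the degeneration formula of Proposition \ref{prop_degeneration}, which writes the left-hand side $\sum_g N_{g,p}^{Y_\fm}\hbar^{2g-1}$ as a finite sum, indexed by partitions $k\vdash p$, of toric generating series $\sum_g N_{g,w(k)}^{\overline{Y}_\fm}\hbar^{2g-1+s(k)}$ multiplied by explicit $\sin$-type prefactors; and the tropical correspondence of Proposition \ref{prop_correspondence}, which evaluates each of these toric generating series in closed form as a Block--G\"ottsche refined tropical count $N^{\trop}_{w(k)}(q^{\frac{1}{2}})\in\Z[q^{\pm \frac{1}{2}}]$ times $(2\sin(\hbar/2))^{s(k)-1}$ divided by the appropriate product of weights. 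Chaining the two identities will produce an explicit finite closed-form expression for $\sum_g N_{g,p}^{Y_\fm}\hbar^{2g-1}$ whose only transcendental constituents are $2\sin(l\hbar/2)$ for various $l$ and the Laurent polynomials $N^{\trop}_{w(k)}(q^{\frac{1}{2}})$.

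Next I will multiply the resulting identity by $2\sin(\hbar/2)$ and pass through the substitution $q=e^{i\hbar}$, using the elementary identity $2\sin(l\hbar/2)=(q^{\frac{l}{2}}-q^{-\frac{l}{2}})/i$. The bookkeeping step here is to check that the various powers of $i$ produced by this conversion cancel out globally; they do, because the total number of sine factors in the denominator of each summand, namely $\sum_{j,l}k_{lj}=s(k)$, exactly matches the exponent $s(k)$ of $2\sin(\hbar/2)$ created by the combination of the prefactor $(2\sin(\hbar/2))^{s(k)-1}$ from Proposition \ref{prop_correspondence} and the extra factor $2\sin(\hbar/2)$ we multiplied in. After cancellation every term becomes a finite product of elements of $\Z[q^{\pm \frac{1}{2}}]$ and reciprocals of the cyclotomic-type polynomials $q^{\frac{l}{2}}-q^{-\frac{l}{2}}$, so $2\sin(\hbar/2)\sum_g N_{g,p}^{Y_\fm}\hbar^{2g-1}$ becomes manifestly a rational function of $q^{\frac{1}{2}}$ with rational coefficients. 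Defining $\overline{\Omega}_{\beta_p}(q^{\frac{1}{2}})$ as $(-1)^{\beta_p.(\partial Y_\fm)+1}$ times this rational function then gives the function required by Conjecture \ref{conj_rationality}.

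There is no real obstacle here beyond combining the two earlier propositions and checking the $i$-parity: once Propositions \ref{prop_degeneration} and \ref{prop_correspondence} are in hand, the lemma reduces to the observation that finite arithmetic combinations of Laurent polynomials in $q^{\pm \frac{1}{2}}$ and of inverses of $q^{\frac{l}{2}}-q^{-\frac{l}{2}}$ remain in $\Q(q^{\pm \frac{1}{2}})$. The only step deserving attention is the clean matching of the exponents of $i$ under the $\sin\leftrightarrow q$ substitution, which works out precisely thanks to the combinatorial identity $s(k)=\sum_{j,l}k_{lj}$; this matching is also the reason why the subsequent integrality conjecture has any chance of holding, and will be revisited in the proof of Theorem \ref{thm_integ}.
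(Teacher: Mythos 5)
Your argument is correct and is essentially the paper's: the paper deduces the lemma from Theorem \ref{precise_main_thm_ch2} together with Proposition \ref{prop_scattering_tropical}, whose explicit formula for $\hat{H}_m$ manifestly has coefficients in $\Q[q^{\pm\frac{1}{2}}][(1-q^l)^{-1}]_{l\geqslant 1}$, whereas you obtain the very same explicit expression by chaining Propositions \ref{prop_degeneration} and \ref{prop_correspondence} directly on the Gromov--Witten/tropical side. The cancellation of powers of $i$ via $s(k)=\sum_{j,l}k_{lj}$ that you check is exactly the bookkeeping already carried out at the end of the proof of Theorem \ref{precise_main_thm_ch2}, so no new content is needed.
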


\begin{proof}
This follows from Theorem \ref{precise_main_thm_ch2}, expressing the generating series of invariants $N_{g,p}^{Y_\fm}$ as a Hamiltonian $\hat{H}_m$ attached to some ray of the quantum scattering diagram 
$S(\hat{\fD}_\fm)$, and from Proposition 
\ref{prop_scattering_tropical}, giving a formula for $\hat{H}_\fm$ whose coefficients are manifestly in 
$\Q[q^{\pm \frac{1}{2}}][(1-q^\ell)^{-1}]_{ \ell \geqslant 1}$.

Alternatively, one could argue that, because the 
initial quantum scattering diagram 
$\hat{\fD}_\fm$ is defined over $\Q[q^{\pm \frac{1}{2}}][(1-q^\ell)^{-1}]_{\ell \geqslant 1}$, the resulting consistent quantum scattering diagram $S(\hat{\fD}_\fm)$ is also 
defined over
$\Q[q^{\pm \frac{1}{2}}][(1-q^\ell)^{-1}]_{\ell \geqslant 1}$ and so \mbox{Lemma \ref{lem_rationality}} follows directly from 
\mbox{Theorem \ref{precise_main_thm_ch2}}. 
\end{proof}

By Lemma
\ref{lem_rationality}, we have rational functions 
\[ \overline{\Omega}_p^{Y_\fm}(q^{\frac{1}{2}}) \in \Q(q^{\pm \frac{1}{2}})\,,\]
such that 
\[\overline{\Omega}_p^{Y_\fm}(q^{\frac{1}{2}}) =(-1)^{\ell_p+1}
\left(2 \sin
\left(\frac{\hbar}{2} \right) \right)
\left( \sum_{g \geqslant 0} N_{g,p}^{Y_\fm} \hbar^{2g-1} \right)\,,\]
as power series in $\hbar$, after the change of variables $q=e^{i \hbar}$. Note that we used the fact that 
$\beta_p.(\partial Y_\fm)=\ell_p$.

The following result is, after Theorem
\ref{precise_main_thm_ch2}, the second main 
result of this paper. It is a precise form of Theorem \ref{main_thm_integ} in the Introduction.

\begin{thm} \label{thm_integ}
For every $\fm=(m_1,\dots,m_n)$ an $n$-tuple of primitive non-zero vectors in $M=\Z^2$ and $p \in P=\NN^n$, the 
integrality Conjecture \ref{conj_integrality} 
is true for the log Calabi-Yau pair $(Y_\fm,\partial Y_\fm)$
with respect to the class $\beta_p \in H_2(Y_m,\Z)$.
In other words, there exists 
$\Omega_p^{Y_\fm}(q^{\frac{1}{2}}) \in \Z[q^{\pm \frac{1}{2}}]$
such that 
\[\overline{\Omega}_p^{Y_\fm}(q^{\frac{1}{2}})
= \sum_{p=\ell p'} \frac{1}{\ell}
\frac{q^{\frac{1}{2}}-q^{-\frac{1}{2}}}{
q^{\frac{\ell}{2}}-q^{-\frac{\ell}{2}}} \Omega_{p'}^{Y_\fm}(q^{\frac{\ell}{2}}) 
\,.\]
\end{thm}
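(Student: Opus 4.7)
\textbf{Proof proposal for Theorem \ref{thm_integ}.} The plan is to translate the statement into an integrality property of the Hamiltonians of the consistent quantum scattering diagram $S(\hat{\fD}_\fm)$ and then to invoke the $q$-deformed scattering integrality theorems of Kontsevich--Soibelman in Section 6 of \cite{MR2851153}. Combining Theorem \ref{precise_main_thm_ch2} with Lemma \ref{lem_rationality} and the definition of $\overline{\Omega}_p^{Y_\fm}$, the Hamiltonian on the outgoing ray $\R_{\geqslant 0}m$ rewrites as
\[ \hat{H}_m = \sum_{p \in P_m} \frac{(-1)^{l_p+1}}{q^{\frac{1}{2}}-q^{-\frac{1}{2}}} \, \overline{\Omega}_p^{Y_\fm}(q^{\frac{1}{2}}) \left( \prod_{j=1}^n t_j^{p_j}\right) \hat{z}^{l_p m} \,. \]
Thus Theorem \ref{thm_integ} is equivalent to the statement that every outgoing ray of $S(\hat{\fD}_\fm)$ lies in the BPS quantum tropical vertex group, i.e.\ admits a multicover decomposition into logarithms of quantum dilogarithms whose BPS functions are symmetric Laurent polynomials in $q^{\pm \frac{1}{2}}$ with integer coefficients.

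The next step is the quadratic refinement of Section \ref{section_quad_ref}, which identifies the natural setting for Kontsevich--Soibelman's integrality. The incoming Hamiltonians $\hat{H}_{\fd_j}$ carry alternating signs $(-1)^{l-1}$, and after conjugation by the appropriate sign twist relating the quantum torus $\hat{T}^q$ to the twisted quantum torus, they become the logarithms of the standard quantum dilogarithms $\Psi_q(t_j \hat{z}^{m_j})$, whose BPS invariant in the primitive class is $1$. This brings $\hat{\fD}_\fm$ into exactly the setup of \cite{MR2851153}. Keeping track of the sign $(-1)^{l_p+1}$ appearing above, and of how it interacts with the multicover formula in the statement, is the delicate bookkeeping carried out in \mbox{Section \ref{section_quad_ref}}.

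The core integrality input is then Section 6 of \cite{MR2851153}: any ordered product identity of quantum dilogarithms in the BPS quantum tropical vertex group admits, after commuting past a wall, a factorization as another ordered product whose BPS functions lie in $\Z[q^{\pm \frac{1}{2}}]$ and are invariant under $q^{\frac{1}{2}} \mapsto q^{-\frac{1}{2}}$. Applied to the single-wall identity encoded by the consistent completion $S(\hat{\fD}_\fm)$, and transported back through the quadratic refinement, this yields the integrality and symmetry of the $\Omega_p^{Y_\fm}(q^{\frac{1}{2}})$.

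The main obstacle is that the hypotheses of \cite{MR2851153} do not directly cover $\hat{\fD}_\fm$ for arbitrary $n$-tuples $\fm$: Kontsevich--Soibelman treat a more rigid input involving a finite collection of stable rays. To bridge this, I would run an induction on scattering diagrams in the style of Appendix C.3 of \cite{MR3758151}, adding one incoming dilogarithm ray at a time and showing that BPS integrality is preserved at each step. A useful simplifying device is the splitting of each initial ray into finitely many generically perturbed rays, as in the proof of Proposition \ref{prop_scattering_tropical}: this reduces every interaction in $S(\hat{\fD}_\fm)$ to a sequence of elementary pairwise scatterings of the type treated by Lemma \ref{lem_elementary_scattering} and handled integrally in \cite{MR2851153}. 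The hard part will be threading the quadratic refinement consistently through this induction and verifying that the $\Omega_p^{Y_\fm}$ extracted from $S(\hat{\fD}_\fm)$ agree with those built inductively from the pairwise integrality, without spoiling the symmetry under $q^{\frac{1}{2}} \mapsto q^{-\frac{1}{2}}$.
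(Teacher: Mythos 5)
Your overall strategy is the one the paper follows: rewrite $\hat{H}_m$ via Theorem \ref{precise_main_thm_ch2} so that Theorem \ref{thm_integ} becomes the statement that the outgoing rays of $S(\hat{\fD}_\fm)$ satisfy the BPS integrality condition, pass to twisted quantum tori via the quadratic refinement of Section \ref{section_quad_ref} so that the incoming rays become admissible series in the sense of Section 6 of \cite{MR2851153}, and then run an induction in the style of Appendix C.3 of \cite{MR3758151} to reduce the general case to one covered by Kontsevich--Soibelman. That is exactly the architecture of Sections \ref{section_quad_ref} and \ref{section_proof_integ}.

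However, the specific reduction mechanism you propose in the last paragraph would not deliver integrality. Splitting each initial ray into generically perturbed rays with square-zero coefficients $u_{ja}$, as in the proof of Proposition \ref{prop_scattering_tropical}, does reduce everything to the nilpotent elementary scatterings of Lemma \ref{lem_elementary_scattering} --- but at that level every Hamiltonian is a single monomial with nilpotent coefficient, so there is no multicover or BPS structure left to control: the integrality content lives entirely in how the perturbed contributions reassemble into the variables $t_j$, and that reassembly only yields the rationality statement of Lemma \ref{lem_rationality} (coefficients in $\Q[q^{\pm 1/2}][(1-q^l)^{-1}]$), not membership of $\Omega_p^{Y_\fm}$ in $\Z[q^{\pm 1/2}]$. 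What the paper actually does is keep the full admissible series on each ray and use the perturbation trick only to arrange that at most two initial rays have order one; it then applies the change-of-monoid trick $P' = \oplus_j \NN e_j' \to P$, so that each outgoing ray either arises from a genuine scattering of two admissible incoming rays --- where the full strength of Proposition 9 of \cite{MR2851153} applies --- or has strictly smaller order in $P'$, where the induction hypothesis applies. You would need to replace the nilpotent splitting by this change-of-monoid induction to close the argument.
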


The proof of Theorem 
\ref{thm_integ} takes Sections
\ref{section_quad_ref} and
\ref{section_proof_integ}.

\subsection{Quadratic refinement}
\label{section_quad_ref}

According to Theorem \ref{precise_main_thm_ch2}, generating series of the log Gromov-Witten invariants $N_{g,p}^{Y_\fm}$ are Hamiltonians attached to the rays of some quantum scattering diagram $S(\hat{\fD}_{\fm})$.
Our integrality result, Theorem
\ref{thm_integ}, will follow from
a general integrality result for scattering diagrams. Our main input, the integrality result of \cite{MR2851153}, is phrased in terms of twisted quantum scattering diagrams, ie scattering diagrams valued in automorphisms of twisted quantum tori. The comparison with quiver DT
invariants, done in Section \ref{section_dt}, also requires
us to consider twisted quantum scattering diagrams. 

In the present Section, we explain how to 
compare the quantum scattering diagram 
$S(\hat{\fD}_\fm)$ with a twisted quantum 
scattering diagram 
$S(\hat{\fD}^{\tw}_\fm)$. This comparison requires the notion of quadratic refinement.
A short and to the point discussion by Neitzke can be found in \cite{neitzke2014comparing}.
Some related discussion can be found in Appendix A of 
\cite{lin2017correspondence}.

We start with $P=\NN^n=\oplus_{j=1}^n \NN e_j$. For $p=(p_1,\dots,p_n) \in P=\NN^n$, we denote
$\ord(p) \coloneqq \sum_{j=1}^n p_j$.
An $n$-tuple $\fm=(m_1,\dots,m_n)$
of primitive non-zero vectors in 
$M=\Z^2$ naturally defines an additive map 
\[ r \colon P \rightarrow M\]
\[ e_j \mapsto m_j\,.\]

For every 
$\Z[q^{\pm \frac{1}{2}}]$-algebra
$A$,
we denote by $\hat{T}_{P,\tw}^{A}$ the non-commutative ``space" whose algebra of functions is the 
algebra
$\Gamma (\cO_{\hat{T}_{P,\tw}^A})$
given by 
$A[\![P]\!]$, 
power series in $\hat{x}^p$ for
$p \in P$, with coefficients in $A$, 
with the product defined by 
\[ \hat{x}^p \cdot \hat{x}^{p'}=(-1)^{
\langle r(p),r(p') \rangle}
q^{\frac{1}{2}\langle r(p),r(p') \rangle}\hat{x}^{p+p'} \,.\]
The main difference with respect to the formalism of Section \ref{section_scattering} is the twist by the extra sign $(-1)^{
\langle r(p),r(p') \rangle}$.

We will use $A=\Z[\![q^{\pm \frac{1}{2}}]\!]$, 
$\Z(\!(q^{\frac{1}{2}})\!)$ and
$\Q(\!(q^{\frac{1}{2}})\!)$.
We have obviously the inclusions
\[ \Gamma \left(\cO_{\hat{T}_{P,\tw}^{\Z[\![q^{\pm 1/2}]\!]}}\right)
\subset 
\Gamma \left(\cO_{\hat{T}_{P,\tw}^{\Z(\!(q^{1/2})\!)}}\right)
\subset
\Gamma \left( \cO_{\hat{T}_{P,\tw}^{\Q(\!(q^{1/2})\!)}} \right) \,.\]

Every 
\[\hat{H}^\tw = \sum_{p\in P}
\hat{H}^\tw_p \hat{x}^p \in
\Gamma \left( \cO_{\hat{T}_{P,\tw}^{\Q(\!(q^{1/2})\!)}} \right) \,,\]
such that $\hat{H}^\tw =0 \mod P$, defines 
via conjugation by 
$\exp \left( \hat{H}^\tw \right)$
an automorphism 
\[\hat{\Phi}^\tw_{\hat{H}^\tw} 
= \Ad_{\exp \left( \hat{H}^\tw \right)}
=\exp \left( \hat{H}^\tw \right)
\left(-\right) \exp \left( -\hat{H}^\tw \right)\]
of 
$\Gamma \left( \cO_{\hat{T}_{P,\tw}^{\Q(\!(q^{1/2})\!)}} \right)$.

\begin{defn}
A twisted quantum scattering diagram
$\hat{\fD}^{\tw}$ over \mbox{$(r \colon P
\rightarrow M)$} is a set of rays 
$\fd$ in $M_\R$,
equipped with elements 
\[ \hat{H}^\tw_\fd
\in \Gamma \left( \cO_{\hat{T}_{P,\tw}^{\Q(\!(q^{1/2})\!)}} \right) \,,\]
such that:
\begin{itemize}
\item There exists a primitive $p \in P$
(which is necessarily unique)
such that $\hat{H}^\tw_\fd \in  
\hat{x}^p \Q(\!(q^{\frac{1}{2}})\!)[\![\hat{x}^p]\!]$ and either $r(p) \in 
-\NN_{\geqslant 1} m_\fd$ or 
$r(p) \in \NN_{\geqslant 1} m_\fd$. We say 
that the ray $(\fd,\hat{H}^\tw_{\fd})$ is ingoing if 
$r(p) \in 
-\NN_{\geqslant 1} m_\fd$ and outgoing if $r(p) \in \NN_{\geqslant 1} m_\fd$. We call $p$ the 
$P$-direction of the ray $(\fd, \hat{H}^\tw_\fd)$.
\item For every $\ell \geqslant 0$, there are only finitely many rays $\fd$ of 
$P$-direction $p$ satisfying 
$\ord(p) \leqslant \ell$.
\end{itemize}

\end{defn}

Using the automorphisms 
$\hat{\Phi}^\tw_{\hat{H}^\tw}$, we define as in Section \ref{section_scattering_diag} the notion of consistent twisted quantum scattering diagram
and one can prove that every twisted quantum scattering diagram 
$\fD^\tw$ can be canonically completed by adding only outgoing rays to form a consistent twisted quantum scattering diagram $S(\fD^\tw)$.

The following Lemma will give us a way to go back and forth between quantum scattering diagrams and twisted quantum scattering diagrams.

\begin{lem} \label{lem_quad_ref_M}
The map 
$\sigma_M \colon M \rightarrow \{ \pm 1\}$, defined by $\sigma_M (0)=1$ and 
$\sigma_M (m)=(-1)^{|m|}$ for $m \in M$
non-zero, where $|m|$ is the divisibility of 
$m$ in $M$, is a quadratic refinement of 
\[\wedge^2 M \rightarrow \{ \pm 1 \}\]
\[ (m_1,m_2) \mapsto (-1)^{\langle m_1,
m_2 \rangle} \,,\]
ie we have 
\[ \sigma_M (m_1+m_2)
=(-1)^{\langle m_1, m_2 \rangle}
\sigma_M (m_1) \sigma_M (m_2) \,,\]
for every $m_1, m_2 \in M$. It is the unique
quadratic refinement such that 
$\sigma_M(m)=-1$ for every $m\in M$ primitive.
\end{lem}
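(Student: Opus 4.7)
The plan is to exploit that $\sigma_M$ factors through $M/2M\cong(\Z/2)^2$, reduce the identity to a finite check, and then deduce uniqueness from the refinement identity applied to collinear pairs. First, I would observe that $|m|\pmod 2$ depends only on $m\pmod 2$: writing $m=(a,b)\in\Z^2$, the divisibility $|m|=\gcd(a,b)$ is even iff both $a$ and $b$ are even. Reading off the four residue classes, this gives the uniform formula
\[ \sigma_M(a,b)=(-1)^{a+b+ab}, \]
where $(a,b)$ is now regarded modulo $2$; note that this also recovers the convention $\sigma_M(0)=1$.

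Second, using this explicit formula I would verify the refinement identity by direct expansion: for $m_1=(a_1,b_1)$ and $m_2=(a_2,b_2)$ one has
\[ (a_1+a_2)+(b_1+b_2)+(a_1+a_2)(b_1+b_2)\equiv(a_1+b_1+a_1b_1)+(a_2+b_2+a_2b_2)+(a_1b_2+a_2b_1)\pmod 2, \]
while $\langle m_1,m_2\rangle=a_1b_2-a_2b_1\equiv a_1b_2+a_2b_1\pmod 2$. Exponentiating by $-1$ gives exactly $\sigma_M(m_1+m_2)=(-1)^{\langle m_1,m_2\rangle}\sigma_M(m_1)\sigma_M(m_2)$. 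Conceptually, this is the statement that the quadratic form $a+b+ab$ on $\F_2^2$ has the mod-$2$ reduction of $\langle -,-\rangle$ as its associated symmetric bilinear form.

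Third, for uniqueness, I would let $\tau\colon M\to\{\pm 1\}$ be any quadratic refinement with $\tau(v)=-1$ for every primitive $v$, and apply the refinement identity to a collinear pair $(m_1,m_2)$: since $\langle m_1,m_2\rangle=0$ in that case, the identity collapses to $\tau(m_1+m_2)=\tau(m_1)\tau(m_2)$. Taking $m_1=kv$, $m_2=v$ with $v$ primitive and inducting on $k\geqslant 1$ yields $\tau(kv)=(-1)^k$, hence $\tau(m)=(-1)^{|m|}=\sigma_M(m)$ for every non-zero $m$. For $m=0$, writing $0=v+(-v)$ and applying the refinement identity gives $\tau(0)=\tau(v)\tau(-v)=1=\sigma_M(0)$, completing the uniqueness claim. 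There is no real obstacle in the argument—the whole content is a finite computation on $(\Z/2)^2$—but the one point worth highlighting is the identification of the quadratic form $a+b+ab$ on $\F_2^2$ as the natural quadratic refinement of $\langle -,-\rangle\pmod 2$, which is what makes the minus sign behave correctly under addition.
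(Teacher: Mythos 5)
Your proof is correct and follows essentially the same route as the paper: both identify $\sigma_M$ with the explicit quadratic form $(-1)^{a+b+ab}$ on coordinates mod $2$ and verify the refinement identity by the same direct parity expansion. The only (harmless) difference is in the uniqueness step, where you compute $\tau(kv)$ by induction along collinear pairs, while the paper invokes the fact that a quadratic refinement is determined by its values on a basis.
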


\begin{proof}
We fix a basis of $M$ and we denote by
$m=(m^x,m^y)$ the coordinates of some 
$m \in M$ in this basis. We define 
$\sigma'_M \colon M \rightarrow \{ \pm 1\}$ by 
\[ \sigma'_M(m)=(-1)^{m^x m^y + m^x + m^y} \,.\]
It is easy to check that $\sigma'_M$ is a quadratic refinement
of $(-1)^{\langle -,- \rangle}$:
the parity of 
\[(m_1^x+m_2^x)(m_1^y+m_2^y)+m_1^x +m_2^x
+m_1^y+m_2^y\]
differs from the parity of 
\[ m_1^x m_1^y +m_1^x+m_1^y+m_2^x m_2^y
+m_2^x+m_2^y\]
by $m_1^x m_2^y + m_2^x m_1^y$, which has the parity of $\langle m_1,m_2\rangle$.

If $m \in M$ is primitive, then $(m^x,m^y)$ is equal to $(1,0)$, $(0,1)$ or $(1,1)$ modulo two, 
and in all three cases, we obtain $\sigma'_M(m)=-1$. Combined with the fact that $\sigma'_M$ is a quadratic refinement, 
this implies that, for every $m \in M$, we have $\sigma'_M(m)=(-1)^{|m|}$, ie
$\sigma'_M=\sigma_M$. In particular,
$\sigma_M$ is a quadratic refinement and
$\sigma'_M$ is independent of the choice of basis.

The uniqueness statement follows from the fact that a quadratic refinement is determined by its value on a basis of $M$.
\end{proof}

Let $\hat{\fD}_\fm^\tw$ be the twisted quantum
scattering diagram consisting of 
incoming rays $(\fd_j, \hat{H}^\tw_{\fd_j})$,
$1 \leqslant j \leqslant n$, where 
\[\fd_j = -\R_{\geqslant 0}m_j \,,\]
and 
\[ \hat{H}^\tw_{\fd_j} = 
-\sum_{\ell \geqslant 1}
\frac{1}{\ell}
\frac{1}{q^{\frac{\ell}{2}}
-q^{-\frac{\ell}{2}}} \hat{x}^{\ell e_j}
\in \Gamma \left( \cO_{\hat{T}_{P,\tw}^{\Q(\!(q^{1/2})\!)}} \right) \,,\]
where we consider 
\[\frac{1}{q^{\frac{\ell}{2}}
-q^{-\frac{\ell}{2}}}=-q^{\frac{\ell}{2}}
\sum_{k \geqslant 0} q^{k\ell} \in \Q(\!(q^{\frac{1}{2}})\!)\,.\] 
Let $S(\hat{\fD}_\fm^\tw)$ be the corresponding 
consistent twisted quantum scattering diagram obtained by adding only outgoing rays.

Define $\sigma_P
\colon P \rightarrow \{\pm 1\}$
by $\sigma_P \coloneqq \sigma_M \circ r$.
It follows from Lemma \ref{lem_quad_ref_M} that $\sigma_P$ is a quadratic refinement and so
\[ \left( \prod_{j=1}^n t_j^{p_j}
\right) \hat{z}^{r(p)} \mapsto \sigma_P(p) \hat{x}^p \,,\]
is an algebra isomorphism between quantum tori and twisted quantum tori.
Using this isomorphism, we can construct a 
twisted quantum scattering diagram 
$S(\hat{\fD}_{\fm})^\tw$ from the quantum 
scattering diagram
$\hat{\fD}_\fm$.

The incoming rays of 
$S(\hat{\fD}_\fm)^\tw$ are $(\fd_j, 
\hat{H}_{\fd_j}^\tw)$, 
$1 \leqslant j \leqslant n$, where 
$\fd_j=-\R_{\geqslant 0} m_j$ and 
\[ \hat{H}_{\fd_j}^\tw=- 
\sum_{\ell \geqslant 1}\frac{1}{\ell} \frac{1}{q^{\frac{\ell}{2}}-q^{-\frac{\ell}{2}}} 
\hat{x}^{\ell e_j} \,.\]

The outgoing rays of $S(\hat{\fD}_\fm)^\tw$
are $(\R_{\geqslant 0}m, \hat{H}_m^\tw)$ where
\[\hat{H}_m^\tw
=-\sum_{p\in P_m}
\frac{\overline{\Omega}_p(q^{\frac{1}{2}})}{q^{\frac{1}{2}}-q^{-\frac{1}{2}}} \hat{x}^p
=-\sum_{p\in P_m}
\sum_{p=\ell p'} 
\frac{1}{\ell} \frac{1}{q^{\frac{\ell}{2}}
-q^{-\frac{\ell}{2}}}
\Omega_{p'}^{Y_\fm}(q^{\frac{\ell}{2}})
\hat{x}^p\,.\]

\begin{lem} \label{lem_twist}
We have $S(\hat{\fD}_\fm^\tw)
=S(\hat{\fD}_\fm)^\tw$.
\end{lem}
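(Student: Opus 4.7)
The plan is to show that the algebra isomorphism induced by the quadratic refinement $\sigma_P = \sigma_M \circ r$ intertwines the two scattering diagrams at the level of incoming rays, and then to invoke uniqueness of the consistent completion.

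First, I would verify that the map
\[ \varphi \colon \left( \prod_{j=1}^n t_j^{p_j}\right) \hat{z}^{r(p)} \mapsto \sigma_P(p)\, \hat{x}^p \]
is indeed an algebra isomorphism (completed appropriately) from the algebra of functions on the relevant formal family of quantum tori to the algebra of functions on the corresponding formal family of twisted quantum tori. This is a direct computation: under $\varphi$, the product of two basis elements acquires a factor $\sigma_P(p) \sigma_P(p') / \sigma_P(p+p')$, which by Lemma \ref{lem_quad_ref_M} is exactly $(-1)^{\langle r(p), r(p') \rangle}$, matching the extra sign in the twisted product.

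Next I would check that $\varphi$ sends each incoming Hamiltonian of $\hat{\fD}_\fm$ to the corresponding incoming Hamiltonian of $\hat{\fD}_\fm^\tw$. Since $m_j$ is primitive, the divisibility of $l m_j$ in $M$ is $l$, so Lemma \ref{lem_quad_ref_M} gives $\sigma_P(l e_j) = \sigma_M(l m_j) = (-1)^l$. Applying $\varphi$ to $\hat{H}_{\fd_j}$ thus produces
\[ \sum_{l \geqslant 1} \frac{1}{l}\, \frac{(-1)^{l-1}}{q^{l/2}-q^{-l/2}}\, (-1)^l\, \hat{x}^{l e_j} = -\sum_{l \geqslant 1} \frac{1}{l}\, \frac{1}{q^{l/2}-q^{-l/2}}\, \hat{x}^{l e_j}, \]
which is exactly $\hat{H}^\tw_{\fd_j}$.

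Since $\varphi$ is an algebra isomorphism, for any Hamiltonian $\hat{H}$ we have $\varphi \circ \hat{\Phi}_{\hat{H}} \circ \varphi^{-1} = \hat{\Phi}^\tw_{\varphi(\hat{H})}$, and hence $\varphi$ carries the path-ordered products $\theta_{\gamma,\fD}$ to their twisted analogues. In particular, $\varphi$ sends any consistent quantum scattering diagram to a consistent twisted quantum scattering diagram, and preserves the property of being obtained from the initial data by adding only outgoing rays. Applying $\varphi$ to $S(\hat{\fD}_\fm)$ therefore yields a consistent twisted scattering diagram whose incoming rays coincide with those of $\hat{\fD}_\fm^\tw$ and which is obtained from $\hat{\fD}_\fm^\tw$ by adding only outgoing rays. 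By the uniqueness part of Proposition \ref{prop_consistency} (equally valid in the twisted setting), it must agree with $S(\hat{\fD}_\fm^\tw)$. Unpacking the definitions, this is precisely the equality $S(\hat{\fD}_\fm^\tw) = S(\hat{\fD}_\fm)^\tw$. The only mildly delicate point is the bookkeeping of the sign $(-1)^l$ versus $(-1)^{l-1}$ above, which is exactly why the quadratic refinement was set up so that $\sigma_M$ takes the value $-1$ (and not $+1$) on primitive vectors.
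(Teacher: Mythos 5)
Your proof is correct and follows exactly the same route as the paper's: the paper's proof consists precisely of observing that the quadratic-refinement map is an algebra isomorphism, hence $S(\hat{\fD}_\fm)^\tw$ is consistent, and concluding by uniqueness of the consistent completion. You have simply filled in the details (the verification that $\sigma_P$ being a quadratic refinement makes $\varphi$ an algebra isomorphism, and the sign computation $\sigma_P(le_j)=(-1)^l$ matching the incoming rays) that the paper leaves implicit in the surrounding text.
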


\begin{proof}
As $(\prod_{j=1}^n t_j^{p_j}) \hat{z}^{r(p)} \mapsto \sigma_P(p) \hat{x}^p$ is an algebra isomorphism, the twisted quantum scattering diagram 
$S(\hat{\fD}_\fm)^\tw$ is consistent and so the result follows from the uniqueness of the consistent completion of twisted quantum scattering diagrams.
\end{proof}

\subsection{Proof of the integrality theorem}
\label{section_proof_integ}

We give below the proof of Theorem 
\ref{thm_integ}. It is a combination of 
the scattering arguments of Appendix C3 of
\cite{MR3758151}
with the formalism of quantum admissible series of \cite{MR2851153}. Because of the 
structure of the induction argument, we will in fact prove a more general statement than 
Theorem \ref{thm_integ}. We will prove, 
as Proposition \ref{prop_integ}, that the consistent completion of any (twisted) quantum scattering with incoming rays equipped with Hamiltonians satisfying some BPS integrality condition 
has outgoing rays equipped with Hamiltonians 
satisfying the BPS integrality condition.

We fix $p \in P$ primitive.
Consider 
\[ \hat{H}^\tw = \sum_{\ell \geqslant 1}
\hat{H}^\tw_\ell(q^{\frac{1}{2}}) \hat{x}^{\ell p}  \in 
\hat{x}^p \Q(\!(q^{\frac{1}{2}})\!)
[\![\hat{x}^{p}]\!] \,. \]
We define
\[ \overline{\Omega}_\ell(q^{\frac{1}{2}})
\coloneqq -(q^{\frac{1}{2}}
-q^{-\frac{1}{2}}) \hat{H}_\ell^\tw(q^{\frac{1}{2}}) \in \Q(\!(q^{\frac{1}{2}})\!) \,,\]
and 
\[\Omega_\ell(q^{\frac{1}{2}})
\coloneqq \sum_{\ell'|\ell} \frac{\mu(\ell')}{\ell'}
\frac{q^{\frac{1}{2}}-q^{-\frac{1}{2}}}{
q^{\frac{\ell}{2}}-q^{-\frac{\ell}{2}}}
\overline{\Omega}_{\frac{\ell}{\ell'}}
(q^{\frac{\ell}{2}})
 \in \Q(\!(q^{\frac{1}{2}})\!) \,.
\]
It follows from Lemma \ref{lem_mobius} that we have 
\[ \hat{H}^\tw=
-\sum_{n \geqslant 1}
\sum_{\ell \geqslant 1}
\frac{1}{\ell} 
\frac{\Omega_{n}(q^{\frac{l}{2}})}
{q^{\frac{\ell}{2}}
-q^{-\frac{\ell}{2}}}
\hat{x}^{\ell np}
 \,.\]
 
\begin{defn} We say that 
$\hat{H}^\tw\in 
\hat{x}^p \Q(\!(q^{\frac{1}{2}})\!)
[\![\hat{x}^{p}]\!]$ satisfies the BPS 
integrality condition if each corresponding 
$\Omega_\ell(q^{\frac{1}{2}}) \in \Q(\!(q^{\frac{1}{2}})\!)$ 
is in fact a Laurent polynomial with integer coefficients, 
ie $\Omega_\ell(q^{\frac{1}{2}}) \in \Z[q^{\frac{1}{2}}]$.
\end{defn}

The function $\hat{H}^\tw$ satisfies the BPS 
integrality condition if and only if 
$\exp \left(\hat{H}^\tw \right)$ is admissible in the sense of 
Section 6 of \cite{MR2851153}.

It follows from the product form of the quantum dilogarithm, as recalled
in \mbox{Section \ref{section_statement}}, that if $\hat{H}^\tw$ satisfies the BPS integrality condition, then $\hat{\Phi}^\tw_{\hat{H}^\tw}$ preserves the subring  
$\Gamma \left( \cO_{\hat{T}_{P,\tw}^{\Z[\![q^{1/2}]\!]}} \right)$
of $\Gamma \left( \cO_{\hat{T}_{P,\tw}^{\Q(\!(q^{1/2})\!)}} \right)$.
We refer to the subgroup of automorphisms of 
$\Gamma \left( \cO_{\hat{T}_{P,\tw}^{\Z[\![q^{1/2}]\!]}} \right)$ generated by automorphisms of the form $\hat{\Phi}^\tw_{\hat{H}^\tw}$ with 
$\hat{H}^\tw$ satisfying the BPS 
integrality condition as the BPS quantum tropical vertex 
group; this subgroup is called the quantum tropical vertex group in
\cite{MR2851153}, .

We fix a choice of 
twisted quantum scattering diagram in each
equivalence class by considering to be distinct rays with different $P$-directions and by merging rays with coinciding supports and  with the same $P$-direction.

Recall that for $p=(p_1,\dots,p_n) \in P=\NN^n$, we denote
$\ord(p) \coloneqq \sum_{j=1}^n p_j$.
It is simply the total degree of the monomial in several variables 
$\prod_{j=1}^n t_j^{p_j}$.

\begin{lem} \label{lem_induction}
Let $N$, $n$ and $n_I$ be a positive integers.
Let
$r \colon P=\NN^n \rightarrow M$ be an additive map.
Let 
$(p^1,\dots,p^{n_I})$ be an 
$n_I$-tuple of primitive vectors in 
$P$.
Let $\hat{\fD}^\tw$ be a twisted quantum scattering diagram over 
$(r \colon P \rightarrow M)$, consisting of incoming rays $(\fd_j,\hat{H}_{\fd_j}^\tw)$ for
$1 \leqslant j 
\leqslant n_I$, with
$\fd_j=-\R_{\geqslant 0}r(p^j)$ 
and 
$\hat{H}_{\fd_j}^\tw \in \hat{x}^{p^j}
\Q(\!(q^{\frac{1}{2}})\!)[\![\hat{x}^{p^j}]\!]$
satisfying the BPS integrality condition. Then, every outgoing ray 
$(\fd, \hat{H}_\fd^\tw)$ of the consistent twisted quantum scattering diagram $S(\hat{\fD}^\tw)$, whose $P$-direction $p$ satisfies $\ord(p) \leqslant N$, is such that
$\hat{H}^\tw_\fd \in 
\hat{x}^p
\Q(\!(q^{\frac{1}{2}})\!)[\![\hat{x}^p]\!]$
satisfies the BPS integrality condition.
\end{lem}

\begin{proof}

We prove the result by induction on $N$.
The result is obviously true for $N=1$: 
the only outgoing rays with $P$-direction $p$ satisfying $\ord (p)=1$ are obtained by straight propagation of the initial rays and so satisfy the BPS integrality condition if it is the case for the initial rays.

Let $N>1$ be an integer.
We assume by induction that 
\mbox{Lemma \ref{lem_induction}}
is true for all integers strictly less than $N$ and we want to prove it for $N$.
As in Step III of Appendix C3 of
\cite{MR3758151}, up to applying the perturbation trick, 
which consists of separating transversally 
and generically the initial rays with the same support and then looking at the new local scatterings, we can assume that at most two initial rays have order one.

We now use the change of monoid trick, as in Steps I and IV of Appendix C3 of
\cite{MR3758151}. 
Write $P'=\oplus_{j=1}^{n_I} \NN e_j'$
and 
\[ r' \colon P' \rightarrow M\] 
\[e_j' \mapsto r'(e_j) \coloneqq r(p^j)\,.\]
Let $\hat{\fD}^{\tw'}$ be the twisted quantum scattering diagram over
$(r' \colon P' \rightarrow M)$
obtained by replacing
$\hat{x}^{p^j}$ by $\hat{x}^{e_j'}$
in $\hat{H}_{\fd_j}^\tw$.
Write
\[u \colon P' \rightarrow P\]
\[e_j' \mapsto p^j\,.\]

Let $(\fd, \hat{H}_\fd^\tw)$ be an outgoing ray of $S(\hat{\fD}^\tw)$, whose $P$-direction $p$ satisfies $\ord(p)=N$. Then
$(\fd, \hat{H}_\fd^\tw)$ is the sum of images by $u$ of outgoing rays
of $S(\hat{\fD}^{\tw'})$, of $P'$-direction mapping to $p$ by $u$. Let 
$(\fd', \hat{H}_{\fd'}^\tw)$ be such an outgoing ray of $S(\hat{\fD}^{\tw'})$.

Writing $p'=\sum_{j=1}^{n_I} p_j'e_j'$, where
$(p_1',\dots,p_n') \in \NN^{n_I}$, 
we have 
\[ \ord(p')=\ord \left(\sum_{j=1}^{n_I} p_j' e_j' \right)
=\sum_{j=1}^{n_I} p_j' \,,\]
whereas 
\[\ord(p)=\ord \left(\sum_{j=1}^{n_I} p_j' p^j\right) 
=\sum_{j=1}^{n_I} p_j' \ord(p^j)\,.\]
If only two $p_j'$ are non-zero, then the ray
$(\fd',\hat{H}_{\fd'}^\tw)$ belongs to 
a twisted quantum scattering diagram with two incoming rays and so its BPS integrality follows from Proposition 9 of 
\cite{MR2851153}. If more than two of the $p_j'$ are non-zero, then, at least one of the $p^j$ with $n_j \neq 0$ satisfies 
$\ord (p^j) \geqslant 2$ and so 
$\ord (p') <\ord(p)$. The BPS integrality of the ray $(\fd',\hat{H}_{\fd'}^\tw)$ then follows by the induction hypothesis.
\end{proof}

\begin{prop} \label{prop_integ}
Let $n_I$ be a positive integer and let
$(p^1,\dots,p^{n_I})$ be an 
$n_I$-tuple of primitive vectors in 
$P$.
Let $\hat{\fD}^\tw$ be a twisted quantum scattering diagram over 
$(r \colon P \rightarrow M)$, consisting of incoming rays $(\fd_j,\hat{H}_{\fd_j}^\tw)$, 
$1 \leqslant j 
\leqslant n_I$, with
$\fd_j=-\R_{\geqslant 0}r(p^j)$ 
and 
$\hat{H}_{\fd_j}^\tw \in \hat{x}^{p^j}
\Q(\!(q^{\frac{1}{2}})\!)[\![\hat{x}^{p^j}]\!]$
satisfying the BPS integrality condition. Then the consistent twisted quantum scattering diagram $S(\hat{\fD}^\tw)$ is such that for every outgoing ray
$(\fd, \hat{H}^\tw_\fd)$, of 
$P$-direction $p \in P$,  we have
that 
$\hat{H}^\tw_\fd \in 
\hat{x}^p
\Q(\!(q^{\frac{1}{2}})\!)[\![\hat{x}^p]\!]$
satisfies the BPS integrality condition.
\end{prop}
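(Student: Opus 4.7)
The plan is to establish Proposition \ref{prop_integ} by combining the perturbation/induction scheme of Appendix C3 of \cite{MR3758151} with the factorization theorem for the BPS quantum tropical vertex group proved in Section 6 of \cite{MR2851153}. The basic idea is to deform the scattering diagram so that only pairwise scatterings occur, handle each pairwise scattering by the Kontsevich--Soibelman integrality result, and then verify that propagating and merging the resulting rays does not break BPS integrality.

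First, I would treat the elementary base case where $\hat{\fD}^\tw$ has exactly two non-parallel incoming rays meeting at a single point. Translating both rays so their origins coincide with the intersection, the local consistency condition asserts that the slope-ordered product of the automorphisms $\hat{\Phi}^\tw_{\hat{H}^\tw_{\fd_1}}$ and $\hat{\Phi}^\tw_{\hat{H}^\tw_{\fd_2}}$ together with the outgoing $\hat{\Phi}^\tw_{\hat{H}^\tw_{\fd_\out}}$ equals the identity. Since the incoming Hamiltonians are BPS-integral by hypothesis, the corresponding automorphisms lie in the BPS quantum tropical vertex group of \cite{MR2851153}. Theorem 6 of \cite{MR2851153} asserts that this group is closed under slope-ordered factorization on a half-plane, so the outgoing factors are themselves of the form $\exp(\hat{H}^\tw_\fd)$ with $\hat{H}^\tw_\fd$ BPS-integral; this handles the base case.

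For the general situation, I would perturb the initial points of the incoming rays by a small generic translation so that at most two rays meet at any singular point, then construct $S(\hat{\fD}^\tw)$ iteratively order by order in the decreasing filtration of $P$ by $\mathrm{ord}(p) \geq l$. At each order $l$, only finitely many new rays appear; each such new ray is produced at a singular point where exactly two older rays meet, so it is governed by the elementary two-ray scattering just treated and therefore carries a BPS-integral Hamiltonian. Any further interaction of the newly produced ray with existing rays generates another pairwise scattering at the next order, to which the same argument applies. Finitely many such events suffice at each order $l$, so the construction terminates modulo $\mathrm{ord} \geq l$ for every $l$, and the limit $S(\hat{\fD}^\tw)$ is built from rays whose Hamiltonians are, inductively, BPS-integral. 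Finally, one collapses the perturbation by uniqueness of the consistent completion: rays produced by the perturbed diagram collect into parallel families with identical primitive direction, and these must be merged into a single ray per asymptotic direction.

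The hard part will be exactly this last merging step. When two rays with the same support $\R_{\geq 0} m$ but different $P$-directions $p_1, p_2$ are combined, the new Hamiltonian is $\hat{H}^\tw_{\fd_1} + \hat{H}^\tw_{\fd_2}$ (since $[\hat{H}^\tw_{\fd_1}, \hat{H}^\tw_{\fd_2}] = 0$ when the supports coincide), so one needs to check that a sum of BPS-integral Hamiltonians along the same primitive $P$-direction is again BPS-integral. This amounts to the stability of the set of admissible series of \cite{MR2851153} under addition inside a fixed rank-one slice of $P$, which follows from the product formula for the quantum dilogarithm and a M\"obius inversion of the defining relation between $\overline{\Omega}$ and $\Omega$. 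Throughout, the quadratic refinement $\sigma_P$ of Section \ref{section_quad_ref} is used to pass between the quantum scattering formalism used in Theorem \ref{precise_main_thm_ch2} and the twisted quantum formalism in which the results of \cite{MR2851153} are stated, so that the sign conventions match up correctly at every scattering event.
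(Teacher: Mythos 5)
Your base case and your final bookkeeping are essentially right: the two-ray case is exactly Proposition 9 of \cite{MR2851153} (the result you call Theorem 6), and the merging step you single out as the hard part is in fact immediate, because $\hat{H}^{\tw}\mapsto(\Omega_l)_l$ is a linear operation, so a sum of BPS-integral Hamiltonians with the same $P$-direction is BPS-integral; moreover the conventions of Section \ref{section_quad_ref} keep rays with distinct $P$-directions distinct, so you never have to add series in different variables.

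The genuine gap is in your reduction of the general case to pairwise scatterings. A generic translation of the finitely many \emph{initial} rays guarantees that no three of \emph{them} are concurrent, but it does not guarantee that every singular point of the full consistent completion has only two rays coming into it: the produced rays are rigidly determined by the initial data, there are infinitely many of them, and three of them (or two produced rays together with an initial one) can a priori meet at a point with incoming directions not contained in a half-plane, where Proposition 9 of \cite{MR2851153} does not apply. The device that does force all scatterings to be elementary — the nilpotent splitting $t_j=\sum_a u_{ja}$ used in the proof of Proposition \ref{prop_scattering_tropical} — is unavailable here, since the nilpotent pieces of a BPS-integral Hamiltonian are not themselves BPS-integral. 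The paper closes this gap with two ingredients absent from your proposal: an induction on $\ord(p)$, and the change of monoid trick of Appendix C3 of \cite{MR3758151}, in which each initial ray is assigned its own degree-one generator $e_j'$ of a new monoid $P'$. An outgoing ray of the primed diagram whose $P'$-direction has at most two nonzero components lives in a two-ray sub-scattering diagram, where Proposition 9 of \cite{MR2851153} applies; if it has three or more nonzero components, then, after the perturbation ensuring that at most two initial rays have order one, at least one contributing initial ray has order at least two, so the $P'$-order drops strictly below $N$ and the induction hypothesis applies. You would need to supply this mechanism, or else actually prove your genericity claim for the full completion, for the argument to close.
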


\begin{proof}
This follows immediately from 
Lemma \ref{lem_induction}.
\end{proof}

We can now finish the proof of Theorem
\ref{thm_integ}. By Theorem \ref{precise_main_thm_ch2} and Lemma \ref{lem_twist}, it is enough to show that the outgoing rays of the twisted quantum scattering diagram $S(\hat{\fD}^\tw_\fm)$ satisfy the BPS integrality condition. 
As the initial rays of $S(\hat{\fD}^\tw_\fm)$ satisfy the BPS integrality condition, the result follows from Proposition 
\ref{prop_integ}.

\subsection{Integrality and quiver DT invariants}
\label{section_dt}
We refer to \cite{kontsevich2008stability},
\cite{MR2951762},
\cite{MR2650811},
\cite{MR2801406},
\cite{meinhardt2017donaldson}
for the Donaldson-Thomas (DT) 
theory of quivers.

For every $\fm=(m_1,\dots,m_n)$ an $n$-tuple of primitive non-zero vectors in 
$M=\Z^2$, we define a quiver $Q_\fm$, with set of vertices 
$\{1,2,\dots,n\}$ and, for every $1 \leqslant j,k \leqslant n$, $\langle m_j, m_k \rangle_+
\coloneqq \max (\langle m_j, m_k \rangle, 0)$ arrows from the vertex $j$ to the vertex $k$. 
We identify $P=\oplus_{j=1}^n \NN e_j$ with the set of dimension vectors for the quiver $Q_\fm$.

\begin{lem}
The quiver $Q_\fm$ is acyclic, ie does not contain any oriented cycle, if and only if the $n$ vectors $m_1, \dots, m_n$ are 
all contained in a closed half-plane of $M_\R=\R^2$.
\end{lem}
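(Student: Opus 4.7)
The key observation is that, since $\langle -,- \rangle$ is the standard symplectic form on $M \cong \Z^2$, there is an arrow from $j$ to $k$ in $Q_\fm$ precisely when $\det(m_j,m_k) > 0$. Writing $m_j = r_j(\cos\theta_j,\sin\theta_j)$, this says $\sin(\theta_k-\theta_j) > 0$, i.e.\ going from $m_j$ to $m_k$ counterclockwise sweeps out an angle in $(0,\pi)$. The plan is to reduce the lemma to an elementary geometric fact: a finite set of nonzero vectors in $\R^2$ lies in a closed half-plane if and only if, after sorting the distinct directions cyclically by angle, some consecutive cyclic angular gap is $\geqslant \pi$; equivalently, the vectors fail to lie in any closed half-plane iff every cyclic gap is strictly less than $\pi$. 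This is a standard fact from $2$-dimensional convex geometry (the characterization of when the conic hull is all of $\R^2$), and I would use it as a black box.

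For the direction $(\Leftarrow)$, if all $m_j$ lie in a closed half-plane, a rotation reduces to $\theta_j \in [0,\pi]$. Then an arrow $j \to k$ forces $\theta_k-\theta_j \in (0,\pi)$, hence $\theta_k > \theta_j$. The arrows are therefore compatible with the linear order on vertices obtained by sorting the $\theta_j$ (ties correspond to parallel $m_j$, between which $\det=0$ so there is no arrow), so $Q_\fm$ admits a topological order and is acyclic.

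For the direction $(\Rightarrow)$ I would argue by contrapositive. Assume the $m_j$ are not contained in any closed half-plane, and let $v_1,\dots,v_k$ be the distinct directions among the $m_j$, sorted cyclically. By the standard fact above, each consecutive cyclic gap lies in $(0,\pi)$. Choose, for each $i$, an index $a_i$ with $m_{a_i}$ of direction $v_i$. Then $\det(m_{a_i},m_{a_{i+1}}) > 0$ for every $i$ modulo $k$, so $Q_\fm$ contains arrows $a_i \to a_{i+1}$, producing the oriented cycle $a_1 \to a_2 \to \cdots \to a_k \to a_1$.

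There is no serious obstacle here; the statement is essentially a dictionary between a combinatorial property of $Q_\fm$ and a convex-geometric property of the directions $m_j$. The only mild care required is in handling ties and repetitions: vectors with identical direction contribute no arrows among themselves, so they are neutral for acyclicity, and in the $(\Rightarrow)$ step working with one representative per distinct direction is harmless. Everything else is the elementary cyclic-gap lemma about subsets of $S^1$.
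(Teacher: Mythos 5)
Your proof is correct and follows the same route as the paper: the paper's entire argument is the single observation that $Q_\fm$ has an arrow from $j$ to $k$ precisely when $(m_j,m_k)$ is a positively oriented basis of $\R^2$, which is exactly your starting point. You simply carry out in full the elementary planar geometry (sorting by angle, the cyclic-gap criterion for lying in a closed half-plane) that the paper leaves to the reader.
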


\begin{proof}
The quiver $Q_\fm$ contains an arrow from the vertex $i$ to the vertex $j$ if and only if $(m_i,m_j)$ is an oriented basis of $\R^2$.
\end{proof}

Let us assume that the quiver $Q_\fm$ is acyclic. 
For every $p \in P$, we consider the hyperplane 
\[p^{\perp}:=\{ \theta \in P_\R^{\vee}\,|\, 
\theta(p)=0\}\] in the dual $P_\R^{\vee}$ of the $n$-dimensional $\R$-vector space $P_\R:=P \otimes \R=
\oplus_{j=1}^n \R e_j$.
Every point $\theta \in p^{\perp}$
defines a notion of stability for representations of $Q_\fm$ of dimension $p$, and we have a 
projective variety $M_p^{\theta-ss}$, moduli space of $\theta$-semistable representations of $Q_\fm$ of dimension $p$,
containing the open smooth locus 
$M_p^{\theta-st}$ of $\theta$-stable representations. 

Let $\{-,-\}$ be the skew-symmetric form on $P_\R$ defined by $\{e_j,e_k\}:=\langle m_j,m_k\rangle$ for every $1 \leq j,k \leq n$. Let $K \subset P_\R$ be the kernel of $\{-,-\}$, that is the set of $p\in P_\R$ such that $
\{p,p'\}=0$ for every $p' \in P_\R$. We denote by 
$\rho: P_\R^\vee \rightarrow K_\R^\vee$ the dual projection. 
Following \cite[\S 2.2]{meinhardt2017donaldson}, we say that a stability $\theta$ is $\infty$-generic\footnote{Here, $\infty$ is the slope of $-\theta(p)+i$ when $\theta(p)=0$. } if for every $p_1,p_2\in P$,  $\theta(p_1)=\theta(p_2)=0$ implies $\{p_1,p_2\}=0$.

Let
$\iota \colon M_p^{\theta-st} \hookrightarrow M_p^{\theta-ss}$
be the natural inclusion. 
The main result of \cite{meinhardt2017donaldson} is that, if $\theta \in p^\perp$ is $\infty$-generic, 
the Laurent polynomials
\[ \Omega_p^{Q_\fm, \theta}
(q^{\frac{1}{2}})
\coloneqq 
(-1)^{\dim M_p^{\theta-ss}} q^{-\frac{1}{2}
\dim M_p^{\theta-ss}}
\sum_{j=0}^{\dim M_p^{\theta-st}}
\left( \dim H^{2j} (M_p^{\theta-ss}, \iota_{!*}\Q)\right)
q^j \]
\[\in  (-1)^{\dim M_p^{\theta-ss}}q^{-\frac{1}{2}
\dim M_p^{\theta-ss}} \NN[q] \]
are the refined DT invariants of $Q_\fm$ for the stability $\theta$.
In the above formula,
$\iota_{!*}$ is the intermediate extension functor defined by $\iota$ and so
$\iota_{!*} \Q$ is a perverse sheaf on
$M_p^{\theta-ss}$.

If all $m_1,\dots,m_n$ are collinear, then the quiver $Q_{\mathfrak{m}}$ has no arrows, the form $\{-,-\}$ is zero, the DT invariants of $Q_{\mathfrak{m}}$ are independent of $\theta$, we write simply $\Omega_p^{Q_\fm}(q^{\frac{1}{2}})$ 
for $\Omega_p^{Q_\fm,\theta}(q^{\frac{1}{2}})$, and they are given explicitly by $\Omega_{e_j}^{Q_\fm}(q^{\frac{1}{2}})=1$,
for all $1 \leqslant j \leqslant n$, and 
$\Omega_p^{Q_\fm}(q^{\frac{1}{2}})=0$ for $p \in P-\{e_1,\dots,e_n\}$.

If not all $m_1,\dots,m_n$ are collinear, the skew-symmetric form $\{-,-\}$ has rank $2$, the kernel $K$ is of dimension $n-2$, and the subspace $\rho^{-1}(0) \subset P_\R^\vee$ is of dimension $2$. In fact, we have an isomorphism
\begin{align*}
M_\R =M \otimes \R &\xrightarrow{\sim} \rho^{-1}(0) \\
m_i &\mapsto \{-, e_i\} \,.
\end{align*}
Note that for every $p\in P$, we have $\{-,p\} \in \rho^{-1}(0)$, because for every $k \in K$, we have $\{k,p\}=0$. 

For every $p \in P$, we denote $\Omega_p^{Q_\fm}(q^{\frac{1}{2}})$ 
for $\Omega_p^{Q_\fm,\theta}(q^{\frac{1}{2}})$ where $\theta$ is the \emph{anti-attractor stability} given by
$\theta:= \{-,p\} \in \rho^{-1}(0) \subset P_\R^\vee$. As $\{-,-\}$ is of rank $2$, the anti-attractor stability $\theta:= \{-,p\}$ is $\infty$-generic: if $\{p_1,p\}=\{p_2,p\}=0$, then both $p_1$ and $p_2$ are of the form a multiple of $p$ plus an element of $K$, and so $\{p_1,p_2\}=0$.

\begin{thm} \label{thm_dt}
For every $\fm=(m_1, \dots, m_n)$ 
such that the quiver $Q_\fm$ is acyclic, and for every $p \in P=\NN^n$, we have the equality
\[ \Omega_p^{Q_\fm}(q^{\frac{1}{2}})
=\Omega_p^{Y_\fm}(q^{\frac{1}{2}})\]
between the refined DT invariant 
$\Omega_p^{Q_\fm}(q^{\frac{1}{2}})$ of the quiver $Q_\fm$ and the log BPS invariant 
$\Omega_p^{Y_\fm}(q^{\frac{1}{2}})$
of the log Calabi-Yau surface $Y_\fm$.
\end{thm}

\begin{proof}
According to Theorem \ref{precise_main_thm_ch2}
and Lemma \ref{lem_twist}, the log Gromov-Witten invariants of $Y_\fm$ are computed by the twisted quantum scattering diagram 
$S(\hat{\fD}_\fm^\tw)$.

If all $m_1,\dots,m_n$ are collinear, the result is clear because in this case $S(\hat{\fD}_\fm^\tw)$ consists only of initial rays. From now on, we assume that $m_1,\dots,m_n$ are not all collinear.
By \cite{bri17}, DT invariants of the quiver $Q_{\mathfrak{m}}$ can be organized in a consistent scattering diagram in $P_\R^\vee$ called the stability scattering diagram. According to \cite[Theorem 1.5]{bri17}, when $Q_{\mathfrak{m}}$ is acyclic, the stability scattering diagram agrees with the cluster scattering diagram described explicitly in
\cite[Theorem 11.2]{bri17} as the consistent completion of an explicit set of initial walls. 
From this explicit description, one checks that the restriction of the cluster scattering diagram to 
$\rho^{-1}(0)\simeq M_\R$ coincides with $S(\hat{\fD}_\fm^\tw)$. In particular, the log BPS invariants attached to the rays $\R_{\geq 0} r(p)$ of $S(\hat{\fD}_\fm^\tw)$ in $M_\R$
agree with the DT invariants attached to the rays $\R_{\geq 0} \{-,p\}$ in $\rho^{-1}(0)$ obtained by intersecting with $\rho^{-1}(0)$ the walls of the cluster scattering diagram contained in $p^{\perp} \subset P_\R^\vee$.
\end{proof}

In the limit $q^{\frac{1}{2}}
\rightarrow 1$, 
if $Q_\fm$ is 
complete bipartite, then 
\mbox{Theorem 
\ref{thm_dt}} 
reduces to the 
Gromov-Witten/Kronecker correspondence of 
\cite{MR2662867}, \cite{MR3004575},
\cite{MR3033514}.

Theorem \ref{thm_dt} can be viewed as a concrete example of equality between open BPS invariants and DT invariants of quivers. 
The expectation for this kind of relation goes back at least to 
\cite{cecotti2009bps}, as reviewed in Section \ref{section_cecotti_vafa}.
Related recent stories include \cite{kucharski2017bps}, 
\cite{kucharski2017knots}, where some knot invariants, which via some string theoretic duality should be examples of open BPS invariants, 
are identified with some quiver DT invariants, and \cite{zaslow2018wavefunctions}, where 
a precise correspondence between open BPS invariants of 
a certain class of Lagrangian
submanifolds in $\C^3$ and some DT invariants of quivers
is conjectured.

Theorem
\ref{thm_dt} gives a different proof of Theorem \ref{thm_integ} when $Q_\fm$ is acyclic. When $Q_\fm$ is not acyclic,
it is unclear a priori how to relate the log
BPS invariants $\Omega_p^{Y_\fm}(q^{\frac{1}{2}})$ to some DT quiver theory.
In the physics language, one should remove the contributions of non-trivial single-centered (pure Higgs) indices (see \cite{MR3080495}
and follow-ups). It is still an open question to define mathematically the corresponding operation in DT quiver theory.
The fact that the integrality given by 
Theorem \ref{thm_integ} holds even if 
$Q_\fm$ is not acyclic is probably  
additional evidence that it should be possible.

When $Q_\fm$ is acyclic, Theorem
\ref{thm_dt} gives a positivity result for the log BPS invariants $\Omega_p^{Y_\fm}
(q^{\frac{1}{2}})$. It is unclear how to prove a similar positivity result if $Q_\fm$ is not acyclic.

We finish this Section with a remark about signs. The definition of $\Omega_p^{Y_\fm}
(q^{\frac{1}{2}})$ given in
Section \ref{section_integ_result} includes a global sign 
$(-1)^{\ell_p-1}=(-1)^{\beta_p \cdot (\partial Y_\fm)-1}$, whereas the formula given above for $\Omega_p^{Q_\fm}(q^{\frac{1}{2}})$ 
includes a global sign $(-1)^{\dim M_p^{\theta-ss}}$.
Using that 
$\beta_p \cdot (\partial Y_\fm)$ and $\beta_p^2$ have the same parity by Riemann-Roch on 
$Y_\fm$, the following result gives a direct proof that these two signs are identical.

\begin{lem} \label{lem_beta_square}
For every $p \in P$, we have 
\[\dim M_p^{\theta-ss} = \beta_p^2+1 \,.\]
\end{lem}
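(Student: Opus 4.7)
\emph{The plan.} My plan is to evaluate both sides of the claimed equality as explicit combinatorial expressions in $\fm$ and $p$. On the quiver side, by King's theorem on moduli of $\theta$-semistable representations, whenever $M_p^{\theta-ss}$ is non-empty and smooth at the stable locus---the only case relevant here, since otherwise $\Omega_p^{Q_\fm}=0=\Omega_p^{Y_\fm}$ and the sign matching is vacuous---its dimension equals $1-\langle p,p\rangle_{Q_\fm}$, with Euler form
\[
\langle p,p\rangle_{Q_\fm}=\sum_{j=1}^n p_j^2-\sum_{j,k}\langle m_j,m_k\rangle_+\,p_jp_k.
\]
On the geometric side, $\beta_p=\nu^*\beta-\sum_j p_jE_j$ combined with $\nu^*\beta\cdot E_j=0$ and $E_j\cdot E_k=-\delta_{jk}$ immediately yields $\beta_p^2=\beta^2-\sum_j p_j^2$. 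Everything therefore reduces to proving the purely toric identity $\beta^2=\sum_{j,k}\langle m_j,m_k\rangle_+\,p_jp_k$ on $\overline{Y}_\fm$.

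\emph{Toric computation via Newton polygon.} To prove this identity I would first use the log-birational invariance of log Gromov-Witten invariants \cite{abramovich2013invariance} to choose the completion of the fan of $\overline{Y}_\fm$ so that $\R_{\geqslant 0}m_p$ is itself a ray. With this choice $D_p^L=D_p^R=D_{m_p}$, the decomposition of $m_p$ has a single term (so $a_p^L=1$), the non-zero intersections of $\beta$ with toric divisors are $p_j$ along $D_{m_j}$ and $l_p$ along $D_{m_p}$, and in particular $\beta_p\cdot\partial Y_\fm=l_p$ consistently with the convention used throughout the paper. Then compute $\beta^2=2\operatorname{Area}(\Delta_\beta)$ from the Newton polygon, whose edges are $u_j=p_j\,J(-m_j)$ for $j=1,\dots,n$ and $u_p=l_p\,J(m_p)$, where $J$ is the $90^\circ$ rotation. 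Closure $\sum_j u_j+u_p=0$ is tautologically equivalent to $\sum_j p_jm_j=l_pm_p$, and the shoelace formula then gives
\[
2\operatorname{Area}(\Delta_\beta)=\sum_{i<j}\langle u_i,u_j\rangle,
\]
where the order is counterclockwise around the polygon.

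\emph{Cancellation and conclusion.} The heart of the proof is a telescoping cancellation. The assumption that $Q_\fm$ is acyclic places the $m_j$'s in a common open half-plane of $M_\R$, and so $m_p$, as a strictly positive combination of them, lies in the same half-plane; consequently in the cyclic counterclockwise ordering of rays of the fan, $m_p$ is extremal relative to the block of $-m_j$'s in the opposite half-plane, and the edge $u_p$ is at one end of the cyclic order of edges. The shoelace terms involving $u_p$ therefore telescope to $u_p\wedge\sum_j u_j=-u_p\wedge u_p=0$. The surviving terms run over counterclockwise pairs in $\{u_1,\dots,u_n\}$, and this ordering agrees with that of $m_1,\dots,m_n$; for $i$ preceding $j$ in this order one has $\langle m_i,m_j\rangle>0$, and since $J$ preserves the skew form, $\langle u_i,u_j\rangle=p_ip_j\langle m_i,m_j\rangle$. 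Summing yields $\beta^2=\sum_{j,k}\langle m_j,m_k\rangle_+\,p_jp_k$, and therefore $\beta_p^2+1=1-\langle p,p\rangle_{Q_\fm}=\dim M_p^{\theta-ss}$. The main obstacle will be making this cancellation rigorous: one must carefully check that the acyclicity of $Q_\fm$ places $u_p$ extremally in the cyclic order, and verify convexity of $\Delta_\beta$ (which follows from non-negativity of the edge lengths together with the strictness of the cyclic ordering of outward normals).
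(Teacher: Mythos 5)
Your proof is correct and follows essentially the same route as the paper's: the quiver dimension formula, the identity $\beta_p^2=\beta^2-\sum_j p_j^2$, and the evaluation of $\beta^2$ as twice the area of the dual polygon built from the edge vectors attached to $p_jm_j$ and $l_pm_p$ in counterclockwise order. The only difference is that you spell out, via the shoelace formula and the telescoping of the $u_p$-terms, the step the paper dismisses as ``standard toric geometry,'' and you make explicit the (implicit in the paper) choice of fan completion in which $\R_{\geqslant 0}m_p$ is a ray.
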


\begin{proof}
We write $p=\sum_{j=1}^n p_j e_j \in P$.
By standard quiver theory, we have 
\[ \dim M_p^{\theta-ss} =
\sum_{j=1}^n \sum_{k=1}^n \langle m_j, m_k \rangle_+ p_j p_k - \sum_{j=1}^n p_j^2+1 \,.
 \]
By definition (Section 
\ref{section_curve_classes_log_cy}) we have 
\[ \beta_p = \nu^{*} \beta - \sum_{j=1}^n p_j E_j \,,\]
where $\nu \colon Y_\fm
\rightarrow \overline{Y}_\fm$ is the blow-up morphism 
and $\beta \in H_2(\overline{Y}_\fm,\Z)$ is defined by certain intersection numbers.
It follows that 
\[\beta_p^2 = \beta^2 - \sum_{j=1}^n p_j^2 \,.\]
From the intersection numbers defining
$\beta$, we see that the convex polygon dual to $\beta$ is obtained by successively adding the vectors $p_j m_j$ and $\ell_p m_p$,
in the order given by the counterclockwise
ordering of the $m_j$ and $m_p$ given by their argument. By standard toric geometry, 
$\beta^2$ is given by twice the area of the dual polygon and so we have 
\[\beta^2=\sum_{j=1}^n \sum_{k=1}^n 
\langle m_j,m_k\rangle_+ p_j p_k \,.\]
It follows that \[\beta_p^2 = \sum_{j=1}^n
\sum_{k=1}^n \langle m_j, m_k \rangle_+ p_j p_k - \sum_{j=1}^n p_j^2 = \dim M_p^{\theta-ss}-1 \,.\]
\end{proof}

\subsection{del Pezzo surfaces}
\label{section_del_pezzo}
In this Section, we study the conjectures of Section 
\ref{section_integ_conj} in the case where $Y$ is a del Pezzo surface $S$ and 
$\partial Y$ is a smooth anticanonical divisor $E$ of $Y$. 
In particular, $E$ is a smooth genus one curve. We formulate 
\mbox{Conjecture
\ref{conj_del_pezzo}}, precise form of 
\mbox{Conjecture \ref{conj_main}} of the Introduction.

\begin{lem} \label{lem_rationality_del_pezzo}
Let $S$ be a del Pezzo surface, and 
$E$ be a smooth anticanonical divisor of $S$.
Then, for every $\beta \in H_2(Y,\Z)$, 
the rationality Conjecture
\ref{conj_rationality} is true for the log Calabi-Yau pair
$(S,E)$ with respect to the curve class 
$\beta$.
\end{lem}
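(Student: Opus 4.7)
The plan is to reduce the statement to the rationality part of the relative MNOP conjecture for Calabi-Yau 3-folds. Since $E$ is smooth, the log Gromov-Witten moduli space $\overline{M}_{g,\beta}(S/E)$ coincides with the relative Gromov-Witten moduli space of $(S,E)$ with the same virtual class, by the comparison theorem of \cite{abramovich2012comparison} already invoked in Lemma \ref{lem_BP}. Hence $N_{g,\beta}^{S/E}$ is a relative Gromov-Witten invariant with top lambda class insertion, and we are squarely in the classical setup of relative Gromov-Witten theory along a smooth divisor in which MNOP-type statements are formulated.

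Next, I would upgrade the $3$-dimensional interpretation of Section \ref{section_3d} to the smooth-boundary setting. The proofs of Lemmas \ref{lem_3d_1} and \ref{lem_3d_2} go through verbatim for any log smooth Calabi-Yau surface: they only use the trivial product structure of $Y \times \C$ or $Y \times \PP^1$ and the identification of $(-1)^g \lambda_g$ via the virtual localization formula with the Euler class of the difference between the surface and $3$-fold obstruction theories. Applied to $(S,E)$, this identifies $\sum_{g\geq 0} N_{g,\beta}^{S/E} \hbar^{2g-1}$, up to the factor $\hbar$ coming from the equivariant parameter, with a generating series of (relative) Gromov-Witten invariants of the Calabi-Yau $3$-fold $S \times \PP^1$ along its smooth divisor $(E \times \PP^1) \cup (S \times \{0\}) \cup (S \times \{\infty\})$ with a point insertion at the distinguished tangency. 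Equivalently, via the $\C^*$-localization of Lemma \ref{lem_3d_1}, one reduces to equivariant Gromov-Witten invariants of the local Calabi-Yau $3$-fold $\mathrm{Tot}(K_S)$ relative to its smooth divisor $E$ (sitting as the zero section restricted to $E$), because $E$ is anticanonical so that $\mathrm{Tot}(K_S)$ is Calabi-Yau.

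Finally, I would invoke the rationality statement of the relative MNOP conjecture \cite{MR2264665}, Section $3.3$: for a Calabi-Yau $3$-fold relative to a smooth divisor, the generating series of relative Gromov-Witten invariants is the Taylor expansion of a rational function in $q$ under the change of variables $q = e^{i\hbar}$ (up to an overall sign). For the local surface geometry $\mathrm{Tot}(K_S)$ with $S$ a del Pezzo surface, this is the known local-surface case of MNOP. Combining with the identification of the previous step yields the required rationality of $\overline{\Omega}_{\beta}(q^{1/2})$.

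The main obstacle is bookkeeping: one must reconcile the convention $q = e^{i\hbar}$ used in Conjecture \ref{conj_rationality} with the convention $q = -e^{iu}$ standard in MNOP, and check that the explicit prefactor $(-1)^{\beta.E+1} \bigl(2\sin(\hbar/2)\bigr)$ in the definition of $\overline{\Omega}_{\beta}(q^{1/2})$ arises correctly from the point-class insertion in the $3$-fold reformulation together with the $\hbar$-factor produced by the $\C^*$-localization. This sign/normalization check is parallel to the quadratic refinement discussion of Section \ref{section_quad_ref} and, once carried out, completes the proof.
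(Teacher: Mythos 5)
Your first step --- rewriting $N_{g,\beta}^{S/E}$ as equivariant Gromov--Witten invariants of the relative $3$-fold geometry $S\times\C/E\times\C$ via the localization of Section \ref{section_3d} --- is exactly the first step of the paper's proof. But there is a genuine gap in the second step: you invoke ``the rationality statement of the relative MNOP conjecture'' as if the relevant case were already established, writing that for $\mathrm{Tot}(K_S)$ ``this is the known local-surface case of MNOP.'' Two problems. First, the geometry produced by Lemmas \ref{lem_3d_1} and \ref{lem_3d_2} is $S\times\C$ with the \emph{trivial} line bundle factor (the virtual normal bundle is $t-\E^\vee\otimes t$, which is what converts $(-1)^g\lambda_g$ into the $3$-fold obstruction theory), not $\mathrm{Tot}(K_S)$; the relation to $\mathrm{Tot}(K_S)$ is the content of the separate Conjecture \ref{conj_del_pezzo}, not something available here. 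Second, and more importantly, what is needed is the Gromov--Witten/stable pairs correspondence for the \emph{relative} geometry $S\times\C/E\times\C$, where $E$ is a genus one curve; this is not covered by the known absolute toric/local results, and the relative MNOP conjecture is in general still a conjecture (the paper's own remark after Conjecture \ref{conj_rationality} cites it only as a conjecture).

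The paper closes this gap with a concrete argument following Section 5.3 of \cite{MR2746343}: degenerate $S\times\C$ to the normal cone of $E\times\C$, so that the degeneration formula expresses the (known, by deformation to a toric $3$-fold) absolute correspondence for $S\times\C$ in terms of the relative theories of $S\times\C/E\times\C$ and of the bubble $\PP(N\oplus\cO_E)\times\C/E\times\C$; the bubble contribution is controlled because the relevant invariants live over the holomorphic symplectic surface $E\times E$ and hence vanish except in classes where everything reduces to local curves; and finally one inverts the degeneration formula using Proposition 6 of \cite{MR3127827} to extract the correspondence for $S\times\C/E\times\C$. Without this (or an equivalent) argument, your proof rests on an unproved conjecture, so you should either supply the degeneration argument or restrict the claim accordingly. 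Your closing remarks about matching $q=e^{i\hbar}$ versus $q=-e^{iu}$ and the prefactor $(-1)^{\beta.E+1}\left(2\sin\left(\frac{\hbar}{2}\right)\right)$ are reasonable bookkeeping, but they are secondary to this missing step.
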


\begin{proof}
As in Section \ref{section_3d}, the invariants $N_{g,\beta}^{S/E}$
can be written as equivariant Gromov-Witten invariants of the 3-fold $S \times \C$
relative to the divisor 
$E \times \C$.
The rationality result then follows from
the Gromov-Witten/stable pairs correspondence for the relative 3-fold geometry $S \times \C/E \times \C$.

This case of the Gromov-Witten/stable pairs correspondence can be proved following 
\mbox{Section 5.3} of
\cite{MR2746343}.
This involves considering the degeneration
of $S \times \C$ 
to the normal cone of $E \times \C$. 
Let $N$ be the normal bundle to 
$E$ in $S$.
The degeneration formula expresses 
equivariant Gromov-Witten/stable pairs theories of $S \times \C$, without insertions, 
in terms of the relative equivariant Gromov-Witten/stable pairs theories, without 
insertions, of 
$S \times \C/E \times \C$ and
$\PP(N \oplus \cO_E) \times \C /
E \times \C$.

The 3-fold $S \times \C$ is deformation equivalent to a toric 3-fold.
Indeed, a del Pezzo surface is deformation equivalent to a (not necessarily del Pezzo) toric surface: 
if $S$ is a blow-up of $\PP^2$ in $n$ points, then $S$ is deformation equivalent to a surface obtained by $n$ successive toric blow-ups of $\PP^2$.
Therefore, the Gromov-Witten/stable pairs correspondence
for $S \times \C$, without insertions, 
follows from Section 5.1 of \cite{MR2746343}. 

The equivariant Gromov-Witten/stable pairs theory of 
$\PP(N \oplus \cO_E) \times \C /
E \times \C$ coincides with the non-equivariant Gromov-Witten theory of 
$\PP(N \oplus \cO_E) \times E/
E \times E$. The 3-fold \mbox{$\PP(N \oplus \cO_E) \times E$} is a $\PP^1$-bundle over $E \times E$ and we are considering curves of degree 0 over the second $E$ factor.
As $E \times E$ is holomorphic symplectic, the Gromov-Witten/stable pairs theories vanish unless the curve class has also degree 0 over the first $E$ factor. 
The Gromov-Witten/stable pairs correspondence for \[\PP(N \oplus \cO_E) \times E/E \times E\,,\]
without insertions,  thus follows from the Gromov-Witten/stable pairs correspondence, without insertions, for local curves. 

It follows from Proposition 6 of
\cite{MR3127827} 
that the degeneration formula can be inverted to imply the Gromov-Witten/stable pairs correspondence, without insertions, for $S \times \C/E \times \C$.
\end{proof}

By Lemma \ref{lem_rationality_del_pezzo}, 
we have rational functions 
\[\overline{\Omega}_\beta^{S/E}
(q^{\frac{1}{2}}) \in \Q(q^{\pm \frac{1}{2}}) \,,\]
such that 
 \[ \overline{\Omega}_\beta^{S/E}
(q^{\frac{1}{2}})
=(-1)^{\beta \cdot E+1} 
\left( 2 \sin \left( \frac{\hbar}{2}
\right) \right) \
\left( \sum_{g\geqslant 0} N_{g,\beta}^{S/E} 
\hbar^{2g-1} \right) \,,\]
as power series in $\hbar$, after the change of variables $q=e^{i\hbar}$.

We define 
\[ \Omega_\beta^{S/E}
(q^{\frac{1}{2}})
=\sum_{\beta=\ell \beta'}
\frac{\mu(\ell)}{\ell} \frac{q^{\frac{1}{2}}
-q^{-\frac{1}{2}}}{q^{\frac{\ell}{2}}
-q^{-\frac{\ell}{2}}}
\overline{\Omega}_{\beta'}
(q^{\frac{\ell}{2}}) \in \Q(q^{\frac{1}{2}}) \,.\]

According to Conjecture 
\ref{conj_integrality}, one should have 
$\Omega_\beta^{S/E}(q^{\frac{1}{2}})
\in \Z [q^{\pm \frac{1}{2}}]$.

Let $M_\beta$ be the moduli space of 
dimension one stable sheaves on $S$, 
of class $\beta \in H_2(S,\Z)$, and Euler
characteristic $1$. It is a smooth
projective variety of dimension 
$\beta^2+1$.
Let 
\[ \chi_q(M_\beta) \coloneqq 
q^{-\frac{1}{2}(\beta^2+1)}
\sum_{j,k=0}^{\beta^2+1}
(-1)^{j+k} h^{j,k}(M_\beta) q^j 
\in \Z[q^{\pm \frac{1}{2}}] \]
be the normalized Hirzebruch genus of $M_\beta$, where $h^{j,k}$ are the Hodge numbers. It follows from Theorem 2 of
\cite{MR2304330}, following \cite{MR1228610} and
\cite{MR1351502}, that $h^{j,k}(M_\beta)=0$ if $j \neq k$. 
In particular, $\chi_q(M_\beta)$ coincides 
with the normalized Poincar\'e polynomial of 
$M_\beta$.

\begin{conj} \label{conj_del_pezzo}
We have 
\[ \Omega_\beta^{S/E}
(q^{\frac{1}{2}})
=(-1)^{\beta^2+1}
(\beta \cdot E)
\chi_q(M_\beta) \,.\]
\end{conj}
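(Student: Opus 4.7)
The plan is to realize both sides of the conjectural equality as refined invariants of the local Calabi--Yau 3-fold $X \coloneqq \mathrm{Tot}\, K_S$. For the left-hand side I would first seek a three-dimensional interpretation of $N_{g,\beta}^{S/E}$ analogous to Section \ref{section_3d}: exactly as in Lemmas \ref{lem_3d_1} and \ref{lem_3d_2}, the insertion $(-1)^g\lambda_g$ should appear as the difference between surface and 3-fold obstruction theories, and one expects
\[ N_{g,\beta}^{S/E} = N_{g,\beta}^{Z/\partial Z}, \]
where $Z \coloneqq S \times \mathbb{P}^1$ is endowed with the log structure along $\partial Z \coloneqq (E \times \mathbb{P}^1) \cup (S \times \{0\}) \cup (S \times \{\infty\})$, and the right-hand side is cut down by pulling back a point class from $\mathbb{P}^1$ via the evaluation at the maximal-tangency marked point. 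Some care is needed because $\partial Z$ fails to be normal crossings at the corners where $E \times \mathbb{P}^1$ meets $S \times \{0,\infty\}$; an alternative is to stay equivariant on $S \times \mathbb{C} / E \times \mathbb{C}$ with its scaling $\mathbb{C}^{*}$-action, as already exploited in the proof of Lemma \ref{lem_rationality_del_pezzo}.

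Next, apply the Gromov--Witten/stable pairs correspondence in its log form to $(Z,\partial Z)$ to convert $N_{g,\beta}^{Z/\partial Z}$ into refined stable pair invariants of $Z$, then degenerate $S \times \mathbb{P}^1$ to the normal cone of $S \times \{0\}$. The resulting degeneration formula should isolate the piece of refined stable pair theory supported on the total space of the canonical bundle $X = \mathrm{Tot}\, K_S$, which carries a natural $\mathbb{C}^{*}$-action with fixed locus $S$. On $X$, refined Pandharipande--Thomas invariants of dimension one stable pairs with Euler characteristic $1$ on curves of class $\beta$ reduce, via a dimensional reduction in the spirit of Maulik--Toda, to cohomological invariants of the smooth moduli space $M_\beta$; more precisely they should compute the normalized Hirzebruch genus of $M_\beta$, which by Theorem 2 of \cite{MR2304330} coincides with $\chi_q(M_\beta)$ since $h^{j,k}(M_\beta) = 0$ for $j \neq k$.

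The remaining combinatorial factor $(\beta \cdot E)$ and the overall sign $(-1)^{\beta^2+1}$ should then emerge naturally from this chain of identifications: the sign matches $\dim M_\beta = \beta^2 + 1$, exactly as in \mbox{Lemma \ref{lem_beta_square}} for the $Y_\fm$ case, while $\beta \cdot E$ counts the possible choices of the unique maximal-tangency marked point on $E$ among the $\beta \cdot E$ intersection points of a closed curve in class $\beta$ with $E$. The main obstacle is the second step: a refined or motivic version of log MNOP for $(Z,\partial Z)$, with its mixed smooth and normal-crossing boundary, is not yet available, and even in the closed setting the refined Gopakumar--Vafa correspondence for local del Pezzo surfaces that one needs here rests on Maulik--Toda type conjectures which are only known in special cases. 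A careful bookkeeping of the factor $\beta \cdot E$ through the successive degenerations and dimensional reductions---analogous to the automorphism analysis in Lemmas \ref{lem_n} and \ref{lem_aut} for the maximal boundary case---will also be required, and will likely be the most delicate piece of the argument.
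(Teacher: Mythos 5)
This statement is a \emph{conjecture} in the paper: the author deliberately does not prove it, and the only support offered is the remark that in the classical limit $q^{\frac{1}{2}}\rightarrow 1$ it reduces, via the genus-zero log-local correspondence $N_{0,\beta}^{S/E}=(-1)^{\beta.E+1}(\beta.E)N_{0,\beta}^{X}$ of Graber--Hassett and van Garrel--Graber--Ruddat together with the GW/stable pairs correspondence and known integrality results, to the Katz conjecture for $X=\mathrm{Tot}\,K_S$, which is established because $X$ is deformation equivalent to a toric Calabi--Yau 3-fold. Your outline is essentially the natural $q$-refinement of that classical-limit argument, so as a heuristic it is consistent with the paper's own motivation; but it is not a proof, and you should be explicit that every one of its load-bearing steps is itself open.

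Concretely, the gaps are: (i) the refined (all-genus) analogue of the log GW/stable pairs correspondence for $(Z,\partial Z)$, or equivalently for the equivariant geometry $S\times\C/E\times\C$, does not exist --- Lemma \ref{lem_rationality_del_pezzo} only uses the unrefined correspondence to get rationality of the generating series, not an identification of the BPS-expanded coefficients with sheaf-theoretic invariants; (ii) the factor $(-1)^{\beta.E+1}(\beta.E)$ is known only in genus zero; its persistence in higher genus with the $(-1)^g\lambda_g$ insertion is precisely the higher-genus log-local correspondence, which is a separate open problem and cannot be dismissed as ``bookkeeping'' analogous to Lemmas \ref{lem_n} and \ref{lem_aut} (those concern a completely different degeneration, to the toric surface $\overline{Y}_\fm$); and (iii) the identification of refined stable pair or Gopakumar--Vafa type invariants of $\mathrm{Tot}\,K_S$ in class $\beta$ with $(-1)^{\beta^2+1}\chi_q(M_\beta)$ is itself a Maulik--Toda type conjecture. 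You acknowledge (i) and (iii), but the chain as written replaces one conjecture by three; the correct conclusion is that your argument explains why the conjecture is plausible and fixes the normalization, which is exactly the role the statement plays in the paper.
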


Note that we have $\beta^2=\beta \cdot E \mod 2$
by the Riemann-Roch theorem.
In the limit $q^{\frac{1}{2}} \rightarrow 1$, 
Conjecture \ref{conj_del_pezzo} reduces to 
\begin{align*}
 N_{0,\beta}^{S/E} &= (-1)^{\beta \cdot E-1}\sum_{\beta=\ell \beta'} (-1)^{(\beta')^2+1} \frac{(\beta' \cdot E)}{\ell^2} e(M_{\beta'})\\
&=(-1)^{\beta \cdot E-1}(\beta \cdot E) \sum_{\beta=\ell \beta'}
\frac{1}{\ell^3} (-1)^{(\beta')^2+1} e(M_{\beta'}) \,,
\end{align*}
which is a known result. 
Indeed, by an application of the degeneration formula originally due to 
Graber-Hassett and generalized in
\cite{van2017local}, we have 
\[N_{0,\beta}^{S/E}=(-1)^{\beta \cdot E+1}
(\beta \cdot E)N_{0,\beta}^X\,\] 
where 
$X$ is the local Calabi-Yau 3-fold given by the total space of the canonical line bundle $K_S$ of $S$, 
and $N_{0,\beta}^X$ is the genus 0, class 
$\beta$, Gromov-Witten invariant of $X$.
So the previous formula is equivalent to
\[N_{0,\beta}^X=\sum_{\beta=\ell \beta'}
\frac{1}{\ell^3}
(-1)^{(\beta')^2+1} e(M_{\beta'}) \,,\]
which is exactly the Katz conjecture
(Conjecture 2.3 of \cite{MR2420017}) for $X$. As $X$ is deformation equivalent to a toric Calabi-Yau 3-fold, the Katz conjecture
for $X$ follows from the combination of the 
Gromov-Witten/stable pairs correspondence
(Section 5.1 of \cite{MR2746343}), the integrality result of
\cite{MR2250076} and \mbox{Theorem 6.4} of
\cite{MR2892766}.

The right-hand side $(-1)^{\beta^2+1} \chi_q(M_\beta)$ should be thought as a refined DT invariant of $X$, counting dimension one sheaves. 
From this point of view, Conjecture \ref{conj_del_pezzo} 
is an equality between a log BPS invariant on one side and a refined DT invariant on the other side, in a way completely parallel to Theorem \ref{thm_dt}.
Further conceptual evidence for Conjecture \ref{conj_del_pezzo} and a further refinement of Conjecture 
\ref{conj_del_pezzo} will be presented elsewhere.

\section{Relation with Cecotti-Vafa} \label{section_cecotti_vafa}

In this last Section, we make no claim of mathematical results or mathematical precision. 
We briefly explain how the main results of this paper are related to some previous expectations in the theoretical physics literature. 

In \cite{cecotti2009bps}, Cecotti-Vafa have given a physical derivation of the fact that the refined BPS indices of a $\cN=2$ 4d quantum field theory admitting a Seiberg-Witten curve satisfy the refined Kontsevich-Soibelman wall-crossing formula. To make a connection with Theorem \ref{precise_main_thm_ch2}, we focus on only one part of the argument, establishing the relation between open Gromov-Witten invariants and the wall-crossing formula via Chern-Simons theory.
In particular, we do not discuss the application to the BPS spectrum of 
$\cN=2$ 4d quantum field theories, which 
would be related to our 
\mbox{Section
\ref{section_dt}} on quiver DT invariants.

\subsection{Summary of the Cecotti-Vafa argument}

\label{summary_cecotti_vafa}

Let $U$ be a non-compact hyperk\"ahler manifold, $(I,J,K)$ be a quaternionic triple of compatible complex structures, 
$(\omega_I, \omega_J, \omega_K)$ be the corresponding triple of real symplectic forms and 
$(\Omega_I, \Omega_J, \Omega_K)$ be the 
corresponding triple of holomorphic symplectic forms. 
In \cite{cecotti2009bps}, Cecotti-Vafa consider $U=\C^2$ but the generalization to an arbitrary hyperk\"ahler surface is clear and is considered for example in \cite{cecotti2010r}
(in particular Appendix B).

Let $\Sigma \subset U$ be an $I$-holomorphic Lagrangian subvariety of $U$, ie a submanifold such that $\Omega_I|_\Sigma =0$. It is a complex subvariety for the complex structure $I$ and a real Lagrangian for any of the real symplectic forms
$(\cos \theta) \omega_J + (\sin \theta) \omega_K$ for $\theta \in \R$.
There is in fact a twistor sphere 
$J_\zeta$,  where $\zeta \in \PP^1$, of compatible complex structures, such that $I=J_0$, 
$J=J_1$ and $K=J_i$.

Let $X$ be the non-compact Calabi-Yau 3-fold,
of underlying real manifold $U \times \C^{*}$,
equipped with a complex structure twisted in a twistorial way, ie such that the fiber over 
$\zeta \in \C^{*}$ is the complex variety 
$(U,J_\zeta)$. Consider $S^1 \subset \C^{*}$ and 
$L \coloneqq \Sigma \times S^1 \subset X$.

We consider counts of holomorphic maps 
$(C, \partial C) \rightarrow (X,L)$ from an open Riemann surface $C$ to $X$ with boundary $\partial C$ mapping to $L$.
Usually, boundary conditions for counts of open holomorphic curves are taken be Lagrangian submanifolds. In fact, $L$ is not Lagrangian in $X$ but only totally real. Combined with specific aspects of the twistorial geometry, it is probably enough to have well-defined open Gromov-Witten invariants. As suggested in \cite{cecotti2009bps}, it would be interesting to clarify this point. 
We restrict ourselves to open Riemann surfaces with only one boundary component. Given a class $\beta \in H_2(X,L)$, let $N_{g,\beta} \in \Q$ be the ``count'' of holomorphic maps $\varphi \colon (C,\partial C) \rightarrow (X,L)$ with $C$ a genus $g$ Riemann surface with one boundary component and $[\varphi(C, \partial C)]=\beta$. We write
\[ \partial \beta =[\partial C] \in H_1(L)\,,\]
to denote the image of $\beta$ by the natural 
boundary map $H_2(X,L) \rightarrow H_1(L)$.
A 
holomorphic map $\varphi \colon (C,\partial C) \rightarrow (X,L)$ of class $\beta \in H_2(X,L)$ is a $J_{e^{i \theta}}$-holomorphic map to 
$U$, at a constant value $e^{i \theta} \in S^1$, where 
$\theta$ is the argument of $\int_\beta \Omega_I$.

According to Witten \cite{MR1362846}, 
one should encode these counts of holomorphic maps as deformations of
Chern-Simons theory of gauge group $U(1)$ on $L$ . The field of this
theory is a $U(1)$ gauge field $A$ and its action is 
\[I_{CS}(A) \coloneqq \frac{1}{2} \int_L A \wedge dA \,.\]
According to \mbox{Section 4.4} of \cite{MR1362846},
this Chern-Simons action is deformed by additional terms involving the counts of holomorphic maps:
\[ I(A)=I_{CS}(A)
+ 
\sum_{\beta} \sum_{g \geqslant 0} N_{g,\beta}
\hbar^{2g} e^{-\int_\beta \omega}
e^{\int_{\partial \beta} A} \,.\]
The partition function of the deformed theory can 
be written as a correlation function in Chern-Simons theory 
\[ Z = \int DA \, e^{i \frac{I(A)}{\hbar}} \] 
\[=\Bigg\langle \exp \left( i \sum_{\beta
\in H_2(X,L)} \sum_{g \geqslant 0} N_{g,\beta}
\hbar^{2g-1} e^{-\int_\beta \omega}
e^{\int_{\partial \beta} A} \right) \Bigg\rangle_{CS} \,.\]
As $L=\Sigma \times S^1$, we can adopt a Hamiltonian description where $S^1$ plays the role of the time direction. The classical phase space of $U(1)$ Chern-Simons theory
on $L=\Sigma \times S^1$ 
is the space 
of $U(1)$ flat connections on $\Sigma$. When $\Sigma$ is a torus, the classical phase space is the dual torus $T$. For every
$m \in H_1(L)$, the holonomy around $m$ defined a function 
$z^m$ on $T$, ie a classical observable,
\[ z^m(A) \coloneqq e^{\int_m A} \,.\] 
The algebra structure is given by 
$z^m z^{m'}=z^{m+m'}$ and the Poisson structure by 
$\{z^m, z^{m'}\}=\langle m, m' \rangle z^{m+m'}$.
The algebra of quantum observables is given by the non-commutative
torus, $\hat{z}^m \hat{z}^{m'}=q^{\frac{1}{2} \langle m,m' \rangle}
\hat{z}^{m+m'}$, where $q=e^{i \hbar}$.
Writing $t^\beta = e^{-\int_\beta \omega}$, we obtain
\[Z=  \Tr_{\cH}  \left( T \prod_{\beta \in H_2(X,L)}
\Ad_{\exp \left( -i \sum_{g \geqslant 0} N_{g,\beta}
\hbar^{2g-1}t^\beta \hat{z}^m \right)} \right)\,,\]
where $\cH$ is the Hilbert space of quantum Chern-Simons
theory and
where $T \prod_\beta$ is a time ordered product, with
ordering according to the phase of 
$\int_\beta \Omega_I$.

The key physical input used by Cecotti-Vafa
\cite{cecotti2009bps}
is the continuity of the partition function $Z$
as function of the position of $L$ in $X$. 
It follows that the jump of the invariants 
$N_{g,\beta}$ under variation of $L$ in $X$
is controlled by the refined Kontsevich-Soibelman
wall-crossing formula formulated in terms of products of 
automorphisms of the quantum torus.

\subsection{Comparison with Theorem \ref{precise_main_thm_ch2}}
\label{comparison}

Our main result, Theorem 
\ref{precise_main_thm_ch2}, expresses the log Gromov-Witten theory 
of a log Calabi-Yau surface $(Y_{\fm}, \partial Y_{\fm})$
in terms of a 2-dimensional Kontsevich-Soibelman scattering diagram. The complement 
$U_{\fm} \coloneqq Y_{\fm}-\partial Y_{\fm}$
is a non-compact holomorphic symplectic surface admitting a
SYZ
real Lagrangian torus fibration.
In some cases, $U_{\fm}$ admits a hyperk\"ahler metric,
such that the original complex structure of $U_{\fm}$
is the compatible complex structure $J$, and such that 
the SYZ fibration becomes $I$-holomorphic Lagrangian. 
Typical examples include 2-dimensional Hitchin moduli spaces, see
\cite{boalch2012hyperkahler}
for a nice review.
In such cases, we can apply the Cecotti-Vafa story 
summarized above to $U \coloneqq U_{\fm}$, with 
$\Sigma$ a torus fiber of the SYZ fibration. 

The log Gromov-Witten invariants with insertion of a top lambda class $N_{g,\beta}$, introduced in Section
\ref{section_log_calabi_yau}, should be viewed as a rigorous definition of the open Gromov-Witten invariants in the twistorial geometry $X$, with 
boundary on a torus fiber $\Sigma$ ``near infinity''.
An early reference for the interpretation of some open Gromov-Witten invariants in terms of relative stable maps is \cite{MR2402819}.
The intuitive picture to have in mind is that an open Riemann surface with a boundary on a torus fiber very close to the divisor at infinity can be capped off by a holomorphic disc meeting the divisor at infinity in one point.
This is in part justified by the 3-dimensional interpretation of the invariants $N_{g,\beta}^{Y_\fm}$
given in Section \ref{section_3d} and in particular by Lemma \ref{lem_3d_2}.

Automorphisms of the quantum torus 
appearing in 
Section \ref{summary_cecotti_vafa}
coincide with the automorphisms of the quantum 
torus appearing in 
Theorem \ref{precise_main_thm_ch2}.
It follows that Theorem \ref{precise_main_thm_ch2} 
can be viewed as a mathematically rigorous check 
of the physical argument given by 
Cecotti-Vafa
\cite{cecotti2009bps}, based on the continuity of Chern-Simons correlation functions and on the connection predicted by Witten
\cite{MR1362846}
between higher genus open Gromov-Witten invariants and quantum Chern-Simons theory.

Finally, Ooguri-Vafa \cite{MR1765411} have given a physical derivation of an integrality result for open Gromov-Witten invariants of Calabi-Yau 3-folds, 
parallel 
to the Gopakumar-Vafa 
\cite{gopakumar1998m1} 
\cite{gopakumar1998m2}
integrality for closed 
Gromov-Witten invariants of Calabi-Yau 3-folds. 
Given the heuristic interpretation of the log Gromov-Witten invariants $N_{g,\beta}$ as open Gromov-Witten invariants,
this integrality coincides with the integrality of 
\mbox{Conjecture \ref{conj_integrality}}
and Theorem \ref{thm_integ}.

\newpage

\vspace{+8 pt}
\noindent
Department of Mathematics \\
Imperial College London \\
pierrick.bousseau12@imperial.ac.uk


\begin{thebibliography}{MNOP06b}

\bibitem[AC14]{MR3257836}
D.~Abramovich and Q.~Chen.
\newblock Stable logarithmic maps to {D}eligne-{F}altings pairs {II}.
\newblock {\em Asian J. Math.}, 18(3):465--488, 2014.

\bibitem[ACGS17a]{abramovich2017decomposition}
D.~Abramovich, Q.~Chen, M.~Gross, and B.~Siebert.
\newblock Decomposition of degenerate {G}romov-{W}itten invariants.
\newblock {\em arXiv preprint arXiv:1709.09864 v1}, 2017.

\bibitem[ACGS17b]{abramovich2017punctured}
D.~Abramovich, Q.~Chen, M.~Gross, and B.~Siebert.
\newblock Punctured logarithmic curves.
\newblock {\em preprint, available on the webpage of Mark Gross}, 2017.

\bibitem[AGV08]{MR2450211}
D.~Abramovich, T.~Graber, and A.~Vistoli.
\newblock Gromov-{W}itten theory of {D}eligne-{M}umford stacks.
\newblock {\em Amer. J. Math.}, 130(5):1337--1398, 2008.

\bibitem[AMW14]{abramovich2012comparison}
D.~Abramovich, S.~Marcus, and J.~Wise.
\newblock Comparison theorems for {G}romov-{W}itten invariants of smooth pairs
  and of degenerations.
\newblock {\em Ann. Inst. Fourier (Grenoble)} 63, 1611--1667, 2014.

\bibitem[AW13]{MR3778185}
D.~Abramovich and J.~Wise.
\newblock Birational invariance in logarithmic {G}romov-{W}itten theory.
\newblock {\em Compos. Math.} 154(3):595--620, 2018.


\bibitem[Bea95]{MR1351502}
A.~Beauville.
\newblock Sur la cohomologie de certains espaces de modules de fibr\'es
  vectoriels.
\newblock In {\em Geometry and analysis ({B}ombay, 1992)}, pages 37--40. Tata
  Inst. Fund. Res., Bombay, 1995.

\bibitem[BG16]{MR3453390}
F.~Block and L.~G\"ottsche.
\newblock Refined curve counting with tropical geometry.
\newblock {\em Compos. Math.}, 152(1):115--151, 2016.


\bibitem[Boa12]{boalch2012hyperkahler}
P.~Boalch.
\newblock Hyperk{\"a}hler manifolds and nonabelian {H}odge theory of
  (irregular) curves.
\newblock {\em arXiv preprint arXiv:1203.6607}, 2012.

\bibitem[Bou19]{bousseau2017tropical}
P.~Bousseau.
\newblock Tropical refined curve counting from higher genera and lambda
  classes.
\newblock {\em Invent. Math.}, 215(1):1--79, 2019.

\bibitem[Bou20]{bousseau2018quantum}
P.~Bousseau.
\newblock Quantum mirrors of log {C}alabi-{Y}au surfaces and higher genus
  curves counting.
\newblock {\em Compositio Mathematica,
156(2), 360-411}, 2020.

\bibitem[Bri17]{bri17}
T.~Bridgeland.
\newblock Scattering diagrams, Hall algebras and stability conditions.
\newblock {\em Algebraic Geometry 4.5: 523-561}, 2017.


\bibitem[BGP08]{MR2357679}
J.~Bryan, T.~Graber, and R.~Pandharipande.
\newblock The orbifold quantum cohomology of {$\C^2/\Z_3$} and
  {H}urwitz-{H}odge integrals.
\newblock {\em J. Algebraic Geom.}, 17(1):1--28, 2008.


\bibitem[BP05]{MR2115262}
J.~Bryan and R.~Pandharipande.
\newblock Curves in {C}alabi-{Y}au threefolds and topological quantum field
  theory.
\newblock {\em Duke Math. J.}, 126(2):369--396, 2005.

\bibitem[Che14]{MR3224717}
Q.~Chen.
\newblock Stable logarithmic maps to {D}eligne-{F}altings pairs {I}.
\newblock {\em Ann. of Math. (2)}, 180(2):455--521, 2014.

\bibitem[CNV10]{cecotti2010r}
S.~Cecotti, A.~Neitzke, and C.~Vafa.
\newblock {R}-twisting and 4d/2d correspondences.
\newblock {\em arXiv preprint arXiv:1006.3435}, 2010.

\bibitem[CR02]{MR1950941}
W.~Chen and Y.~Ruan.
\newblock Orbifold {G}romov-{W}itten theory.
\newblock In {\em Orbifolds in mathematics and physics ({M}adison, {WI},
  2001)}, volume 310 of {\em Contemp. Math.}, pages 25--85. Amer. Math. Soc.,
  Providence, RI, 2002.

\bibitem[CV09]{cecotti2009bps}
S.~Cecotti and C.~Vafa.
\newblock {BPS} wall crossing and topological strings.
\newblock {\em arXiv preprint arXiv:0910.2615}, 2009.

\bibitem[ESm93]{MR1228610}
G.~Ellingsrud and S.~A.~Str\o~mme.
\newblock Towards the {C}how ring of the {H}ilbert scheme of {${\bf P}^2$}.
\newblock {\em J. Reine Angew. Math.}, 441:33--44, 1993.

\bibitem[FK94]{MR1264393}
L.~D. Faddeev and R.~M. Kashaev.
\newblock Quantum dilogarithm.
\newblock {\em Modern Phys. Lett. A}, 9(5):427--434, 1994.

\bibitem[FP00]{MR1728879}
C.~Faber and R.~Pandharipande.
\newblock Hodge integrals and {G}romov-{W}itten theory.
\newblock {\em Invent. Math.}, 139(1):173--199, 2000.

\bibitem[FS15]{MR3383167}
S.~A. Filippini and J.~Stoppa.
\newblock Block-{G}\"ottsche invariants from wall-crossing.
\newblock {\em Compos. Math.}, 151(8):1543--1567, 2015.

\bibitem[Fuk05]{MR2131017}
K.~Fukaya.
\newblock Multivalued {M}orse theory, asymptotic analysis and mirror symmetry.
\newblock In {\em Graphs and patterns in mathematics and theoretical physics},
  volume~73 of {\em Proc. Sympos. Pure Math.}, pages 205--278. Amer. Math.
  Soc., Providence, RI, 2005.

\bibitem[Ful98]{MR1644323}
W.~Fulton.
\newblock {\em Intersection theory}, volume~2 of {\em Ergebnisse der Mathematik
  und ihrer Grenzgebiete. 3. Folge. A Series of Modern Surveys in Mathematics
  [Results in Mathematics and Related Areas. 3rd Series. A Series of Modern
  Surveys in Mathematics]}.
\newblock Springer-Verlag, Berlin, second edition, 1998.


\bibitem[vGGR19]{van2017local}
M.~van Garrel, T.~Graber, and H.~Ruddat.
\newblock Local {G}romov-{W}itten {I}nvariants are {L}og {I}nvariants.
\newblock {Adv. Math.} 359, 860–-876, 2019.


\bibitem[GHK15]{MR3415066}
M.~Gross, P.~Hacking, and S.~Keel.
\newblock Mirror symmetry for log {C}alabi-{Y}au surfaces {I}.
\newblock {\em Publ. Math. Inst. Hautes \'Etudes Sci.}, 122:65--168, 2015.

\bibitem[GHKK18]{MR3758151}
M.~Gross, P.~Hacking, S.~Keel, and M.~Kontsevich.
\newblock Canonical bases for cluster algebras.
\newblock {\em J. Amer. Math. Soc.}, 31(2):497--608, 2018.

\bibitem[GP99]{MR1666787}
T.~Graber and R.~Pandharipande.
\newblock Localization of virtual classes.
\newblock {\em Invent. Math.}, 135(2):487--518, 1999.

\bibitem[GP10]{MR2662867}
M.~Gross and R.~Pandharipande.
\newblock Quivers, curves, and the tropical vertex.
\newblock {\em Port. Math.}, 67(2):211--259, 2010.

\bibitem[GPS10]{MR2667135}
M.~Gross, R.~Pandharipande, and B.~Siebert.
\newblock The tropical vertex.
\newblock {\em Duke Math. J.}, 153(2):297--362, 2010.

\bibitem[GS11]{MR2846484}
M.~Gross and B.~Siebert.
\newblock From real affine geometry to complex geometry.
\newblock {\em Ann. of Math. (2)}, 174(3):1301--1428, 2011.

\bibitem[GS13]{MR3011419}
M.~Gross and B.~Siebert.
\newblock Logarithmic {G}romov-{W}itten invariants.
\newblock {\em J. Amer. Math. Soc.}, 26(2):451--510, 2013.

\bibitem[GV98a]{gopakumar1998m1}
R.~Gopakumar and C.~Vafa.
\newblock M-{T}heory and {T}opological {S}trings--{I}.
\newblock {\em arXiv preprint hep-th/9809187}, 1998.

\bibitem[GV98b]{gopakumar1998m2}
R.~Gopakumar and C.~Vafa.
\newblock M-{T}heory and {T}opological {S}trings--{II}.
\newblock {\em arXiv preprint hep-th/9812127}, 1998.

\bibitem[Iac17]{iacovino2017ks}
V.~Iacovino.
\newblock Kontsevich-{S}oibelman {W}all {C}rossing {F}ormula and {H}olomorphic
  {D}isks.
\newblock {\em arXiv preprint arXiv:1711.05306}, 2017.

\bibitem[IM13]{MR3142257}
I.~Itenberg and G.~Mikhalkin.
\newblock On {B}lock-{G}\"ottsche multiplicities for planar tropical curves.
\newblock {\em Int. Math. Res. Not. IMRN}, (23):5289--5320, 2013.

\bibitem[JPT11]{MR2785870}
P.~Johnson, R.~Pandharipande, and H.-H. Tseng.
\newblock Abelian {H}urwitz-{H}odge integrals.
\newblock {\em Michigan Math. J.}, 60(1):171--198, 2011.

\bibitem[JS12]{MR2951762}
D.~Joyce and Y.~Song.
\newblock A theory of generalized {D}onaldson-{T}homas invariants.
\newblock {\em Mem. Amer. Math. Soc.}, 217(1020):iv+199, 2012.

\bibitem[Kat89]{MR1463703}
K.~Kato.
\newblock Logarithmic structures of {F}ontaine-{I}llusie.
\newblock In {\em Algebraic analysis, geometry, and number theory ({B}altimore,
  {MD}, 1988)}, pages 191--224. Johns Hopkins Univ. Press, Baltimore, MD, 1989.

\bibitem[Kat08]{MR2420017}
S.~Katz.
\newblock Genus zero {G}opakumar-{V}afa invariants of contractible curves.
\newblock {\em J. Differential Geom.}, 79(2):185--195, 2008.

\bibitem[KLR18]{kim2018degeneration}
B.~Kim, H.~Lho, and H.~Ruddat.
\newblock The degeneration formula for stable log maps.
\newblock {\em arXiv preprint arXiv:1803.04210}, 2018.

\bibitem[Kon06]{MR2250076}
Y.~Konishi.
\newblock Integrality of {G}opakumar-{V}afa invariants of toric {C}alabi-{Y}au
  threefolds.
\newblock {\em Publ. Res. Inst. Math. Sci.}, 42(2):605--648, 2006.

\bibitem[KRSS17a]{kucharski2017bps}
P.~Kucharski, M.~Reineke, M.~Sto{\v{s}}i{\'c}, and P.~Su{\l}kowski.
\newblock {BPS} states, knots, and quivers.
\newblock {\em Physical Review D}, 96(12):121902, 2017.

\bibitem[KRSS17b]{kucharski2017knots}
P.~Kucharski, M.~Reineke, M.~Stosic, and P.~Su{\l}kowski.
\newblock Knots-quivers correspondence.
\newblock {\em arXiv preprint arXiv:1707.04017}, 2017.

\bibitem[KS06]{MR2181810}
M.~Kontsevich and Y.~Soibelman.
\newblock Affine structures and non-{A}rchimedean analytic spaces.
\newblock In {\em The unity of mathematics}, volume 244 of {\em Progr. Math.},
  pages 321--385. Birkh\"auser Boston, Boston, MA, 2006.

\bibitem[KS08]{kontsevich2008stability}
M.~Kontsevich and Y.~Soibelman.
\newblock Stability structures, motivic {D}onaldson-{T}homas invariants and
  cluster transformations.
\newblock {\em arXiv preprint arXiv:0811.2435}, 2008.

\bibitem[KS11]{MR2851153}
M.~Kontsevich and Y.~Soibelman.
\newblock Cohomological {H}all algebra, exponential {H}odge structures and
  motivic {D}onaldson-{T}homas invariants.
\newblock {\em Commun. Number Theory Phys.}, 5(2):231--352, 2011.

\bibitem[KS13]{kontsevich2013wall}
M.~Kontsevich and Y.~Soibelman.
\newblock Wall-crossing structures in {D}onaldson-{T}homas invariants,
  integrable systems and {M}irror {S}ymmetry.
\newblock {\em arXiv preprint arXiv:1303.3253}, 2013.

\bibitem[Lin17]{lin2017correspondence}
Y.-S.~Lin.
\newblock Correspondence {T}heorem between {H}olomorphic {D}iscs and {T}ropical
  {D}iscs on {K3} {S}urfaces.
\newblock {\em arXiv preprint arXiv:1703.00411}, 2017.

\bibitem[LS06]{MR2402819}
J.~Li and Y.~Song.
\newblock Open string instantons and relative stable morphisms.
\newblock In {\em The interaction of finite-type and {G}romov-{W}itten
  invariants ({BIRS} 2003)}, volume~8 of {\em Geom. Topol. Monogr.}, pages
  49--72. Geom. Topol. Publ., Coventry, 2006.

\bibitem[Man15]{mandel2015refined}
T.~Mandel.
\newblock Scattering diagrams, theta functions, and refined tropical curve counts.
\newblock {\em arXiv preprint arXiv:1503.06183}, 2015.

\bibitem[MPS13]{MR3080495}
J.~Manschot, B.~Pioline, and A.~Sen.
\newblock On the {C}oulomb and {H}iggs branch formulae for multi-centered black
  holes and quiver invariants.
\newblock {\em J. High Energy Phys.}, (5):166, front matter+42, 2013.



\bibitem[Mar07]{MR2304330}
E.~Markman.
\newblock Integral generators for the cohomology ring of moduli spaces of
  sheaves over {P}oisson surfaces.
\newblock {\em Adv. Math.}, 208(2):622--646, 2007.


\bibitem[MNOP06a]{MR2264664}
D.~Maulik, N.~Nekrasov, A.~Okounkov, and R.~Pandharipande.
\newblock Gromov-{W}itten theory and {D}onaldson-{T}homas theory. {I}.
\newblock {\em Compos. Math.}, 142(5):1263--1285, 2006.

\bibitem[MNOP06b]{MR2264665}
D.~Maulik, N.~Nekrasov, A.~Okounkov, and R.~Pandharipande.
\newblock Gromov-{W}itten theory and {D}onaldson-{T}homas theory. {II}.
\newblock {\em Compos. Math.}, 142(5):1286--1304, 2006.



\bibitem[MPT10]{MR2746343}
D.~Maulik, R.~Pandharipande, and R.~P. Thomas.
\newblock Curves on {$K3$} surfaces and modular forms.
\newblock {\em J. Topol.}, 3(4):937--996, 2010.
\newblock With an appendix by A. Pixton.

\bibitem[MR17]{meinhardt2017donaldson}
S.~Meinhardt and M.~Reineke.
\newblock {D}onaldson--{T}homas invariants versus intersection cohomology of
  quiver moduli.
\newblock {J. Reine Angew. Math.}, 754, 143-–178, 2019.

\bibitem[Mik05]{MR2137980}
G.~Mikhalkin.
\newblock Enumerative tropical algebraic geometry in {$\R^2$}.
\newblock {\em J. Amer. Math. Soc.}, 18(2):313--377, 2005.



\bibitem[Mum83]{MR717614}
D.~Mumford.
\newblock Towards an enumerative geometry of the moduli space of curves.
\newblock In {\em Arithmetic and geometry, {V}ol. {II}}, volume~36 of {\em
  Progr. Math.}, pages 271--328. Birkh\"auser Boston, Boston, MA, 1983.

\bibitem[Nei14]{neitzke2014comparing}
A.~Neitzke.
\newblock Comparing signs in wall-crossing formulas. \\
\newblock {\em https://www.ma.utexas.edu/users/neitzke/expos/sign-expos.pdf},
  2014.

\bibitem[NS06]{MR2259922}
T.~Nishinou and B.~Siebert.
\newblock Toric degenerations of toric varieties and tropical curves.
\newblock {\em Duke Math. J.}, 135(1):1--51, 2006.

\bibitem[OV00]{MR1765411}
H.~Ooguri and C.~Vafa.
\newblock Knot invariants and topological strings.
\newblock {\em Nuclear Phys. B}, 577(3):419--438, 2000.

\bibitem[PP13]{MR3127827}
R.~Pandharipande and A.~Pixton.
\newblock Descendent theory for stable pairs on toric 3-folds.
\newblock {\em J. Math. Soc. Japan}, 65(4):1337--1372, 2013.

\bibitem[Rei10]{MR2650811}
M.~Reineke.
\newblock Poisson automorphisms and quiver moduli.
\newblock {\em J. Inst. Math. Jussieu}, 9(3):653--667, 2010.

\bibitem[Rei11]{MR2801406}
M.~Reineke.
\newblock Cohomology of quiver moduli, functional equations, and integrality of
  {D}onaldson-{T}homas type invariants.
\newblock {\em Compos. Math.}, 147(3):943--964, 2011.

\bibitem[RSW12]{MR3033514}
M.~Reineke, J.~Stoppa, and T.~Weist.
\newblock M{PS} degeneration formula for quiver moduli and refined
  {GW}/{K}ronecker correspondence.
\newblock {\em Geom. Topol.}, 16(4):2097--2134, 2012.

\bibitem[RW13]{MR3004575}
M.~Reineke and T.~Weist.
\newblock Refined {GW}/{K}ronecker correspondence.
\newblock {\em Math. Ann.}, 355(1):17--56, 2013.

\bibitem[Soi09]{MR2596639}
Y.~Soibelman.
\newblock On non-commutative analytic spaces over non-{A}rchimedean fields.
\newblock In {\em Homological mirror symmetry}, volume 757 of {\em Lecture
  Notes in Phys.}, pages 221--247. Springer, Berlin, 2009.

\bibitem[SYZ96]{MR1429831}
A.~Strominger, S.-T.~Yau, and E.~Zaslow.
\newblock Mirror symmetry is {$T$}-duality.
\newblock {\em Nuclear Phys. B}, 479(1-2):243--259, 1996.

\bibitem[Tod12]{MR2892766}
Y.~Toda.
\newblock Stability conditions and curve counting invariants on {C}alabi-{Y}au
  3-folds.
\newblock {\em Kyoto J. Math.}, 52(1):1--50, 2012.

\bibitem[Wit95]{MR1362846}
E.~Witten.
\newblock Chern-{S}imons gauge theory as a string theory.
\newblock In {\em The {F}loer memorial volume}, volume 133 of {\em Progr.
  Math.}, pages 637--678. Birkh\"auser, Basel, 1995.

\bibitem[Zag07]{MR2290758}
D.~Zagier.
\newblock The dilogarithm function.
\newblock In {\em Frontiers in number theory, physics, and geometry. {II}},
  pages 3--65. Springer, Berlin, 2007.

\bibitem[Zas18]{zaslow2018wavefunctions}
E.~Zaslow.
\newblock Wavefunctions for a {C}lass of {B}ranes in {T}hreespace.
\newblock {\em arXiv preprint arXiv:1803.02462}, 2018.

\end{thebibliography}
\end{document}